\documentclass[12pt]{amsart}

\usepackage{amsmath,amsthm,amsfonts,amssymb,eucal}%, hyperref}

% defines \mathscr as a calligarphic font which is
% considerably more fancy than the standard \mathcal
%\usepackage{mathrsfs}

%%%%%%%%%%%%%%%%%%%%%%%%%%%%%%%%%%%%%%%%%%%%%%%%%%%%%%%%%%%%%
\renewcommand {\a}{ \alpha }
\renewcommand{\b}{\beta}

\newcommand{\g}{\gamma}
\newcommand{\G}{\Gamma}

\newcommand{\vark}{\varkappa}
\renewcommand{\d}{\delta}
\newcommand{\s}{\sigma}

\renewcommand{\L}{\Lambda}
\newcommand{\z}{\zeta}
\renewcommand{\t}{\theta}

\newcommand{\p}{\partial}
\newcommand{\om}{\omega}
\newcommand{\Om}{\Omega}

\newcommand{\oq}{\ {\raise 7pt\hbox{${\scriptstyle\circ}$}}
	\kern -7pt{%\lower 2pt
		\hbox{$Q$}}}

\newcommand{\R}{ \mathbb R}

\newcommand {\GH}{\mathfrak H}

\newcommand {\GS}{\mathfrak S}

\newcommand {\GV}{\mathfrak V}
\newcommand {\GW}{\mathfrak W}

\newcommand {\bx}{\mathbf x}

\newcommand {\be}{\mathbf e}

\newcommand {\bz}{\mathbf z}
\newcommand {\by}{\mathbf y}

\newcommand {\bn}{\mathbf n}

\newcommand{\SN}{{\sf{N}}}
\newcommand{\SX}{{\sf{X}}}

\newcommand {\bmu}{\boldsymbol\mu}

\newcommand {\boldeta}{\boldsymbol\eta}

\newcommand {\bxi}{\boldsymbol\xi}

\newcommand{\overc}{\overset{\circ}}

\newcommand{\lu}{\langle}
\newcommand{\ru}{\rangle}

%mathcal notation

\newcommand{\CK}{\mathcal K}
\newcommand{\CB}{\mathcal B}

\newcommand{\CH}{\mathcal H}
\newcommand{\CO}{\mathcal O}

\newcommand{\CA}{\mathcal A}

\newcommand{\CC}{\mathcal C}
\newcommand{\CS}{\mathcal S}

\newcommand{\CD}{\mathcal D}

%\mathscr notation

\newcommand{\plainW}[2]{\textup{{\textsf{W}}}^{#1, #2}}
\newcommand{\plainC}[1]{\textup{{\textsf{C}}}^{#1}}

\newcommand{\plainS}{\textup{{\textsf{S}}}}

\newcommand{\plainL}[1]{\textup{{\textsf{L}}}^{#1}}

\DeclareMathOperator{\tr}{{tr}}

\newcommand{\1}
{{\,\vrule depth3pt height9pt}{\vrule depth3pt height9pt}
	{\vrule depth3pt height9pt}{\vrule depth3pt height9pt}\,}
\newcommand{\2}{\1\!\1}

\DeclareMathOperator {\im }{{Im}}

\DeclareMathOperator {\dist} {{dist}}

\DeclareMathOperator{\op}{{Op}}

\DeclareMathOperator{\supp}{{supp}}

%\DeclareMathOperator{\arg}{{arg}}

%%%%%%%%%%%%%%%%%%%%%%%%%%%%%%%%%%%%%%%%%%%%%%%%

%\chardef\bslash=`\\ % p. 424, TeXbook
%\newcommand{\ntt}{\normalfont\ttfamily}

%\newcommand{\cn}[1]{{\protect\ntt\bslash#1}}
%    LaTeX package name
%\newcommand{\pkg}[1]{{\protect\ntt#1}}
%    File name
%\newcommand{\fn}[1]{{\protect\ntt#1}}
%    environment name
%\newcommand{\env}[1]{{\protect\ntt#1}}
\hfuzz1pc % Don't bother to report overfull boxes if overage is < 1pc
\vfuzz1pc

\newtheorem{thm}{Theorem}[section]
\newtheorem{cor}[thm]{Corollary}
\newtheorem{lem}[thm]{Lemma}
\newtheorem{prop}[thm]{Proposition}
\newtheorem{cond}[thm]{Condition}

\theoremstyle{definition}
%[section]

%\theoremstyle{remark}
\newtheorem{rem}[thm]{Remark}

\numberwithin{equation}{section}

%
%      Lyonya's commands
%

\newcommand{\bee}{\begin{equation}}
	\newcommand{\ene}{\end{equation}}
\newcommand{\bees}{\begin{equation*}}
	\newcommand{\enes}{\end{equation*}}
\newcommand{\bes}{\begin{split}}
	\newcommand{\ens}{\end{split}}

\newcommand{\bet}{\begin{thm}}
	\newcommand{\ent}{\end{thm}}
\newcommand{\bel}{\begin{lem}}
	\newcommand{\enl}{\end{lem}}
\newcommand{\bec}{\begin{cor}}
	\newcommand{\enc}{\end{cor}}
\newcommand{\bep}{\begin{proof}}
	\newcommand{\enp}{\end{proof}}
\newcommand{\ber}{\begin{rem}}
	\newcommand{\enr}{\end{rem}}

\newcommand{\Z}{\mathbb Z}

%\newcommand{\bysame}{\mbox{\rule{3em}{.4pt}}\,}

%    \interval is used to provide better spacing after a [ that
%    is used as a closing delimiter.
%\newcommand{\interval}[1]{\mathinner{#1}}

%    Notation for an expression evaluated at a particular condition. The
%    optional argument can be used to override automatic sizing of the
%    right vert bar, e.g. \eval[\biggr]{...}_{...}
%\newcommand{\eval}[2][\right]{\relax
% \ifx#1\right\relax \left.\fi#2#1\rvert}

%    Enclose the argument in vert-bar delimiters:
%\newcommand{\envert}[1]{\left\lvert#1\right\rvert}
%\let\abs=\envert

%    Enclose the argument in double-vert-bar delimiters:
%\newcommand{\enVert}[1]{\left\lVert#1\right\rVert}
%\let\norm=\enVert

%\makeatletter
%\def\square{\RIfM@\bgroup\else$\bgroup\aftergroup$\fi
% \vcenter{\hrule\hbox{\vrule\@height.6em\kern.6em\vrule}\hrule}\egroup}
%\makeatother

\usepackage{color}

\setlength{\textwidth}{450pt }
%%%%%%%%%%%%%%%%%%%%%%%%%%%%%%%%%%%%%%%%%%%%%%%%%%%%%%%%%%%%%%%%

\begin{document}
	\hoffset -4pc

\title
[Quasi-classical asymptotics]
{{Quasi-classical asymptotics for functions of 
Wiener-Hopf operators: smooth vs non-smooth symbols}}
\author[A.V. Sobolev]{Alexander V. Sobolev}
\address{Department of Mathematics\\ University College London\\
	Gower Street\\ London\\ WC1E 6BT UK}
\email{a.sobolev@ucl.ac.uk}
\keywords{Non-smooth functions of 
Wiener--Hopf operators, asymptotic 
trace formulas, entanglement entropy}
\subjclass[2010]{Primary 47G30, 35S05; 
Secondary 45M05, 47B10, 47B35}

\begin{abstract}	
We consider functions of  
Wiener--Hopf type operators on the Hilbert space $\plainL2(\mathbb R^d)$. 
It has been known for a long time that the quasi-classical asymptotics 
for traces of resulting operators strongly depend on the smoothness 
of the symbol: for smooth symbols the expansion is power-like, whereas 
discontinuous symbols (e.g. indicator functions) 
produce an extra logarithmic factor. 
We investigate the transition regime by studying symbols depending on 
an extra parameter $T\ge 0$ in such a way that the symbol tends to 
a discontinuous one as $T\to 0$. The main result is two-parameter 
asymptotics (in the quasi-classical parameter and in $T$),
  describing a transition from the smooth case 
to the discontinuous one. 
The obtained asymptotic formulas are used to analyse the low-temperature 
scaling limit of the spatially 
bipartite entanglement entropy of thermal equilibrium states of non-interacting 
fermions. 
\end{abstract}

\maketitle

\tableofcontents
\section{Introduction}   

The present paper is devoted to the study of (bounded, self-adjoint) operators 
of the form
\begin{align}\label{WH:eq}
W_\a := W_\a(a; \L) := \chi_\L \op_\a(a) \chi_\L,\ \a >0,
\end{align}
on $\plainL2(\R^d)$, $d\ge 1$, 
where $\chi_\L$ is the indicator function of a set $\L\subset\R^d$. 
The notation $\op_\a(a)$ stands for the $\a$-pseudo-differential 
operator with symbol $a = a(\bxi)$, which acts on Schwartz functions $u$ on $\R^d$ as
\begin{equation*}
\bigl(\op_\a(a) u\bigr)(\bx) := \frac{\a^{d}}{(2\pi)^d}
\iint e^{i\a\bxi\cdot(\bx-\by)} a(\bxi) u(\by) d\by d\bxi\,,\quad \bx\in\R^d.
\end{equation*} 
Integrals without indication of the 
integration domain always mean integration 
over $\R^d$ with the value of $d$ which is clear from the context.
%
%More general symbols, depending on both variables 
%$\bx$ and $\bxi$, or operators 
%with matrix-valued symbols can be also treated, 
%but we limit our attention only to $\bxi$-dependent symbols. 
%
We call the operator \eqref{WH:eq} a (truncated) Wiener--Hopf operator. 
We are interested in the asymptotics of the trace of the following operator 
difference 
\begin{equation}\label{Dalpha:eq}
D_\a(a, \L; f) := \chi_\L f(W_\a(a; \L)) \chi_\L 
- W_\a(f\circ a; \L),
\end{equation}
as $\a\to\infty$, with some suitably chosen functions $f$. 
The reciprocal parameter $\a^{-1}$ can be interpreted 
as Planck's constant, 
and hence the limit $\a\to\infty$ can be regarded as 
the quasi-classical limit. Sometimes a different point of view is convenient: 
by changing the variables one easily sees that the operator \eqref{Dalpha:eq} 
is unitarily equivalent to $D_1(a, \a\L; f)$, so that the asymptotics  
$\a\to\infty$ can be interpreted as a large-scale limit. 

The second operator on the right-hand side of 
\eqref{Dalpha:eq} 
can be viewed 
as a regularizing term: it makes the operator \eqref{Dalpha:eq} 
trace class even if $f(0)\not = 0$ and 
$\L$ is unbounded, see Condition \ref{domain:cond} 
for precise assumptions on $\L$. On the other hand, 
if $f(0) = 0$, $\L$ is bounded and the symbol 
$a$ decays fast at infinity, 
then $W_\a(f\circ a; \L)$ is trace class itself and  
an elementary calculation shows that 
\begin{equation}\label{weyl:eq}
\tr W_\a(f\circ a; \L) = \frac{\a^d}{(2\pi)^d} |\L| \int f\bigl(a(\bxi)\bigr)d\bxi,
\end{equation} 
where $|\Lambda|$ is the $d$-dimensional Lebesgue measure of $\Lambda$.

Asymptotic properties of $D_\a(a, \L; f)$ depend strongly on the smoothness 
of the symbol $a$. 
For smooth symbols $a$, smooth functions $f$ and smooth bounded domains $\L$, 
the full asymptotic expansion of $\tr D_\a(a, \L; f)$ in powers of 
$\a^{-1}$ was derived by A.~Budylin--V.~Buslaev \cite{BuBu} 
and  H.~Widom \cite{Widom_85}, and we refer to these papers for 
the history of the problem and further references. 
We are concerned only with the leading term asymptotics: 
they have  the form 
\begin{align}\label{bd:eq}
\tr D_\a(a, \L; f) = \a^{d-1}\CB_d(a)  + O(\a^{d-2}), \a\to\infty,
\end{align}
where the coefficient $\CB_d(a)  = \CB_d(a; \p\L, f)$ is defined 
in \eqref{cbd:eq}. 

For symbols $a$ with jump discontinuities 
the asymptotics have a different form. For instance, for $a = \chi_\Om$ with 
a bounded piece-wise region $\Om$, it was found that 
\begin{equation}\label{Widom_conj:eq}
\tr D_\a(\chi_\Om, \L; f) = U\GV_1 \,\a^{d-1}\log \a + 
o(\a^{d-1}\log\a)\,, \;\a\to\infty,
\end{equation}
for a bounded region $\L\subset \R^d$, with explicitly given coefficients 
$U = U(f)$ and 
$\GV_1 = \GV_1(1, \p\L, \p\Om)$, see \eqref{W1:eq} and 
\eqref{U:eq} for the definitions. 
Discontinuous symbols came into prominence after the 
papers by M. E. Fisher and R. E. Hartwig, see e.g. \cite{FH}, 
on determinants of truncated Toeplitz matrices. Ever since, discontinuity 
of the symbol is sometimes referred to as one of the two  
\textit{Fisher--Hartwig singularities}. 
The formula \eqref{Widom_conj:eq} for smooth functions $f$ 
was proved by 
H.~Landau--H.~Widom \cite{Land_Wid},  
H.~Widom \cite{Widom_821} (for $d=1$) 
and by A.V.~Sobolev \cite{Sob, Sob2} (for arbitrary $d\ge 1$). 
These issues have been 
exhaustively studied for the Toeplitz matrices, see e.g. survey 
\cite{Kras} for references.

In the present paper we study the transition from the smooth to discontinuous 
symbols. Precisely, we consider 
smooth symbols $a = a_T$ depending on 
the additional parameter $T>0$, in such a way 
that $a_T(\bxi)\to \chi_\Om(\bxi)$,  
as $T\to 0$ pointwise, with a region $\Om\subset\R^d$, and 
satisfying some mild regularity conditions, see 
\eqref{ata:eq}, \eqref{nablat:eq}. 
 The objective is to investigate 
the asymptotics of $\tr D_\a(a_T, \L; f)$ 
as $T\to 0$ and $\a\to\infty$ simultaneously and independently. 
The sharp bounds for this quantity 
are stated in Theorem \ref{entropy:thm}, and the asymptotic 
results are collected in Theorem \ref{main:thm}. 
The function $f$ 
is not assumed to be globally smooth, but is 
allowed to have finitely many points of non-smoothness. A typical 
example of such a function, with one point of non-smoothness, is 
\begin{align}\label{typf:eq}
f(t) = |t-z|^\g, \ t\in\R,
\end{align} 
with some fixed $z\in\R$, where $\g>0$. The inclusion of 
non-smooth functions $f$ is far from trivial, 
but the relevant tools have been developed earlier (see \cite{Sob_14, LeSpSo_15}), 
and we use them with minor modifications.  

The study of such a two-parameter behaviour seems to be an interesting natural 
problem of asymptotic analysis in its own right. 
Our motivation however comes from the analysis of 
large-scale behaviour of the spatially 
bipartite entanglement entropy 
of free fermions in thermal equilibrium. 
This question amounts to studying the trace 
of the operator \eqref{Dalpha:eq} with a specific choice of the symbol $a_T$ and 
function $f$. 
The symbol is taken to be the \textit{Fermi symbol} 
\begin{equation}\label{positiveT:eq} 
a_T(\bxi) := a_{T, \mu}(\bxi) 
:= \frac{1}{1+ \exp \frac{h(\bxi) - \mu}{T}}\,,\quad \bxi\in\R^d,
\end{equation}
where $T>0$ is the temperature, and $\mu\in\R$ is the chemical potential. 
The function $h = h(\bxi)$ is the free Hamiltonian and we assume that 
$h(\bxi)\to \infty$ as $|\bxi|\to\infty$, so that the \textit{Fermi sea} 
$\Om = \{\bxi\in\R^d: h(\bxi)<\mu\}$ is a bounded set. 
Note that $a_{T, \mu}\to \chi_{\Om},\ T\to 0$, pointwise.

The function\eqref{positiveT:eq} is a 
typical representative of the symbols $a_T$ 
featuring in Theorem \ref{main:thm}, so 
for the sake of discussion in this introduction, 
we assume that $a_T$ is simply given by 
the symbol \eqref{positiveT:eq}. 
The form of the asymptotics in the main theorem  
depends on the 
relation between $\a$ and $T$, the regime $\a T = const$ 
being the critical one.  
If $\a T \le const$, then the asymptotics have 
exactly the form 
\eqref{Widom_conj:eq}, i.e. the same as in the case $a = \chi_\Om$. 
If however $\a T\ge const$, then 
\begin{equation}\label{Widom_conj_T:eq}
\tr D_\a(a_T, \L; f) = U\GV_1 \,\a^{d-1}\log \frac{1}{T} + 
o\biggl(\a^{d-1}\log\frac{1}{T}\biggr)\,, \; T\to 0.
\end{equation}
As proved in Theorem \ref{comparison:thm}, 
the asymptotic formula \eqref{Widom_conj_T:eq} 
can be recast in the form 
\eqref{bd:eq} as follows:
\begin{align}\label{standard:eq}
\tr D_\a(a_T, \L; f) =  \a^{d-1} \CB_d(a_T)  + 
o\biggl(\a^{d-1}\log\frac{1}{T}\biggr)\,, \;T\to 0, \a T\ge const.
\end{align}
Therefore the asymptotic results 
in Theorem \ref{main:thm} do indeed bridge the dichotomy 
between smooth and discontinuous symbols. 

Returning to the large-scale asymptotics 
of the entanglement entropy, 
they follow from Theorem \ref{main:thm} with the symbol \eqref{positiveT:eq}, 
and with the function $f$ which is chosen to be one of the 
\textit{$\g$-R\'enyi entropy functions} $\eta_\g, \g >0,$ 
that are defined 
in \eqref{eta_gamma:eq} and \eqref{eta1:eq}.  
Thus our results provide low-temperature scaling limit of the 
entanglement entropy in all dimensions $d\ge 1$. 
These formulas were announced in the article \cite{LeSpSo2} without underlying 
mathematical details.  
The case of zero temperature, i.e. that of $a = \chi_\Om$ 
was studied in \cite{LeSpSo}. 
The two-parameter asymptotics for the entropy 
were obtained in \cite{LeSpSo_15} for $d = 1$. 
The formulas found there hold for $\a\to\infty$ 
and $\a T\ge const$. In particular, 
these conditions allow the limit $\a\to\infty$, $T = const$. 
On the other hand, Theorem \ref{main:thm} 
always requires $T\to 0$, but allows $\a T \le const$.

The idea is to prove the main result for smooth functions $f$ first. 
In the 
case $\a T \le const$ an elementary argument allows us to 
replace the symbol $a_T$ by its limit $\chi_\Om$, 
and subsequently use the known asymptotic results for discontinuous symbols, 
see \cite{Sob, Sob2}. This produces a formula of the form \eqref{Widom_conj:eq}. 
The case $\a T\ge const$ is substantially more difficult. 
Here we observe that   
different parts of the region $\L$ give different contribution to the trace 
asymptotics. Namely,  
the boundary layer of width $(\a T)^{-1}$ gives the main input into the answer. 
This input is found again by replacing the symbol $a_T$ with the function $\chi_\Om$, and using the results of \cite{Sob, Sob2}.  However, in contrast to the 
$\a T\le const$ case, due to the small size of the boundary layer, 
the resulting asymptotic formula contains 
$\log \frac1{T}$ instead of $\log\a$. The extension of these results to 
non-smooth functions $f$ 
follows the classical idea of asymptotic analysis: 
we approximate $f$ by smooth functions, and for the error we use 
bounds for the trace norm of \eqref{Dalpha:eq} that explicitly depend on the 
function $f$ and the parameter $T$. 
In the abstract setting such 
bounds had been proved in \cite{Sob_14}, and later they were 
used for pseudo-differential operators in \cite{LeSpSo_15} 
for the case $\a T \ge const$. 

The first principal technical ingredient 
is estimates for pseudo-differential operators in 
the Schatten-von Neumann classes $\GS_q, q >0$. 
Since the symbol $a_T$ depends on the extra parameter $T$, 
the main effort goes into controlling 
the dependence of the estimates on the symbol, 
or at least on the parameter $T$. 
Here we rely mostly on the bounds obtained 
in \cite{Sob1} and \cite{LeSpSo_15}, but also derive some new ones, 
see e.g. \eqref{comm_new:eq}. 
Although the main results are concerned with traces and trace norms, 
one should also stress that some intermediate results require 
bounds in the classes $\GS_q$ with $q\in (0, 1)$.
The need for this 
becomes transparent if in  
the operator \eqref{Dalpha:eq} one takes, as an example, the 
function \eqref{typf:eq} with $0<\g <1$.  
 
The second ingredient is the trace asymptotics for  
the operator \eqref{Dalpha:eq} with a discontinuous 
symbol of the type $a = \chi_\Om$. As mentioned earlier, 
these were obtained in \cite{Sob, Sob2}. 
Again, it is crucial that 
these results are uniform in the region $\L$  
in some suitable sense.

Different parts of the proof have different degree of detail. 
In maximal detail we present new arguments, in particular those 
involving explicit 
control of the dependence on the parameter $T$. 
At the same time, the parts of reasoning that repeat previously 
known proofs in new circumstances, are just sketched and sometimes, omitted. 

The plan of the paper is as follows: 
in Section \ref{main:sect} we provide some basic information 
on Schatten-von Neumann classes, including the useful 
$q$-triangle inequality \eqref{qtriangle:eq}, 
and state the main results, 
see Theorems \ref{entropy:thm} and \ref{main:thm}. 
The whole of Section \ref{entropy:sect}
 is devoted to applications of 
 the main theorems to the study of various 
 entropies of fermionic systems. 
Some elementary estimates for smooth 
functions of self-adjoint operators are presented in Section 
\ref{smooth:sect}. 
In Sections \ref{WH:sect} 
and \ref{at:sect} we collect the necessary 
Schatten-von Neumann estimates for pseudo-differential operators 
and Wiener-Hopf operators, and prove Theorem \ref{entropy:thm}.  
Section \ref{discont:sect}  
contains preliminary information about trace asymptotics 
for discontinuous symbols. 
The main theorems are proved in Sections 
\ref{proof1:sect} and \ref{proof2:sect}. 
Rewriting the results for $\a T\gtrsim 1$ 
in the form \eqref{standard:eq} 
takes a lot of technical work, 
which is done in Sections \ref{compar:sect} 
and \ref{coeff:sect}. 

Throughout the paper we adopt the following convention.
For two non-negative numbers (or functions) 
$X$ and $Y$ depending on some parameters, 
we write $X\lesssim Y$ (or $Y\gtrsim X$) if $X\le C Y$ with 
some positive constant $C$ independent of those parameters.
If $X\lesssim Y$ and $X\gtrsim Y$, then we write $X\asymp Y$. 
For example, $\a T\asymp 1$ 
means that $c\le \a T\le C$ with some constants $C, c$, independent 
of $\a$ and $T$. 
To avoid confusion we often make explicit comments on the nature of 
(implicit) constants in the bounds. 

The notation $B(\bz, R)\subset\R^d$, $\bz\in\R^d$, $R>0$,
is used for the open ball of radius $R$, centred at the point 
$\bz$. 

For any vector $\mathbf v\in\R^n$, $n=1, 2, \dots$ we use the standard 
notation $\lu \mathbf v\ru = \sqrt{1+|\mathbf v|^2}$.

\textbf{Acknowledgements.} The author is grateful 
to W. Spitzer for useful remarks. 

Thanks also go to the anonymous referee for a number of useful 
suggestions. 

This work was supported by EPSRC grant EP/J016829/1.

\section{Preliminaries. Main result}\label{main:sect}
 
 \subsection{Schatten-von Neumann classes}
 We use some well-known facts about 
Schatten--von Neumann operator ideals $\GS_q, q>0$. 
Detailed information on these ideals can be found e.g. in \cite{BS,GK,Pie,Simon}. 
We shall point out only some basic facts.
For a compact operator $A$ on a separable Hilbert space $\CH$ denote 
by $s_n(A), n = 1, 2, \dots$ its singular values, 
i.e., the eigenvalues of the operator $|A| := \sqrt{A^*A}$. 
We denote the identity operator on $\CH$ by $I$. 
The Schatten--von Neumann ideal $\GS_q$, $q>0$ consists of all 
compact operators $A$, for which 
\begin{equation*}
	\|A\|_q := \biggl[\sum_{k=1}^\infty s_k(A)^q\biggr]^{\frac{1}{q}}<\infty.
\end{equation*}
If $q\ge 1$, then the above functional defines a norm. 
If $0<q <1$, then it is a quasi-norm. 
There is nevertheless a convenient analogue of the triangle inequality, which is called 
the \textit{$q$-triangle inequality:}
\begin{equation}\label{qtriangle:eq} 
\|A_1+A_2\|_q^q\le \|A_1\|_q^q + \|A_2\|_q^q, \ \ 
A_1, A_2\in\GS_q, 0 < q \le 1,
\end{equation}
see \cite{Rot} and also \cite{BS}. Thus $\|A\|_q$ is sometimes called 
a \textit{$q$-norm}. 
Note also the H\"older inequality
\begin{equation*}
%\label{Holder:eq} 
\|A_1 A_2\|_q\le \|A_1\|_{q_1} \cdot \|A_2\|_{q_2}, 
\ \ q^{-1} = q_1^{-1} + q_2^{-1}\,,\ \ 0 < q_1,q_2\le\infty.
\end{equation*}
Further on we need some $\GS_q$-estimates for functions of 
self-adjoint operators that were established in \cite{Sob_14}. 
As indicated in the Introduction, we are interested in functions 
that lose smoothness at finitely many points. Without loss of generality, for 
almost all estimates we may assume that $f$ has only one non-smoothness point. 
Below $\chi_R$ denotes the indicator function of 
the interval $(-R, R)$, $R>0$. 
We impose the following condition. 
 
\begin{cond}\label{f:cond}
For some integer $n \ge 1$ the function 
	$f\in\plainC{n}(\R\setminus\{ t_0 \})\cap\plainC{}(\R)$ satisfies the 
	bound 
	\begin{equation}\label{fnorm:eq}
	\1 f\1_n = 
	\max_{0\le k\le n}\sup_{t\not = t_0} 
	|f^{(k)}(t)| |t-t_0|^{-\g+k}<\infty
	\end{equation}
	with some $\g >  0$, 
	and is supported on the 
	interval $[t_0-R, t_0+R]$ with some $R>0$.  
\end{cond}
A function $f$ satisfying \eqref{fnorm:eq} with 
$ n = 1$ is H\"older-continuous:
\begin{equation}\label{hf:eq}
|f(t_1)-f(t_2)|
\le 2 \1 f\1_1|t_1-t_2|^\varkappa,\ \varkappa = \min \{\g, 1\},
\end{equation}
for all $t_1, t_2\in\R$.
In what follows, all the bounds 
involving functions from Condition \ref{f:cond}, 
are uniform in $t_0$, and 
contain explicit dependence on the quantity 
\eqref{fnorm:eq}, and on the radius $R$.

Now we can quote one abstract result following 
from \cite[Theorem 2.10]{Sob_14}.
    
\begin{prop}\label{Szego1:prop} 
Suppose that $f$ satisfies Condition 
\ref{f:cond} with some $\g >0$,  $n\ge 2$ and 
some $t_0\in\R$, $R\in (0, \infty)$. 
Let $q$ be a number such that $(n-\s)^{-1} < q\le 1$ with some 
number $\s \in (0, 1]$, $\s < \g$. 
Let $A$ be a bounded self-adjoint operator 
and let $P$ be an orthogonal projection such that 
$PA(I-P)\in \GS_{\s q}$. Then 	
\begin{equation}\label{Szego:eq}
	\|f(PAP)P - P f(A)\|_q
	\lesssim \1 f\1_n R^{\g-\s} \|PA(I-P)\|_{\s q}^\s,
\end{equation}
with an implicit constant independent 
of the operators $A, P$, the 
function $f$, and the parameters $R, t_0$.  
\end{prop}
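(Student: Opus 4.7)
Since Proposition \ref{Szego1:prop} is billed as an abstract result following from Theorem 2.10 of \cite{Sob_14}, my plan is to recall the strategy underlying that reference. The central device is an almost-analytic (Helffer--Sj\"ostrand) extension $\tilde f$ of $f$, chosen so that
\begin{equation*}
|\bar\partial \tilde f(x+iy)|\lesssim \1 f\1_n\,|x-t_0|^{\gamma-n}|y|^{n-1}
\end{equation*}
on a set of the form $\{|y|\le \langle x-t_0\rangle,\ |x-t_0|\le CR\}$, with $\tilde f(t)=f(t)$ on $\R$. This encodes Condition \ref{f:cond} (both the smoothness away from $t_0$ and the size controlled by $R$) into a single bound on the Cauchy--Riemann mass of $\tilde f$. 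Then
\begin{equation*}
f(PAP)P - Pf(A)=\frac{1}{\pi}\int \bar\partial\tilde f(z)\bigl[(z-PAP)^{-1}P - P(z-A)^{-1}\bigr]\,dL(z).
\end{equation*}

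The second ingredient is the resolvent identity
\begin{equation*}
(z-PAP)^{-1}P - P(z-A)^{-1}=-(z-PAP)^{-1}\,PA(I-P)\,(z-A)^{-1},
\end{equation*}
which isolates the off-diagonal operator $T:=PA(I-P)$ explicitly, together with the obvious bounds $\|(z-PAP)^{-1}\|,\ \|(z-A)^{-1}\|\le |\im z|^{-1}$. Substituting this into the Helffer--Sj\"ostrand integral and applying H\"older's inequality in Schatten ideals would give a \emph{linear} bound $\lesssim \1 f\1_n R^{\gamma-1}\|T\|_q$ when $\sigma=1$; this is the easy endpoint.

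To reach the claimed $\|T\|_{\sigma q}^{\sigma}$ scaling, the plan is to interpolate between the operator-norm bound (no factor of $T$) and the linear $\GS_q$-bound. Concretely, split $T=T\chi_{\{|T|\le\lambda\}}+T\chi_{\{|T|>\lambda\}}$ via the spectral cutoff of $|T|$ at level $\lambda>0$, inserting the first piece into the linear estimate (which gains a factor $\lambda^{1-\sigma}\|T\|_{\sigma q}^\sigma$ using $\|T\chi_{\{|T|\le\lambda\}}\|_q\le \lambda^{1-\sigma/q\cdot q}\cdots$ and the trivial inequality $s_k(T)^q\le \lambda^{q-\sigma q}s_k(T)^{\sigma q}$), and bounding the second via the uniform estimate. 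Optimising in $\lambda$ and summing dyadically using the $q$-triangle inequality \eqref{qtriangle:eq} converts the linear bound into the fractional one; the constraint $q>(n-\sigma)^{-1}$ is precisely what makes the dyadic sum convergent after raising to the $q$-th power, since it controls the competition between the singular factor $|x-t_0|^{\gamma-n}$ in $|\bar\partial\tilde f|$ and the extra $s_k(T)^\sigma$ weight produced by the interpolation. The $R^{\gamma-\sigma}$ factor emerges from integrating the weight $|\bar\partial\tilde f(z)||\im z|^{-2}$ over the support of $\tilde f$, where the $R$-dependence of $\1 f\1_n$ and the diameter of the support combine into the stated power.

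The main obstacle will be the bookkeeping in the interpolation step: one has to check that the spectral cut-off of $T$ interacts correctly with both resolvents under the $\GS_q$-quasi-norm (with $q\le 1$), so that the $q$-triangle inequality produces an integrable expression in $z$ after optimisation in $\lambda$. All other steps--the construction of $\tilde f$, the resolvent identity, and the linear Schatten bound--are standard.
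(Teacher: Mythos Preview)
The paper does not prove this proposition; it simply quotes it from \cite[Theorem 2.10]{Sob_14}. So there is no ``paper's own proof'' to compare against, and your task is really to reconstruct the argument of that reference. Your outline has the right ingredients---the Helffer--Sj\"ostrand representation, the resolvent identity
\[
(z-PAP)^{-1}P - P(z-A)^{-1} = -(z-PAP)^{-1}\,PA(I-P)\,(z-A)^{-1},
\]
and some form of interpolation to pass from a linear $\|T\|_q$ bound to the fractional $\|T\|_{\sigma q}^{\sigma}$ bound---and these are indeed the building blocks in \cite{Sob_14}.

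However, the interpolation step as you describe it has a real gap. For $q<1$ the quasi-norm $\|\cdot\|_q$ is not convex, so the inequality $\bigl\|\int g(z)\,dL(z)\bigr\|_q \le \int\|g(z)\|_q\,dL(z)$ \emph{fails}; one cannot simply bound the $\GS_q$-norm of the Helffer--Sj\"ostrand integral by integrating the pointwise Schatten bound on $(z-PAP)^{-1}T(z-A)^{-1}$. Your proposal to split $T$ by a spectral cutoff at level $\lambda$, optimise in $\lambda$, and ``sum dyadically'' does not, as written, resolve this: you have not said what is being summed dyadically, nor how the pieces of the \emph{integral} (rather than a finite sum) are to be controlled via the $q$-triangle inequality \eqref{qtriangle:eq}. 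The condition $q>(n-\sigma)^{-1}$ must enter through the convergence of a genuine discrete sum, and that sum has to be produced before the $q$-triangle inequality can be invoked.

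The route actually taken in \cite{Sob_14} is to decompose $f$ itself dyadically around the singular point $t_0$ (i.e.\ write $f=\sum_k f_k$ with $f_k$ supported on $\{|t-t_0|\asymp 2^{-k}R\}$), apply to each smooth piece $f_k$ the bound of Proposition~\ref{smoothP:prop} (which already contains the $\|T\|_{\sigma q}^{\sigma}$ factor and is proved by a more direct argument), and then sum in $k$ using \eqref{qtriangle:eq}. The constraint $q(n-\sigma)>1$ is exactly what makes $\sum_k \|f_k\|_{\plainC n}^q\,(2^{-k}R)^{\ldots}$ converge. Your spectral-cutoff idea could perhaps be made to work, but it would need to be combined with a discretisation of the integral (dyadic in $|y|$ or in $|x-t_0|$) to create a countable sum to which \eqref{qtriangle:eq} applies; without that, the bookkeeping you flag as ``the main obstacle'' is not just bookkeeping but a structural gap.
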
  

Later in the proofs we apply this 
proposition to the operator \eqref{Dalpha:eq}.   
  
We also need a version of Proposition 
\ref{Szego1:prop} for smooth functions $f$, see 
\cite[Corollary 2.11]{Sob_14}.

\begin{prop}\label{smoothP:prop}
	Suppose that $g\in \plainC{n}_0(-r, r)$, 
	with some $r >0$ and $n\ge 2$. 
	Assume $q\in (0, 1]$ and $\s\in (0, 1]$ are such that $(n-\s)^{-1} < q \le 1$.  
	Let the operator $A$ and orthogonal projection $P$ be as in Proposition 
	\ref{Szego1:prop}. 
	Then 
	\begin{equation*}%\label{smooth:eq}
	\|g(PAP)P - P g(A)\|_q
	\lesssim \|g\|_{\plainC{n}} \|PA(I-P)\|_{\s q}^\s,
	\end{equation*}
	with an implicit constant 
independent of the operator $A$, projection $P$ and the function $g$.  
\end{prop}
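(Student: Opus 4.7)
The plan is to reduce Proposition \ref{smoothP:prop} to Proposition \ref{Szego1:prop} by viewing the smooth compactly supported $g$ as a function satisfying Condition \ref{f:cond} whose artificial point of non-smoothness $t_0$ is placed \emph{outside} $\supp g$. Concretely, I would take $t_0 = 2r$ and $R = 3r$, so that $\supp g\subset [-r, r]\subset [t_0-R, t_0+R]$ and $g$ vanishes in a full neighbourhood of $t_0$. Any $\g>\s$ is admissible, and I would simply fix $\g = 2$ (legitimate since $\s\le 1$). The membership $g\in\plainC{n}(\R\setminus\{t_0\})\cap\plainC{}(\R)$ is automatic from $g\in\plainC{n}_0(-r, r)$.

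Next I would control the weighted seminorm $\1 g\1_n$ defined in \eqref{fnorm:eq}. For every $t\in\supp g$ the distance $|t-t_0|$ lies in the interval $[r, 3r]$, so the weight $|t-t_0|^{-\g+k}$ is bounded by a constant depending only on $r$, $\g$, $n$. Outside $\supp g$ all derivatives of $g$ vanish, hence the supremum in \eqref{fnorm:eq} is attained on the support, yielding
\bees
\1 g\1_n\lesssim \|g\|_{\plainC{n}},
\enes
with an implicit constant depending only on $r, \g, n$. Applying Proposition \ref{Szego1:prop}, whose hypothesis $(n-\s)^{-1}<q\le 1$ coincides with the one assumed here and whose constraint $\s<\g$ holds by construction, the bound \eqref{Szego:eq} then produces
\bees
\|g(PAP)P - Pg(A)\|_q \lesssim \1 g\1_n\, R^{\g-\s}\|PA(I-P)\|_{\s q}^\s \lesssim \|g\|_{\plainC{n}}\|PA(I-P)\|_{\s q}^\s,
\enes
which is the desired inequality after absorbing the $r, \g, \s$-dependent factor into the implicit constant.

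There is no substantive obstacle in this reduction. The only point to notice is that $t_0$ must be chosen \emph{outside} $\supp g$: were $t_0$ placed inside, the weight $|t-t_0|^{-\g}$ in \eqref{fnorm:eq} would be singular at a point where $g$ need not vanish, and $\1 g\1_n$ could not be controlled by $\|g\|_{\plainC{n}}$ alone. This trick exploits the freedom in the placement of the singular point in Condition \ref{f:cond} and is essentially the only idea required.
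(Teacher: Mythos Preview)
Your reduction is correct. The paper does not supply its own proof of this proposition but simply imports it from \cite[Corollary 2.11]{Sob_14}, where it appears immediately after the result quoted here as Proposition \ref{Szego1:prop}; your idea of placing the fictitious singular point $t_0$ outside $\supp g$ so that the weighted norm $\1 g\1_n$ is controlled by $\|g\|_{\plainC{n}}$ (with $r$-dependent constant) is precisely the natural way to derive the corollary from the theorem, and the numerical choices $t_0=2r$, $R=3r$, $\g=2>\s$ work as you describe.
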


\subsection{The domains and regions}\label{domains:subsect} 
We always assume that $d\ge 2$. 
We say that $\L$ is a basic Lipschitz 
(resp. basic $\plainC{m}$, $m = 1, 2, \dots$) domain, if 
there is a Lipschitz (resp. $\plainC{m}$) 
function $\Phi = \Phi(\hat\bx)$, $\hat\bx\in\R^{d-1}$, such that 
with a suitable choice of Cartesian coordinates $\bx = (\hat\bx, x_d)$, 
the domain $\L$ is represented as  
\begin{equation}\label{basic_d:eq}
\L = \{\bx\in\R^d: x_d > \Phi(\hat\bx)\}. 
\end{equation} 
We use the notation $\L = \G(\Phi)$. 
The function $\Phi$ is assumed to be globally Lipschitz, i.e. 
the constant 
\begin{equation}\label{MPhi:eq}
M_{\Phi} = \underset{\hat\bx\not=\hat\by }
\sup\ \frac{|\Phi(\hat\bx) - \Phi(\hat\by)|}{|\hat\bx-\hat\by|},
\end{equation}
 is finite. 
 \textit{Throughout the paper, all 
 estimates involving basic Lipschitz domains 
$\L = \G(\Phi)$, are uniform in the number $M_\Phi$}.  
 
A domain (i.e. connected open set) 
is said to be Lipschitz (resp. $\plainC{m}$) if locally 
it coincides with some basic Lipschitz 
(resp. $\plainC{m}$) domain. 
We call $\L$ a Lipschitz (resp. $\plainC{m}$) 
region if $\L$ is a union of 
finitely many Lipschitz (resp. $\plainC{m}$) 
domains such that their closures are pair-wise disjoint.  

A basic Lipschitz 
domain $\L=\G(\Phi)$ is said to be piece-wise $\plainC{m}$  
with some $m = 1, 2, \dots$, if  the function 
$\Phi$ is $\plainC{m}$-smooth away from a collection of 
finitely many  $(d-2)$-dimensional 
Lipschitz surfaces $L_1, L_2, \dots\subset \R^{d-1}$. 
By $(\p\L)_{\rm s}\subset\p\L$ we denote the set of points 
where the $\plainC{m}$-smoothness of the surface $\p\L$ may break down. 
A piece-wise $\plainC{m}$ region $\L$ and the set $(\p\L)_{\rm s}$ 
for it are defined in the obvious way. 
An expanded version of these definitions 
can be found in \cite{Sob1}, \cite{Sob2}, and here we omit the details. 

%In the case $d = 1$ we assume as a rule that 
%$\L$ is a finite union 
%of open intervals (bounded or unbounded) such that their 
%closures are pair-wise disjoint. 
%For brevity we call such a set a region, as in the multi-dimensional case. 

The minimal assumptions on the sets
featuring in this paper are laid out in the following condition. 

\begin{cond}\label{domain:cond}
The set $\L\subset \R^d, d\ge 2$, 
is a Lipschitz region, 
and  either $\L$ or 
	$\R^d\setminus\L$ is bounded.
\end{cond}

Some results, including the main asymptotic formulas in Theorem \ref{main:thm}, 
require extra smoothness of $\L$.  
Note that if $\L$ is a Lipschitz (or $\plainC{m}$) region, then so is 
the interior of $\R^d\setminus \L$.

\subsection{The main result}\label{at:subsect} 
We study the operator $D_\a(a_T, \L; f)$, see 
\eqref{Dalpha:eq} for the definition, with the 
symbol $a_T$ approximating the indicator function 
of an open set $\Om\subset\R^d$. 
To state the precise definition of $a_T$ 
denote 
\begin{align}\label{distance:eq}
\rho(\bxi) = \dist(\bxi, \p\Om),\  
\tilde \rho(\bxi) = \min\{\rho(\bxi), 1\},\ 
\lu t\ru  = \sqrt{1+{t^2}},\ t\in \R.
\end{align} 

\begin{cond}\label{at:cond}
Let $\Om\subset \R^d$, $d\ge 2,$ 
satisfy Condition \ref{domain:cond}. 
The symbol $a_T = a_{T, \Om}\in\plainC\infty(\R^d)$, 
depending on the set $\Om$ and parameter $T\in (0, T_0]$, is a function  
satisfying the properties
\begin{align}\label{ata:eq}
|a_T(\bxi) - \chi_\Om(\bxi)|\lesssim \lu \rho(\bxi)T^{-1}\ru^{-\b}
, \ \ \b > d, 
\end{align}
and 
\begin{align}\label{nablat:eq}
|\nabla^m a_T(\bxi)|\lesssim 
(T+\tilde\rho(\bxi))^{-m} 
\lu \rho(\bxi)T^{-1}\ru^{-\b},\  
m = 1, 2, \dots.
\end{align}
The implicit constants are independent 
of $T$, but may depend on $T_0$ and region $\Om$. 
\end{cond}

Although $a_{T, \Om}$ depends on two parameters, i.e. the number 
$T\in(0, T_0]$ and the region 
$\Om\subset\R^d$, we usually omit the dependence on $\Om$, since $\Om$ is fixed.

The main results of this paper are contained in the next 
two theorems.

\begin{thm}\label{entropy:thm}  
Let $d\ge 2$. Suppose that 
$\L\subset\R^d$ and $\Om\subset\R^d$  satisfy Condition \ref{domain:cond}.
Let $a_T = a_{T, \Om}\in\plainC\infty(\R^d)$ be a real-valued symbol depending 
on the parameter $T: 0 < T\lesssim 1$, and 
satisfying Condition \ref{at:cond}. Suppose also that $\a\gtrsim 1$. 

Suppose that $f$ satisfies Condition \ref{f:cond} with $n=2$ 
and some $\g>0$. 
%
%Suppose that $a_T = a_{T, \Om}\in\plainC\infty(\R^d)$ 
%is a real-valued symbol depending 
%on the parameter $T: 0 < T\lesssim 1$, and 
%satisfying Condition \ref{at:cond} with some $\b > \max\{d, d\g^{-1}\}$. 
%
%Suppose also that $\a\gtrsim 1$. 
If $\b > \max\{d, d\g^{-1}\}$, 
then for any $\s\in (d\b^{-1}, \min\{\g, 1\})$, we have 
\begin{equation}\label{entropy:eq}
	\bigl\| 
	D_\a(a_{T}, \L; f)\bigr\|_1
	\lesssim
	R^{\g-\s} \a^{d-1} \log \biggl( 
	\min \biggl\{\a, \frac{1}{T}\biggr\}+1\biggr)   
	\1 f\1_2,
	\end{equation}
	with an implicit constant independent of $T, R, t_0, \a$ and the function $f$. 
\end{thm}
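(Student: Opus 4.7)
The plan is to apply Proposition \ref{Szego1:prop} with $P=\chi_\L$ and $A=\op_\a(a_T)$, reducing the theorem to the off-diagonal Schatten-von Neumann estimate
\[
\bigl\|\chi_\L\,\op_\a(a_T)\,(I-\chi_\L)\bigr\|_\s^\s
\;\lesssim\; \a^{d-1}\log\bigl(\min\{\a, 1/T\}+1\bigr),
\qquad \s\in\bigl(d/\b,\, \min\{\g,1\}\bigr),
\]
which is the technical core of the argument.

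Granting this estimate, I would decompose
\[
D_\a \;=\; P\bigl[f(PAP)-f(A)\bigr]P \;+\; P\bigl[f(A)-\op_\a(f\circ a_T)\bigr]P.
\]
The first summand is controlled by Proposition \ref{Szego1:prop} with $q=1$ and $n=2$: one exploits the commutation $Pf(PAP)=f(PAP)P$ and invokes the adjoint form of the same proposition to absorb the residual $Pf(A)(I-P)$ term, yielding a bound of order $R^{\g-\s}\1 f\1_2 \|PA(I-P)\|_\s^\s$. The second summand is the pseudo-differential symbol-calculus error. I would handle it by mollifying $f=g_\d+h_\d$ at a scale $\d$, with $g_\d$ smooth and $h_\d$ supported in $(t_0-\d,t_0+\d)$. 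For $g_\d$, standard $\a$-pseudo-differential symbol calculus gives $\chi_\L\bigl(g_\d(\op_\a(a_T))-\op_\a(g_\d\circ a_T)\bigr)\chi_\L$ of trace norm $O(\a^{d-1})$, with constants controlled by the derivatives of $g_\d$ and $a_T$. For $h_\d$, direct $\GS_\s$ bounds on $\chi_\L h_\d(\op_\a(a_T))\chi_\L$ and $\chi_\L\op_\a(h_\d\circ a_T)\chi_\L$, using the H\"older bound \eqref{hf:eq} together with the $q$-triangle inequality \eqref{qtriangle:eq}, give a $\d^\s$-small contribution, which for an appropriate choice of $\d$ is dominated by the target bound.

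The core step is the off-diagonal estimate above. Set $\d_0=\max\{T,\a^{-1}\}$ and decompose the symbol dyadically with respect to $\rho(\bxi)=\dist(\bxi,\p\Om)$: write $a_T=\sum_{k\ge 0} a_T^{(k)}$, with $a_T^{(k)}$ supported in the annulus $\{2^{k-1}\d_0<\rho<2^{k+1}\d_0\}$ for $k\ge 1$, and $a_T^{(0)}$ supported in $\{\rho<2\d_0\}$. From \eqref{ata:eq}--\eqref{nablat:eq} we have $\|\nabla^m a_T^{(k)}\|_\infty \lesssim (2^k\d_0)^{-m}\,2^{-k\b}$ for $k\ge 1$, so the uniform Schatten bounds for smooth-symbol pseudo-differential operators from \cite{Sob1} yield
\[
\bigl\|\chi_\L\op_\a(a_T^{(k)})(I-\chi_\L)\bigr\|_\s^\s
\;\lesssim\; \a^{d-1}\,2^{-k(\b\s-d)},
\qquad k\ge 1,
\]
which sum geometrically (by the assumption $\s>d/\b$) to $O(\a^{d-1})$. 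For the boundary-layer piece $a_T^{(0)}$, a similar analysis in which the $\p\L$-adjacent spatial strip is further partitioned dyadically down to scale $\a^{-1}$ produces the logarithmic factor, the total number of strips being $\log(\min\{\a,1/T\})+O(1)$.

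The principal obstacle is this last point: controlling the boundary-layer contribution with the sharp logarithmic factor uniformly in $T\in(0,T_0]$ and in the Lipschitz constant $M_\Phi$ of $\L$, and combining the two regimes $\a T\lesssim 1$ and $\a T\gtrsim 1$ into the single expression $\log(\min\{\a,1/T\}+1)$. A subsidiary technical point is that the Schatten bounds of \cite{Sob1} must be invoked with constants depending on the symbol only through the seminorms prescribed by Condition \ref{at:cond}, while the use of the $q$-triangle inequality with $q=\s<1$ forces one to take $\s$-th powers systematically throughout the dyadic summation.
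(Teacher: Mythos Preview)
Two structural simplifications you have missed make your proposal considerably longer than necessary.

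First, since $a_T$ depends on $\bxi$ only, $\op_\a(a_T)$ is a Fourier multiplier, so $f(\op_\a(a_T))=\op_\a(f\circ a_T)$ exactly. Hence the second summand $P[f(A)-\op_\a(f\circ a_T)]P$ in your decomposition is identically zero; the entire mollification argument for the ``symbol-calculus error'' is unnecessary. Second, since $D_\a=P\bigl[f(PAP)P-Pf(A)\bigr]P$, the trace-norm bound $\|D_\a\|_1\le\|f(PAP)P-Pf(A)\|_1$ follows by sandwiching, so Proposition~\ref{Szego1:prop} applies directly with $q=1$ and no adjoint manoeuvre is needed. The paper does exactly this.

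For the off-diagonal estimate your dyadic decomposition contains an error: you claim $\|\nabla^m a_T^{(k)}\|_\infty\lesssim(2^k\d_0)^{-m}2^{-k\b}$, but for $m=0$ this is false --- inside $\Om$ the symbol $a_T$ is close to $1$, not to $0$, so $a_T^{(k)}=a_T\psi_k$ has amplitude $\sim 1$ there, and the pieces do \emph{not} sum geometrically. (The decay in \eqref{ata:eq} is for $a_T-\chi_\Om$, not $a_T$.) The dyadic scheme can be repaired --- each shell actually contributes $\a^{d-1}$ and the number of shells is $\sim\log(1/\d_0)$, giving the logarithm --- but the paper takes a different and cleaner route. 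For $\a T\gtrsim 1$ it treats $a_T$ as a \emph{multi-scale symbol} with scale $\tau(\bxi)=\tfrac12(\tilde\rho(\bxi)+T)$ and amplitude $v(\bxi)=\langle\bxi\rangle^{-\b}$, applying Proposition~\ref{Crelle:prop} together with the elementary integral bound \eqref{vlog:eq}. For $\a T\lesssim 1$ it replaces $a_T$ by $\chi_\Om$ via the lattice-norm estimate of Corollary~\ref{cyl:cor} (which costs only $(\a\ell)^{d-1}$), and then quotes the known bound \eqref{comm_chi_chi:eq} for the discontinuous symbol $\chi_\Om$. This avoids the boundary-layer analysis you flag as the principal obstacle.
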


For the main asymptotic result we
need some more notation. 
For any two Lipschitz regions $\L$, $\Om$ and a continuous function 
$b = b(\bx, \bxi)$ define the quantity
\begin{equation}\label{W1:eq}
\GV_1(b) = \GV_1(b; \p\L, \p\Om) = \frac{1}{(2\pi)^{d+1}}
\underset{\p\L}\int\ \underset{\p\Om}
\int b(\bx, \bxi)|\bn_{\bxi}\cdot \bn_{\bx}| 
d S_{\bxi} d S_{\bx},
\end{equation}
where $\bn_{\bx}, \bn_{\bxi}$ are the exterior unit normals to the surfaces $\p\L$ 
and $\p\Om$ at the points $\bx$ and $\bxi$ respectively. 
For a H\"older-continuous function $f: \mathbb C\mapsto \mathbb C,$ 
define the integral
\begin{equation}\label{U:eq}
 	U(f) 
 	= \int_0^1 \frac{f(1-t) 
 		- (1-t)f(1) - t f(0)}{t(1-t)} dt.
 \end{equation} 
Now we can describe the asymptotics of $D_\a(a_T, \L; f)$.
%The next theorem constitutes the main result of the paper. 

\begin{thm}\label{main:thm} 
Let $d\ge 2$. 
Suppose that 
$\L\subset\R^d$ and $\Om\subset\R^d$ satisfy Condition \ref{domain:cond}.  
Suppose also 
that $\L$ is piece-wise $\plainC1$, 
and $\Om$ is piece-wise $\plainC3$.

Let $X = \{z_1, z_2, \dots, z_N\}\subset \R$, $N <\infty$, be a collection of 
points on the real line. 
Suppose that $f\in\plainC2(\R\setminus X)$ is a function such that 
in a neighbourhood of each point $z\in X$ 
it satisfies the bound
\begin{equation}\label{fmain:eq}
|f^{(k)}(t)|\le C_k |t - z|^{\g-k}, \ k = 0, 1, 2, 
\end{equation} 
with some $\g>0$. 

Suppose that $a_T = a_{T, \Om}\in\plainC\infty(\R^d)$, $0<T\lesssim 1$, 
is a real-valued symbol 
as in Theorem \ref{entropy:thm}.

Then 
\begin{align}\label{main_gr:eq}
\underset{ \substack{T\to 0\\\a T\gtrsim 1}}\lim\ 
\frac{1}{\a^{d-1}\log \frac{1}{T}}
\tr D_\a( a_{T}, \L;  f)  
=  U(f)\GV_1( 1; \p\L, \p \Om),
\end{align} 
and
\begin{align}\label{main_le:eq}
\underset{ \substack{\a\to\infty\\ \a T\lesssim 1}}\lim\ 
\frac{1}{\a^{d-1}\log \a}
\tr D_\a( a_{T}, \L;  f)  
=  U(f)\GV_1( 1; \p\L, \p \Om).
\end{align} 
\end{thm}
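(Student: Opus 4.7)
The plan is to implement the three-stage strategy sketched in the introduction: reduction to functions with a single non-smoothness point, treatment of the smooth-$f$ case in each of the two regimes separately, and then removal of the smoothness assumption by approximation. Since $\|W_\a(a_T;\L)\|\le\|a_T\|_\infty$ is uniformly bounded in $\a$ and $T$, values of $f$ outside a fixed compact interval do not affect either side of \eqref{main_gr:eq} or \eqref{main_le:eq}, so I may assume $f$ has compact support. A smooth partition of unity subordinate to separating neighbourhoods of $z_1,\dots,z_N$ then decomposes $f=g+\sum_{j=1}^N f_j$, where $g$ is globally $\plainC{2}$ with compact support and each $f_j$ satisfies Condition \ref{f:cond} with $n=2$ and single non-smoothness point $z_j$. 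Since $\tr D_\a$ and $U$ are both linear in $f$, it suffices to treat each summand individually.

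For the smooth pieces I would invoke the discontinuous-symbol asymptotics \eqref{Widom_conj:eq} for $\chi_\Om$ and show that $\tr D_\a(a_T,\L;g)-\tr D_\a(\chi_\Om,\L;g)$ is of smaller order than the relevant logarithmic factor. In the regime $\a T\lesssim 1$, Condition \ref{at:cond} together with the Schatten-class estimates of Sections \ref{WH:sect}--\ref{at:sect} directly yields a trace-norm bound of order $\a^{d-1}$ on this difference, which is negligible after dividing by $\a^{d-1}\log\a$. In the regime $\a T\gtrsim 1$, the symbol $a_T$ is no longer close to $\chi_\Om$ on the relevant scale, and the key observation is that the main contribution comes from a boundary layer of width $\asymp(\a T)^{-1}$ around $\p\Om$. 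I would isolate this contribution by a smooth cutoff $\phi_T$ supported in the layer, control the far-field part via Condition \ref{at:cond}, and compute the near-field part by rescaling the layer to unit thickness and invoking the discontinuous-symbol results of \cite{Sob, Sob2}. The layer thickness $(\a T)^{-1}$ rather than $\a^{-1}$ is precisely what replaces $\log\a$ by $\log(1/T)$.

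For $f_j$ satisfying Condition \ref{f:cond} I would approximate $f_j=h_\ep+r_\ep$, where $h_\ep\in\plainC{2}$ is a smooth modification outside an $\ep$-neighbourhood of $z_j$ and $r_\ep$ is supported in that neighbourhood with $U(h_\ep)\to U(f_j)$ as $\ep\to 0$. The smooth part $h_\ep$ is covered by the previous step. For the residual, Theorem \ref{entropy:thm} provides
\begin{equation*}
\|D_\a(a_T,\L;r_\ep)\|_1\lesssim R^{\g-\s}\a^{d-1}\log\bigl(\min\{\a,1/T\}+1\bigr)\1 r_\ep\1_2,
\end{equation*}
uniformly in $\a$ and $T$, with $\1 r_\ep\1_2$ controllable in terms of $\ep$. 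The factor $\log(\min\{\a,1/T\}+1)$ matches the logarithm in whichever regime one works in, so dividing by $\a^{d-1}\log\a$ or $\a^{d-1}\log(1/T)$ and sending $\ep\to 0$ absorbs the residual into the error term.

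The main obstacle is the smooth-$f$ analysis in the regime $\a T\gtrsim 1$: the boundary-layer decomposition needs to be carried out with uniform control of the pseudo-differential estimates in both $T$ and $\a$, and the reduction to the discontinuous-symbol case must preserve the correct coefficient $U(f)\GV_1(1;\p\L,\p\Om)$ after rescaling the thin layer. This is where the novel two-parameter asymptotic is genuinely produced, and where the $T$-explicit Schatten-class bounds highlighted in the introduction become essential.
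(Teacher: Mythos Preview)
Your overall three-stage plan and the treatment of the regime $\a T\lesssim 1$ and of the non-smooth approximation step are in line with the paper. The gap is in the regime $\a T\gtrsim 1$: the boundary layer that carries the main contribution is not around $\p\Om$ in the $\bxi$-variable but around $\p\L$ in the $\bx$-variable. The paper covers $\p\L$ with balls of radius $\ell=(\a T)^{-1}$ in position space (the partition $\{\phi_{\hat\bn}\}$ in Section~\ref{proof2:sect}); inside each such ball $\a\ell T=1$, so Lemma~\ref{jump:lem} allows one to replace $a_T$ by $\chi_\Om$ at cost $O((\a\ell)^{d-1})$, and the discontinuous-symbol asymptotics rescaled to effective large parameter $\a\ell=T^{-1}$ (Lemma~\ref{onebasic:lem}) produce the factor $\log(\a\ell)=\log(1/T)$. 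Summing over the $\asymp\ell^{1-d}$ balls gives $\a^{d-1}\log(1/T)$; the part away from $\p\L$ is $O(\a^{d-1})$ by Lemma~\ref{away:lem}.

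A cutoff $\phi_T(\bxi)$ near $\p\Om$ does not decompose $\tr D_\a$ usefully: $D_\a$ is nonlinear in the symbol, and there is no analogue of Lemma~\ref{9Dp:lem} for momentum-side localization that would isolate a near-$\p\Om$ piece of $f(W_\a)$. The width $(\a T)^{-1}$ is natural in $\bx$-space (the quasi-classical scale being $\a^{-1}$), but has no intrinsic meaning near $\p\Om$, where the symbol varies on scale $T$. Two further points the paper relies on and your outline omits: the discontinuous-symbol asymptotics must hold \emph{uniformly in the basic domain $\L$} (Remark~\ref{dom_uni:rem}), because rescaling each small ball changes $\L$; and the smooth-$f$ argument passes through polynomials $g_p$ first (Propositions~\ref{IETO:prop} and~\ref{discont:prop}), since the available asymptotics with the needed uniformity are stated only for $g_p$, with the extension to $\plainC{2}$ done afterwards by polynomial approximation.
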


\begin{rem}\label{boundom:rem}
Note that in Theorems \ref{entropy:thm} and \ref{main:thm} 
either one or both regions 
$\L$ or $\Om$ are allowed to be unbounded, as long as the complements  
$\Om^c$ and/or $\L^c$ are bounded.  Nevertheless,  
it suffices to prove Theorems \ref{entropy:thm} and \ref{main:thm} 
for a bounded $\Om$ only. Indeed, suppose that $\Om$ is a Lipschitz region such that 
$\Om^c$ is bounded. Denote 
\begin{align*}
b_T = b_{T, \Om^c} = 1-a_{T, \Om}, \ \ g(t) = f(-t+1).
\end{align*}
The symbol $b_T$ satisfies Condition \ref{at:cond} with $\Om^c$ instead of $\Om$.
The function $g$ satisfies 
Condition \ref{f:cond} with $1-t_0$ instead of $t_0$ 
(for Theorem \ref{entropy:thm}), 
or it satisfies the bound \eqref{fmain:eq} for all $z\in \tilde X$ 
with $\tilde X = 1-X$ (for Theorem \ref{main:thm}). Moreover, 
\begin{align*}
f\bigl(W_\a(a_T; \L)\bigr)\chi_\L 
= g\bigl(W_\a(b_T; \L)\bigr)\chi_\L,\ 
 f\bigl(\op_\a(a_T)\bigr) = g\bigl(\op_\a(b_T)\bigr),
\end{align*}
so that $D_\a(a_{T, \Om}, \L; f) = D_\a(b_{T, \Om^c}, \L; g)$. Thus 
Theorems \ref{entropy:thm} and \ref{main:thm} 
for the symbol $a_T$ and function $f$  
follow from themselves for the symbol $b_T$ and function $g$. 

This observation has no bearing on 
many intermediate estimates as 
our argument usually goes through equally 
well for bounded or unbounded $\Om$. Nevertheless, 
it will be useful to us since some of the results that we borrow from  
the literature, have been formally proved for bounded $\Om$ only. 
\end{rem}

\begin{rem}
If the regions $\L$ and $\Om$ are bounded, and $f(0) = 0$, then the second operator 
on the right-hand side of \eqref{Dalpha:eq} is trace class, 
and its trace is easily found from the formula (see \eqref{weyl:eq}) 
\begin{align*}
\tr W_\a(f\circ a_T; \L) = \frac{\a^d}{(2\pi)^d} \underset{\L}\int 
\underset{\R^d}\int f(a_T(\bxi)) d\bxi d\bx 
=: \a^d \GW_0(a_T\chi_\L; f),
\end{align*} 
and thus the formulas \eqref{main_gr:eq} 
and \eqref{main_le:eq} give two-term asymptotics 
for $\tr f(W_\a)$. For instance, \eqref{main_gr:eq} 
can be rewritten as 

\begin{align}\label{Weyl:eq}
\tr f\bigl(W_\a(a_T; \L)\bigr) =  &\ \a^d \GW_0(a_T\chi_\L; f)\notag\\[0.2cm] 
&\ + \a^{d-1}\log\frac{1}{T} \ U(f)\GV_1(1; \p\L, \p\Om) 
+ o\biggl(\a^{d-1}\log\frac{1}{T}\biggr),\ \a T\gtrsim 1, T\to 0.
\end{align}
\end{rem}

In this paper we 
discuss two possible realizations of the symbol $a_T$. 
The first example is given in the next subsection. 

\subsection{Anti-Wick quantization} 
Let $\Om$ be a Lipschitz region. 
One way to approximate $\chi_\Om$ by a smooth function 
is to use the following standard procedure. 
Let $\z\in\plainC\infty_0(\R^d)$ be a non-negative function 
with a support in the unit ball $B(\bold0, 1)$, such 
that $\|\z\|_{\plainL1} = 1$. Define 
 \begin{align}\label{smooth:eq}
 a_T(\bxi) = \int_\Om \z_T(\bxi-\boldeta) d\boldeta,\ \z_T(\bxi) = 
 \frac{1}{T^d}\z\biggl(\frac{\bxi}{T}\biggr).
 \end{align}
It is straightforward that $a_T(\bxi) - \chi_\Om(\bxi) = 0$ 
for all $\bxi$ with the property 
$\rho(\bxi)>T$, and $|\nabla^m a_T(\bxi)|\lesssim T^{-m}$, 
$m= 1, 2, \dots,$ if $\rho(\bxi)\le T$. 
Thus $a_T$ satisfies \eqref{ata:eq} and \eqref{nablat:eq} for any $\b >0$, and 
hence Theorems \ref{entropy:thm} and 
\ref{main:thm} hold. 

Clearly, the smoothing 
function $\z$ does not need to have a compact support. One can 
pick, for instance $\z(\bxi) = c_d\exp(-|\bxi|^2)$ 
with the appropriate normalizing coefficient 
$c_d$. In this case the conditions 
\eqref{ata:eq} and \eqref{nablat:eq} are also 
readily verified.  With this 
choice of the smoothing $\z$, the symbol 
resembles the definition of the anti-Wick quantization of 
a pseudo-differential operator, see e.g. \cite[Ch. 4, \S 24]{Sh}.
Recall that if the anti-Wick symbol is given by 
$b(\bx, \bxi)$, then 
the ``quasi-classical" Weyl symbol $b^{\rm W}$ 
of the same operator is found via the formula 
\begin{align*}%\label{Weyl_symb:eq}
b^{\rm W}_T(\bx, \bxi)
= \frac{1}{(\pi T^2)^d} \int \exp
\biggl({-\frac{|\bx-\by|^2}{T^2} - \frac{|\bxi-\boldeta|^2}{T^2}}\biggr)
b(\by, \boldeta) d\by d\boldeta.
\end{align*}
Since \eqref{smooth:eq} does not contain any dependence on the spatial variable, 
the symbol $a_T$ in fact is the Weyl symbol associated with the anti-Wick symbol 
$\chi_\Om(\bxi)$. At the same time,  the indicator function $\chi_\L$ 
in the definition \eqref{WH:eq} is not smoothed-out. Thus the operator 
$W_\a(a_T; \L)$ can be loosely described as a ``partial" 
anti-Wick quantization. Therefore 
it seems appropriate to compare our results, e.g. 
formula \eqref{Weyl:eq}, with the known asymptotic formulas for anti-Wick operators. 
These operators 
were introduced 
(see \cite{Berezin}) and subsequently extensively studied 
in the case $T = \a^{-1/2}$. We do not intend to discuss 
these studies in detail, and cite only the latest asymptotic result in this area, 
found in 
\cite{Oldfield}. This paper was concerned with potentially 
discontinuous anti-Wick symbols of the form 
$ b(\bx, \bxi) = \chi_M(\bx, \bxi) \tilde b(\bx, \bxi)$ where $M\subset\R^{2d}$ 
is a bounded smooth domain, and $\tilde b$ is a smooth symbol. 
For the sake of precision one should say that the paper \cite{Oldfield} considered 
\textit{generalized} anti-Wick-operators, i.e. those with arbitrary 
decreasing smoothing functions $\z$, but we do not elaborate on this point here. 
The main result of \cite{Oldfield} states that for smooth functions $f$, 
such that $f(0) = 0$,  
one has 
\begin{align*}
\tr f(\op_\a(b^W_T)) = \a^d \GW_0(b^W_T; f) + O(\a^{d-1}) \ \ \textup{with}\ 
\ T = \frac{1}{\sqrt{\a}},\ \ \textup{as}\ \ \a\to\infty.
\end{align*}
This formula, in contrast to \eqref{Weyl:eq}, does not have a $\log$-term.

\section{
The Fermi symbol. Entanglement entropy and local entropy}
\label{entropy:sect}

This section is focused on our second example of the symbol $a_T$. 
%
%\subsection{The Fermi symbol $a_{T, \mu}$}\label{a:subsect}
%
As mentioned earlier, the asymptotical problems studied in this 
paper are partly motivated by the study of entanglement entropy of free 
fermions both for positive and zero temperature $T$. In particular, 
for the  positive temperature $T>0$ the relevant symbol $a_T$ is the Fermi symbol 
\eqref{positiveT:eq} with temperature $T>0$ and chemical potential 
$\mu >0$. The parameter $\mu$ 
is always assumed fixed. 
The assumptions on the function $h = h(\bxi)$ are as follows:

\begin{cond} \label{h:cond}
\begin{enumerate}
\item
The function $h\in\plainC\infty(\R^d)$ is real-valued,  
 and for sufficiently large $\bxi$ we have 
\begin{equation}\label{equih:eq}
h(\bxi)\gtrsim |\bxi|^{\b_1}, 
\end{equation}
with some constant $\b_1>0$. Moreover, 
\begin{equation}\label{derivh:eq}
|\nabla^n h(\bxi)| \lesssim (1+|\bxi|)^{\b_2}, \ n= 1, 2, \dots,\ \ 
\forall \bxi\in\R^d,
\end{equation}
with some $\b_2\ge 0$. 
\item  
On the set   
$S = S_\mu = \{\bxi\in\R^d: h(\bxi) = \mu\}$ the condition 
\begin{equation}\label{nondeg:eq}
\nabla h(\bxi)\not = 0,\ \forall\bxi\in S. 
\end{equation}
is satisfied. 
\item
The set $\Om_\mu = \{\bxi\in\R^d: h(\bxi) < \mu\}$ has finitely many connected components. 
\end{enumerate}
\end{cond}

Because of \eqref{equih:eq} 
the set $\Om = \Om_\mu$ is bounded, i.e. $\Om\subset B(\bold0, R_0)$ 
with some $R_0>0$.   
Furthermore, due to the condition \eqref{nondeg:eq}, the set 
$S_\mu$ is a $\plainC\infty$-surface (called \textit{the Fermi surface}), 
and we have the bounds:
 \begin{equation}\label{twosided:eq}
 |h(\bxi)-\mu|
 \asymp \rho(\bxi), \ \forall \bxi\in B(\bold0, R_0).
 \end{equation}
By virtue of \eqref{equih:eq} we also have the lower bound 
\begin{align*}
h(\bxi) - \mu\gtrsim |\bxi|^{\b_1},\ |\bxi|\ge R_0.
\end{align*}
Note the straightforward bound:
\begin{align*}
|a_{T, \mu}(\bxi) - \chi_\Om(\bxi)|\le \exp\biggl(
- \frac{|h(\bxi)-\mu|}{T}
\biggr),\ \bxi\in\R^d.
\end{align*}
This allows us to extend the definition \eqref{positiveT:eq} to 
$T=0$:
\begin{align*}
a_{0, \mu}(\bxi) = \underset{T\to 0}\lim a_{T, \mu}(\bxi) = \chi_\Om(\bxi),\ 
\hbox{a.e.} \ \bxi\in\R^d.
\end{align*}
As pointed out in \cite[Section 8]{LeSpSo_15}, 
as a consequence of \eqref{equih:eq}, \eqref{derivh:eq} 
and \eqref{twosided:eq}, 
we also have
\begin{align*}
|\nabla^n a_{T, \mu}(\bxi)| \lesssim &\ (T+\rho(\bxi))^{-n}
\exp\biggl(-c_1\frac{\rho(\bxi)}{T}
\biggr),\ n = 1, 2, \dots, \ \forall |\bxi|\le R_0,\\
|\nabla^n a_{T, \mu}(\bxi)| \lesssim &\  
\exp\biggl(-c_1\frac{|\bxi|^{\b_1}}{T}
\biggr),\ n = 1, 2, \dots, \ \forall |\bxi|\ge R_0,
\end{align*}
with some positive constant $c_1$.
Thus the symbol \eqref{positiveT:eq} satisfies 
the bounds \eqref{ata:eq} and \eqref{nablat:eq} for any $\b >0$. 
Consequently, the trace $\tr D_\a(a_{T, \mu}, \L; f)$ 
satisfies \eqref{main_gr:eq} and \eqref{main_le:eq}. 
In order to study the entropy we use these asymptotic formulas with
the \textit{$\g$-R\'enyi entropy function} 
$\eta_\g: \R\mapsto [0, \infty)$ 
defined for all $\g >0$ as follows. If $\g\not = 1$, then
\begin{equation}\label{eta_gamma:eq}
\eta_\gamma(t) := \left\{\begin{array}{ll} 
\frac{1}{1-\gamma} \log\big[t^\gamma + (1-t)^\gamma\big]& 
\mbox{ for }t\in(0,1),\\[0.2cm]
0&\mbox{ for }t\not\in(0,1),\end{array}\right.
\end{equation}
and for $\gamma=1$ (the von Neumann case) it is defined as the limit 
\begin{equation}\label{eta1:eq}
\eta_1(t) := \lim_{\gamma\to1} \eta_\gamma(t) = \left\{\begin{array}{ll} -t \log(t) -(1-t)\log(1-t)& \mbox{ for } t\in(0,1),\\[0.2cm]
0& \mbox{ for }t\not\in(0,1).\end{array}\right.
\end{equation} 
For $\g\not = 1$ the function $\eta_\g$ satisfies condition \eqref{fmain:eq} 
with $\g$ replaced with $\varkappa = \min\{\g, 1\}$, and with $X=\{0, 1\}$. 
The function $\eta_1$ satisfies \eqref{fmain:eq} with an 
arbitrary $\g\in (0, 1)$, 
and the same set $X$. 

We begin with reminding 
definitions of the entropies in the form given in \cite[Section 10]{LeSpSo_15}. 
If $\L\subset\R^d$ is bounded, then \textit{the local (thermal) $\g$-R\'enyi entropy} 
of the equilibrium state at temperature $T\ge 0$ and chemical potential 
$\mu\in\R$ is defined as 
\begin{align}\label{thermal:eq}
\mathrm{S}_\g(T, \mu; \L):=\tr\bigl[
\eta_\g(W_1(a_{T, \mu}; \L))
\bigr].
\end{align}
For arbitrary $\L\subset\R^d$ 
we define the  
\textit{the $\g$-R\'enyi entanglement entropy (EE)} with respect 
to the bipartition 
$\R^d = \L \cup (\R^d\setminus\L)$, as 
\bee \label{def:EE}
\mathrm{H}_\g(T,\mu; \L) 
:= \tr D_1(a_{T,\mu},\L;\eta_\g) 
+ \tr D_1(a_{T,\mu},\R^d\setminus\L;\eta_\g). 
\ene 
These entropies were studied in \cite{LeSpSo} (for $T = 0$) 
and \cite{LeSpSo_15} (for $T >0$).
In particular, in \cite{LeSpSo_15} it was shown that for any $T>0$ 
the EE is finite, if $\L$ satisfies Condition \ref{domain:cond}. 

We are interested in the behaviour of 
the above quantities when $T\to 0$ and $\L$ is replaced with $\a\L$, 
with a large scaling parameter $\a$.  
The next theorem 
establishes sharp bounds for the entropies 
\eqref{thermal:eq} and \eqref{def:EE}.

\begin{thm}\label{EE_bound:thm}
Let $d\ge 2$. Suppose that 
$\L$ satisfies Condition \ref{domain:cond}. 
Suppose that $0<T\lesssim 1$ and $\a\gtrsim 1$.  
Then the $\g$-R\'enyi entanglement entropy
satisfies  
\bee \label{EE bound}
|\mathrm{H}_\g(T,\mu;\a\L)| 
\lesssim \a^{d-1} \log\bigl(\min\bigl\{\a, T^{-1}\bigr\}+1\bigr).
\ene
If $\L$ is bounded, then the local $\g$-R\'enyi entropy satisfies
\begin{align}\label{sgam:eq}
\bigl|\mathrm{S}_\g(T, \mu; \a\L) 
- \a^d s_\g(T, \mu)|\L|\bigr|\lesssim \a^{d-1}
\log\bigl(\min\bigl\{\a, T^{-1}\bigr\}+1\bigr),
\end{align}
where 
\begin{align*}
s_\g (T, \mu) := \frac{1}{(2\pi)^d} 
\int \eta_\g(a_{T, \mu}(\bxi)) d\bxi.
\end{align*}
The constants in \eqref{EE bound} and \eqref{sgam:eq} 
are independent of $\a$ and $T$, 
but may depend on the function $h$, parameter $\mu$ and the region $\L$. 
\end{thm}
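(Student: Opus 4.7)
The plan is to reduce both bounds directly to Theorem \ref{entropy:thm} applied to the R\'enyi entropy function $\eta_\g$.

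As a first step, I would use the unitary equivalence $D_\a(a,\L;f)\sim D_1(a,\a\L;f)$ recorded in the Introduction, together with the identity $\R^d\setminus\a\L = \a(\R^d\setminus\L)$, to rewrite
\[
\mathrm{H}_\g(T,\mu;\a\L) = \tr D_\a(a_{T,\mu},\L;\eta_\g) + \tr D_\a(a_{T,\mu},\R^d\setminus\L;\eta_\g).
\]
For the local entropy, since $\eta_\g(0)=0$, the operator $\eta_\g(W_1(a_{T,\mu};\a\L))$ is supported in $\chi_{\a\L}\plainL2(\R^d)$, so that by the definition \eqref{Dalpha:eq} and by \eqref{weyl:eq} I would obtain
\[
\mathrm{S}_\g(T,\mu;\a\L) - \a^d s_\g(T,\mu)|\L| = \tr D_\a(a_{T,\mu},\L;\eta_\g).
\]
Bounding $|\tr(\cdot)|$ by $\|\cdot\|_1$, both desired estimates reduce to controlling $\|D_\a(a_{T,\mu},\L;\eta_\g)\|_1$, with $\L$ or $\R^d\setminus\L$ satisfying Condition \ref{domain:cond}.

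Next I would verify the hypotheses of Theorem \ref{entropy:thm}. The Fermi symbol was already shown, in the discussion preceding the statement of the current theorem, to satisfy Condition \ref{at:cond} with arbitrarily large $\b$, so the constraint $\b>\max\{d,d\g^{-1}\}$ is satisfied trivially. The only real issue is that Condition \ref{f:cond} admits a single non-smooth point, whereas $\eta_\g$ has two, at $t=0$ and $t=1$. I would resolve this by choosing a cutoff $\phi\in\plainC\infty(\R)$ with $\phi=1$ in a neighbourhood of $0$ and $\phi=0$ in a neighbourhood of $1$, and decomposing $\eta_\g = \eta_\g\phi + \eta_\g(1-\phi)$. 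The expansion $\eta_\g(t)\sim c_\g\,t^\g$ near $t=0$ for $\g\in(0,1)$, the affine leading behaviour (with smooth $t^\g$ remainder) for $\g>1$, and the $-t\log t$ behaviour for $\g=1$, together with the symmetric statements near $t=1$, imply that each piece satisfies Condition \ref{f:cond} with $n=2$ and some exponent: $\g'=\g$ for $\g\in(0,1)$, $\g'=1$ for $\g>1$, and any $\g'\in(0,1)$ for $\g=1$. The radius $R$ can be taken equal to $1$. Applying Theorem \ref{entropy:thm} to each piece and summing via the triangle inequality in $\GS_1$ yields
\[
\|D_\a(a_{T,\mu},\L;\eta_\g)\|_1 \lesssim \a^{d-1}\log\bigl(\min\{\a,T^{-1}\}+1\bigr),
\]
with an implicit constant independent of $\a$ and $T$ but depending on $\g$, $h$, $\mu$ and the region.

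This gives \eqref{sgam:eq} at once, and summing two such estimates (one for $\L$, one for $\R^d\setminus\L$) delivers \eqref{EE bound}. I do not expect any serious obstacle beyond this: the genuine analytic work is hidden in Theorem \ref{entropy:thm}, while the partition argument near the two singularities of $\eta_\g$ and the scaling identity are routine bookkeeping. The one point that requires mild care is the uniformity of Theorem \ref{entropy:thm} in the region, which allows it to be applied in turn to $\L$ and to $\R^d\setminus\L$, each satisfying Condition \ref{domain:cond} by assumption.
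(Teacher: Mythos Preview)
Your proposal is correct and follows essentially the same approach as the paper: reduce both bounds to Theorem \ref{entropy:thm} via the scaling $D_1(a,\a\L;f)\cong D_\a(a,\L;f)$, handle the two singularities of $\eta_\g$ by splitting it additively into two pieces each satisfying Condition \ref{f:cond}, and for the local entropy use \eqref{weyl:eq}. The paper compresses all of this into a few lines (invoking ``additivity in $f$'' and Theorem \ref{entropy:thm} directly), but your expanded version is exactly what is meant.
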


The coefficient $s_\gamma(T,\mu)$ 
is called the \textit{$\g$-R\'enyi entropy density} 
(cf.~\cite{LeSpSo}). For $\a T\gtrsim 1$ 
the bounds \eqref{EE bound} and \eqref{sgam:eq} 
were derived in \cite{LeSpSo_15}. 

\begin{proof}[Proof of Theorem \ref{EE_bound:thm}]
In view of additivity of the operator $D_\a(a, \L; f)$ in the functional 
parameter $f$, the bound \eqref{EE bound} follows from Theorem \ref{entropy:thm}. 
  
In order to prove \eqref{sgam:eq}, we rewrite \eqref{thermal:eq}:
\begin{align*}
\mathrm{S}_\g(T,\mu;\a\L) 
= \tr[\chi_\L \eta_\g(\op_\a(a_{T,\mu}))\chi_\L] 
+ \tr D_\a(a_{T,\mu},\L; \eta_\g).
\end{align*}
For the first trace we use the identity \eqref{weyl:eq}, 
and for the second one -- 
the bound \eqref{EE bound}. 
This yields \eqref{sgam:eq}.
\end{proof}

The next theorem establishes the 
asymptotic behaviour of the local entropy and of the EE. 

\begin{thm}\label{EE:thm} 
Suppose that $\L\subset\R^d, d\ge 2$, 
satisfies Condition \ref{domain:cond}, and 
is piecewise $\plainC1$. Then the EE satisfies
\begin{align}\label{mainEE_gr:eq}
\underset{ \substack{T\to 0\\\a T\gtrsim 1}}\lim\ 
\frac{1}{\a^{d-1}\log \frac{1}{T}}
\mathrm{H}_\g(T, \mu; \a\L)  
=  2{\pi^2}\,\frac{1+\g}{6\g}\GV_1( 1; \p\L, \p \Om),
\end{align} 
and
\begin{align}\label{mainEE_le:eq}
\underset{ \substack{\a\to\infty\\ \a T\lesssim 1}}\lim\ 
\frac{1}{\a^{d-1}\log \a}
\mathrm{H}_\g(T, \mu; \a\L)  
=  2{\pi^2}\,\frac{1+\g}{6\g}\GV_1( 1; \p\L, \p \Om).
\end{align} 
If the region $\L$ is bounded, then 
the local entropy satisfies
\begin{align}\label{mainLE_gr:eq}
\underset{ \substack{T\to 0\\\a T\gtrsim 1}}\lim\ 
\frac{1}{\a^{d-1}\log \frac{1}{T}}
\biggl(\mathrm{S}_\g(T, \mu; \a\L) - \a^d s_\g(T, \mu)|\L|\biggr)  
=  {\pi^2}\,\frac{1+\g}{6\g}\GV_1( 1; \p\L, \p \Om),
\end{align} 
and
\begin{align}\label{mainLE_le:eq}
\underset{ \substack{\a\to\infty\\ \a T\lesssim 1}}\lim\ 
\frac{1}{\a^{d-1}\log \a}
\biggl(\mathrm{S}_\g(T, \mu; \a\L) - \a^d s_\g(T, \mu)|\L|\biggr)  
=  {\pi^2}\,\frac{1+\g}{6\g}\GV_1( 1; \p\L, \p \Om).
\end{align} 
\end{thm}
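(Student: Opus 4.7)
The plan is to derive Theorem \ref{EE:thm} as a direct corollary of Theorem \ref{main:thm}, applied with the R\'enyi entropy function $f=\eta_\g$ and the Fermi symbol $a_{T,\mu}$ from \eqref{positiveT:eq}. All the hypotheses have essentially been checked in the surrounding text: the Fermi sea $\Om=\Om_\mu$ is bounded by \eqref{equih:eq}, and \eqref{nondeg:eq} together with the smoothness of $h$ guarantees that $\p\Om$ is a $\plainC\infty$ surface, hence piecewise $\plainC3$; the symbol $a_{T,\mu}$ satisfies Condition \ref{at:cond} for every $\b>0$, as noted just above \eqref{eta_gamma:eq}; and $\eta_\g\in\plainC2(\R\setminus\{0,1\})$ satisfies \eqref{fmain:eq} with $X=\{0,1\}$ and a suitable positive exponent, as recorded immediately after \eqref{eta1:eq}.

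For the entanglement entropy I would invoke the scaling identity $\tr D_1(a,\a\L;f)=\tr D_\a(a,\L;f)$ mentioned in the Introduction, so that \eqref{main_gr:eq} and \eqref{main_le:eq} apply directly to $\tr D_1(a_{T,\mu},\a\L;\eta_\g)$. The same applies with $\L$ replaced by $\R^d\setminus\L$, which also satisfies Condition \ref{domain:cond} and is piecewise $\plainC1$ whenever $\L$ is. Since $\p(\R^d\setminus\L)=\p\L$, the coefficient $\GV_1(1;\cdot,\p\Om)$ is the same for both summands in \eqref{def:EE}, so their asymptotics add and produce the overall factor $2U(\eta_\g)$ in \eqref{mainEE_gr:eq}--\eqref{mainEE_le:eq}.

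For the local entropy, I would start from the observation that $\eta_\g(W_1(a_{T,\mu};\a\L))$ is supported on the range of $\chi_{\a\L}$, so that a rearrangement of \eqref{Dalpha:eq} gives
\begin{equation*}
\chi_{\a\L}\,\eta_\g(W_1(a_{T,\mu};\a\L))\,\chi_{\a\L}
= D_1(a_{T,\mu},\a\L;\eta_\g) + W_1(\eta_\g\circ a_{T,\mu};\a\L).
\end{equation*}
The first term on the right is asymptotically $U(\eta_\g)\GV_1(1;\p\L,\p\Om)\a^{d-1}L$ by Theorem \ref{main:thm} (with $L=\log T^{-1}$ or $\log\a$ according to the regime). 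The second term is trace class because $\eta_\g(0)=0$ and $\a\L$ is bounded; its trace is computed exactly by \eqref{weyl:eq} to be $\a^d|\L|s_\g(T,\mu)$. Taking traces and subtracting yields \eqref{mainLE_gr:eq}--\eqref{mainLE_le:eq}.

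The one step that is not a mechanical consequence of Theorem \ref{main:thm} is the identification of the coefficient $U(\eta_\g)=\pi^2(1+\g)/(6\g)$. Using $\eta_\g(0)=\eta_\g(1)=0$ and the symmetry $\eta_\g(1-t)=\eta_\g(t)$, formula \eqref{U:eq} reduces to $U(\eta_\g)=\int_0^1 \eta_\g(t)[t(1-t)]^{-1}\,dt$, which is evaluated by expanding $\log(t^\g+(1-t)^\g)$ near the endpoints and summing $\sum_{k\ge 1}k^{-2}=\pi^2/6$; this is the classical Gioev--Klich constant. Since this computation is standard, the main point of the proof is really a packaging exercise, the only modest subtlety being to keep the two regimes $\a T\gtrsim 1$ and $\a T\lesssim 1$ separate and to handle the case where one of $\L$, $\R^d\setminus\L$ is unbounded in the entanglement entropy statement.
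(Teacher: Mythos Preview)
Your proposal is correct and follows essentially the same route as the paper: apply Theorem \ref{main:thm} with $f=\eta_\g$ and the Fermi symbol to each of the two $D_1$-terms in \eqref{def:EE} (using $\p(\R^d\setminus\L)=\p\L$ to get the factor $2$), invoke the known value $U(\eta_\g)=\pi^2(1+\g)/(6\g)$, and for the local entropy rewrite $\mathrm{S}_\g$ via \eqref{Dalpha:eq} and \eqref{weyl:eq}. The paper's own proof is the same but terser, citing \cite{LeSpSo} for the evaluation of $U(\eta_\g)$ rather than sketching it.
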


\begin{proof}
Formulas \eqref{mainEE_gr:eq} and \eqref{mainEE_le:eq} 
follow from 
\eqref{main_gr:eq} and \eqref{main_le:eq} respectively 
upon observing (cf. \cite{LeSpSo}) that 
\[ 
U(\eta_\g) 
= \int_0^1 \frac{\eta_\g(t)}{t(1-t)} dt = {\pi^2}\,\frac{1+\g}{6\g}.
\]
Formulas \eqref{mainLE_gr:eq} and \eqref{mainLE_le:eq} also follow 
from \eqref{main_gr:eq} and \eqref{main_le:eq}, and from \eqref{weyl:eq}.
\end{proof}

For $d= 1$ and $\a T\gtrsim 1$ Theorem \ref{EE:thm} was proved in \cite{LeSpSo_15}. 
We also re-iterate that the formulas 
above agree with the large-scale asymptotics of the entropies 
$\mathrm{H}_\g$ and $\mathrm{S}_\g$ 
for the zero temperature case, which were found in \cite{LeSpSo}. 
%Precisely, 
%the EE $H_\g(0, \mu, \a\L)$ satisfies \eqref{mainEE_le:eq} as $\a\to\infty$.  
   
\section{Smooth functions of self-adjoint operators}
\label{smooth:sect}

\subsection{The Helffer-Sj\"ostrand formula} 
When studying functions of self-adjoint operators 
we rely on  
the Helffer-Sj\"ostrand formula (see \cite{HelSjo}) 
which holds for arbitrary operator $X=X^*$  
and arbitrary smooth function $f\in\plainC{n}_0(\R), n\ge 2$:
\begin{equation}\label{HS:eq}
f(X) = \frac{1}{\pi} \iint \mathcal Z(x, y)(X-x-iy)^{-1} dx dy,\
\mathcal Z(x, y) = \frac{\p}{\p \bar z} \tilde f(x, y),	
\end{equation}
where $\tilde f =  \tilde f(x, y)$ 
is a quasi-analytic extension of the function $f$, see 
\cite[Ch. 2]{EBD}.  
A quasi-analytic extension of 
$f\in \plainC{n}(\R)$ is a 
$\plainC1(\R^2)$-function $\tilde f$, such that 
$f(x) = \tilde f(x, 0)$ and $|\mathcal Z(x, y)|\le C|y|$.  
For the sake of brevity we use 
the representation \eqref{HS:eq} for compactly supported 
functions only, so that that the integral 
\eqref{HS:eq} is norm-convergent. 

Let us describe a 
convenient quasi-analytic extension of a function 
$f\in\plainC{n}_0(\R)$. 
For an arbitrary $r >0$ introduce the function 
\begin{equation}\label{ub:eq}
U(x, y) = 
\begin{cases}
1,\ |y|< \lu x\ru,\\[0.2cm]
0,\  |y|\ge \lu x\ru,
\end{cases}
\lu x\ru = \sqrt{x^2 + 1}. 
\end{equation}
The next proposition can be found in \cite[Ch. 2]{EBD}. 

\begin{prop}\label{ab:prop}
	Let $f\in \plainC{n}(\R), n\ge 2$. Then the function $f$ 
	has a quasi-analytic extension 
	$\tilde f = \tilde f(\ \cdot\ , \ \cdot)\in\plainC1(\R^2)$ such that 
	$\tilde f(x, y) = 0$ if $|y|>\lu x\ru$. Moreover, the derivative
	\begin{equation*}
		\mathcal Z(x, y) = \frac{\p}{\p \overline{z}}\tilde f(x, y),
	\end{equation*}
	satisfies the bound 
	\begin{equation}\label{ab:eq}
	|\mathcal Z(x, y)|
	\lesssim F(x)
	|y|^{n-1} U(x, y).
	\end{equation} 
	where 
	\begin{equation*}
		F(x) = \sum_{l=0}^{n} |f^{(l)}(x)|\lu x\ru^{-n+l}.
	\end{equation*}	
	The constant in \eqref{ab:eq} does not depend on $f$. 
\end{prop}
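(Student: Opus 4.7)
The plan is to construct $\tilde f$ explicitly via a truncated Taylor expansion multiplied by a smooth spatial cutoff, following the classical Helffer--Sj\"ostrand ansatz. I fix a cutoff $\tau\in\plainC\infty_0(\R)$ with $\tau(s)=1$ for $|s|\le 1/2$ and $\supp\tau\subset(-1,1)$, and set
\[
\tilde f(x+iy):=\tau\!\left(\frac{y}{\lu x\ru}\right)\sum_{k=0}^{n-1}\frac{f^{(k)}(x)}{k!}(iy)^k.
\]
By construction $\tilde f(x,0)=f(x)$, $\tilde f$ vanishes whenever $|y|>\lu x\ru$, and $\tilde f\in\plainC1(\R^2)$ since $f\in\plainC n$ with $n\ge 2$. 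What remains is to bound $\mathcal Z=\bar\partial\tilde f$, where $\bar\partial=\tfrac12(\p_x+i\p_y)$.

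I would split $\bar\partial\tilde f = \varphi\,\bar\partial g + g\,\bar\partial\varphi$, where $\varphi(x,y):=\tau(y/\lu x\ru)$ and $g(x,y):=\sum_{k=0}^{n-1} f^{(k)}(x)(iy)^k/k!$. A direct calculation exploits the telescoping of the Taylor polynomial under $\p_x+i\p_y$: all intermediate terms cancel in pairs, leaving
\[
2\bar\partial g(x,y) = \frac{f^{(n)}(x)}{(n-1)!}(iy)^{n-1}.
\]
Multiplied by $\varphi$, which is bounded by $1$ and vanishes outside $\{|y|\le\lu x\ru\}$, this contribution is dominated by a constant times $|f^{(n)}(x)|\,|y|^{n-1}U(x,y)$, which is exactly the $l=n$ summand in $F(x)\,|y|^{n-1}U(x,y)$.

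The remaining term $g\,\bar\partial\varphi$ is supported on the annulus $\lu x\ru/2\le|y|\le\lu x\ru$, where $\tau'(y/\lu x\ru)$ is nonzero; a short computation using $|\p_x(y/\lu x\ru)|\le\lu x\ru^{-1}$ and $\p_y(y/\lu x\ru)=\lu x\ru^{-1}$ gives $|\bar\partial\varphi|\lesssim\lu x\ru^{-1}$ on that set. Combining with the trivial bound $|g(x,y)|\lesssim\sum_{k=0}^{n-1}|f^{(k)}(x)|\,|y|^k$ and using $|y|\asymp\lu x\ru$ on the support of $\bar\partial\varphi$, one gets
\[
|g\,\bar\partial\varphi|\lesssim U(x,y)\sum_{k=0}^{n-1}|f^{(k)}(x)|\,\lu x\ru^{k-1}\lesssim |y|^{n-1}U(x,y)\sum_{k=0}^{n-1}|f^{(k)}(x)|\,\lu x\ru^{-n+k},
\]
where the last inequality uses $\lu x\ru^{n-1}\asymp|y|^{n-1}$ on the annulus. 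The right-hand side is precisely the $0\le l\le n-1$ portion of $F(x)|y|^{n-1}U(x,y)$, so summing the two contributions yields \eqref{ab:eq}.

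The only real obstacle is bookkeeping: converting $\lu x\ru^{k-1}$ into the weighted form $\lu x\ru^{-n+k}|y|^{n-1}$ on the cutoff annulus. This step uses crucially that $\bar\partial\varphi$ is supported exactly where $|y|\asymp\lu x\ru$, which is built into the choice of $\tau$ and the scaling of its argument by $\lu x\ru^{-1}$; no other part of the argument is delicate.
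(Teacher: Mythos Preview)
Your construction and estimates are correct and follow exactly the standard Helffer--Sj\"ostrand ansatz that the paper cites from \cite[Ch.~2]{EBD}; the paper itself does not give an independent proof but simply refers to that source, and your argument is essentially the one found there. The telescoping of $\bar\partial g$ and the annulus estimate for $g\,\bar\partial\varphi$ are both handled cleanly, and the conversion $\lu x\ru^{k-1}\asymp|y|^{n-1}\lu x\ru^{-n+k}$ on $\supp\bar\partial\varphi$ is exactly the right observation.
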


Denote
\begin{equation}\label{Nns:eq}
N_{n} (f) = \int F(x)\lu x\ru^{n-2} dx.
\end{equation}

\subsection{``Quasi-commutators"}
Let $\GH$ be a Hilbert space. Let 
$A, B : \GH \mapsto \GH$ be some bounded self-adjoint 
operators, and let $J: \GH\mapsto \GH$ be a bounded operator. 
Here we make some elementary observations about the ``quasi-commutator"
\begin{align*}
f(A)J-J f(B)
\end{align*}
with a smooth function $f$. 
We are interested in estimates in the normed ideal $\GS$ of 
compact operators with the norm $\|\ \cdot\ \|_\GS$. 

\begin{thm}\label{perturb:thm}
Let $A, B$ be two self-adjoint bounded operators, and let $J$ be a bounded operator. 	
Suppose that $f\in \plainC{n}_0(\R)$ with $n\ge 3$. Let $\GS$ be a normed ideal 
	of compact operators acting on $\GH$. Then   
\begin{align}\label{J0fun:eq}
\| f(A)J- J f(B)\|_{\GS}\lesssim  N_{n}(f) \|A J - J B\|_{\GS}.
		\end{align}		
		The constant in \eqref{J0fun:eq} 
		does not depend on $f$ or $A, B, J$. 
\end{thm}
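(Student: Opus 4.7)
The plan is to apply the Helffer--Sj\"ostrand formula \eqref{HS:eq} to both $f(A)$ and $f(B)$, with a common quasi-analytic extension $\tilde f$ supplied by Proposition \ref{ab:prop}. Since $\tilde f$ has compact support, the resulting integrals converge in operator norm, so subtracting them gives
\begin{equation*}
f(A)J - J f(B) = \frac{1}{\pi}\iint \mathcal Z(x,y)\bigl[(A-x-iy)^{-1} J - J (B-x-iy)^{-1}\bigr] dx\, dy.
\end{equation*}
The next step is a quasi-commutator version of the resolvent identity: for $z = x+iy \notin \R$, a direct calculation yields
\begin{equation*}
(A-z)^{-1} J - J(B-z)^{-1} = (A-z)^{-1}\bigl[J(B-z)-(A-z)J\bigr](B-z)^{-1} = -(A-z)^{-1}(AJ-JB)(B-z)^{-1}.
\end{equation*}

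I would then take $\|\cdot\|_\GS$ and use the ideal property together with the standard resolvent bound $\|(A-x-iy)^{-1}\|, \|(B-x-iy)^{-1}\| \le |y|^{-1}$ to obtain
\begin{equation*}
\|f(A)J - J f(B)\|_\GS \le \frac{1}{\pi}\|AJ - JB\|_\GS \iint |\mathcal Z(x,y)|\, |y|^{-2}\, dx\, dy.
\end{equation*}
Inserting the Helffer--Sj\"ostrand-type estimate \eqref{ab:eq}, namely $|\mathcal Z(x,y)| \lesssim F(x)|y|^{n-1} U(x,y)$, and using that $U$ confines $|y| \le \lu x\ru$, the remaining double integral is bounded by
\begin{equation*}
\int F(x) \int_{-\lu x\ru}^{\lu x\ru} |y|^{n-3}\, dy\, dx \lesssim \int F(x) \lu x\ru^{n-2}\, dx = N_{n}(f),
\end{equation*}
which is exactly the bound \eqref{J0fun:eq}.

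The only delicate point is the integrability at $y = 0$: because two resolvents appear, the integrand carries a factor $|y|^{-2}$, costing one extra power of $|y|$ compared with the single-operator Helffer--Sj\"ostrand bound, where $|\mathcal Z||y|^{-1}$ is handled by the one-sided $|y|^{n-1}$ decay. This is precisely why the hypothesis is strengthened from $n\ge 2$ (the minimum required to construct the quasi-analytic extension in Proposition \ref{ab:prop}) to $n\ge 3$: one needs $|y|^{n-3}$ to be locally integrable. No further obstacle is expected, since all constants depend only on $N_n(f)$, $\|AJ-JB\|_\GS$, and an absolute numerical factor, and the argument does not use any specific structure of $A$, $B$, $J$, or $\GS$ beyond self-adjointness and the ideal property.
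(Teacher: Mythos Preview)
Your proposal is correct and follows essentially the same approach as the paper's proof: both use the Helffer--Sj\"ostrand representation, the quasi-commutator resolvent identity $(A-z)^{-1}J - J(B-z)^{-1} = -(A-z)^{-1}(AJ-JB)(B-z)^{-1}$ with the bound $|\im z|^{-2}\|AJ-JB\|_\GS$, and then integrate $F(x)|y|^{n-3}$ over $|y|<\lu x\ru$ to produce $N_n(f)$. The paper merely isolates the resolvent estimate as a separate displayed inequality before invoking it, whereas you fold it directly into the integral; otherwise the arguments are identical.
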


\begin{proof}
Let us consider the function 
$f(t) = r_z(t) = (t-z)^{-1}$ 
with $\im z\not = 0$, and prove that 
	\begin{align}\label{J0:eq}
		\| r_z(A)J 
		- Jr_z(B)\|_{\GS}
		\le \frac{1}{|\im z|^2} \|A J - J B\|_{\GS}.
		\end{align}
By the resolvent identity 
\begin{align*}
r_z(A)J - J r_z(B) = 
- r_z(A)
(AJ - J B) r_z(B),
\end{align*}
we have 
\begin{equation*}
\|r_z(A)J - J r_z(B)\|_{\GS}\le \frac{1}{|\im z|^2}
\|A J - J B\|_{\GS},
\end{equation*}
whence \eqref{J0:eq}.

By formula \eqref{HS:eq} and Proposition \ref{ab:prop}, 
we have 
\begin{align}\label{funD:eq}
	\| f(A)J - &\ Jf(B)\|_{\GS}\notag\\[0.2cm]
	\lesssim &\  \underset{|y| < \lu x\ru}\iint F(x) |y|^{n-1}
	%\notag\\[0.2cm]
%&\ \times 
\| r_{x+iy}(A)J - Jr_{x+iy}(B)\|_{\GS}
	dx dy\notag\\
	\lesssim &\ \|A J - J B\|_{\GS}\underset{|y| < \lu x\ru}\iint F(x) |y|^{n-3}dx dy,
	\end{align}
	where we have used \eqref{J0:eq}. 
The right-hand side is clearly estimated by $N_n(f) \|A J - J B\|_{\GS}$, as required. 
\end{proof}

 From now on, unless otherwise stated we always assume that 
$f\in \plainC{n}_0(\R)$ with some $n\ge 3$.  
We apply the above simple result to the operators of the form
\begin{align*}
\CD(A_1, P_1; f) J - J \CD(A_2, P_2; f), 
\end{align*}
involving two pairs of self-adjoint bounded operators $A_1, A_2$ and 
$P_1, P_2$, where
\begin{align*}
\CD(A, P; f) = Pf(PAP)P - Pf(A)P. 
\end{align*}
We do not consider the most general case, but concentrate on a very 
special one, which is used  
later in the proof of the main theorem.

As before, the constants in the estimates below 
depend neither on the function $f$ 
nor operators involved.
 
\begin{cor}\label{on:cor}
Let $A_1, A_2$ be bounded self-adjoint operators, and let 
$P_1, P_2$ be bounded self-adjoint 
operators such that $\|P_1\|$, $\|P_2\|\le 1$. 
Let $J$ be a bounded operator. 
Suppose that 
\begin{equation}\label{on_comm:eq}
[P_1, J] = [P_2, J] = 0,
\end{equation}
and
\begin{equation}\label{on_loc:eq}
(P_1 - P_2) J = 0.
\end{equation}
Then  
\begin{align}\label{on:eq}
\| \CD(A_1, P_1; f)J 
- &\ J\CD(A_2, P_2; f)\|_{\GS}\notag\\
\lesssim &\  N_{n}(f) 
\bigl[\|(A_1-A_2)J\|_{\GS}+ \|[A_2, J]\|_{\GS}\bigr],
\end{align}
and 
\begin{align}\label{on1:eq}
\| J\CD(A_1, P_1; f) 
- &\ J\CD(A_2, P_2; f)\|_{\GS}\notag\\
\lesssim &\  N_{n}(f) 
\bigl[\|(A_1-A_2)J\|_{\GS}+ \|[A_1, J]\|_{\GS} + \|[A_2, J]\|_{\GS}\bigr],
\end{align}
%The constant in \eqref{on:eq} and \eqref{on1:eq}
%is independent 
%of $A_1, A_2, P_1, P_2, J$ or $f$.
\end{cor}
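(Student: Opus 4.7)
\textbf{Proof plan for Corollary \ref{on:cor}.} The plan is to reduce both bounds to Theorem \ref{perturb:thm} after a short algebraic rearrangement that exploits the hypotheses \eqref{on_comm:eq} and \eqref{on_loc:eq}. The key observation is that \eqref{on_comm:eq} gives $P_iJ = JP_i$ for $i=1,2$, and \eqref{on_loc:eq} gives $P_1J = P_2J$; taking adjoints of the latter (using self-adjointness of $P_i$) also yields $JP_1 = JP_2$. Hence the four operators $P_1J$, $JP_1$, $P_2J$, $JP_2$ all coincide.

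Using these identities I can rewrite
\begin{equation*}
\CD(A_1,P_1;f)J = P_1 f(P_1A_1P_1)JP_2 - P_1 f(A_1) JP_2,
\end{equation*}
\begin{equation*}
J\CD(A_2,P_2;f) = P_1 J f(P_2A_2P_2)P_2 - P_1 J f(A_2) P_2,
\end{equation*}
so that after subtracting and factoring on the left and right,
\begin{equation*}
\CD(A_1,P_1;f)J - J\CD(A_2,P_2;f) = P_1\bigl[f(P_1A_1P_1)J - Jf(P_2A_2P_2)\bigr]P_2 - P_1\bigl[f(A_1)J - Jf(A_2)\bigr]P_2.
\end{equation*}
Applying Theorem \ref{perturb:thm} to each bracket, using $\|P_i\|\le 1$, and rewriting $A_1J - JA_2 = (A_1-A_2)J + [A_2,J]$, the outer quasi-commutator contributes exactly the right-hand side of \eqref{on:eq}. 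For the inner one, the same commutation identities give $P_1A_1P_1J - JP_2A_2P_2 = P_1(A_1J - JA_2)P_2$, which is again bounded by $\|(A_1-A_2)J\|_\GS + \|[A_2,J]\|_\GS$. This establishes \eqref{on:eq}.

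For \eqref{on1:eq} I write
\begin{equation*}
J\CD(A_1,P_1;f) - J\CD(A_2,P_2;f) = -\bigl[\CD(A_1,P_1;f)J - J\CD(A_1,P_1;f)\bigr] + \bigl[\CD(A_1,P_1;f)J - J\CD(A_2,P_2;f)\bigr].
\end{equation*}
The second bracket is controlled by the bound \eqref{on:eq} just proved, while the first bracket is a special case of \eqref{on:eq} with $A_2=A_1$ and $P_2=P_1$ (the hypotheses are then trivially satisfied), giving a bound $\lesssim N_n(f)\|[A_1,J]\|_\GS$. Adding the two contributions yields \eqref{on1:eq}.

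The only delicate point is the algebraic identity in the second paragraph: one must track that $P_1$ can be pulled out on the left and $P_2$ on the right of the entire difference \emph{consistently}, and this hinges on simultaneously using $P_1J=JP_1$, $P_2J=JP_2$, and $P_1J=P_2J$ (hence also $JP_1=JP_2$). Once this rearrangement is in place, everything else is a direct appeal to Theorem \ref{perturb:thm} together with the submultiplicativity $\|P_1XP_2\|_\GS\le\|X\|_\GS$ coming from $\|P_i\|\le 1$.
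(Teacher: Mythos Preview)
Your proof is correct and follows the same overall strategy as the paper: reduce \eqref{on1:eq} to \eqref{on:eq} by splitting off the commutator $[J,\CD(A_1,P_1;f)]$, and prove \eqref{on:eq} by using \eqref{on_comm:eq}--\eqref{on_loc:eq} to factor the difference and then invoke Theorem~\ref{perturb:thm}. One minor slip: taking adjoints of $P_1J=P_2J$ yields $J^*P_1=J^*P_2$, not $JP_1=JP_2$; the identity you actually need, $P_1J=JP_2$, follows directly from $P_1J=P_2J=JP_2$ via \eqref{on_loc:eq} and \eqref{on_comm:eq}, so your subsequent algebra is valid. In fact your symmetric factorization $P_1[\,\cdots\,]P_2$ of the full difference is slightly cleaner than the paper's version, which factors as $P_1[\,\cdots\,]P_1$ and then carries along an extra correction term $P_1[f(P_2A_2P_2),J](P_2-P_1)$.
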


\begin{proof} 
Observe first that \eqref{on1:eq} follows from \eqref{on:eq} 
due to the following equality:
\begin{align*}
J\CD(A_1, P_1; f) 
- &\ J\CD(A_2, P_2; f)\\ 
= &\ [J, \CD(A_1, P_1; f)] + 
\CD(A_1, P_1; f)J - J\CD(A_2, P_2; f). 
\end{align*}

Proof of \eqref{on:eq}. By \eqref{on_comm:eq},
\begin{align*}
\|[P_1 A_2 P_2, J]\|_{\GS}\le \|[A_2, J]\|_{\GS}.
\end{align*}
Also, by  
\eqref{J0fun:eq}, 
\begin{align*}
\|[  f(P_2A_2 P_2), J]\|_\GS 
\lesssim N_n(f) \|[A_2, J]\|_{\GS}.
\end{align*}
Thus, in view of \eqref{on_comm:eq} and \eqref{on_loc:eq},  
\begin{align*}
P_1 f(P_1A_1 P_1) P_1J - &\ JP_2 f(P_2A_2 P_2) P_2\\
= &\ P_1\bigl( f(P_1 A_1 P_1)J - J f(P_2 A_2 P_2)\bigr) P_1 \\
+ &\ P_1[ f(P_2A_2 P_2), J]P_2 - P_1[ f(P_2A_2 P_2), J]P_1
\end{align*}
By \eqref{J0fun:eq} and \eqref{on_comm:eq}, \eqref{on_loc:eq}, 
the first term on the right-hand side 
does not exceed
\begin{align*}
N_n(f) \|P_1 A_1 P_1J - &\ JP_1 A_2 P_2\|_\GS\\
\le &\ N_n(f) \bigl(\|P_1 A_1 P_1J - P_1 A_2 P_1J\|_\GS 
+ \|[P_1 A_2P_2, J]\|_{\GS}\bigr)\\
\le &\ N_n(f) \bigl(\|(A_1 - A_2)J\|_\GS 
+ \|[A_2, J]\|_{\GS}\bigr).
\end{align*}
The second and the third terms are bounded by 
$N_n(f)\|[A_2, J]\|_{\GS}$.
This leads to \eqref{on:eq}. 
 \end{proof}

\begin{cor}\label{CD:cor}
Let $A$ be a bounded self-adjoin operator, and let $P$ be an orthogonal 
projection. Let $J$ be an operator such that $[P, J] = 0$. Then
\begin{equation}\label{CD:eq}
\|J\CD(A, P; f)\|_\GS + \|\CD(A, P; f)J\|_\GS\lesssim N_n(f) 
\bigl(\|JPA(I-P)\|_{\GS} 
+ \|[A, J]\|_{\GS} \bigr).
\end{equation}
The constant in \eqref{CD:eq} is independent 
of $A, P, J$ or $f$.
\end{cor}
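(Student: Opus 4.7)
The plan is to derive an explicit formula for $\CD(A, P; r_z)$ with resolvent $r_z(t) = (t-z)^{-1}$, bound $J\,\CD(A, P; r_z)$ in the $\GS$-norm, and feed the result into the Helffer--Sj\"ostrand representation \eqref{HS:eq}, in the same spirit as Theorem \ref{perturb:thm}. Since $PAP$ commutes with $P$, so does $r_z(PAP)$, and the resolvent identity yields
\begin{equation*}
\CD(A, P; r_z) = P\bigl(r_z(PAP) - r_z(A)\bigr)P
= r_z(PAP)\, PA(I-P)\, r_z(A)\, P,
\end{equation*}
which isolates precisely the factor $PA(I-P)$ appearing in the target bound.

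To estimate $\|J\,\CD(A, P; r_z)\|_\GS$ I would commute $J$ through $r_z(PAP)$ using the hypothesis $[P, J] = 0$, which gives $[PAP, J] = P[A, J]P$, together with the resolvent-commutator identity $Jr_z(PAP) = r_z(PAP)J + r_z(PAP)[PAP, J] r_z(PAP)$. Substitution yields
\begin{equation*}
J\,\CD(A, P; r_z)
= r_z(PAP)\,\bigl(JPA(I-P)\bigr)\, r_z(A)\, P
+ r_z(PAP)\, P[A, J]P\, \CD(A, P; r_z),
\end{equation*}
where the last three factors of the second term reassemble into $\CD(A, P; r_z)$ itself. Using the operator-norm bounds $\|r_z(\,\cdot\,)\|\le |\im z|^{-1}$ and $\|\CD(A, P; r_z)\|\le 2|\im z|^{-1}$ (the latter following directly from the definition of $\CD$), both summands together produce
\begin{equation*}
\|J\,\CD(A, P; r_z)\|_\GS
\lesssim |\im z|^{-2}\bigl(\|JPA(I-P)\|_\GS + \|[A, J]\|_\GS\bigr).
\end{equation*}
Inserting into \eqref{HS:eq} and bounding $|\mathcal Z|$ via Proposition \ref{ab:prop}, the $|\im z|^{-2}$ factor combines with $|y|^{n-1}$ to give $|y|^{n-3}$, which integrates over $|y|<\lu x\ru$ to $\lu x\ru^{n-2}$ since $n\ge 3$, leaving $N_n(f)$ from the remaining $x$-integration. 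This establishes the estimate for $\|J\,\CD(A, P; f)\|_\GS$.

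For the companion term $\|\CD(A, P; f)\,J\|_\GS$, I would exploit the swap identity
\begin{equation*}
\CD(A, P; f)\,J - J\,\CD(A, P; f) = P\bigl[f(PAP) - f(A), J\bigr]P,
\end{equation*}
which follows from $[P, J] = 0$ by writing $\CD = PXP$ with $X = f(PAP) - f(A)$ and sliding $P$ across $J$. Theorem \ref{perturb:thm}, applied separately to the self-adjoint operators $PAP$ and $A$ (using $[PAP, J] = P[A, J]P$ in the first case), controls both $\|[f(PAP), J]\|_\GS$ and $\|[f(A), J]\|_\GS$ by a constant multiple of $N_n(f)\,\|[A, J]\|_\GS$. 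Hence $\|\CD(A, P; f)\,J\|_\GS$ is dominated by $\|J\,\CD(A, P; f)\|_\GS$ plus a term of the required form, completing \eqref{CD:eq}.

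The main obstacle is to keep the singularity of $\|J\,\CD(A, P; r_z)\|_\GS$ at the level $|\im z|^{-2}$: a naive estimate of the commutator contribution, bounding the trailing $r_z(PAP)\,PA(I-P)\,r_z(A)$ by $\|A\|\cdot|\im z|^{-2}$, would produce a spurious $|\im z|^{-3}$ factor and cause the Helffer--Sj\"ostrand integral to diverge in the boundary case $n = 3$. Recognising that this contribution secretly contains a copy of $\CD(A, P; r_z)$, already of operator norm $O(|\im z|^{-1})$, is what keeps the estimate at the correct power.
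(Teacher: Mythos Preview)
Your argument is correct, but it takes a longer route than the paper's. The paper does not redo the Helffer--Sj\"ostrand integration; instead it observes that, after one line of algebra using $[P,J]=0$,
\[
\CD(A,P;f)\,J = P\bigl(f(PAP)\,PJ - PJ\,f(A)\bigr)P - P[f(A),J]P,
\]
and then applies Theorem~\ref{perturb:thm} (the quasi-commutator bound \eqref{J0fun:eq}) to each piece as a black box. The first piece is controlled by $N_n(f)\,\|PAP\cdot PJ - PJ\cdot A\|_\GS$, which unpacks (via $PJ=JP$) into $\|JPA(I-P)\|_\GS + \|[A,J]\|_\GS$; the second by $N_n(f)\,\|[A,J]\|_\GS$. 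The companion bound for $J\,\CD$ follows by the symmetric rearrangement.

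By contrast, you return to the resolvent level and rebuild the estimate for $J\,\CD(A,P;r_z)$ from scratch, only invoking Theorem~\ref{perturb:thm} at the end for the swap identity. This is sound, and your explicit factorisation $\CD(A,P;r_z)=r_z(PAP)\,PA(I-P)\,r_z(A)\,P$ makes the origin of the $PA(I-P)$ term very transparent. The trick of recognising $\CD(A,P;r_z)$ inside the commutator remainder to keep the singularity at $|\im z|^{-2}$ is neat, though the paper's route sidesteps this issue entirely by never leaving the level of \eqref{J0fun:eq}. In short: your proof works, but the paper's is shorter because it treats the corollary as a purely algebraic consequence of Theorem~\ref{perturb:thm}.
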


\begin{proof} It suffices to prove \eqref{CD:eq} 
for the operator $\CD(A, P; f)J$ only. 
Rewrite it:
\begin{align*}
\CD(A, P; f)J
= &\ Pf(PAP)PJ - Pf(A)JP\\
= &\ Pf(PAP)PJ - PJ f(A)P 
-P[f(A), J]P\\
= &\ P\bigl(f(PAP)PJ - PJ f(A)\bigr)P
-P[f(A), J]P.
\end{align*}
By \eqref{J0fun:eq}, the $\GS$-norm of the 
second term is bounded by $N_n(f)\|[A, J\|_\GS$. By 
\eqref{J0fun:eq}, 
the $\GS$-norm of the first term is bounded by
\begin{align*}
N_n(f)\|PAPJ - PJA\|_{\GS}\le 
N_n(f)\bigl(\|JPA(I-P)\|_{\GS} 
+ \|[A, J]\|_{\GS} \bigr).
\end{align*}
This completes the proof of \eqref{CD:eq}. 
\end{proof}

\subsection{Elementary estimates 
for pseudo-differential operators}
Now we apply  
Corollaries \ref{on:cor} and \ref{CD:cor} 
to pseudo-differential operators. 
As above we assume that  
$f\in \plainC{n}_0(\R)$ with some $n\ge 3$. 
The quantity $N_n(f)$ 
is defined in \eqref{Nns:eq}. 

Below we always assume that $\varphi\in\plainC\infty(\R^d)$ 
is a bounded function. 
Often we also assume that 
for some sets $\L$ and $\Pi$,
\begin{equation}\label{localp:eq}
\supp\varphi\cap \L = \supp\varphi\cap \Pi.
\end{equation}

\begin{lem}\label{9Dp:lem}   
Let $b, \tilde b$ be some real-valued symbols, and let 
$\L$ and $\Pi$ be some sets. 
Then 
\begin{equation}\label{D_trace_norm:eq}
\|\varphi D_\a(b, \L; f)\|_1 \lesssim 
N_n(f)\bigl(\| \chi_\L\varphi\op_\a(b)(I-\chi_\L)\|_1 
+ \|[\op_\a(b), \varphi]\|_1\bigr).
\end{equation}
Suppose that \eqref{localp:eq} is satisfied. Then 
\begin{align}\label{9Dp:eq}
\|\varphi D_\a(b, \L; f)
- &\ \varphi D_\a(\tilde b, \Pi; f)\|_1\notag\\[0.2cm]
\lesssim &\ N_n(f)
\bigl(\|[\varphi, \op_\a(b)]\|_1 
+ \|\varphi\op_\a(b-\tilde b)\|_1
+ \|\op_\a(b-\tilde b)\varphi\|_1\bigr).
\end{align}
If, in addition, $\Pi = \R^d$, then
\begin{equation}\label{9Dp_away:eq}
\|\varphi D_\a(b, \L; f)\|_1\lesssim 
N_n(f) \|[\varphi, \op_\a(b)]\|_1.
\end{equation} 
The constants in the above bounds do not depend on 
the function $f$, $\varphi$, sets $\L$, $\Pi$ or symbols $b$, $\tilde b$.
\end{lem}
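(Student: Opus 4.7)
The plan is to recognise that the operator $D_\a(b,\L;f)$ coincides with the abstract quasi-commutator $\CD(\op_\a(b),\chi_\L;f)$ of Corollary \ref{CD:cor}, after which all three estimates drop out as immediate applications of Corollaries \ref{CD:cor} and \ref{on:cor}. The crucial preliminary observation is that the symbols considered here depend only on the momentum variable $\bxi$, so $\op_\a(b)$ is a Fourier multiplier and the spectral theorem gives the exact identity $f(\op_\a(b))=\op_\a(f\circ b)$. Consequently $W_\a(f\circ b;\L)=\chi_\L\op_\a(f\circ b)\chi_\L=\chi_\L f(\op_\a(b))\chi_\L$, and comparison with the definition \eqref{Dalpha:eq} yields $D_\a(b,\L;f)=\CD(\op_\a(b),\chi_\L;f)$.

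Granted this identification, I would prove \eqref{D_trace_norm:eq} by applying Corollary \ref{CD:cor} with $A=\op_\a(b)$, $P=\chi_\L$ and $J=\varphi$. The hypothesis $[P,J]=0$ is automatic since both $\chi_\L$ and $\varphi$ are multiplication operators, and the resulting first term $\|\varphi\chi_\L\op_\a(b)(I-\chi_\L)\|_1$ equals $\|\chi_\L\varphi\op_\a(b)(I-\chi_\L)\|_1$ by the same commutativity. For \eqref{9Dp:eq} I would invoke Corollary \ref{on:cor} in the form \eqref{on1:eq}, with $A_1=\op_\a(b)$, $A_2=\op_\a(\tilde b)$, $P_1=\chi_\L$, $P_2=\chi_\Pi$, $J=\varphi$. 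Condition \eqref{on_comm:eq} is automatic for multiplication operators, and the localisation hypothesis \eqref{on_loc:eq}, $(\chi_\L-\chi_\Pi)\varphi=0$, is precisely what the assumption \eqref{localp:eq} encodes. The bound \eqref{on1:eq} produces a spurious term $\|[\op_\a(\tilde b),\varphi]\|_1$ on the right-hand side, which I would absorb using the triangle inequality
\begin{equation*}
\|[\op_\a(\tilde b),\varphi]\|_1\le \|[\op_\a(b),\varphi]\|_1+\|\varphi\op_\a(b-\tilde b)\|_1+\|\op_\a(b-\tilde b)\varphi\|_1,
\end{equation*}
yielding exactly the form claimed in \eqref{9Dp:eq}.

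Finally, \eqref{9Dp_away:eq} follows directly from \eqref{D_trace_norm:eq}: when $\Pi=\R^d$, the hypothesis \eqref{localp:eq} forces $\supp\varphi\subset\L$, hence $\varphi(I-\chi_\L)=0$, and therefore
\begin{equation*}
\varphi\op_\a(b)(I-\chi_\L)=[\varphi,\op_\a(b)](I-\chi_\L),
\end{equation*}
so that the first term in \eqref{D_trace_norm:eq} is itself controlled by $\|[\varphi,\op_\a(b)]\|_1$. I do not expect any real obstacle here; the entire content of the lemma is the recognition that the Fourier-multiplier identity $f(\op_\a(b))=\op_\a(f\circ b)$ turns $D_\a(b,\L;f)$ into an object to which the abstract machinery of Section \ref{smooth:sect} applies verbatim, the only delicate point being the routine book-keeping that verifies $[\chi_\L,\varphi]=[\chi_\Pi,\varphi]=0$ and $(\chi_\L-\chi_\Pi)\varphi=0$ under the stated support condition.
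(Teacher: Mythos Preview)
Your proposal is correct and follows essentially the same approach as the paper: Corollary \ref{CD:cor} for \eqref{D_trace_norm:eq} and Corollary \ref{on:cor} (specifically \eqref{on1:eq}) for \eqref{9Dp:eq}, with the same choices of $A,P,J$. Your explicit absorption of the extra term $\|[\op_\a(\tilde b),\varphi]\|_1$ via the triangle inequality is a detail the paper leaves implicit. The only minor difference is in \eqref{9Dp_away:eq}: the paper deduces it from \eqref{9Dp:eq} by taking $\tilde b=b$ and observing that $D_\a(b,\R^d;f)=0$, whereas you deduce it from \eqref{D_trace_norm:eq} using $\varphi(I-\chi_\L)=0$; both are one-line arguments and equally valid.
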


 \begin{proof} 
The bound \eqref{D_trace_norm:eq} follows from Corollary 
\ref{CD:cor}  
with $P = \chi_\L, A = \op_\a(b), J = \varphi$.

The bound \eqref{9Dp_away:eq} 
follows from \eqref{9Dp:eq} used with $b = \tilde b$, 
since $D_\a(b, \R^d; f) = 0$.  
 
Proof of \eqref{9Dp:eq}. 
Use Corollary \ref{on:cor} with 
\[
A_1 = \op_\a(b), 
A_2 = \op_\a(\tilde b), P_1 = \chi_\L, P_2 = \chi_\Pi,\ 
J = \varphi. 
\] 
The condition \eqref{on_comm:eq} is trivially satified. By 
\eqref{localp:eq}, the condition \eqref{on_loc:eq} 
is also satisfied. Thus \eqref{9Dp:eq} follows from \eqref{on1:eq}.
\end{proof}

\section{ Estimates for Wiener-Hopf operators}\label{WH:sect}

In this section we collect some Schatten-von Neumann bounds for 
Wiener-Hopf operators with symbols satisfying some general conditions. 
Our main objective is to ensure the explicit dependence of the bounds on the symbols.

\subsection{Some basic bounds} 
  
To control the scaling properties of functions we introduce the following norms:
\begin{equation}\label{norm:eq}
\SN^{(n)}(\eta; \tau)
= \underset{0\le k\le n}
\max \ \underset{\bxi}
\sup\  \tau^{k}|\nabla_{\bxi}^k 
\eta(\bxi)|, n = 1, 2, \dots. 
\end{equation}
First we give some bounds in Schatten-von Neumann classes $\GS_q, q\in (0, 1]$,  
established in \cite{Sob1}, but adjusted for our purposes 
in the current paper. 

Unless specified otherwise, 
below each of the sets $\L, \Om\subset\R^d$ is 
%either 
a Lipschitz region. 
%region satisfying Condition 
%\ref{domain:cond}, 
%or a basic Lipschitz domain. 
%It is important for us to have 
%the following uniformity property. 
If $\L$ (or $\Om$) 
is a basic Lipschitz domain, i. e. $\L = \G(\Phi)$ with 
a globally Lipschitz function $\Phi$,  
then the constants 
in the estimates obtained below 
are uniform in $\L$   
in the sense that they depend only 
on the constant $M$ in the bound $M_\Phi \le M$.

Below we often use a test-function 
$\varphi\in\plainC\infty_0(\R^d)$  
such that 
\begin{equation}\label{supporth:eq}
\textup{support of the function $\varphi$ is contained in 
	$B(\bz, \ell)$,}
\end{equation} 
with some constant $\ell >0$ and some $\bz\in\R^d$. 
The bounds below are uniform in $\bz$, since by translation one can always 
assume that $\bz = \bold0$.
The constants in 
all estimates 
are independent of the 
 parameters $\a, \ell$ and $\tau$.

\begin{lem} \label{between:lem} 
Suppose that $\varphi$ satisfies \eqref{supporth:eq}, and 
that the support of $\eta\in\plainC\infty_0(\R^d)$ 
is contained in a ball 
of radius $\tau >0$. Let $q\in (0, 1]$ and 
\begin{equation}\label{rq:eq}
r  = r_q = [(d+1)q^{-1}] + 1.
\end{equation}
\begin{enumerate}
\item
%Let $\L=\G(\Phi)$ and $\Om= \G(\Psi)$ be basic Lipschitz domains.
Let $\L$ and $\Om$ be Lipschitz regions. 
%, and 
%let $M_\Phi, M_\Psi\le M$ with some $M$.   
If $\a\ell\tau\gtrsim 1$, then 
\begin{align}\label{comm_chi:eq} 
\|[\varphi\op_\a(\eta), \chi_\L]\|_q
+  \|[\varphi\op_\a(\eta), &\ \op_\a(\chi_\Om)]\|_q\notag\\[0.2cm]
\lesssim &\ (\a\ell\tau)^{\frac{d-1}{q}} \SN^{(r)}(\varphi; \ell)
\SN^{(r)}(\eta; \tau).
\end{align}
%where the implicit constant is uniform in the functions $\Phi, \Psi$ and 
%the parameters $\a, \ell, \tau$, and is independent of 
%the functions $\varphi, \eta$.
\item
Let $\L$ be a Lipschitz region,  
and let $\Om$ satisfy Condition \ref{domain:cond}. Suppose that 
$\a\ell\gtrsim 1$. 
Then 
\begin{equation}\label{comm_chi_chi:eq}
\|\chi_\L
\chi_{B(\bz, \ell)}
\op_\a(\chi_\Om)(I-\chi_\L)\|_q
\lesssim \bigl[(\a\ell)^{d-1} \log (\a\ell + 1)\bigr]^{\frac{1}{q}}.
%\SN^{(r)}(\varphi; \ell).
\end{equation} 
\item
Let $\L$ and $\Om$ satisfy Condition \ref{domain:cond}. 
If $\a \ell\gtrsim 1$, then  
\begin{equation}\label{between:eq}
\| [\varphi, \op_\a(\chi_\Om)]\|_q
\lesssim (\a\ell)^{\frac{d-1}{q}} \SN^{(r)}(\varphi; \ell). 
\end{equation}
If $\a \tau\gtrsim 1$, then 
\begin{equation}\label{between1:eq}
\|[\op_\a(\eta), \chi_\L]\|_q
\lesssim (\a\tau)^{\frac{d-1}{q}} \SN^{(r)}(\eta; \tau).
\end{equation}
\end{enumerate}
If $\L$ (or $\Om$) is basic Lipschitz, then the relevant 
bounds are uniform in $\L$ (or $\Om$).
\end{lem}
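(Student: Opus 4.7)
The plan is to reduce each of the four bounds to Schatten--von Neumann estimates for pseudo-differential operators with either smooth or discontinuous symbols established in \cite{Sob1, Sob2}, using rescaling, the $q$-triangle inequality \eqref{qtriangle:eq}, and elementary commutator identities. Throughout, the change of variables $\bx\mapsto\a^{-1}\bx$ unitarily transforms $\op_\a$ into $\op_1$ and replaces $\L$, $\Om$ by $\a\L$, $\a\Om$ and the scales $\ell$, $\tau$ by $\a\ell$, $\a\tau$, so the assumption $\a\ell\tau\gtrsim 1$ (resp.~$\a\ell\gtrsim 1$, $\a\tau\gtrsim 1$) places us in the semi-classical regime for the rescaled operator.

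For part (iii), the bounds \eqref{between:eq} and \eqref{between1:eq} are essentially special cases of the Schatten-norm estimate from \cite{Sob1} for the commutator between a PDO with a smooth symbol, compactly supported at scale $\ell$ (resp.~$\tau$) in one of the variables, and the characteristic function of a Lipschitz region, the $(d-1)/q$-power being the sharp critical-dimension exponent in that regime. Part (i) is then derived from (iii) together with the identities
\begin{equation*}
[\varphi\op_\a(\eta), \chi_\L] = \varphi[\op_\a(\eta), \chi_\L],
\end{equation*}
which holds because $\varphi$ and $\chi_\L$ commute as multiplication operators, and
\begin{equation*}
[\varphi\op_\a(\eta), \op_\a(\chi_\Om)] = \varphi[\op_\a(\eta), \op_\a(\chi_\Om)] + [\varphi, \op_\a(\chi_\Om)]\op_\a(\eta).
\end{equation*}
The two pieces are then controlled separately: one factor contributes $(\a\tau)^{(d-1)/q}\SN^{(r)}(\eta;\tau)$ or $(\a\ell)^{(d-1)/q}\SN^{(r)}(\varphi;\ell)$ via \eqref{between:eq}--\eqref{between1:eq}, while the other factor is estimated in the operator norm by a standard Calder\'on--Vaillancourt bound $\|\op_\a(\eta)\|\lesssim \SN^{(r)}(\eta;\tau)$ and the analogous bound for $\varphi$. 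The combined phase-space volume factor $\ell\tau$ arises because the symbol of $\varphi\op_\a(\eta)$ is localized at scale $\ell$ in $\bx$ and $\tau$ in $\bxi$; careful tracking of these scales after rescaling yields the exponent $(\a\ell\tau)^{(d-1)/q}$.

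The main obstacle is part (ii), the bound \eqref{comm_chi_chi:eq}, which carries the logarithmic factor $\log(\a\ell+1)$ characteristic of the Fisher--Hartwig regime: this term is not a commutator of one discontinuity with a smooth object, but rather sandwiches $\op_\a(\chi_\Om)$ between two cutoffs associated with the \emph{same} jumping function $\chi_\L$. Consequently neither of the elementary arguments used in part (iii) applies; the sharp bound instead relies on the Widom-type trace estimate proved in \cite{Sob1, Sob2} for operators of the form $\chi_\L\op_\a(\chi_\Om)(I-\chi_\L)$. The scheme of proof will be to localize using the factor $\chi_{B(\bz,\ell)}$, cover $\p\L\cap B(\bz,\ell)$ by finitely many patches in each of which $\L$ is basic Lipschitz, apply the $q$-triangle inequality, and invoke the quantitative local bound from \cite{Sob1, Sob2} on each patch. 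Uniformity in $\L$ and $\Om$ is preserved because the constants in those references depend only on the Lipschitz constants $M_\Phi$ of the defining functions.
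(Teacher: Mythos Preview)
Your logical order is inverted relative to the paper, and this creates a genuine gap in part (i). In the paper, the bound \eqref{comm_chi:eq} is the \emph{primitive} estimate, quoted directly from \cite[Theorem~4.2 and Corollary~4.4]{Sob1}, which treats $\varphi\op_\a(\eta)$ as a single pseudo-differential operator with amplitude $\varphi(\bx)\eta(\bxi)$ localised at scale $\ell$ in $\bx$ and $\tau$ in $\bxi$; the joint exponent $(\a\ell\tau)^{(d-1)/q}$ arises intrinsically from that phase-space localisation. Part (iii) is then \emph{deduced from} (i) by a partition of unity (or again cited from \cite{Sob1}), together with the identities $[\varphi,\op_\a(\chi_\Om)]=-[\varphi,\op_\a(\chi_{\Om^c})]$ and $[\op_\a(\eta),\chi_\L]=-[\op_\a(\eta),\chi_{\L^c}]$ to handle the unbounded-complement case in Condition~\ref{domain:cond}.

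Your attempt to run the implication the other way fails on two counts. First, the hypotheses do not match: in (i) the regions $\L,\Om$ are merely Lipschitz, whereas (iii) requires Condition~\ref{domain:cond}; without $\varphi$ providing the localisation, the commutator $[\op_\a(\eta),\chi_\L]$ need not even be compact. Second, and more seriously, your decomposition does not yield the claimed scaling. From $\varphi[\op_\a(\eta),\chi_\L]$ and \eqref{between1:eq} you would obtain a bound $\|\varphi\|_\infty\,(\a\tau)^{(d-1)/q}\SN^{(r)}(\eta;\tau)$, and from $[\varphi,\op_\a(\chi_\Om)]\op_\a(\eta)$ and \eqref{between:eq} a bound $(\a\ell)^{(d-1)/q}\SN^{(r)}(\varphi;\ell)\,\|\op_\a(\eta)\|$. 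Neither of these is controlled by $(\a\ell\tau)^{(d-1)/q}$ unless you additionally assume $\ell\ge 1$ or $\tau\ge 1$, which is nowhere stipulated (only $\a\ell\tau\gtrsim 1$ is). The sentence ``careful tracking of these scales after rescaling yields the exponent $(\a\ell\tau)^{(d-1)/q}$'' is precisely where the argument breaks: multiplying by $\varphi$ in operator norm cannot improve a Schatten bound, so the $\ell$-localisation of $\varphi$ is simply lost in the first term. The joint scale must be captured at the level of the underlying $\GS_q$-estimate for the full amplitude, which is what \cite{Sob1} provides.

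Your treatment of part (ii) is essentially in line with the paper: it too reduces to \cite[Theorem~4.6 and Corollary~4.7]{Sob1} for bounded $\Om$, with the identity $\chi_\L\chi_{B(\bz,\ell)}\op_\a(\chi_\Om)(I-\chi_\L)=-\chi_\L\chi_{B(\bz,\ell)}\op_\a(\chi_{\Om^c})(I-\chi_\L)$ handling the case of bounded complement.
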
  

\begin{proof}
The bounds \eqref{comm_chi:eq} follow from 
\cite[Theorem 4.2 and Corollary 4.4]{Sob1}. 

For bounded $\Om$ the bound 
\eqref{comm_chi_chi:eq} is easily deduced 
from \cite[Theorem 4.6 and Corollary 4.7]{Sob1}. 
We omit the details. If $\Om^c = \R^d\setminus\Om$ is bounded, 
then 
\begin{align*}
\chi_\L\varphi\op_\a(\chi_\Om)(I-\chi_\L) 
= - \chi_\L \varphi\op_\a(\chi_{\Om^c}) (I-\chi_\L),
\end{align*}
and we can use \cite{Sob1} again. 

For bounded $\L$ and $\Om$ the bounds 
\eqref{between:eq} and \eqref{between1:eq} 
follow from \eqref{comm_chi:eq} by using a suitable partition of unity, or 
one can use the appropriate result from 
\cite[Corollary 4.4]{Sob1}. 
In the case of bounded complements $\L^c$ and 
$\Om^c$ we use the 
obvious identities 
\[
[\varphi, \op_\a(\chi_\Om)]
= -[\varphi, \op_\a(\chi_{\Om^c})], \  
\ [\op_\a(\eta), \chi_\L]
= -[\op_\a(\eta), \chi_{\L^c})],
\] 
and \cite[Corollary 4.4]{Sob1} again.
\end{proof}

\begin{lem}\label{comm:lem} 
Let 
$a=a(\bxi)$ be a symbol, and let 
$\ell, \a >0$ be some numbers. 
%If $\varphi$ 
%satisfies \eqref{supporth:eq}, 
Then for any $m\ge d+1$ and any $s \ge s_0 >1$,
\begin{equation}\label{decouple:eq}
\|
 \chi_{B(\bz, \ell)}
\op_{\a}(a)\bigl(1- \chi_{B(\bz, s\ell)}\bigr)\|_1
\lesssim ((s-1)\a \ell)^{d-m} 
 \|\nabla^m a\|_{\plainL1}. 
\end{equation} 
The implicit constant does not 
depend on $\ell, \a$, $\varphi$ and $a$, but may depend on $s_0$. 
\end{lem}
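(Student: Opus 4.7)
\textbf{The plan} is to derive a pointwise bound on the kernel of $\op_\a(a)$ via integration by parts in the frequency variable, and then convert this bound into a trace-norm estimate by exploiting the separation of the spatial supports of $\chi_{B(\bz,\ell)}$ and $1-\chi_{B(\bz,s\ell)}$.

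\textbf{Step 1: Kernel estimate.} The operator $\op_\a(a)$ is a convolution with the kernel
\[
K_\a(\br) = \frac{\a^d}{(2\pi)^d}\tilde a(\a\br),\qquad \tilde a(\br) = \int a(\bxi)e^{i\bxi\cdot\br}d\bxi.
\]
Integration by parts $m$ times gives $\br^\alpha\tilde a(\br)=(-i)^{|\alpha|}\int e^{i\bxi\cdot\br}\partial_\bxi^\alpha a(\bxi)\,d\bxi$, hence $|\br^\alpha\tilde a(\br)|\le\|\partial^\alpha a\|_{\plainL1}$. Summing over $|\alpha|=m$ and using $|\br|^m\lesssim\max_{|\alpha|=m}|\br^\alpha|$ yields $|\tilde a(\br)|\lesssim|\br|^{-m}\|\nabla^m a\|_{\plainL1}$, so that
\[
|K_\a(\br)|\lesssim \a^{d-m}|\br|^{-m}\|\nabla^m a\|_{\plainL1},\qquad \br\ne 0.
\]
On the support of the kernel of $T:=\chi_{B(\bz,\ell)}\op_\a(a)(1-\chi_{B(\bz,s\ell)})$ we have $\bx\in B(\bz,\ell)$ and $|\by-\bz|\ge s\ell$, so $|\bx-\by|\ge(s-1)\ell$.

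\textbf{Step 2: Reduction by iterated commutators.} Choose smooth cutoffs $\eta_1,\dots,\eta_m\in\plainC\infty_0(\R^d)$ and $\zeta\in\plainC\infty(\R^d)$ such that each $\eta_j=1$ on $B(\bz,\ell)$ with $\supp\eta_j\subset B(\bz,(1+s)\ell/2)$, $\zeta=1$ outside $B(\bz,s\ell)$ with $\supp\zeta\subset \R^d\setminus B(\bz,(1+s)\ell/2)$, and $\|\nabla^k\eta_j\|_\infty+\|\nabla^k\zeta\|_\infty\lesssim((s-1)\ell)^{-k}$. Since $\eta_j\zeta=0$, repeated use of the identity $\eta_j\cdot\op_\a(a)\cdot\zeta=\eta_j[\op_\a(a),\zeta]$ (valid because $\eta_j\zeta\op_\a(a)=0$) and its analogues yields
\[
T = \chi_{B(\bz,\ell)}\,\eta_1\bigl[\eta_2,[\eta_3,\cdots[\eta_m,[\op_\a(a),\zeta]]\cdots]\bigr](1-\chi_{B(\bz,s\ell)}).
\]
Using Taylor expansion $\psi(\bx)-\psi(\by)=(\bx-\by)\cdot\int_0^1\nabla\psi(\by+t(\bx-\by))\,dt$ for each commutator together with $(\bx-\by)^\alpha K_\a(\bx-\by)=(-i/\a)^{|\alpha|}K_{\partial^\alpha a,\a}(\bx-\by)$ (which is the operator identity behind Step 1), the resulting kernel factors as
\[
\a^{-m}\sum_{|\alpha|=m} h_\alpha(\bx,\by)\,K_{\partial^\alpha a,\a}(\bx-\by),
\]
with $h_\alpha$ smooth and $\|h_\alpha\|_\infty\lesssim((s-1)\ell)^{-m}$.

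\textbf{Step 3: Trace-norm bound.} The trace norm of the resulting operator is estimated by factoring it as a product of two Hilbert--Schmidt operators, inserting an auxiliary smooth cutoff roughly halfway in the gap between $B(\bz,\ell)$ and $B(\bz,s\ell)^c$, and exploiting that (a) the $\bx$-support is contained in the ball of radius $(1+s)\ell/2$, (b) further integrations by parts applied to $K_{\partial^\alpha a,\a}(\bx-\by)$ provide decay in $|\bx-\by|$ ensuring integrability on the unbounded part $|\by-\bz|\to\infty$. Combining the prefactor $(\a(s-1)\ell)^{-m}\cdot\a^d$ with the $\ell^d$-volume factor produced by the Hilbert--Schmidt norms over bounded pieces yields the bound $((s-1)\a\ell)^{d-m}\|\nabla^m a\|_{\plainL1}$.

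\textbf{The main obstacle} is precisely the conversion in Step 3: the $\by$-support is unbounded, so a direct Hilbert--Schmidt estimate fails; a dyadic decomposition of $\R^d\setminus B(\bz,s\ell)$ into annuli $\{|\by-\bz|\asymp 2^k s\ell\}$, together with the pointwise kernel decay, is needed to sum the contributions into a finite geometric series, and the condition $m\ge d+1$ ensures convergence of this sum.
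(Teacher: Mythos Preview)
Step 1 is correct and captures the essential kernel decay. The gap is in Steps 2--3, where you convert this into a trace-norm bound. Step 2 is a valid identity (once the cutoffs are arranged so that $\supp\eta_j\subset B(\bz,s\ell)$, which makes the unwanted terms $[\cdots]\eta_j(1-\chi_{B(\bz,s\ell)})$ vanish), but it does not help: it produces a kernel of the form $\a^{-m}h_\alpha(\bx,\by)K_{\partial^\alpha a,\a}(\bx-\by)$ with $\|h_\alpha\|_\infty\lesssim((s-1)\ell)^{-m}$ and $|K_{\partial^\alpha a,\a}|\le(2\pi)^{-d}\a^d\|\nabla^m a\|_{\plainL1}$, i.e.\ a \emph{constant} bound with no decay in $|\bx-\by|$. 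The $\by$-support is still all of $\R^d\setminus B(\bz,s\ell)$, so no integration is possible. Your proposed fix in Step~3(b), ``further integrations by parts applied to $K_{\partial^\alpha a,\a}$'', would restore decay but at the cost of $\|\nabla^{m+k}a\|_{\plainL1}$ for some $k>0$, which is not in the claimed bound. The closing paragraph correctly names dyadic decomposition of the $\by$-domain combined with the pointwise decay from Step~1 as the right tool, but the crucial ingredient --- an actual $\GS_1$-bound for each dyadic piece --- is not supplied; a sup-norm bound on a kernel does not by itself control a trace norm.

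By contrast, the paper's proof is a two-line reduction: translate $\bz\to\mathbf 0$ and rescale $\bx\mapsto\ell\bx$, so the operator becomes unitarily equivalent to $\chi_{B(\mathbf 0,1)}\op_1(\tilde a)(I-\chi_{B(\mathbf 0,s)})$ with $\tilde a(\bxi)=a((\a\ell)^{-1}\bxi)$; then quote \cite[Theorem~2.6]{Sob1}, a ready-made trace-norm estimate for operators with spatially separated supports, which yields $(s-1)^{d-m}\|\nabla^m\tilde a\|_{\plainL1}$. Since $\|\nabla^m\tilde a\|_{\plainL1}=(\a\ell)^{d-m}\|\nabla^m a\|_{\plainL1}$, the bound \eqref{decouple:eq} follows. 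In effect you are trying to re-prove that cited theorem from scratch; this can be done (for instance via the amplitude bound of Lemma~\ref{ampl:lem} applied to smooth dyadic localisations, which needs only $\bx,\by$-derivatives and hence no further $\bxi$-derivatives of $a$), but your write-up does not carry it through.
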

 
\begin{proof}
Without loss of generality we may assume that $\bz =  \bold0$. 
The operator in \eqref{decouple:eq} is unitarily equivalent 
to 
\[ 
 \chi_{B(\bold0, 1)}
\op_1(\tilde a)\bigl(I - \chi_{B(\bold0, s)}\bigr),\ 
\tilde a(\bxi) = a(\bxi(\a\ell)^{-1}).
\]
Since the sets 
$B(\bold0, 1)$ and $\R^d\setminus B(\bold0, s)$ 
are separated by a 
positive distance $s-1$, 
it follows from \cite[Theorem 2.6]{Sob1} that 
\begin{equation*}
\|
\chi_{B(\bold0, 1)}
\op_1(\tilde a)\bigl(I - \chi_{B(\bold0, s)}\bigr)\|_1
\lesssim 
(s-1)^{d-m}\| \nabla^m \tilde a\|_{\plainL1},
\end{equation*}
for any $m\ge d+1$.
The $\plainL1$-norm on the right-hand side coincides with 
$(\a\ell)^{d-m}\|\nabla^m a\|_{\plainL1}$, which leads to \eqref{decouple:eq}.
\end{proof}

\subsection{Bounds for more general operators}

For methodological purposes it is also necessary to 
introduce more general pseudo-differential operators. 
For a function $p = p(\bx, \by, \bxi)$, 
which we call \textit{amplitude}, define the operator 
\begin{equation*}
\bigl(\op_\a^{\rm a}(p) u\bigr)(\bx)
= \frac{\a^{d}}{(2\pi)^{\frac{d}{2}}}
\iint e^{i\a\bxi\cdot(\bx-\by)} p(\bx, \by, \bxi) 
u(\by) d\by d\bxi,  u\in \plainS(\R^d).
\end{equation*} 
We need a very simple-looking bound for the trace norm of 
$\op_\a^{\rm a}(p)$ which we borrow from  \cite[Theorem 2.5]{Sob1}:

\begin{lem}\label{ampl:lem}
\begin{equation}\label{ampl1:eq}
\|\op_1^{\rm a}(p)\|_1
\lesssim \sum_{n, l = 0}^{d+1}
\iiint |\nabla_{\bx}^n\nabla_{\by}^l p(\bx, \by, \bxi)|d\bx d\by d\bxi,
\end{equation}
and 
\begin{equation}\label{ampl2:eq}
\|\op_\a^{\rm a}(p)\|_1
\lesssim \a^d \sum_{n, l = 0}^{d+1}
\iiint |\nabla_{\bx}^n\nabla_{\by}^l p(\bx, \by, \bxi)|d\bx d\by d\bxi.
\end{equation}
for any $\a>0$. 
The implicit constants in \eqref{ampl1:eq} and \eqref{ampl2:eq} 
do not depend on $\a$ or amplitude $p$.
\end{lem}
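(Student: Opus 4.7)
The bound \eqref{ampl2:eq} reduces to \eqref{ampl1:eq} by the substitution $\bxi\mapsto\bxi/\a$ inside the oscillatory integral defining $\op_\a^{\rm a}(p)$: the factor $\a^d$ in front cancels against $\a^{-d}$ from the Jacobian, giving the identity $\op_\a^{\rm a}(p)=\op_1^{\rm a}(q_\a)$ with $q_\a(\bx,\by,\bxi):=p(\bx,\by,\bxi/\a)$. Applying \eqref{ampl1:eq} to $q_\a$ and then performing a further change of variables $\bxi\mapsto\a\bxi$ in each of the iterated $L^1$-integrals on the right-hand side produces exactly one factor of $\a^d$, since differentiation in $\bx,\by$ commutes with composition in the last argument. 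Thus the whole task is to prove \eqref{ampl1:eq}.

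For the base case $\a=1$, I would write the operator as the vector-valued integral $\op_1^{\rm a}(p)=\int A(\bxi)\,d\bxi$, where $A(\bxi)=M_{e^{i\bxi\cdot\bx}}\,K_\bxi\,M_{e^{-i\bxi\cdot\bx}}$ and $K_\bxi$ is the plain integral operator with kernel $(2\pi)^{-d/2}p(\bx,\by,\bxi)$. Since the modulation operators $M_{e^{\pm i\bxi\cdot\bx}}$ are unitary on $\plainL2(\R^d)$, one has $\|A(\bxi)\|_1=\|K_\bxi\|_1$, and Minkowski's inequality for the trace norm reduces the task to the pointwise-in-$\bxi$ estimate
\[
\|K_\bxi\|_1\lesssim \sum_{n,l=0}^{d+1}\iint|\nabla_\bx^n\nabla_\by^l p(\bx,\by,\bxi)|\,d\bx\,d\by,
\]
after which integration in $\bxi$ yields \eqref{ampl1:eq}.

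The pointwise bound on $\|K_\bxi\|_1$ is the heart of the argument. I would obtain it by localising the kernel with a partition of unity subordinate to unit product cubes $Q_\bj\times Q_\bk\subset\R^{2d}$, $\bj,\bk\in\Z^d$, and expanding each local piece in a Fourier series on its cube. The Fourier coefficient at frequencies $(\bm,\bn)\in\Z^d\times\Z^d$ produces a rank-one contribution with trace norm comparable to $|c_{\bm,\bn}^{\bj,\bk}|$; integration by parts of order exactly $d+1$ in each of $\bx$ and $\by$ yields the bound
\[
|c_{\bm,\bn}^{\bj,\bk}|\lesssim \langle\bm\rangle^{-(d+1)}\langle\bn\rangle^{-(d+1)}\,\bigl\|(\nabla_\bx^{d+1}\nabla_\by^{d+1}p(\cdot,\cdot,\bxi))\chi_{Q_\bj\times Q_\bk}\bigr\|_{\plainL1}.
\]
This is the minimal order at which $\sum_{\bm,\bn\in\Z^d}\langle\bm\rangle^{-(d+1)}\langle\bn\rangle^{-(d+1)}<\infty$, which explains the index $d+1$ appearing in the statement of the lemma. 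Summing the local bounds over the cube indices $\bj,\bk$ then reassembles the global $\plainL1$-norms on the right-hand side. The main obstacle is the double-sum bookkeeping --- controlling the off-diagonal contributions while ensuring that only derivatives in $\bx$ and $\by$ (not in $\bxi$) appear and no spurious cross-terms enter --- and this is precisely the technical content of \cite[Theorem 2.5]{Sob1} that is being invoked here.
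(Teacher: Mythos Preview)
Your proposal is correct and matches the paper's approach: the paper's proof simply states that \eqref{ampl1:eq} is a direct consequence of \cite[Theorem~2.5]{Sob1} and that \eqref{ampl2:eq} follows from \eqref{ampl1:eq} by the rescaling $\bxi\mapsto\bxi\a^{-1}$, which is exactly your reduction. Your additional sketch of the proof of the cited theorem --- writing $\op_1^{\rm a}(p)=\int A(\bxi)\,d\bxi$ with $A(\bxi)$ unitarily equivalent to the integral operator $K_\bxi$, then bounding $\|K_\bxi\|_1$ via a unit-cube partition of unity and Fourier series on each cube --- is a valid outline of the standard argument behind that reference, and you correctly identify it as such in your final sentence.
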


\begin{proof}
The bound \eqref{ampl1:eq} is a direct consequence of \cite[Theorem 2.5]{Sob1}. 
The bound \eqref{ampl2:eq} follows from \eqref{ampl1:eq} 
by rescaling $\bxi\mapsto \bxi\a^{-1}$.
\end{proof}

The above estimates are convenient for us 
because they do not contain any derivatives w.r.t. $\bxi$.  

\begin{lem} 
Let $a = a(\bxi)$, and let $\varphi = \varphi(\bx)$ satisfy 
\eqref{supporth:eq}. 
Then for any $\a>0$ and $\ell>0$, we have  
\begin{equation}\label{comm_new:eq}
\|[\op_\a(a), \varphi]\|_1\lesssim (\a\ell)^{d-1}
\SN^{(d+2)}( \varphi; \ell)
\bigl[
\|\nabla a\|_{\plainL1} + (\a\ell)^{1-m} \|\nabla^m a\|_{\plainL1}
\bigr],
\end{equation}
with an arbitrary $m\ge d+1$. 
The implicit constant in \eqref{comm_new:eq} does not depend on 
$\a, \ell$, $\varphi$ or $a$.
\end{lem}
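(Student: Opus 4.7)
I would split the commutator using a smooth spatial cutoff $\psi\in\plainC\infty_0(\R^d)$ with $\psi\equiv 1$ on $B(\bz,2\ell)$, $\supp\psi\subset B(\bz,3\ell)$, and $\SN^{(k)}(\psi;\ell)\lesssim 1$. Since $\supp\varphi\subset B(\bz,\ell)\subset\{\psi=1\}$, so that $\varphi(1-\psi)=(1-\psi)\varphi=0$, one obtains the elementary identity
\[
[\op_\a(a),\varphi]=\psi[\op_\a(a),\varphi]\psi+(1-\psi)\op_\a(a)\varphi-\varphi\op_\a(a)(1-\psi).
\]
The two off-diagonal terms have multipliers whose supports are separated by $\gtrsim\ell$. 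After inserting $\varphi=\chi_{B(\bz,\ell)}\varphi$ and taking the operator norm of $\varphi$ to the outside, the decoupling estimate \eqref{decouple:eq} of Lemma \ref{comm:lem} with $s=2$ produces a bound of order $(\a\ell)^{d-m}\|\nabla^m a\|_{\plainL1}\|\varphi\|_{\plainL\infty}$, which is absorbed into the second summand on the right-hand side of \eqref{comm_new:eq} using $\|\varphi\|_{\plainL\infty}\le\SN^{(d+2)}(\varphi;\ell)$.

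\paragraph{} For the localized piece $\psi[\op_\a(a),\varphi]\psi$, I would start from the integral representation of the commutator
\[
[\op_\a(a),\varphi]u(\bx)=\frac{\a^{d}}{(2\pi)^{d}}\iint e^{i\a\bxi\cdot(\bx-\by)}a(\bxi)(\varphi(\by)-\varphi(\bx))u(\by)\,d\by\,d\bxi,
\]
substitute the first-order Taylor identity $\varphi(\by)-\varphi(\bx)=(\by-\bx)\cdot F(\bx,\by)$ with $F(\bx,\by)=\int_0^1\nabla\varphi(\bx+t(\by-\bx))\,dt$, and integrate by parts in $\bxi$ once via $(\by-\bx)e^{i\a\bxi\cdot(\bx-\by)}=(i/\a)\nabla_{\bxi}e^{i\a\bxi\cdot(\bx-\by)}$ to transfer the gradient onto $a$. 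This yields the amplitude representation
\[
\psi[\op_\a(a),\varphi]\psi=-\frac{i}{\a}\,\op_\a^{\rm a}\bigl(\psi(\bx)\nabla a(\bxi)\cdot F(\bx,\by)\psi(\by)\bigr).
\]

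\paragraph{} The trace norm of this amplitude operator is then estimated using Lemma \ref{ampl:lem}. After a Leibniz expansion of $\nabla_\bx^n\nabla_\by^l(\psi F\psi)$, one encounters terms with up to $n+l+1$ derivatives falling on $\varphi$. The spatial double integral is handled by the change of variables $(\bw,\bv)=(\bx+t(\by-\bx),\by-\bx)$ (unit Jacobian for each $t$); the support of $\psi(\bx)\psi(\by)$ confines $\bv$ to a ball of radius $\lesssim\ell$ while $\nabla\varphi$ restricts $\bw$ to $B(\bz,\ell)$. Summing over $n,l\le d+1$, multiplying by the $\a^{d-1}$ prefactor and by $\|\nabla a\|_{\plainL1}$, and invoking the monotonicity $\SN^{(k)}(\varphi;\ell)\le\SN^{(d+2)}(\varphi;\ell)$, one reaches the first summand $(\a\ell)^{d-1}\SN^{(d+2)}(\varphi;\ell)\|\nabla a\|_{\plainL1}$. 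The main obstacle is to avoid a spurious extra factor of $\ell^d$ that appears in the naive Leibniz bookkeeping: this is overcome by a refined estimate which, in the spirit of a rank-one (``plane-wave'') decomposition of $\op_\a(\nabla a)=\text{const}\cdot\int\nabla a(\bxi)\,e_{\a\bxi}\otimes e_{\a\bxi}^{*}\,d\bxi$, bounds $\|M_\phi\op_\a(b)M_{\tilde\psi}\|_1$ by $\a^d\|\phi\|_{\plainL2}\|\tilde\psi\|_{\plainL2}\|b\|_{\plainL1}$; this gives $\|\nabla\varphi\|_{\plainL2}\|\psi\|_{\plainL2}\lesssim\ell^{d-1}\SN^{(1)}(\varphi;\ell)$, which yields the desired scaling.
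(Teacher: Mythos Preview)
Your overall architecture --- insert a smooth cutoff, handle the off-diagonal part by the decoupling estimate \eqref{decouple:eq}, write the localized commutator as an amplitude operator after one integration by parts in~$\bxi$, and then invoke Lemma~\ref{ampl:lem} --- is exactly the paper's strategy. The essential difference is that the paper first reduces to $\bz=\bold0$ and $\ell=1$ by scaling (the operator $[\op_\a(a),\varphi]$ is unitarily equivalent to $[\op_{\a\ell}(a),\tilde\varphi]$ with $\tilde\varphi(\bx)=\varphi(\ell\bx+\bz)$ supported in $B(\bold0,1)$ and $\SN^{(k)}(\tilde\varphi;1)=\SN^{(k)}(\varphi;\ell)$). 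With $\ell=1$ the double spatial integral in \eqref{ampl2:eq} is over a set of volume $O(1)$, so the ``spurious $\ell^{d}$'' never appears, and one lands directly on $(\a\ell)^{d-1}\SN^{(d+2)}(\varphi;\ell)\|\nabla a\|_{\plainL1}$.

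Your proposed cure for the $\ell^{d}$ defect has a genuine gap. The rank-one bound $\|M_\phi\op_\a(b)M_{\tilde\psi}\|_1\lesssim\a^d\|\phi\|_{\plainL2}\|\tilde\psi\|_{\plainL2}\|b\|_{\plainL1}$ requires the amplitude to factor as $\phi(\bx)b(\bxi)\tilde\psi(\by)$. After the Taylor step, however, the amplitude is $\psi(\bx)\,\nabla a(\bxi)\cdot F(\bx,\by)\,\psi(\by)$ with $F(\bx,\by)=\int_0^1\nabla\varphi((1-t)\bx+t\by)\,dt$ depending jointly on $(\bx,\by)$; it does not split, and for intermediate $t$ the kernel $e^{i\a\bxi\cdot(\bx-\by)}\psi(\bx)\partial_j\varphi((1-t)\bx+t\by)\psi(\by)$ is not rank one, so the plane-wave decomposition does not produce $\|\nabla\varphi\|_{\plainL2}\|\psi\|_{\plainL2}$. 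The change of variables $(\bw,\bv)=((1-t)\bx+t\by,\by-\bx)$ you describe also does not help: both $\bw$ and $\bv$ range over balls of radius $\lesssim\ell$, so one still picks up $\ell^{2d}\cdot\ell^{-1}=\ell^{2d-1}$ instead of the needed $\ell^{d-1}$. The simplest and cleanest fix is precisely the paper's rescaling to $\ell=1$; once you insert that step, the rest of your outline goes through unchanged.
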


\begin{proof} Without loss of generality assume that $\bz = 0$ and $\ell = 1$. 
Let $\tilde\varphi\in\plainC\infty_0(\R^d)$ be 
a function such that $\tilde\varphi(\bx) = 1$ for $|\bx|<2$. 
Denote 
\begin{equation*}
\tilde p(\bx, \by, \bxi) 
= a(\bxi)
\tilde \varphi(\bx)
\bigl(\varphi(\bx) - \varphi(\by) \bigr)
\tilde \varphi(\by).
\end{equation*}
Then by \eqref{decouple:eq}, 
\begin{equation}\label{besk:eq}
\| [\varphi, \op_\a(a)] - \op_\a^{\rm a}(\tilde p)\|_1
\lesssim \a^{d-m} \SN^{(d+1)}(\varphi; 1) \|\nabla^m a\|_{\plainL1}, 
\end{equation}
for any $m\ge d+1$. 
Further proof we conduct for $d=1$, although we do not replace 
$d$ by its value in the estimates below. 
The case of arbitrary $d$ is done in a similar way 
with obvious modifications. 
Integrate by parts in $\xi$, so that
\begin{equation*}
\op_\a^{\rm a}(\tilde p) = -\frac{1}{i\a} \op_\a^{\rm a} (p),\ 
p(x, y, \xi) = \frac{\p_{\xi}\tilde p(x, y, \xi)}{x-y}.
\end{equation*}
Rewrite 
\begin{align*}
p(x, y, \xi) =  a'(\xi) \tilde \varphi(x) \tilde \varphi(y)
\int_0^1 \varphi'\bigl(x+t(y-x)\bigr) dt .
\end{align*}
Thus by \eqref{ampl2:eq},
\begin{equation*}
\|\op_\a^{\rm a}(\tilde p)\|_1\lesssim 
\a^{d-1} \SN^{(d+1)}(\nabla \varphi; 1) \| \nabla a\|_{\plainL1},
\end{equation*}
where the implicit constant depends on $\tilde\varphi$. 
Together with \eqref{besk:eq} this gives \eqref{comm_new:eq}. 
\end{proof}

\subsection{\textit{Multi-scale} symbols} \label{mscale:subsect} 
The bounds above are very convenient as they 
contain easily computable quantities, such as integral norms 
of symbol's derivatives. We also need other types of bounds where 
the dependence on the symbol $a$ is less explicit, but still sufficient 
for our needs. Following 
\cite{LeSpSo_15}, we achieve this by placing ourselves in the context of 
\textit{multi-scale} symbols.

Let $v = v(\bxi)$ and $\tau = \tau(\bxi)$ 
be some continuous, positive functions on $\R^d$.  
Consider a symbol $a\in \plainC{\infty}(\R^d)$ satisfying the bounds
\begin{equation}\label{scales:eq}
|\nabla_{\bxi}^k a(\bxi)|\lesssim  
\tau(\bxi)^{-k} v(\bxi),\ k = 0, 1, 2, \dots,\quad \bxi\in\R^d.
\end{equation}
It is natural to call $\tau$ a \textit{scale (function)} 
and $v$ the \textit{amplitude (function)}. 
We always assume that $\|v\|_{\plainL\infty}\le 1$ and  
\begin{equation}\label{tauinf:eq}
\tau_{\textup{\tiny inf}} := \inf_{\bxi\in\R^d}\tau(\bxi)>0. 
\end{equation}
Introduce the notation 
\begin{equation}\label{Vsigma:eq}
V_{\s, \om}(v, \tau)  := \int \frac{v(\bxi)^\s}{\tau(\bxi)^\om}d\bxi, \ 
\s>0, \om\in\R.
\end{equation}
Apart from the continuity we 
need some extra conditions on the scale and the amplitude. 
First we assume that 
$\tau$ is globally Lipschitz, i.e., for some $\nu \in (0, 1)$,
\begin{equation}\label{Lip:eq} 
|\tau(\bxi) - \tau(\boldeta)| \le \nu |\bxi-\boldeta|,\ \ \bxi,\boldeta\in\R^d,
\end{equation} 
with some $\nu >0$. 
By adjusting the implicit constants in \eqref{scales:eq} 
one may choose for $\nu$ an arbitrary positive value. 
We assume that this value can be picked in such a way that 
the amplitude $v$ satisfies the relation 
\begin{equation}\label{w:eq}
\frac{v(\boldeta)}{v(\bxi)}\asymp 1,\ \boldeta\in B\bigl(\bxi, \tau(\bxi)\bigr).
\end{equation}
%
%%It is useful to think of $v$ and $\tau$ 
%as functional parameters. 
%
In the next result we establish some bounds that 
depend explicitly on the functional parameters $v$ and $\tau$. 

The following result follows from  
\cite[Lemma 3.4 and Theorem 3.5]{LeSpSo_15}.

\begin{prop}\label{Crelle:prop}
Let $a$ be a symbol satisfying \eqref{scales:eq} with 
some scaling function $\tau$ and some amplitude $v(\bxi)$ 
for which \eqref{Lip:eq}, \eqref{tauinf:eq} and 
\eqref{w:eq} hold. Suppose that $\L$ is a Lipschitz region, 
%that $\varphi$ satisfies \eqref{supporth:eq}, 
and that $\a\ell\tau_{\textup{\tiny inf}} \gtrsim 1$. Then 
\begin{align}\label{crelle1:eq}
\|\chi_\L  \chi_{B(\bz, \ell)}
\op_\a(a)(I-\chi_\L)\|_q^q
\lesssim (\a\ell)^{d-1}
V_{q,1}(v, \tau). 
\end{align}
If $\L$ is basic Lipschitz, then this bound is uniform in $\L$.

Suppose in addition that 

-- $\L$ satisfies Condition \ref{domain:cond},

-- the function $f$ satisfies Condition \ref{f:cond} with some $\g>0$, $R>0$  
and  $n=2$, and that $\a \tau_{\textup{\tiny inf}} \gtrsim 1$. 

Then 
for any $\s < \min\{1,\g\}$, we have 
\begin{align}\label{crelle2:eq}
\|D_\a(a, \L; f)\|_1\lesssim  \a^{d-1}\1 f\1_2 R^{\g-\s}
 V_{\s,1}(v, \tau). 
\end{align}
The implicit constants in 
\eqref{crelle1:eq} and \eqref{crelle2:eq} 
do not depend on $\a$, $f$ and $R$, 
but depend on the region $\L$, on 
the implicit constants 
in \eqref{scales:eq}, \eqref{w:eq}, and on the parameter $\nu$. 
\end{prop}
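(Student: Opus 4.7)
The plan is to deduce both inequalities from their single-scale counterparts in Lemma \ref{between:lem} by means of a partition of unity adapted to the scale function $\tau$. Exploiting \eqref{Lip:eq} with a sufficiently small $\nu$ and \eqref{tauinf:eq}, one constructs a locally finite cover $\{B(\bxi_j,\tau(\bxi_j))\}_j$ of $\R^d$ with a subordinate smooth partition of unity $\{\psi_j\}$ satisfying $\SN^{(r)}(\psi_j;\tau(\bxi_j))\lesssim 1$ uniformly in $j$. Setting $a_j:=a\psi_j$, the hypotheses \eqref{scales:eq} and \eqref{w:eq} give $\SN^{(r)}(a_j;\tau(\bxi_j))\lesssim v(\bxi_j)$. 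The passage from discrete sums to the integrals \eqref{Vsigma:eq} rests on the Riemann-sum comparison
\begin{equation*}
\sum_j v(\bxi_j)^\s\tau(\bxi_j)^{d-1}\;\asymp\;\int v(\bxi)^\s\,\tau(\bxi)^{-1}\,d\bxi\;=\;V_{\s,1}(v,\tau),
\end{equation*}
which follows from \eqref{w:eq} since each ball $B(\bxi_j,\tau(\bxi_j))$ has volume comparable to $\tau(\bxi_j)^d$ and both $v$, $\tau$ are slowly varying at this scale.

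For \eqref{crelle1:eq}, I would apply the $q$-triangle inequality \eqref{qtriangle:eq} to the decomposition $\op_\a(a)=\sum_j\op_\a(a_j)$. After a mild smoothing of the cut-off $\chi_{B(\bz,\ell)}$ into some $\varphi$ satisfying \eqref{supporth:eq}, the identity
\begin{equation*}
\chi_\L\varphi\op_\a(a_j)(I-\chi_\L)=-\chi_\L[\varphi\op_\a(a_j),\chi_\L](I-\chi_\L)
\end{equation*}
reduces each single-scale term to the commutator bound \eqref{comm_chi:eq}, which, applied to $\eta:=a_j/v(\bxi_j)$, produces $v(\bxi_j)^q(\a\ell\tau(\bxi_j))^{d-1}$ (legal under the standing assumption $\a\ell\tau_{\textup{\tiny inf}}\gtrsim 1$). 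Summing in $j$ and invoking the Riemann-sum comparison yields \eqref{crelle1:eq}.

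For \eqref{crelle2:eq}, I would decompose
\begin{equation*}
D_\a(a,\L;f)=\chi_\L\bigl[f(W_\a(a;\L))-f(\op_\a(a))\bigr]\chi_\L+\chi_\L\bigl[f(\op_\a(a))-\op_\a(f\circ a)\bigr]\chi_\L.
\end{equation*}
The first summand is exactly of the form to which Proposition \ref{Szego1:prop} applies with $P=\chi_\L$, $A=\op_\a(a)$, $n=2$, $q=1$, and $\s<\min\{\g,1\}$: its trace norm is controlled by $\1 f\1_2 R^{\g-\s}\|\chi_\L\op_\a(a)(I-\chi_\L)\|_\s^\s$, and the Schatten factor is estimated by a global version of \eqref{crelle1:eq}, obtained by summing the local bound over a cover of a neighbourhood of $\p\L$ and using the off-diagonal decay provided by Lemma \ref{comm:lem} to discard contributions away from $\p\L$. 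The second summand is the standard pseudo-differential symbolic error: expanding $f$ via the Helffer--Sj\"ostrand formula \eqref{HS:eq} and controlling the resulting resolvent commutators by \eqref{comm_new:eq} at each scale $\tau(\bxi_j)$, followed by Riemann-sum comparison, yields a bound of the same order.

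The principal obstacle will be the interaction between the multi-scale partition and the quasi-norm nature of the $\s q$-Schatten class with $\s q<1$ that appears through Proposition \ref{Szego1:prop}: one must orchestrate the $q$-triangle inequality so that all implicit constants remain uniform in the number of partition pieces (which may be infinite if $\Om^c$ or $\L^c$ is unbounded), and so that the radius $R$ enters solely through the explicit factor $R^{\g-\s}$ rather than contaminating constants via auxiliary cut-off constructions. The careful bookkeeping needed here is exactly what the results borrowed from \cite{Sob1,LeSpSo_15} already provide.
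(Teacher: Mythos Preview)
Your multi-scale partition approach to \eqref{crelle1:eq} is the correct one and matches the method underlying the results the paper cites from \cite{LeSpSo_15}; the paper itself does not argue the proposition but simply refers to \cite[Lemma 3.4 and Theorem 3.5]{LeSpSo_15}.

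For \eqref{crelle2:eq}, however, your decomposition introduces a spurious term. Since the symbol $a=a(\bxi)$ depends on $\bxi$ only, the operator $\op_\a(a)$ is a Fourier multiplier and the functional calculus gives $f(\op_\a(a))=\op_\a(f\circ a)$ \emph{exactly}. Hence your second summand $\chi_\L\bigl[f(\op_\a(a))-\op_\a(f\circ a)\bigr]\chi_\L$ vanishes identically, and the machinery you propose for it (Helffer--Sj\"ostrand with \eqref{comm_new:eq}) is unnecessary --- and would in fact not work as written, because $f$ satisfies Condition~\ref{f:cond} only with $n=2$, which is not enough smoothness for the integral in \eqref{funD:eq} to converge. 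With that term removed, your application of Proposition~\ref{Szego1:prop} with $P=\chi_\L$, $A=\op_\a(a)$, $q=1$ is exactly right: it yields $\|D_\a(a,\L;f)\|_1\lesssim \1 f\1_2 R^{\g-\s}\|\chi_\L\op_\a(a)(I-\chi_\L)\|_\s^\s$, and since $\L$ satisfies Condition~\ref{domain:cond} either $\L$ or $\L^c$ lies in a fixed ball $B(\bold0,R_0)$, so a single use of \eqref{crelle1:eq} (or its adjoint) with $q=\s$ and $\ell=R_0$ bounds the Schatten factor by $\a^{d-1}V_{\s,1}(v,\tau)$. No additional cover of $\p\L$ and no off-diagonal decay from Lemma~\ref{comm:lem} is needed here.
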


\section{Bounds involving the symbol $a_T$. 
Proof of Theorem \ref{entropy:thm}}\label{at:sect}

\subsection{Elementary bounds for the symbol $a_T$} 
Let $a_T=a_{T, \Om}$ 
be a symbol satisfying Condition \ref{at:cond}, 
with a \textit{bounded} $\Om$.  
In order to derive some integral bounds for $a_T$ 
we need to obtain estimates for the function $\rho(\bxi)$ 
(see \eqref{distance:eq}) 
in terms of the Lipschitz functions responsible for the local representation 
of $\p\Om$. 

Since the region $\Omega$ is bounded and has 
finitely many connected components, 
we can cover the boundary $S = \p\Om$ 
with finitely many open balls $\{D_j\}$ of equal radii $r\le 1$, 
centred at 
some $\bxi_j\in S$, such that in each of the balls $D_j$ 
the boundary $S$, with an appropriate choice of coordinates,
 is the graph of a Lipschitz function 
$\Psi_j$ on $\R^{d-1}$:
\begin{equation}\label{boundary:eq}
 S\cap D_j = \{\bxi\in\R^d: \xi_d = \Psi_j(\hat\bxi)\}\cap D_j. 
 \end{equation}
We may also assume that 
 \begin{equation}\label{telo:eq}
 \Om\cap D_j
 = \{\bxi\in\R^d: \xi_d > \Psi_j(\hat\bxi)\}\cap D_j.
 \end{equation} 
Let $\tilde D$ be an open subset of $\R^d\setminus S$, such that
\begin{equation}\label{coverrd:eq}
\R^d = (\cup_j D_j) \cup\tilde D,
\end{equation}  
It is clear that one can choose the balls $D_j$ so that 
\begin{align}
\begin{cases}\label{dist:eq}
\rho(\bxi)\asymp |\xi_d - \Psi_j(\hat\bxi)|,\ \bxi\in D_j,\\[0.2cm]
\rho(\bxi)\asymp \lu \bxi\ru,\ \bxi\in\tilde D.
\end{cases}
\end{align} 
It is natural to view $a_T$ as a multi-scale symbol 
(see Subsect. \ref{mscale:subsect} for the definition). Indeed, 
the bounds \eqref{ata:eq} and \eqref{nablat:eq} imply that  
\begin{align}\label{at:eq}
|\nabla^m a_T(\bxi)|\lesssim (T+\tilde\rho(\bxi))^{-m} 
\lu \bxi\ru^{-\b}, m = 0, 1, 2, \dots, 
\end{align}
so $a_T$ satisfies \eqref{scales:eq} with
\begin{align}\label{vtau:eq}
v(\bxi) = \lu\bxi\ru^{-\b},\ \ 
\tau(\bxi) = \frac{1}{2}(\tilde\rho(\bxi) + T).
\end{align}
Since $|\nabla\tilde \rho|= 1$ a.e., the thus defined scale $\tau$ satisfies 
\eqref{Lip:eq} with $\nu = 1/2$. 
Furthermore, the function $v$ satisfies \eqref{w:eq}. 
Note also that 
$\tau_{\textup{\tiny inf}}\asymp T$.

\begin{lem}\label{atprop:lem} 
Let $a_T = a_{T, \Om}$ be as in Condition \ref{at:cond}, and 
let $0 < T\lesssim 1$. Then 
for any $\d> d\b^{-1}$, 
\begin{equation}\label{ld:eq}
\||a_{T} - \chi_{\Om}|^\d\|_{\plainL1}\lesssim T, 
\end{equation} 
and for any $m\ge 1$,
\begin{equation}\label{nablanorm:eq}
\|\nabla^m a_{T}\|_{\plainL1}\lesssim T^{-m+1}.
\end{equation}
Furthermore, let $v$ and $\tau$ be as defined in 
\eqref{vtau:eq}. Then for any $\s > d\b^{-1}$, 
\begin{align}\label{vlog:eq}
V_{\s, 1}(v, \tau)
\lesssim\log\biggl(\frac{1}{T}+1\biggr), 
\end{align}
and
\begin{align}\label{vp:eq}
V_{\s, \om}(v, \tau)
\lesssim T^{-\om+1}, \ \forall \om > 1. 
\end{align} 
The implicit constants in \eqref{ld:eq}, \eqref{nablanorm:eq}, \eqref{vlog:eq} 
and \eqref{vp:eq} depend only on the constants in \eqref{at:cond}. 
\end{lem}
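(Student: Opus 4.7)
The approach is to localize the integrals using the finite cover \eqref{coverrd:eq}: on each ball $D_j$ I pass to the coordinates $(\hat\bxi, \xi_d)$ of \eqref{boundary:eq}--\eqref{telo:eq} and reduce to a one-dimensional integral in $t := \xi_d - \Psi_j(\hat\bxi)$, using the local comparison $\rho(\bxi)\asymp |t|$ from \eqref{dist:eq}; on $\tilde D$ I use the other half of \eqref{dist:eq}, namely $\rho(\bxi)\asymp \langle\bxi\rangle$, so that $\tilde\rho\asymp 1$ and the integrand inherits the decay $\langle\bxi\rangle^{-\b}$ from $v$. Since $\Om$ is bounded, each $D_j$ has finite measure and the $\hat\bxi$-integrals are harmless; only the scaling in $t$ and the tail behaviour on $\tilde D$ need attention.

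\textbf{Proof of \eqref{ld:eq}.} By \eqref{ata:eq} we have $|a_T-\chi_\Om|^\d\lesssim \lu\rho/T\ru^{-\b\d}$. On each $D_j$, after the substitution $s = t/T$,
\[
\int_{D_j}\lu \rho(\bxi)/T\ru^{-\b\d}\,d\bxi
\lesssim \int d\hat\bxi\int_{|t|<r}\lu t/T\ru^{-\b\d}\,dt
\lesssim T\int_{\R}\lu s\ru^{-\b\d}\,ds\lesssim T,
\]
since the exponent $\b\d>d\ge 2>1$. On $\tilde D$ one has $\rho\gtrsim 1$, so $\lu\rho/T\ru^{-\b\d}\lesssim (T/\lu\bxi\ru)^{\b\d}$, and the resulting integral is of order $T^{\b\d}\lesssim T$ because $\b\d>d$.

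\textbf{Proof of \eqref{nablanorm:eq}.} The bound \eqref{at:eq} together with $\tilde\rho\asymp \rho$ near $\p\Om$ gives, on each $D_j$,
\[
\int_{D_j}|\nabla^m a_T|\,d\bxi
\lesssim \int d\hat\bxi\int_{|t|<r}(T+|t|)^{-m}\lu t/T\ru^{-\b}\,dt
\lesssim T^{-m+1}\int_{\R}(1+|s|)^{-m}\lu s\ru^{-\b}\,ds,
\]
where the last integral converges for any $m\ge 1$ and any $\b>0$. On $\tilde D$, $(T+\tilde\rho)^{-m}\asymp 1$ and the integrand is dominated by $\lu\bxi\ru^{-\b}$ (even by $T^\b\lu\bxi\ru^{-\b}$ at infinity), which is integrable as $\b>d$; this contribution is $O(1)+O(T^\b)$, lower order than $T^{-m+1}$.

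\textbf{Proof of \eqref{vlog:eq} and \eqref{vp:eq}.} Write
\[
V_{\s,\om}(v,\tau)
=\int \lu\bxi\ru^{-\b\s}(T+\tilde\rho(\bxi))^{-\om}\,d\bxi
\lesssim \sum_j\int_{D_j}(T+|t|)^{-\om}\,d\bxi+\int_{\tilde D}\lu\bxi\ru^{-\b\s}\,d\bxi,
\]
since on $D_j$ the factor $\lu\bxi\ru^{-\b\s}$ is bounded and $\tilde\rho\asymp|t|$, while on $\tilde D$ one has $\tau\asymp 1$. The $\tilde D$-integral is finite precisely when $\b\s>d$, i.e.\ $\s>d/\b$, and contributes $O(1)$. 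On each $D_j$ the one-dimensional integral evaluates to $\log(1/T+1)$ when $\om=1$ and to $T^{-\om+1}$ when $\om>1$, giving \eqref{vlog:eq} and \eqref{vp:eq} respectively (the $O(1)$ from $\tilde D$ is absorbed into both right-hand sides).

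\textbf{Main obstacle.} There is no serious obstacle; the entire argument is a scale analysis of elementary integrals. The only point requiring care is the role of the hypotheses $\d,\s>d/\b$: they are used in two places simultaneously, namely to make the tail integral of $\lu\bxi\ru^{-\b\s}$ (or $\lu\bxi\ru^{-\b\d}$) on $\tilde D$ converge, and to make the rescaled one-dimensional integrals over the boundary layer finite. The dependence of the implicit constants on $\Om$ enters only through the finite number of charts $D_j$, the Lipschitz norms of the $\Psi_j$, and the $(d-1)$-measure of $\p\Om$, all of which are subsumed in the constants of Condition \ref{at:cond}.
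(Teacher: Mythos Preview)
Your proof is correct and follows essentially the same approach as the paper: both localize via the cover \eqref{coverrd:eq}, use the comparison \eqref{dist:eq} to reduce the $D_j$-contributions to one-dimensional integrals in $t=\xi_d-\Psi_j(\hat\bxi)$, and handle $\tilde D$ by the decay $\langle\bxi\rangle^{-\b}$. The only cosmetic differences are that the paper uses \eqref{nablat:eq} directly on $\tilde D$ to get $T^d$ rather than your $O(1)$ (both suffice), and your remark that $\d>d/\b$ is needed ``in two places'' is slightly off for the $V_{\s,\om}$ integrals, where the $D_j$-part imposes no condition on $\s$ and only the tail on $\tilde D$ requires $\b\s>d$.
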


\begin{proof} 
Proof of \eqref{ld:eq}. 
We estimate separately the integrals over domains $D_j$ and $\tilde D$. 
By \eqref{ata:eq} and \eqref{dist:eq}, 
the integral over $\tilde D$ does not exceed  
\begin{align*}
\underset{\tilde D}\int \lu |\bxi| T^{-1}\ru^{-\d\b} 
d\bxi  \lesssim  T^d \int_0^\infty \lu s\ru^{-\d\b} s^{d-1} ds\lesssim T^{d}.
\end{align*}
In the same way, 
in view of \eqref{dist:eq}, the integral over $D_j$ is bounded 
by 
\begin{align*}
 \underset{D_j}\int 
\lu |\xi_d - \Psi_j(\hat\bxi)|T^{-1}\ru^{-\d\b}d\bxi
\lesssim \int_0^\infty \lu s T^{-1}\ru^{-\d\b} ds
\lesssim T.
\end{align*} 
The bound \eqref{nablanorm:eq} is proved in a similar way: 
using \eqref{dist:eq} and \eqref{nablat:eq} 
we conclude that 
\begin{align*}
\underset{\tilde D}\int 
|\nabla^m a_T(\bxi)|d\bxi
\lesssim 
\int  
\lu|\bxi|T^{-1}\ru^{-\b}  
d\bxi\lesssim T^d, 
\end{align*}
and 
\begin{align*}
\underset{D_j}\int 
|\nabla^m a_T(\bxi)|d\bxi
\lesssim &\  \underset{D_j}\int 
(T+|\xi_d - \Psi_j(\hat\bxi)|)^{-m} 
\lu|\xi_d - \Psi_j(\hat\bxi)|T^{-1}\ru^{-\b}  
d\bxi\\[0.2cm]
\lesssim &\  T^{1-m}\int_0^\infty (1+s)^{-m-\b} ds,
\end{align*}
as required.

Proof of \eqref{vlog:eq} and \eqref{vp:eq}. 
As above, we use the covering \eqref{coverrd:eq} and 
estimate the integrals over $D_j, \tilde D$ separately, so that 
\begin{align*}
\underset{D_j}\int (T+\tilde\rho(\bxi))^{-\om} \lu \bxi\ru^{- \s\b} d\bxi
\lesssim &\ 
\underset{D_j}\int (T+|\xi_d-\Psi_j(\hat\bxi)|)^{-\om} d\bxi \\[0.2cm]
\lesssim &\ \int_0^1 \frac{1}{(T+s)^\om} ds\lesssim 
\begin{cases}
\log\biggl(\frac{1}{T}+1\biggr), \om = 1,\\
T^{1-\om},\ \om > 1,
\end{cases}
\end{align*}
and 
\begin{align*}
\underset{\tilde D}\int (T+\tilde\rho(\bxi))^{-\om} 
\lu\bxi\ru^{-\s\b} d\bxi
\lesssim 
\int \lu\bxi\ru^{-\s\b} d\bxi\lesssim 1.
\end{align*}
This proves \eqref{vlog:eq} and \eqref{vp:eq}.
\end{proof}

\subsection{Bounds for pseudo-differential operators with $a_T$}
     
Without delay we infer the following useful consequence of the above bounds. 
%Below $a_T = a_{T, \Om}$ is the symbol described in Condition 
%\ref{at:cond}. In particular, $a_T$ satisfies \eqref{ata:eq} 
%and \eqref{nablat:eq} with an exponent $\b>d$.   
  
\begin{lem}\label{scalesT:lem}
Let  $\Om$ be a bounded Lipschitz region. 
Let $\a, \ell>0$ be some numbers and let $0<T\lesssim 1$. 
If $\varphi$ satisfies \eqref{supporth:eq}, then for arbitrary 
$m\ge d+1$, and any $s >1$,
\begin{equation}\label{scales_apart:eq}
\| \chi_{B(\bz, \ell)} 
\op_\a(a_{T}) 
\bigl(I- \chi_{B(\bz, s\ell)}\bigr)\|_1
\lesssim (\a\ell)^{d-1} (\a\ell T)^{1-m}, 
\end{equation}
and 
\begin{align}\label{comm_trace:eq}
  \|[\varphi, \op_\a(a_{T})]\|_1
\lesssim (\a\ell)^{d-1} 
\SN^{(d+2)}(\varphi; \ell)\bigl(1+ (\a\ell T)^{1-m}\bigr).
\end{align}	  
The implicit constants in \eqref{scales_apart:eq} and 
\eqref{comm_trace:eq} are independent of $\a, \ell, T$ or $\varphi$. 
\end{lem}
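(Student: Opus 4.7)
The plan is to derive both bounds directly by combining the general estimates from Lemma \ref{comm:lem} and inequality \eqref{comm_new:eq} with the integral bounds on the derivatives of $a_T$ furnished by Lemma \ref{atprop:lem}. No new ideas seem to be required beyond tracking powers of $\a\ell$ and $T$; the only mild subtlety is verifying that the two contributions in \eqref{comm_new:eq} do in fact reorganize into the announced factor $1+(\a\ell T)^{1-m}$.

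\textbf{Step 1 (proof of \eqref{scales_apart:eq}).} I would simply apply Lemma \ref{comm:lem} with $a=a_T$, giving
\begin{equation*}
\|\chi_{B(\bz,\ell)}\op_\a(a_T)(I-\chi_{B(\bz,s\ell)})\|_1
\lesssim ((s-1)\a\ell)^{d-m}\|\nabla^m a_T\|_{\plainL1}
\end{equation*}
for any $m\ge d+1$, with the implicit constant uniform in $s\ge s_0>1$. Since $s$ is fixed and plays no role, I absorb the $(s-1)^{d-m}$ into the implicit constant. Then I insert \eqref{nablanorm:eq} from Lemma \ref{atprop:lem}, so that $\|\nabla^m a_T\|_{\plainL1}\lesssim T^{1-m}$. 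Rewriting
\begin{equation*}
(\a\ell)^{d-m}T^{1-m}=(\a\ell)^{d-1}(\a\ell)^{1-m}T^{1-m}=(\a\ell)^{d-1}(\a\ell T)^{1-m}
\end{equation*}
yields \eqref{scales_apart:eq}.

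\textbf{Step 2 (proof of \eqref{comm_trace:eq}).} Here I would invoke \eqref{comm_new:eq} with $a=a_T$ for an arbitrary $m\ge d+1$, which gives
\begin{equation*}
\|[\op_\a(a_T),\varphi]\|_1\lesssim (\a\ell)^{d-1}\SN^{(d+2)}(\varphi;\ell)
\bigl[\|\nabla a_T\|_{\plainL1}+(\a\ell)^{1-m}\|\nabla^m a_T\|_{\plainL1}\bigr].
\end{equation*}
Applying \eqref{nablanorm:eq} with $m=1$ gives $\|\nabla a_T\|_{\plainL1}\lesssim 1$, while applying it for the chosen $m\ge d+1$ gives $\|\nabla^m a_T\|_{\plainL1}\lesssim T^{1-m}$. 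Inserting these two bounds and noting that $(\a\ell)^{1-m}T^{1-m}=(\a\ell T)^{1-m}$, I arrive at the bracketed factor $1+(\a\ell T)^{1-m}$ claimed in \eqref{comm_trace:eq}.

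\textbf{Main obstacle.} Honestly, given the machinery already developed, the statement is essentially a bookkeeping exercise. The only point that requires (mild) attention is that \eqref{comm_new:eq} was only available because it had been proved in the preceding subsection precisely to avoid differentiating the symbol in $\bxi$ more than once at the leading order; this is what lets the $\|\nabla a_T\|_{\plainL1}\lesssim 1$ bound (rather than an unfavourable $T$-dependent quantity) feed into the final estimate. Provided one uses \eqref{comm_new:eq} rather than a naive estimate that would bring in $\|\nabla^m a_T\|_{\plainL1}$ alone, no real difficulty arises, and the uniformity of the implicit constants in $\a,\ell,T,\varphi$ is inherited directly from Lemmas \ref{comm:lem}, \ref{atprop:lem} and from \eqref{comm_new:eq}.
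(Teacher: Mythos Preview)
Your proposal is correct and matches the paper's own proof essentially verbatim: the paper derives \eqref{scales_apart:eq} from \eqref{decouple:eq} and \eqref{nablanorm:eq}, and \eqref{comm_trace:eq} from \eqref{comm_new:eq} and \eqref{nablanorm:eq}, exactly as you do. Your additional remarks on the algebraic reorganization of the powers of $\a\ell$ and $T$ and on why \eqref{comm_new:eq} is the right tool are accurate and simply make explicit what the paper leaves to the reader.
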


\begin{proof} 
The bound \eqref{scales_apart:eq} follows from \eqref{decouple:eq} 
and \eqref{nablanorm:eq}. 
The bound \eqref{comm_trace:eq} follows from \eqref{comm_new:eq} 
and \eqref{nablanorm:eq}. 
 \end{proof}
   
Let us use Proposition \ref{Crelle:prop} for the symbol $a_T$.

\begin{prop}\label{logT:prop} 
Suppose that $\L$ is a Lipschitz region, and that 
$\Om$ is a bounded Lipschitz region. 
%Suppose that  
%$\varphi$ satisfies \eqref{supporth:eq}. 
If $\a\ell T \gtrsim 1$, then for any $q\in (d\b^{-1}, 1]$ we have 
\begin{equation}\label{logT:eq}
\|\chi_\L
\chi_{B(\bz, \ell)}
\op_\a(a_{T})(I-\chi_\L)\|_q^q\lesssim 
(\a\ell)^{d-1} \log \biggl(\frac{1}{T}+1\biggr).
\end{equation}
If $\L$ is basic Lipschitz, then this bound is uniform in $\L$. 
Suppose in addition that 

-- $\L$ satisfies Condition \ref{domain:cond}, 

-- $f$ satisfies Condition \ref{f:cond} with some $\g >0$, $R>0$ and 
$n=2$, and 
that $\b > \max\{d\g^{-1}, d\}$. 

If $\a T\gtrsim 1$, then  
for any $\s \in (d\b^{-1}, \g)$, $\s < 1$:
\begin{align}\label{logda:eq}
\|D_\a(a_T, \L; f)\|_1\lesssim \a^{d-1}
\1 f\1_2 R^{\g-\s} 
\log \biggl(\frac{1}{T}+1\biggr).
\end{align}
The implicit constants in \eqref{logT:eq} and 
\eqref{logda:eq}
are independent of $\a, \ell, T$ and $f$.
% or $\varphi$. 

\end{prop}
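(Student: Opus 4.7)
The plan is to deduce both bounds directly from the multi-scale machinery of Proposition \ref{Crelle:prop}, using the fact that the symbol $a_T$ fits into that framework with an explicit choice of scale and amplitude for which the integral $V_{\s,1}(v,\tau)$ is logarithmic in $T^{-1}$.

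First, I would verify the multi-scale hypotheses. As noted in the discussion around \eqref{vtau:eq}, the bounds \eqref{ata:eq}--\eqref{nablat:eq} in Condition \ref{at:cond} give \eqref{at:eq}, so that $a_T$ satisfies \eqref{scales:eq} with
\begin{equation*}
v(\bxi) = \lu\bxi\ru^{-\b}, \qquad \tau(\bxi) = \tfrac{1}{2}\bigl(\tilde\rho(\bxi) + T\bigr).
\end{equation*}
Since $|\nabla\tilde\rho|\le 1$ almost everywhere, $\tau$ is Lipschitz with $\nu = 1/2$, condition \eqref{w:eq} is evident for this amplitude, and because $\Om$ is bounded we have
\begin{equation*}
\tau_{\textup{\tiny inf}} \asymp T.
\end{equation*}
This last relation is what converts the hypotheses $\a\ell\tau_{\textup{\tiny inf}}\gtrsim 1$ and $\a\tau_{\textup{\tiny inf}}\gtrsim 1$ of Proposition \ref{Crelle:prop} into the stated assumptions $\a\ell T\gtrsim 1$ and $\a T\gtrsim 1$ respectively.

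Next, for \eqref{logT:eq} I would apply \eqref{crelle1:eq} with the exponent $q$, which gives
\begin{equation*}
\|\chi_\L\chi_{B(\bz,\ell)}\op_\a(a_T)(I-\chi_\L)\|_q^q
\lesssim (\a\ell)^{d-1} V_{q,1}(v,\tau).
\end{equation*}
Since $q > d\b^{-1}$, Lemma \ref{atprop:lem} (specifically \eqref{vlog:eq} with $\s = q$) bounds $V_{q,1}(v,\tau)$ by $\log(T^{-1}+1)$, finishing \eqref{logT:eq}. Uniformity in $\L$ for basic Lipschitz $\L$ is inherited from Proposition \ref{Crelle:prop}.

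For \eqref{logda:eq} I would pick $\s$ in the allowed range $(d\b^{-1}, \min\{\g,1\})$ (which is non-empty precisely because $\b > \max\{d, d\g^{-1}\}$) and invoke \eqref{crelle2:eq}, yielding
\begin{equation*}
\|D_\a(a_T,\L;f)\|_1 \lesssim \a^{d-1}\1 f\1_2 R^{\g-\s}\, V_{\s,1}(v,\tau),
\end{equation*}
after which \eqref{vlog:eq} again replaces $V_{\s,1}(v,\tau)$ by $\log(T^{-1}+1)$.

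There is no substantial obstacle here beyond the bookkeeping: the content lies in Proposition \ref{Crelle:prop} and in the integral estimate \eqref{vlog:eq}, both of which are already established. The one point that genuinely requires the boundedness of $\Om$ is the identification $\tau_{\textup{\tiny inf}}\asymp T$, which is needed so that the geometric condition on $\a,\ell,T$ matches the multi-scale hypothesis; the remark at the end of Section 2 (cf.\ Remark \ref{boundom:rem}) shows that restricting to bounded $\Om$ here entails no loss of generality.
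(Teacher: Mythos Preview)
Your proposal is correct and follows essentially the same approach as the paper: both arguments reduce the bounds to Proposition \ref{Crelle:prop} via the identification $\tau_{\textup{\tiny inf}}\asymp T$ (so that $\a\ell T\gtrsim 1$ and $\a T\gtrsim 1$ become $\a\ell\tau_{\textup{\tiny inf}}\gtrsim 1$ and $\a\tau_{\textup{\tiny inf}}\gtrsim 1$), and then insert the logarithmic bound \eqref{vlog:eq} for $V_{\s,1}(v,\tau)$. The paper's proof is simply a terser version of what you wrote.
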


\begin{proof} 
Since $\tau_{\textup{\tiny inf}}\asymp T$, we have 
$\a\ell\tau_{\textup{\tiny inf}}\asymp \a\ell T\gtrsim 1$. So 
%in view of the identity, 
%\begin{align*}
%[\varphi\op_\a(a_{T}), \chi_\L]
%= (I-\chi_\L)\varphi\op_\a(a_T)\chi_\L - 
%\chi_\L \varphi \op_\a(a_T) (I-\chi_\L),
%\end{align*}
the bounds \eqref{crelle1:eq}
 and \eqref{vlog:eq} 
lead to \eqref{logT:eq}.
Under the condition $\a\tau_{\textup{\tiny inf}}\asymp \a T\gtrsim 1$
the bounds \eqref{crelle2:eq} and \eqref{vlog:eq} lead 
to \eqref{logda:eq}.
\end{proof}

\subsection{Lattice norm bounds 
for pseudo-differential operators}

For a function $u:\R^d\mapsto \mathbb C$ denote
\begin{equation*}
\2 u\2_q = \biggl[\sum_{\bn\in\mathbb Z^d} 
\biggl(\underset{\bn + [0, 1)^d}\int |u(\bx)|^2 d\bx\biggr)^{\frac{q}{2}}
\biggr]^{\frac{1}{q}}.
\end{equation*}
If $q\ge 1$, this formula defines a norm, sometimes called 
\textit{a lattice norm}, 
and if $q <1$, then -- quasi-norm, called \textit{lattice quasi-norm}. 
The following result is well-known, see 
\cite[Theorem 11.1]{BS_U}, \cite[Section 5.8]{BKS}, 
and for $q\in [1, 2)$ -- 
\cite[Theorem 4.5]{Simon}. 

\begin{prop}\label{BS:prop}
If $\2 w\2_q, \2 b\2_q<\infty$ for some 
$q\in (0, 2]$, then 
\begin{equation*}
\| w \op_1(b)\|_q\lesssim  \2 w\2_q\  \2 b\2_q.
\end{equation*}
\end{prop}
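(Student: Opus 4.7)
The plan is to follow the classical Birman--Solomyak scheme: handle the endpoint $q=2$ by a direct Hilbert--Schmidt calculation, and then descend to $0<q<2$ by decomposing both $w$ and $b$ into blocks localised on unit cubes of $\Z^d$, combined with an orthogonality inequality for Schatten quasi-norms. For $q=2$, I would compute the integral kernel $K(\bx,\by) = (2\pi)^{-d/2} w(\bx)\,\check b(\bx-\by)$ of $w\,\op_1(b)$, and Plancherel in the second variable immediately gives
\[
\|w\,\op_1(b)\|_2^2 \;=\; (2\pi)^{-d}\|w\|_{\plainL2}^2\,\|b\|_{\plainL2}^2 \;=\; (2\pi)^{-d}\,\2 w\2_2^{\,2}\,\2 b\2_2^{\,2},
\]
which settles the case $q=2$, since the lattice $2$-norm coincides with the $\plainL2$-norm.

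For $0<q<2$, I would set $w_\bn = w\chi_{\bn+[0,1)^d}$ and $b_\bm = b\chi_{\bm+[0,1)^d}$, and introduce the two families of mutually orthogonal projections $P_\bn$ (multiplication by $\chi_{\bn+[0,1)^d}$) and $\tilde P_\bm = F^{-1} P_\bm F$, where $F$ is the Fourier transform; each family sums strongly to the identity on $\plainL2(\R^d)$. Writing
\[
w\,\op_1(b) \;=\; \sum_{\bn,\bm} P_\bn\, w\,\op_1(b)\,\tilde P_\bm \;=\; \sum_{\bn,\bm} C_{\bn\bm}, \qquad C_{\bn\bm} := w_\bn\,\op_1(b_\bm),
\]
I note that the blocks $C_{\bn\bm}$ have pairwise orthogonal final ranges (via $P_\bn$) and pairwise orthogonal initial subspaces (via $\tilde P_\bm$). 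The Birman--Solomyak orthogonality inequality, a standard extension of Pythagoras valid precisely for $0<q\le 2$ (see \cite{BS_U,BKS}), then yields
\[
\|w\,\op_1(b)\|_q^{\,q} \;\le\; \sum_{\bn,\bm} \|C_{\bn\bm}\|_q^{\,q}.
\]

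The remaining and principal task is to establish the uniform block estimate $\|C_{\bn\bm}\|_q \lesssim \|w_\bn\|_{\plainL2}\|b_\bm\|_{\plainL2}$ for $0<q\le 2$, and this is the step I expect to be the main obstacle. The Hilbert--Schmidt calculation of the first step, applied to the block, produces $\|C_{\bn\bm}\|_2 = (2\pi)^{-d/2}\|w_\bn\|_2\|b_\bm\|_2$ exactly, but this is of no direct use when $q<2$, because the ordering $\|\cdot\|_q\ge\|\cdot\|_2$ runs the wrong way in that range. To close the gap I would exploit the essentially finite-rank character of $C_{\bn\bm}$: a function whose Fourier transform is supported in a unit cube and which is spatially localised to a unit cube lies in a subspace whose effective dimension is bounded by a constant $c_d$ depending only on $d$ (a Paley--Wiener / Nyquist sampling-type fact), so $C_{\bn\bm}$ may be approximated in $\GS_q$ by operators of rank at most $c_d$, uniformly in $\bn,\bm$. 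For such finite-rank approximants the elementary bound $\|A\|_q \le (\operatorname{rank} A)^{1/q-1/2}\|A\|_2$ reduces control of $\|\cdot\|_q$ to the Hilbert--Schmidt norm with a constant depending only on $q$ and $d$. Combining this uniform block estimate with the orthogonality inequality finally gives
\[
\|w\,\op_1(b)\|_q^{\,q} \;\lesssim\; \sum_{\bn,\bm} \|w_\bn\|_2^{\,q}\,\|b_\bm\|_2^{\,q} \;=\; \2 w\2_q^{\,q}\,\2 b\2_q^{\,q},
\]
as required.
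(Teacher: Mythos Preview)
The paper does not supply its own proof of this proposition; it is quoted as a well-known result with references to \cite{BS_U}, \cite{BKS}, and (for $q\in[1,2)$) \cite{Simon}. Your sketch follows exactly the Birman--Solomyak scheme contained in those references, so there is nothing in the paper to compare against.

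Your outline is correct, but the block estimate in the final step is stated imprecisely and, as written, does not quite close. The sentence ``a function whose Fourier transform is supported in a unit cube and which is spatially localised to a unit cube lies in a subspace whose effective dimension is bounded by a constant $c_d$'' is false as a subspace statement: a non-zero band-limited function cannot be compactly supported. Nor is $C_{\bn\bm}$ well approximated in $\GS_q$ by an operator of a single fixed rank $c_d$; you need a \emph{sequence} of approximants with growing rank and rapidly decaying error. The clean way to do this (and what the cited references effectively use) is to translate $\bn,\bm$ to the origin and observe that $C_{00}=M_{w_0}F^{-1}M_{b_0}F$ has the same singular values as the integral operator on $L^2(Q)$, $Q=[0,1)^d$, with kernel $(2\pi)^{-d/2}w_0(\bx)\,e^{i\bx\cdot\bxi}\,b_0(\bxi)$. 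Since $|\bx\cdot\bxi|\le d$ on $Q\times Q$, the Taylor expansion of $e^{i\bx\cdot\bxi}$ truncated at order $N$ yields an approximant $R_N$ of rank at most $\binom{N+d}{d}$ with
\[
\|C_{00}-R_N\|_2\;\le\;\frac{d^{\,N+1}e^{d}}{(N+1)!}\,\|w_0\|_{\plainL2}\,\|b_0\|_{\plainL2}.
\]
Hence the singular values of $C_{00}$ decay super-exponentially relative to $\|w_0\|_{\plainL2}\|b_0\|_{\plainL2}$, which gives $\|C_{00}\|_q\le C_{q,d}\,\|w_0\|_{\plainL2}\|b_0\|_{\plainL2}$ for every $q>0$. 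With this correction your argument goes through; the orthogonality (pinching) inequality you invoke for $0<q\le2$ is indeed the standard ingredient from \cite{BS_U}.
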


\begin{cor}\label{cyl:cor} 
Let $\a, \ell>0$ and $0<T\lesssim 1$ 
be such that $\a\ell\gtrsim 1$ and 
$\a T\ell \lesssim 1$. 
Let $\Om\subset\R^d$ and $a_T = a_{T, \Om}$ be as in Condition 
\ref{at:cond}. 
Then for any $q\in (d\b^{-1}, 1]$ 
\begin{equation}\label{cyl:eq}
\| \chi_{B(\bz, \ell)} 
\op_\a\bigl(a_{T} - \chi_{\Om}\bigr)\|_q^q
\lesssim (\a \ell)^{d-1},
\end{equation}
with an implicit constant independent of $\bz\in\R^d$ or $\a, \ell, T$. 
\end{cor}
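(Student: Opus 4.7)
The plan is to apply Proposition \ref{BS:prop} after the standard unitary rescaling $\bx\mapsto \a \bx$, which, via the dilation intertwining $\op_\a$ with $\op_1$, reduces the task to bounding $\|\chi_{B(\a\bz, L)}\op_1(a_T - \chi_\Om)\|_q^q$ with $L := \a\ell$, satisfying $L \gtrsim 1$ and $LT\lesssim 1$. Applying Proposition \ref{BS:prop} with $w=\chi_{B(\a\bz, L)}$ and $b=a_T-\chi_\Om$ reduces the problem to estimating the product of two lattice quasi-norms.

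The first factor is a direct count: $\2\chi_{B(\a\bz, L)}\2_q^q\asymp L^d$, since the ball meets $\asymp L^d$ unit cubes and on each of them the integral of $\chi_{B(\a\bz,L)}$ is at most $1$. For the second factor I would exploit the pointwise decay \eqref{ata:eq} together with the local Lipschitz representation \eqref{boundary:eq} of $\partial \Om$ and the covering \eqref{coverrd:eq}. Concretely, splitting the lattice sum into (i) the uniformly bounded collection of cubes within distance $1$ of $\partial\Om$, on each of which $\int_{\text{cube}}|a_T-\chi_\Om|^2\lesssim T$ after integrating $\lu s/T\ru^{-2\b}$ in the normal coordinate, and (ii) cubes at distance $\rho\ge 1$, whose number is $\lesssim \rho^{d-1}$ and on each of which the $\plainL2$-integral is $\lesssim (T/\rho)^{2\b}$, one obtains $\2 a_T-\chi_\Om\2_q^q\lesssim T^{q/2}$, provided $\b q>d$ --- which is exactly the hypothesis $q > d\b^{-1}$.

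Combining with Proposition \ref{BS:prop} yields the intermediate bound $\|\cdot\|_q^q\lesssim L^d T^{q/2}$. At $q=2$ this is $L^d T \le L^{d-1}$ by the hypothesis $LT\lesssim 1$, matching the target (equivalently, a direct Hilbert--Schmidt computation gives $\|\chi_{B(\a\bz, L)}\op_1(a_T-\chi_\Om)\|_2^2 = (2\pi)^{-d}|B(L)|\, \|a_T - \chi_\Om\|_{\plainL2}^2 \lesssim L^d T$, using \eqref{ld:eq} with $\d=2$). For $q\in(d\b^{-1},1]$, however, this naive application falls short of $L^{d-1}$ by a factor of $L^{1-q/2}$, and closing this gap will be the main technical obstacle. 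I anticipate that the sharp bound is obtained by exploiting the concentration of the symbol $a_T - \chi_\Om$ in a $T$-thin tube around the $(d-1)$-dimensional surface $\partial\Om$: together with the spatial localisation to $B(\a\bz,L)$ and the uncertainty principle this provides an effective operator ``rank'' of order $L^d\cdot T\lesssim L^{d-1}$, and combined with the uniform bound $\|\op_1(a_T-\chi_\Om)\|\le 2$ yields $\|\cdot\|_q^q\lesssim L^{d-1}$. One rigorous implementation is a dyadic decomposition of the symbol in the distance $\rho$ to $\partial\Om$ together with the sharper $\GS_q$-estimates from \cite{Sob1} of the type underlying Lemma \ref{between:lem} and Proposition \ref{Crelle:prop} (which carry an intrinsic $(\a\ell\tau)^{(d-1)/q}$ saving over the naive lattice-norm bound), summed via the $q$-triangle inequality \eqref{qtriangle:eq}.
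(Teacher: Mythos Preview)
Your direct application of Proposition \ref{BS:prop} yields $L^d T^{q/2}$, which you correctly note is too weak for $q<2$; the gap $L^{1-q/2}$ is genuine and your proposed fix via dyadic decomposition plus the $\GS_q$-bounds behind Lemma \ref{between:lem} or Proposition \ref{Crelle:prop} is vague and does not obviously apply --- those results concern commutator-type objects $\chi_\L\op_\a(a)(I-\chi_\L)$, not $\chi_B\op_\a(a_T-\chi_\Om)$.

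The missing idea is much simpler: you rescaled the wrong way. Instead of dilating so that $\op_\a$ becomes $\op_1$ with the symbol unchanged (which blows up the spatial ball to radius $L=\a\ell$ and forces the wasteful factor $\2\chi_{B(\cdot,L)}\2_q^q\asymp L^d$), dilate $\bx$ by $\ell$ so that the spatial ball becomes a \emph{unit} ball and the symbol becomes $b(\bxi)=(a_T-\chi_\Om)(\bxi/(\a\ell))$. Then $\2\chi_{B(\cdot,1)}\2_q\lesssim 1$ is a parameter-free constant, and the entire geometry is pushed into the lattice norm of $b$. After localising with the partition $\{\phi_j\},\tilde\phi$ subordinate to \eqref{coverrd:eq}, on each chart $D_j$ the rescaled symbol satisfies
\[
|b(\bxi)|\lesssim \bigl\lu |\xi_d-\tilde\Psi_j(\hat\bxi)|/(\a\ell T)\bigr\ru^{-\b}\chi_{B(\bold0,\a\ell)}(\bxi),
\qquad \tilde\Psi_j(\hat\bxi)=\a\ell\,\Psi_j(\hat\bxi/(\a\ell)),
\]
i.e.\ it is concentrated in a tube of width $\a\ell T\lesssim 1$ around a Lipschitz graph of controlled slope inside a ball of radius $\a\ell$. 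Stratifying by $\CO_s=\{s\le |\xi_d-\tilde\Psi_j(\hat\bxi)|<s+1\}$, the number of unit lattice cubes meeting $\CO_s$ is $\lesssim(\a\ell)^{d-1}$ uniformly in $s$, and on each such cube $\|b\|_{\plainL2}\lesssim\lu s\ru^{-\b}$ (here $\a\ell T\lesssim 1$ is used). Hence, by the $q$-triangle inequality,
\[
\2 b\2_q^q\le\sum_{s\ge 0}\2 b\chi_{\CO_s}\2_q^q
\lesssim (\a\ell)^{d-1}\sum_{s\ge 0}\lu s\ru^{-\b q}\lesssim(\a\ell)^{d-1},
\]
since $\b q>d\ge 1$. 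The piece supported on $\tilde D$ is handled by a direct lattice-norm bound giving $(\a\ell T)^d\lesssim 1$. No dyadic decomposition in $\rho$ and no multi-scale machinery from \cite{Sob1} is needed --- Proposition \ref{BS:prop} alone suffices once the rescaling is chosen to keep the spatial factor of unit size.
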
 

\begin{proof}
Let $\{\phi_j\}, \tilde\phi$ be a partition of unity 
subordinate to the cover \eqref{coverrd:eq}. 
Denote for brevity $a = a_{T}$.  
Estimate separately the operators 
\[
Z_j = \chi_{B(\bz, \ell)} \op_\a\bigl(\phi_j(a - \chi_\Om)\bigr) \ \ 
\textup{and}\ \  
\tilde Z = \chi_{B(\bz, \ell)} \op_\a\bigl(\tilde\phi(a - \chi_\Om)\bigr).
\]
Let $\Psi = \Psi_j\in\textup{Lip}(\R^{d-1})$ 
be a function describing the surface $S$ inside $D_j$, 
see \eqref{boundary:eq}. Recall that we always assume that 
$\|\nabla\Psi\|_{\plainL\infty}\lesssim 1$.
Without loss of generality assume that $\bz = 0$. By rescaling, the operator 
$Z_j$ is unitarily equivalent to 
\begin{equation*}
Z_j' = \chi_{B(\bold0, 1)} \op_1(b),\ b(\bxi) = 
\bigl[a\bigl(\bxi (\a\ell)^{-1}\bigr) - \chi_\Om\bigl(\bxi(\a\ell)^{-1}\bigr)\bigr]
\phi_j\bigl(\bxi(\a\ell)^{-1}\bigr),
\end{equation*} 
By Proposition \ref{BS:prop}, 
\begin{equation*}
\|  \chi_{B(\bold0, 1)} \op_1(b)\|_q
\lesssim  \2  \chi_{B(\bold0, 1)}\2_q \2 b\2_q.
\end{equation*} 
It is clear that $\2  \chi_{B(\bold0, 1)}\2_q<\infty$ and 
it is independent of any parameters. Let us estimate $\2 b\2_q$. 
By \eqref{ata:eq},
\begin{equation*}
|b(\bxi)|\lesssim \biggl\lu 
\frac{|\xi_d - \tilde \Psi(\hat\bxi)|}{\a\ell T}\biggr\ru^{-\b}
\chi_{B(\bold0, \a \ell)}(\bxi),\ 
\tilde\Psi(\bxi) = \a\ell \Psi\bigl(\hat\bxi(\a\ell)^{-1}\bigr).
\end{equation*}
Define the sets 
\begin{equation*}
\CO_s = \{\bxi\in B(\bold0, \a\ell): 
s\le |\xi_d - \tilde\Psi(\hat\bxi)|< s+1\}, 
s = 0, 1, \dots.
\end{equation*}
Since $\|\nabla\tilde\Psi\|_{\plainL\infty} 
= \|\nabla\Psi\|_{\plainL\infty}\lesssim 1$, the number 
\begin{equation*}
\#\{\bn\in\mathbb Z^d: \bn + [0, 1)^d
\cap  \CO_s\not = \varnothing\}
\end{equation*}
does not exceed $(\a\ell)^{d-1}$, uniformly in $s$.
As a result,
\begin{equation*}
\2 b\chi_{\CO_s}\2_q\lesssim \lu s \ru^{-\b}(\a\ell)^{\frac{d-1}{q}}, 
\end{equation*}
where we have used the property $\a\ell T\lesssim 1$. 
Thus, by the $q$-triangle inequality, we have
\begin{align*}
\2 b\2_q^q\le \sum_s \2 b\chi_{\CO_s}\2_q^q
\lesssim (\a\ell)^{d-1}\sum_s \lu s\ru^{-\b q}\lesssim (\a\ell)^{d-1}.
\end{align*}
Let us now consider the operator 
$\tilde Z = \chi_{B(\bold0, \ell)}\op_\a(\tilde b)$, 
$\tilde b = \tilde\phi(a - \chi_\Om)$. 
 By rescaling, 
the operator $\tilde Z$ is unitarily equivalent to 
\begin{equation*}
\tilde Z' = \chi_{B(\bold0, \a\ell)}\op_1(\tilde b),  
\end{equation*}
By virtue of \eqref{dist:eq}, $\rho(\bxi)\gtrsim |\bxi|$, $\bxi\in\tilde D$, 
and hence \eqref{ata:eq} implies that  
\begin{equation*}
\2\tilde b\2_q^q\lesssim 
\sum_{\bn\in\Z^d} \lu \bn T^{-1}\ru^{-\b q}
\lesssim \int_0^\infty \biggl\lu \frac{s}{T}\biggr\ru^{-\b q} s^{d-1} ds
= T^d\int_0^\infty \lu s\ru^{-\b q} s^{d-1}ds \lesssim T^d. 
\end{equation*}
Furthermore, since $\a\ell\gtrsim 1$, we have 
\begin{equation*}
\2 \chi_{B(\bold0, \a\ell)}\2_q^q\lesssim (\a\ell)^d.
\end{equation*}
By Proposition \ref{BS:prop},
\begin{equation*}
\| \tilde Z\|_q^q\lesssim (\a\ell)^d  T^d
\lesssim (\a\ell T)^{d} \lesssim 1.
\end{equation*}
Collecting the contributions from all $D_j$'s 
and $\tilde D$, we get the bound \eqref{cyl:eq}, as claimed.
\end{proof}
 
\begin{cor} Let 
$\varphi$ satisfy \eqref{supporth:eq}, and 
let $\Om$ be a bounded Lipschitz region.  
Suppose that $\a\ell\gtrsim 1$. Then for any $0<T\lesssim 1$ we have 
\begin{equation}\label{between2:eq}
\|[\varphi, \op_\a(a_{T})]\|_1\lesssim \SN^{(d+2)}(\varphi; \ell) 
(\a\ell)^{d-1}.
\end{equation}
Moreover, for any Lipschitz region $\L$, and any $q\in (d\b^{-1}, 1]$ we have  
\begin{equation}\label{logalT:eq}
\|\chi_\L 
\chi_{B(\bz, \ell)}
\op_\a(a_T)(I-\chi_\L)\|_q^q
\lesssim  
(\a\ell)^{d-1}
\log \biggl(\min\biggl\{\a\ell, \frac{1}{T}\biggr\}+1\biggr).
\end{equation}
%
%where $r = r_q$ is defined in \eqref{rq:eq}.
%  
The constants in \eqref{between2:eq} and \eqref{logalT:eq} 
are independent of $\a, \ell, T$ or $\varphi$.
\end{cor}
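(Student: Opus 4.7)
The plan is to split both bounds according to the regime of the quantity $\a\ell T$: the regime $\a\ell T\gtrsim 1$ is essentially covered by the multi-scale bounds already proved (Lemma \ref{scalesT:lem} and Proposition \ref{logT:prop}), so the real work is in the complementary regime $\a\ell T\lesssim 1$, where one instead decomposes the symbol as $a_T = \chi_\Om + (a_T - \chi_\Om)$ and handles the two pieces with the sharp indicator-function bounds from Lemma \ref{between:lem} and the lattice-norm estimate Corollary \ref{cyl:cor}. Note that since $\b > d$ (Condition \ref{at:cond}), the value $q=1$ is admissible in Corollary \ref{cyl:cor} as well as in $q\in(d\b^{-1},1]$.

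For \eqref{between2:eq}, in the regime $\a\ell T\gtrsim 1$ the factor $(\a\ell T)^{1-m}$ in \eqref{comm_trace:eq} is bounded (take $m=d+1$), giving the claim immediately. In the regime $\a\ell T\lesssim 1$, I write $[\varphi,\op_\a(a_T)] = [\varphi,\op_\a(\chi_\Om)] + [\varphi,\op_\a(a_T-\chi_\Om)]$. The first commutator is bounded by $(\a\ell)^{d-1}\SN^{(d+2)}(\varphi;\ell)$ via \eqref{between:eq} with $q=1$ (and $r=d+2$). For the second, since $\supp\varphi\subset B(\bz,\ell)$ gives $\varphi = \varphi\chi_{B(\bz,\ell)}$, I estimate
\begin{equation*}
\|\varphi\op_\a(a_T-\chi_\Om)\|_1 \le \|\varphi\|_\infty\,\|\chi_{B(\bz,\ell)}\op_\a(a_T-\chi_\Om)\|_1 \lesssim \SN^{(d+2)}(\varphi;\ell)(\a\ell)^{d-1}
\end{equation*}
by Corollary \ref{cyl:cor} at $q=1$; the other half of the commutator has the same trace norm by self-adjointness of $\op_\a(a_T-\chi_\Om)$ (the symbol is real). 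Adding yields \eqref{between2:eq}.

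For \eqref{logalT:eq}, in the regime $\a\ell T\gtrsim 1$ one has $\min\{\a\ell,1/T\}\asymp 1/T$, and the claim is exactly \eqref{logT:eq}. In the regime $\a\ell T\lesssim 1$ one has $\min\{\a\ell,1/T\}\asymp\a\ell$, and I decompose $a_T$ as above and apply the $q$-triangle inequality \eqref{qtriangle:eq}:
\begin{equation*}
\|\chi_\L\chi_{B(\bz,\ell)}\op_\a(a_T)(I-\chi_\L)\|_q^q \le \|\chi_\L\chi_{B(\bz,\ell)}\op_\a(\chi_\Om)(I-\chi_\L)\|_q^q + \|\chi_\L\chi_{B(\bz,\ell)}\op_\a(a_T-\chi_\Om)(I-\chi_\L)\|_q^q.
\end{equation*}
The first summand is bounded by $(\a\ell)^{d-1}\log(\a\ell+1)$ via \eqref{comm_chi_chi:eq}, and the second by $\|\chi_{B(\bz,\ell)}\op_\a(a_T-\chi_\Om)\|_q^q\lesssim(\a\ell)^{d-1}$ via Corollary \ref{cyl:cor}. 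Together these give the claimed bound.

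The arguments are essentially routine combinations of results stated earlier in the paper; the only subtle point is the book-keeping to check that $q=1$ is admissible in \eqref{cyl:eq} (which uses $\b>d$), and that the case-splitting at the threshold $\a\ell T\asymp 1$ exactly matches the form of the right-hand side $\log(\min\{\a\ell,1/T\}+1)$. I do not expect any further technical obstacle.
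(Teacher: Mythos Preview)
Your proposal is correct and follows essentially the same approach as the paper's own proof: the case split at $\a\ell T\asymp 1$, the use of \eqref{comm_trace:eq} and \eqref{logT:eq} in the large regime, and the decomposition $a_T=\chi_\Om+(a_T-\chi_\Om)$ handled via \eqref{between:eq}, \eqref{comm_chi_chi:eq} and Corollary \ref{cyl:cor} in the small regime, are exactly what the paper does.
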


\begin{proof}
The bound \eqref{between2:eq}
holds for $\a\ell T\gtrsim 1$, due to \eqref{comm_trace:eq}. 
For $\a\ell T\lesssim 1$ we use \eqref{cyl:eq} and \eqref{between:eq} to get
\begin{align*}
\|[\varphi, \op_\a(a_{T})]\|_1\le &\ 
\|[\varphi, \op_\a(\chi_{\Om})]\|_1
+ 2\|\varphi\op_\a(a_{T} - \chi_{\Om})\|_1\\[0.2cm]
\lesssim &\ \SN^{(d+2)}(\varphi; \ell) (\a\ell)^{d-1}.
\end{align*}
Thus \eqref{between2:eq} is proved. 

Proof of \eqref{logalT:eq}. If $\a\ell T\gtrsim 1$, 
then \eqref{logalT:eq} follows directly from 
\eqref{logT:eq}. For $\a \ell T\lesssim 1$ estimate using the $q$-triangle inequality 
\eqref{qtriangle:eq}:
\begin{align*}
\|\chi_\L \chi_{B(\bz, \ell)}
\op_\a(a_T)(I-\chi_\L)\|_q^q
\le &\ \|\chi_{B(\bz, \ell)}
\op_\a(a_T-\chi_\Om)\|_q^q \\[0.2cm]
+ &\ \|\chi_\L 
\chi_{B(\bz, \ell)}
\op_\a(\chi_\Om)(I-\chi_\L)\|_q^q.
\end{align*}
By \eqref{cyl:eq} and \eqref{comm_chi_chi:eq}, 
the right-hand side satisfies the required bound. 
\end{proof}

\subsection{Proof of Theorem \ref{entropy:thm}} 
Recall that by Remark \ref{boundom:rem} 
it suffices to prove Theorem \ref{entropy:thm} 
for a bounded $\Om$.  In this case 
the bound \eqref{entropy:eq} for $\a T\gtrsim 1$ 
is already proved in Proposition \ref{logT:prop}. 

Suppose that $\a T\lesssim 1$. 
It immediately follows from Proposition \ref{Szego1:prop} 
with $P = \chi_{\L}, A = \op_{\a}(a_T)$, that 
\begin{equation*}
\| D(a_{T}, \L; f)\|_1\lesssim \1 f\1_2 R^{\g-q} 
\|\chi_\L \op_\a(a_{T})(I -\chi_\L)\|_q^q,
\end{equation*}
for any $q   < \min\{1, \g\}$. 
Let   $B(\bold0, R)$ be a ball such that 
either $\L\subset B(\bold0, R)$ or 
$\R^d\setminus\L\subset B(\bold0, R)$. 
Thus the $\GS_q$-norm on the right-hand side is estimated either by
\begin{align*}
\| \chi_{B(\bold0, R)}
\chi_\L \op_\a(a_T)(I-\chi_\L)\|_q^q
\end{align*}
or by 
\begin{equation*}
\|[\chi_\L\op_\a(a_T) \chi_{B(\bold0, R)}
(I-\chi_\L)\|_q^q.
\end{equation*}
For $q\in (d\b^{-1}, \g)$, $q\le 1$ both 
(quasi-)norms are bounded 
as in \eqref{logalT:eq}. This leads to the proclaimed bound \eqref{entropy:eq}. 
\qed 
 
\begin{rem}
In the proof of the main Theorem \ref{main:thm} we also need 
a version of the bound \eqref{entropy:eq} for smooth functions $f$. 
Suppose that $g\in\plainC{2}_0(-r, r), r>0$. 
Then,arguing as in the proof above but using Proposition 
\ref{smoothP:prop} 
instead of Proposition \ref{Szego1:prop}, 
we obtain the bound 
\begin{equation*}
\| D(a_{T}, \L; g)\|_1\lesssim \| g\|_{\plainC{2}} 
\|\chi_\L \op_\a(a_{T})(I -\chi_\L)\|_q^q,
\end{equation*}
for any $q < 1$, with an implicit constant independent of 
$g$, but dependent on the 
number $r$.  As in the proof above, this bound in combination with \eqref{logalT:eq} leads to the estimate 
\begin{align}\label{smooth2:eq}
\| D(a_{T}, \L; g)\|_1\lesssim 
\| g\|_{\plainC{2}} \ 
\a^{d-1}
\log \biggl(\min\biggl\{\a, \frac{1}{T}\biggr\}+1\biggr). 
\end{align}
\end{rem}

\section{Asymptotics for discontinuous symbols}
\label{discont:sect}
  
In the proof of the main theorem we use two types of asymptotics 
for $D_\a(\chi_\Om, \L; g_p)$ for polynomials $g_p(t) = t^p$, 
$p = 1, 2, \dots$, established in \cite{Sob} and \cite{Sob2}. 
Recall that the integrals $\GV_1(b; \p\L, \p\Om)$ 
and $U(f)$ are defined in \eqref{W1:eq} 
and \eqref{U:eq} respectively. 

In the case $\a T \lesssim 1$ we need the following fact, 
see \cite[Lemma 4.3]{Sob2}.

\begin{prop}\label{IETO:prop}
Let $\L$ and $\Om$ be regions in $\R^d$ satisfying 
Condition \ref{domain:cond}, and let $\Om$ be bounded. 
Assume also that $\Om$ is piece-wise $\plainC3$ and $\L$ is piece-wise 
$\plainC1$. 
Then for any function $\varphi\in\plainC\infty_0(\R^d)$, we have  
\begin{equation*}
\underset{\a\to\infty}
\lim\ \frac{1}{\a^{d-1}\log\a} \tr \bigl(\varphi D_\a(\chi_\Om, \L; g_p)\bigr)
= U(g_p)\GV_1(\varphi; \p\L, \p\Om).
\end{equation*}
\end{prop}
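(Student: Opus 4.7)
The plan is to first simplify the operator expression and then invoke the un-localized asymptotic formula of \cite{Sob, Sob2} via a partition-of-unity argument in the spatial variable. For $g_p(t) = t^p$ with integer $p \geq 1$, since $\chi_\Om^p = \chi_\Om$ pointwise (as $\chi_\Om \in \{0, 1\}$) and $\chi_\L$ already flanks $W_\a$, the defining expression \eqref{Dalpha:eq} collapses to
\[
D_\a(\chi_\Om, \L; g_p) = W_\a^p - W_\a, \qquad W_\a := W_\a(\chi_\Om; \L).
\]
The case $p = 1$ is trivial (both sides vanish, consistent with $U(g_1) = 0$), so I assume $p \geq 2$ throughout.

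Next, I would fix an intermediate length scale $\ell = \ell(\a) \to 0$ with $\a\ell \to \infty$ and $\ell^{-1} = o(\log\a)$, and cover $\R^d$ by balls $B_j = B(\bz_j, \ell)$ with a smooth subordinate partition of unity $\{\psi_j\}$ satisfying $\SN^{(r)}(\psi_j; \ell) \lesssim 1$ uniformly in $j$ for sufficiently large $r$. Writing $\varphi = \sum_j \varphi\psi_j$, only $O(\ell^{-d})$ indices contribute. Split these into ``boundary'' indices for which $B_j$ meets $\p\L$ (of which there are $O(\ell^{-(d-1)})$), and ``bulk'' indices for which $B_j$ lies entirely in $\L$ or entirely in $\R^d \setminus \L$. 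For boundary indices, on a neighborhood of $B_j$ the region $\L$ coincides in suitable coordinates with a basic Lipschitz domain $\G(\Phi_j)$, and continuity of $\varphi$ lets me replace $\varphi\psi_j$ by $\varphi(\bz_j)\psi_j$ with an error of order $\ell$. Applying the un-localized asymptotic from \cite{Sob2} specialized to $\G(\Phi_j)$ with cutoff $\psi_j$ yields
\[
\tr\bigl(\psi_j(W_\a^p - W_\a)\bigr) = U(g_p)\,\GV_1(\psi_j; \p\L, \p\Om)\,\a^{d-1}\log\a + o(\a^{d-1}\log\a)
\]
uniformly in $j$, exploiting the uniformity in $M_{\Phi_j}$ noted in Subsection \ref{domains:subsect}. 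As $\ell\to 0$, the Riemann sums $\sum_j \varphi(\bz_j)\GV_1(\psi_j; \p\L, \p\Om)$ converge to $\GV_1(\varphi; \p\L, \p\Om)$, producing the claimed coefficient.

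For bulk indices, I would argue that $\|\psi_j(W_\a^p - W_\a)\|_1 = o(\a^{d-1})$, so that even summed over all $O(\ell^{-d})$ bulk indices the contribution is $o(\a^{d-1}\ell^{-1}) = o(\a^{d-1}\log\a)$. The guiding principle is that the $\log\a$ factor arises only from boundary/corner interactions of $\p\L$ with $\p\Om$; a ball well inside $\L$ or $\L^c$ contributes only a lower-order term. Concretely, on a bulk ball $B_j \subset \L$ one has $\psi_j\chi_\L = \psi_j$, so $\psi_j W_\a = \psi_j \op_\a(\chi_\Om) - \psi_j \op_\a(\chi_\Om)(I-\chi_\L)$; iterating and combining the commutator bound \eqref{between:eq} with decay estimates for $\op_\a(\chi_\Om)$ acting between well-separated points (this uses the piecewise $\plainC3$-smoothness of $\p\Om$, since $\chi_\Om$ itself is discontinuous), one can ensure the bulk correction is smaller than the boundary-layer contribution by a factor of $\log\a$.

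The hard part will be the uniform bookkeeping of errors: each local Widom-type expansion must carry an error small enough that summing over the $O(\ell^{-(d-1)})$ boundary cells still leaves an $o(\a^{d-1}\log\a)$ remainder, and the bulk estimate must be tight enough to avoid producing a spurious $\log$ factor. This forces one to extract from the proofs in \cite{Sob, Sob2} quantitative versions of the leading-order asymptotic with explicit control on the remainder's dependence on the Lipschitz data and on the localizing cutoff. Fortunately, this is precisely the level of uniformity that those proofs themselves establish -- the un-localized result is obtained there by a similar localization scheme -- so the required quantitative statements should be available as intermediate steps in \cite{Sob2}, making the present localized version essentially a repackaging.
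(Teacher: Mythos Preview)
The paper does not prove this proposition at all: it is simply quoted from \cite[Lemma~4.3]{Sob2}, so there is no ``paper's own proof'' to compare against. Your sketch is therefore an attempt to reconstruct the argument behind that cited lemma, and its overall shape (localize, treat boundary balls via the basic-domain asymptotics, show bulk balls are negligible) is indeed how such results are obtained in \cite{Sob,Sob2} and, for the $\a T\gtrsim 1$ regime, in Section~\ref{proof2:sect} of the present paper.

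That said, there is one genuine omission and one arithmetic slip. The omission is that you never confront the \emph{piece-wise} smoothness of $\L$ (and of $\Om$). Your scheme applies the basic-$\plainC1$-domain asymptotics (Proposition~\ref{discont:prop} / Lemma~\ref{onebasic:lem}) to every boundary ball, but balls meeting the singular set $(\p\L)_{\rm s}$ sit where $\p\L$ is only Lipschitz, not $\plainC1$, so that result is unavailable there. The cure, used both in \cite{Sob2} and in the proof of Theorem~\ref{pw:thm} here, is to separate off the $O(\varepsilon^{2-d})$ balls touching $(\p\L)_{\rm s}$ and bound them crudely by \eqref{log_bound:eq} or its analogue; their total contribution is then $O(\varepsilon\,\a^{d-1}\log\a)$, which vanishes after sending $\varepsilon\to 0$. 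Without this step your argument is incomplete.

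The arithmetic slip is in the bulk estimate: from ``each bulk ball is $o(\a^{d-1})$'' you cannot conclude that the sum over $O(\ell^{-d})$ balls is $o(\a^{d-1}\ell^{-1})$. You need the sharp bound $O\bigl((\a\ell)^{d-1}\bigr)$ per bulk ball, which is exactly what \eqref{9D_away:eq} gives; then $O(\ell^{-d})\cdot O\bigl((\a\ell)^{d-1}\bigr)=O(\a^{d-1}\ell^{-1})=o(\a^{d-1}\log\a)$ under your hypothesis $\ell^{-1}=o(\log\a)$. Also note that bulk balls in $\L^c$ contribute exactly zero, so only interior bulk balls need estimating. Finally, the detour through Riemann sums with $\varphi(\bz_j)\psi_j$ is unnecessary: the localized asymptotic already carries the weight $\varphi\psi_j$, and $\sum_j\GV_1(\varphi\psi_j)=\GV_1(\varphi)$ by linearity.
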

  
Under the condition $\a T\gtrsim 1$ we appeal to more subtle results   
from \cite{Sob}. These are not stated in 
\cite{Sob} exactly in the required form, hence  we 
need to do some extra work.  
Let $\L = \G(\Phi)$ be a basic $\plainC1$-domain.
We need to control the modulus of continuity of $\nabla\Phi$, hence 
we assume that 
\begin{equation}\label{Phieps:eq}
\sup_{\hat\bx, \hat\by: |\hat\bx-\hat\by|<r}
|\nabla\Phi(\hat\bx)-\nabla\Phi(\hat\by)| \le \varepsilon(r), 
\end{equation}
for some non-negative 
function $\varepsilon$ such that $\varepsilon(r)\to 0 $ 
as $r\to 0$. 

\begin{rem}\label{dom_uni:rem}
The asymptotic formula stated in the next Proposition 
is uniform in the basic domain $\L$ in the sense that the 
convergence is uniform in all functions $\Phi$  satisfying 
 the bound $M_\Phi\le M$ and \eqref{Phieps:eq} with some 
 constant $M$ 
 and some function $\varepsilon=\varepsilon(r)$.  
In particular, the domain $\L$ is allowed to depend on the large parameter $\a$ 
as long as the bounds $M_\Phi\le M$ 
and \eqref{Phieps:eq} hold with 
some $\a$-independent $M$ and $\varepsilon(r)$. 
\end{rem}

\begin{prop}\label{discont:prop} 
Let $\L$ be a basic 
$\plainC1$-domain, and let $\Om$ be a bounded $\plainC3$-region. 
Let $g_p(t) = t^p, t\in\R, p = 1, 2, \dots$, 
and let $\varphi = \varphi(\bx)$, $\eta = \eta(\bxi)$ 
be functions such that   
$\varphi\in\plainC\infty_0(B(\bz, R))$ and 
$\eta\in\plainC\infty_0(B(\bmu, R_1))$ 
with some $\bz, \bmu\in\R^d$, and some fixed $R, R_1>0$.   
Then 
\begin{align}\label{AMS:eq}
\lim_{\a\to\infty}\biggl[\frac{1}{\a^{d-1}\log\a}
\tr \bigl(\varphi \op_\a(\eta) D_\a(\chi_\Om, \L;  g_p)\bigr) 
- U(g_p)\GV_1(\varphi\eta; \p\L, \p \Om)\biggr] = 0.
\end{align} 
The convergence is uniform
\begin{enumerate}
\item
in the domain $\L$ in the sense specified in Remark \ref{dom_uni:rem}, 
\item 
in the functions $h$ and $\eta$ in the sense 
that it is uniform in the 
functions $\varphi$, $\eta$ satisfying the bounds
$\SN^{(k)}(\varphi; R), \SN^{(k)}(\eta; R_1)\lesssim 1$ for 
all $k = 0, 1, 2, \dots$, with some fixed constants.
%\item
%in the point $\bz\in\R^d$.
\end{enumerate}
\end{prop}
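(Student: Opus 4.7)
The plan is to reduce Proposition \ref{discont:prop} to the known case without the factor $\op_\a(\eta)$ (Proposition \ref{IETO:prop}), exploiting that $g_p$ is a monomial. Since $\chi_\Om^p = \chi_\Om$, one has $W_\a(g_p\circ\chi_\Om;\L) = W_\a(\chi_\Om;\L)$, so
\begin{align*}
D_\a(\chi_\Om,\L;g_p) = (\chi_\L\op_\a(\chi_\Om)\chi_\L)^p - \chi_\L\op_\a(\chi_\Om)\chi_\L.
\end{align*}
Expanding the $p$-th power telescopically, every resulting summand is an operator product whose factors alternate between $\chi_\L$ and $\op_\a(\chi_\Om)$ and which contains at least one commutator $[\chi_\L,\op_\a(\chi_\Om)]$. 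This reduces the problem to computing, for each combinatorial pattern, the asymptotics of a trace of the form $\tr\bigl(\varphi\op_\a(\eta)\,\chi_\L\op_\a(\chi_\Om)\cdots[\chi_\L,\op_\a(\chi_\Om)]\cdots\bigr)$.

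Next, I would commute $\varphi\op_\a(\eta)$ through the product using trace cyclicity. The bounds of Lemma \ref{between:lem}, in particular \eqref{comm_chi:eq} with $q=1$, yield
\begin{align*}
\|[\varphi\op_\a(\eta),\chi_\L]\|_1 + \|[\varphi\op_\a(\eta),\op_\a(\chi_\Om)]\|_1 = O(\a^{d-1}),
\end{align*}
which is one logarithm smaller than the target $\a^{d-1}\log\a$. Combined with H\"older in $\GS_q$ and the commutator-with-jump estimate \eqref{comm_chi_chi:eq} (which supplies the $\log\a$ factor), all terms in which $\varphi\op_\a(\eta)$ has been moved away from its starting position are $o(\a^{d-1}\log\a)$. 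After this rearrangement one reaches expressions of the shape handled in \cite{Sob}: on a boundary layer of $\p\L$ of width $\sim\a^{-1}$ the operator $\op_\a(\chi_\Om)$ reduces to a one-dimensional Wiener--Hopf operator in the normal direction, and the symbols $\varphi$ and $\eta$ contribute their boundary values $\varphi(\bx)\eta(\bxi)$ at points $\bx\in\p\L$, $\bxi\in\p\Om$. Summing the combinatorial contributions of the $p$ telescoping pieces reconstructs exactly $U(g_p)\GV_1(\varphi\eta;\p\L,\p\Om)$, matching \eqref{AMS:eq}.

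Finally, for localization I would use a partition of unity subordinate to a cover of $\supp\varphi\times\supp\eta$ by balls so small that both $\p\L$ and $\p\Om$ are smooth graphs inside each, and apply the Widom-type asymptotic of \cite{Sob} chart-by-chart. The uniformity in the test functions $\varphi,\eta$ is then automatic, because all implicit constants in the commutator and localization estimates depend polynomially on finitely many $\SN^{(k)}$-seminorms. The main obstacle is the uniformity in the basic domain $\L=\G(\Phi)$ claimed in Remark \ref{dom_uni:rem}, since $\L$ is allowed to depend on $\a$: one must verify that the convergence in the underlying asymptotic of \cite{Sob} depends on $\Phi$ only through the constants $M$ and $\varepsilon(r)$ controlling $\nabla\Phi$. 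This can be achieved by a compactness argument on the class of Lipschitz graphs satisfying $M_\Phi\le M$ and \eqref{Phieps:eq} with a common $\varepsilon$, noting that the boundary integrals defining $\GV_1$ are continuous with respect to this metric and that the error estimates in \cite{Sob} use only $M$ and $\varepsilon$.
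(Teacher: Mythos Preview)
The paper does not give an independent proof here: it simply records that Proposition~\ref{discont:prop} is \cite[Theorem 11.1]{Sob}. So the relevant question is whether your reduction yields an alternative route.

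Your commutator manipulation to strip off $\op_\a(\eta)$ is sound, and in fact the paper uses exactly this argument in the reverse direction at the start of the proof of Lemma~\ref{onebasic:lem}: with $\eta\chi_\Om=\chi_\Om$ one has
\[
\|\varphi\op_\a(\eta)D_\a(\chi_\Om,\L;g_p)-\varphi D_\a(\chi_\Om,\L;g_p)\|_1
\le 2\|[\varphi\op_\a(\eta),\chi_\L]\|_1 \lesssim (\a R)^{d-1},
\]
so the versions with and without $\eta$ are asymptotically equivalent. That part of your plan is fine.

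The genuine gap is the target of the reduction. Proposition~\ref{IETO:prop} is stated for regions $\L$ satisfying Condition~\ref{domain:cond} (either $\L$ or $\L^c$ bounded), whereas Proposition~\ref{discont:prop} concerns a \emph{basic} $\plainC1$-domain $\L=\G(\Phi)$, for which neither $\L$ nor $\L^c$ is bounded. More importantly, Proposition~\ref{IETO:prop} carries no uniformity statement in $\L$, while the whole point of Proposition~\ref{discont:prop}---and the reason it is invoked separately---is the uniformity over all $\Phi$ with $M_\Phi\le M$ and common modulus of continuity $\varepsilon(r)$ (Remark~\ref{dom_uni:rem}). This uniformity is what makes the rescaling in Lemma~\ref{onebasic:lem} work, since there $\L$ is replaced by the $\a$-dependent domain $\tilde\L=\G(R^{-1}\Phi(R\,\cdot\,))$. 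Localizing via \eqref{9D:eq} to replace $\L$ by a bounded $\Pi$ and then applying Proposition~\ref{IETO:prop} would give only \emph{pointwise} convergence in $\L$, not uniform.

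Your proposed compactness fix does not close this gap. Arzel\`a--Ascoli on the class $\{M_\Phi\le M,\ \eqref{Phieps:eq}\}$ gives precompactness in $\plainC1_{\loc}$, but to upgrade pointwise-in-$\L$ convergence of the trace to uniform-in-$\L$ convergence you would need some equicontinuity of the map $\L\mapsto \a^{1-d}(\log\a)^{-1}\tr(\cdots)$, and that is precisely the quantitative content supplied by the proof of \cite[Theorem~11.1]{Sob} (where all error terms are controlled by $M$ and $\varepsilon$ only). In other words, once you trace through your ``chart-by-chart'' step you are re-proving \cite[Theorem~11.1]{Sob}, not avoiding it.
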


This proposition follows from \cite[Theorem 11.1]{Sob}. 
% in the same way as 
%Lemma 4.3 in \cite{Sob2}. 
%The details are relatively standard and we omit this 
%argument. 
%

The next Proposition makes a statement similar to \eqref{AMS:eq}, but 
uniform in the radius $R$. 

\begin{lem} \label{onebasic:lem}
Suppose that $\L$ and $\Om$ are as in Proposition 
\ref{discont:prop}.
Let $\varphi\in\plainC\infty_0(B(\bz, R))$ with some $R\le 1$. 
Then 
\begin{align}\label{onebasic:eq}
\lim_{\a R\to\infty} R^{1-d}\biggl[\frac{1}{\a^{d-1}\log(\a R)}
\tr \bigl(\varphi D_\a(\chi_\Om, \L;  g_p)\bigr) 
- U(g_p)\GV_1(\varphi ; \p\L, \p \Om)\biggr] = 0.
\end{align} 
The convergence is uniform in the domain $\L$ and the function $\varphi$
% and the point $\bz$ 
as specified in Proposition \ref{discont:prop}. 
\end{lem}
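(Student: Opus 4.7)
The plan is to reduce Lemma \ref{onebasic:lem} to Proposition \ref{discont:prop} via a rescaling that trades the small spatial scale $R$ for a large effective parameter $\a R$. Introduce the unitary dilation $(U_R u)(\bx) := R^{d/2}u(R\bx)$ on $\plainL2(\R^d)$. A direct calculation on integral kernels shows that $U_R\op_\a(\chi_\Om)U_R^{-1} = \op_{\a R}(\chi_\Om)$, $U_R\chi_\L U_R^{-1}=\chi_{R^{-1}\L}$, and $U_R\varphi U_R^{-1}$ is multiplication by $\varphi_R(\bx) := \varphi(R\bx)$, which is supported in $B(R^{-1}\bz, 1)$ and satisfies $\SN^{(k)}(\varphi_R; 1) = \SN^{(k)}(\varphi; R)\lesssim 1$. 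Hence by unitary invariance of the trace,
\begin{equation*}
\tr\bigl(\varphi D_\a(\chi_\Om, \L; g_p)\bigr) = \tr\bigl(\varphi_R D_{\a R}(\chi_\Om, R^{-1}\L; g_p)\bigr).
\end{equation*}
If $\L = \G(\Phi)$ is basic $\plainC1$ with $M_\Phi\le M$ and $\nabla\Phi$-modulus of continuity $\varepsilon(r)$, then $R^{-1}\L = \G(R^{-1}\Phi(R\,\cdot))$ has the same $M$ and modulus $\tilde\varepsilon(r)=\varepsilon(Rr)\le\varepsilon(r)$ (since $R\le 1$ and $\varepsilon$ may be taken non-decreasing), so the rescaled data remain in the uniform class of Remark \ref{dom_uni:rem}.

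Since $\Om$ is bounded, say $\Om\subset B(\bzero, R_0)$, fix a function $\eta\in \plainC\infty_0(B(\bzero, 2R_0))$ with $\eta\equiv 1$ on $\overline\Om$; then $\op_{\a R}(\eta)\op_{\a R}(\chi_\Om) = \op_{\a R}(\eta\chi_\Om)=\op_{\a R}(\chi_\Om)$. Applying Proposition \ref{discont:prop} to the rescaled trace with parameter $\a R\to\infty$, using the uniformity in domain and test functions, yields
\begin{equation*}
\tr\bigl(\varphi_R\op_{\a R}(\eta) D_{\a R}(\chi_\Om, R^{-1}\L; g_p)\bigr) = (\a R)^{d-1}\log(\a R)\,U(g_p)\,\GV_1(\varphi_R\eta; \p(R^{-1}\L), \p\Om) + o\bigl((\a R)^{d-1}\log(\a R)\bigr).
\end{equation*}
Since $\eta\equiv 1$ on $\p\Om$, the coefficient reduces to $\GV_1(\varphi_R; \p(R^{-1}\L), \p\Om)$, and the change of variables $\bx\mapsto R^{-1}\bx$ on $\p(R^{-1}\L) = R^{-1}\p\L$ yields $\GV_1(\varphi_R; \p(R^{-1}\L), \p\Om) = R^{1-d}\GV_1(\varphi; \p\L, \p\Om)$, using $dS\mapsto R^{-(d-1)}dS$ and the invariance of unit normals under dilation.

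The remaining and most delicate step is to replace $\op_{\a R}(\eta)$ by the identity inside the trace, incurring an error $o((\a R)^{d-1}\log(\a R))$. The key observation is $(I - \op_{\a R}(\eta))\op_{\a R}(\chi_\Om) = \op_{\a R}((1-\eta)\chi_\Om) = 0$: expanding the polynomial $g_p(W_{\a R}(\chi_\Om; R^{-1}\L))$ and commuting $(I-\op_{\a R}(\eta))$ past each occurrence of $\chi_{R^{-1}\L}$, every bulk term vanishes and only iterated commutators $[\op_{\a R}(\eta), \chi_{R^{-1}\L}]$ survive. Localized on the left by $\varphi_R$ (which lives on the unit scale), these commutators are controlled by Lemma \ref{between:lem} with $q=1$ and $\a$ replaced by $\a R$, producing trace norm $O((\a R)^{d-1})$, which is $o((\a R)^{d-1}\log(\a R))$ as $\a R\to\infty$.

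Combining the three steps yields
\begin{equation*}
\tr\bigl(\varphi D_\a(\chi_\Om,\L; g_p)\bigr) = \a^{d-1}\log(\a R)\,U(g_p)\,\GV_1(\varphi; \p\L, \p\Om) + o\bigl(R^{d-1}\a^{d-1}\log(\a R)\bigr),
\end{equation*}
and dividing by $\a^{d-1}\log(\a R)$ and then multiplying by $R^{1-d}$ produces \eqref{onebasic:eq} with the claimed uniformity. The hard part is the removal of $\op_{\a R}(\eta)$ in Step 3: although the algebraic cancellation is clean, one must control the cumulative trace norm of the commutator remainders uniformly in $R\le 1$ and in the basic domain $R^{-1}\L$; the rest of the argument is bookkeeping around the rescaling and the dilation behaviour of $\GV_1$.
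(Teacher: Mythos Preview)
Your proof is correct and takes essentially the same route as the paper: rescale to trade $R$ for an effective large parameter $\nu=\a R$, check that the rescaled domain $R^{-1}\L=\G(R^{-1}\Phi(R\,\cdot))$ stays in the uniform class (same Lipschitz bound and no worse modulus of continuity since $R\le 1$), apply Proposition~\ref{discont:prop}, and undo the dilation in $\GV_1$. The only cosmetic difference is the order of operations: the paper first upgrades \eqref{AMS:eq} to $\eta\equiv 1$ and then rescales, whereas you rescale first and remove $\op_{\a R}(\eta)$ afterwards; in both cases the removal reduces to a single commutator $[\varphi_R\op_{\a R}(\eta),\chi_{R^{-1}\L}]$ (not genuinely iterated ones, since $(I-\op_{\a R}(\eta))\op_{\a R}(\chi_\Om)=0$ after the first commutation) bounded via \eqref{comm_chi:eq} by $(\a R)^{d-1}$.
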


\begin{proof} 
First we note that \eqref{AMS:eq} with an arbitrary 
$\eta\in\plainC\infty_0(\R^d)$, implies \eqref{AMS:eq} with $\eta \equiv 1$. 
Indeed, Let $\eta\in\plainC\infty_0$ be such that $\eta\chi_\Om = \chi_\Om$. 
Write:
\begin{equation*}
\varphi \op_\a(\eta) \chi_\L \op_\a(\chi_\Om) 
= [\varphi \op_\a(\eta), \chi_\L] \op_\a(\chi_\Om)
+ \varphi \chi_\L \op_\a(\chi_\Om),
\end{equation*} 
so that 
\begin{align*}
\varphi\op_\a(\eta) \bigl(\chi_\L \op_\a(\chi_\Om)\chi_\L\bigr)^p 
- \varphi&\ \bigl(\chi_\L \op_\a(\chi_\Om)\chi_\L\bigr)^p\\[0.2cm]
= &\ [\varphi \op_\a(\eta), \chi_\L] \op_\a(\chi_\Om)
\bigl(\chi_\L \op_\a(\chi_\Om)\chi_\L\bigr)^{p-1},
\end{align*}
for any $p = 1, 2, \dots$. Therefore
\begin{equation*}
\| \varphi \op_\a(\eta) D_\a(\chi_\Om, \L;  g_p) 
- \varphi D_\a(\chi_\Om, \L;  g_p) \|_1
\le 2 \|[\varphi \op_\a(\eta), \chi_\L]\|_1.
\end{equation*}
By \eqref{comm_chi:eq}, the right-hand side  
does not exceed  
$\SN^{(d+2)}(\varphi; R)(\a R)^{d-1}$, uniformly in the domain $\L$. Therefore 
\eqref{AMS:eq} leads to \eqref{AMS:eq} with $\eta\equiv 1$, as claimed. 

By rescaling, the operator in \eqref{onebasic:eq} 
is unitarily equivalent to 
\begin{equation*}
\tilde \varphi D_\nu(\chi_\Om, \tilde\L; g_p),\ \nu = \a R\to \infty,  
\end{equation*}
with 
\begin{equation*}
\tilde \varphi(\bx) = \varphi(R\bx),\ \tilde \L = \G(\tilde\Phi),\ 
\tilde\Phi(\hat\bx) = R^{-1} \Phi(R\hat\bx).
\end{equation*}
Clearly, 
$\tilde \varphi\in\plainC\infty_0( R^{-1}\bz, 1)$, 
and $\SN^{(k)}(\tilde \varphi; 1)\lesssim 1$ 
for all $k= 0, 1, 2, \dots$. 
%The function $\tilde\Phi$ satisfies \eqref{PhiM:eq} 
%with the same constant $M$ as the function $\Phi$. 
Furthermore, $M_{\tilde\Phi} = M_{\Phi}$, and,  
because of the restriction $R\le 1$ the functions $\nabla\Phi$ and 
$\nabla\tilde\Phi$ satisfy \eqref{Phieps:eq} with the same modulus 
of continuity $\varepsilon(r)$.
Thus one can use formula \eqref{AMS:eq} with $\eta\equiv 1$:
\begin{align*} 
\lim_{\nu\to\infty}\biggl[\frac{1}{\nu^{d-1}\log\nu}
\tr \bigl(\tilde \varphi D_\nu(\chi_\Om, \tilde\L;  g_p)\bigr) 
- U(g_p)\GV_1(\tilde \varphi ; \p\tilde\L, \p\Om)\biggr] = 0.
\end{align*} 
Observing that  
\begin{equation*}
\GV_1(\tilde \varphi; \p\tilde\L, \p \Om) 
= R^{1-d}\GV_1( \varphi; \p\L, \p \Om),
\end{equation*} 
we get \eqref{onebasic:eq}, as claimed. 
\end{proof}

\section{Proof of Theorem \ref{main:thm} for $\a T \lesssim 1$}
\label{proof1:sect}

In this section we begin the proof of Theorem \ref{main:thm}. 

\subsection{Localization estimates for the operator $D_\a(a, \L; f)$} 
Using Lemma \ref{9Dp:lem}, here we convert the bounds obtained previously 
in Sect. \ref{at:sect} into appropriate bounds for the operator 
\eqref{Dalpha:eq}. In Lemmas \ref{9D:lem} - \ref{ext:lem} 
we assume that $f\in \plainC{n}_0(\R)$ with $n\ge 3$. 
Unless otherwise stated, 
the constants in the estimates below do not depend on the {\color{red}function}
$f$ or parameters $\a, \ell, T$.

\begin{lem}\label{9D:lem} 
Let the symbol $a$ be either $a_T=a_{T, \Om}$ or $\chi_\Om$ with a 
bounded Lipschitz region $\Om$. 
Suppose that  
the  sets $\L$, $\Pi$ satisfy  
\begin{align}\label{localp1:eq}
B(\bz, 2\ell)\cap \L = B(\bz, 2\ell)\cap \Pi.
\end{align}
Suppose also that $\a\ell \gtrsim 1$, 
$0 < T\lesssim 1$. Then 
\begin{equation}\label{9D:eq}
\|
\chi_{B(\bz, \ell)} 
\bigl(D_\a(a, \L; f) - 
D_\a(a, \Pi; f)\bigr)\|_1\lesssim  N_n(f) 
(\a\ell)^{d-1}.
\end{equation}
If $\Pi = \R^d$, then
\begin{equation}\label{9D_away:eq}
\|
\chi_{B(\bz, \ell)} 
D_\a(a, \L; f)\|_1\lesssim N_n(f)
%\SN^{(d+2)}(\varphi; \ell)  
(\a\ell)^{d-1}.
\end{equation}
The constants in \eqref{9D:eq} and \eqref{9D_away:eq} 
do not depend on the sets $\L$ and $\Pi$. 

If, in addition, 
$\Pi$ is a Lipschitz region, then 
\begin{equation}\label{log_bound:eq}
\|\chi_{B(\bz, \ell)} D_\a(a_{T}, \L; f)\|_1
\lesssim N_n(f) 
%\SN^{(d+2)}(\varphi; \ell) 
(\a\ell)^{d-1}\log 
\biggl(\min\biggl\{
\a\ell, \frac{1}{T}
\biggr\}+1\biggr).
\end{equation}
If $\Pi$ is a basic Lipschitz domain, than the constant in 
\eqref{log_bound:eq} is uniform in $\Pi$.
\end{lem}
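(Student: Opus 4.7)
The plan is to fix a cutoff $\varphi\in\plainC\infty_0(\R^d)$ supported in $B(\bz,2\ell)$, identically $1$ on $B(\bz,\ell)$, with $0\le\varphi\le 1$ and $\SN^{(k)}(\varphi;\ell)\lesssim 1$ uniformly in $\ell$ for every $k$. Since $\chi_{B(\bz,\ell)}=\chi_{B(\bz,\ell)}\varphi$, one has $\|\chi_{B(\bz,\ell)}X\|_1\le\|\varphi X\|_1$ for any trace-class $X$, so all three claims reduce to estimating $\|\varphi\,(\cdot)\,\|_1$; this converts the non-smooth indicator into an admissible test function for the quasi-commutator machinery of Lemma~\ref{9Dp:lem}.

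The bound \eqref{9D:eq} will then follow at once from \eqref{9Dp:eq} applied with $b=\tilde b=a$: the locality hypothesis \eqref{localp:eq} is ensured by $\supp\varphi\subset B(\bz,2\ell)$ together with \eqref{localp1:eq}, the symbol-difference terms drop out, and we are left with
\[
\|\varphi(D_\a(a,\L;f)-D_\a(a,\Pi;f))\|_1\lesssim N_n(f)\|[\varphi,\op_\a(a)]\|_1.
\]
The remaining commutator is $\lesssim(\a\ell)^{d-1}$ by \eqref{between2:eq} when $a=a_T$ and by \eqref{between:eq} at $q=1$ when $a=\chi_\Om$, both bounds being uniform in the sets $\L$ and $\Pi$. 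Inequality \eqref{9D_away:eq} is then the special case $\Pi=\R^d$, using $D_\a(a,\R^d;f)=0$, or equivalently a direct appeal to \eqref{9Dp_away:eq}.

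For the logarithmic bound \eqref{log_bound:eq} I would apply \eqref{D_trace_norm:eq} with $b=a_T$, splitting
\[
\|\varphi D_\a(a_T,\L;f)\|_1\lesssim N_n(f)\bigl(\|\chi_\L\varphi\op_\a(a_T)(I-\chi_\L)\|_1+\|[\op_\a(a_T),\varphi]\|_1\bigr).
\]
The commutator is again $\lesssim(\a\ell)^{d-1}$ by \eqref{between2:eq}. For the off-diagonal term, commutativity of the multiplication operators $\chi_\L$ and $\varphi$ together with $\varphi=\varphi\chi_{B(\bz,2\ell)}$ gives
\[
\|\chi_\L\varphi\op_\a(a_T)(I-\chi_\L)\|_1\le\|\chi_\L\chi_{B(\bz,2\ell)}\op_\a(a_T)(I-\chi_\L)\|_1,
\]
and \eqref{logalT:eq} applied with radius $2\ell$ and $q=1$ (permitted because $\b>d$ forces $d\b^{-1}<1$) supplies the required factor $(\a\ell)^{d-1}\log(\min\{\a\ell,T^{-1}\}+1)$.

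The step I expect to be the main obstacle is the uniformity clause when $\Pi$ is basic Lipschitz: \eqref{logalT:eq} is claimed to be uniform in $\L$ only when $\L$ itself is basic Lipschitz, whereas here $\L$ is merely some Lipschitz region. The plan to transfer the uniformity to $\Pi$ is to replace $\chi_\L$ by $\chi_\Pi$ using $\chi_\L\chi_{B(\bz,2\ell)}=\chi_\Pi\chi_{B(\bz,2\ell)}$ and the decomposition $I-\chi_\L=(I-\chi_\Pi)+(\chi_\Pi-\chi_\L)$; since $\chi_\Pi-\chi_\L$ vanishes on $B(\bz,2\ell)$ by \eqref{localp1:eq}, the cross term $\chi_\Pi\chi_{B(\bz,2\ell)}\op_\a(a_T)(\chi_\Pi-\chi_\L)$ factors through $\chi_{B(\bz,(3/2)\ell)}\op_\a(a_T)(1-\chi_{B(\bz,2\ell)})$ after shrinking $\supp\varphi$ slightly to create a buffer, and this off-diagonal piece is controlled by \eqref{scales_apart:eq} in the regime $\a\ell T\gtrsim 1$ and by first approximating $a_T$ with $\chi_\Om$ via \eqref{cyl:eq} and then invoking \eqref{comm_chi_chi:eq} in the complementary regime $\a\ell T\lesssim 1$.
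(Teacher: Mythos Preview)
Your argument for \eqref{9D:eq} and \eqref{9D_away:eq} is essentially identical to the paper's: introduce a cutoff $\varphi$ supported in $B(\bz,2\ell)$ and equal to $1$ on $B(\bz,\ell)$, verify \eqref{localp:eq}, apply \eqref{9Dp:eq} with $b=\tilde b=a$, and finish with the commutator bound \eqref{between:eq} or \eqref{between2:eq}.

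For \eqref{log_bound:eq} your proof is correct but takes a longer route than the paper. You apply \eqref{D_trace_norm:eq} directly with the set $\L$, obtain the off-diagonal term $\|\chi_\L\varphi\op_\a(a_T)(I-\chi_\L)\|_1$, and then---because \eqref{logalT:eq} carries uniformity only for \emph{basic} Lipschitz regions---you rewrite $\chi_\L\varphi=\chi_\Pi\varphi$, split $I-\chi_\L=(I-\chi_\Pi)+(\chi_\Pi-\chi_\L)$, and handle the cross term by a separated-supports estimate (\eqref{scales_apart:eq} when $\a\ell T\gtrsim 1$, or \eqref{cyl:eq} plus \eqref{comm_chi_chi:eq} with $\L$ replaced by the ball $B(\bz,2\ell)$ when $\a\ell T\lesssim 1$). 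This works, but the paper sidesteps the whole issue with one line: having already proved \eqref{9D:eq}, which is uniform in both $\L$ and $\Pi$, it simply replaces $D_\a(a_T,\L;f)$ by $D_\a(a_T,\Pi;f)$ at cost $N_n(f)(\a\ell)^{d-1}$, and then applies \eqref{D_trace_norm:eq} with $\Pi$ in place of $\L$. The off-diagonal term is then $\|\varphi\chi_\Pi\op_\a(a_T)(I-\chi_\Pi)\|_1$, to which \eqref{logalT:eq} applies directly with the required uniformity. Your ``main obstacle'' thus dissolves by using the first part of the lemma to prove the second.
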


\begin{proof}  
The bound \eqref{9D_away:eq} is a direct consequence of \eqref{9D:eq}, since 
$D_\a(a, \R^d; f) = 0$.

To prove \eqref{9D:eq} 
let $\varphi\in\plainC\infty_0$ be a function such that 
$\varphi(\bx) = 1$, $\bx\in B(\bz, \ell)$, $\varphi(\bx) = 0$ 
for $\bx\notin B(\bz, 2\ell)$, and 
$\ell^m|\nabla^m\varphi|\lesssim 1$ for all $m = 1, 2, \dots$.  
Since $\chi_{B(\bz, \ell)}\varphi = \varphi$, 
in view of \eqref{localp1:eq}, the relation \eqref{localp:eq} is satisfied, and hence    
we can use Lemma \ref{9Dp:lem} with $\tilde b = b = a$.  
It follows from 
\eqref{between:eq} or \eqref{between2:eq} that
\begin{align*}
\|[\varphi, \op_\a(a)]\|_1\lesssim &\ 
%\SN^{(d+2)}(\varphi; \ell)
(\a\ell)^{d-1}. 
\end{align*} 
Now \eqref{9Dp:eq} leads to \eqref{9D:eq}. 

Proof of \eqref{log_bound:eq}. 
We use the same function $\varphi$ as above. 
By \eqref{9D:eq} we may assume that $\L = \Pi$. 
Due to \eqref{D_trace_norm:eq}, the left-hand side 
of \eqref{log_bound:eq} does not exceed
\begin{equation*}
N_n(f) \bigl(\|\varphi\chi_\Pi \op_\a(a_{T})(I-\chi_\Pi)\|_1 
+ \|[\varphi, \op_\a(a_T)]\|_1\bigr).
\end{equation*}
It remains to apply \eqref{between2:eq} and 
\eqref{logalT:eq}.
\end{proof}

\begin{lem}\label{jump:lem}
Let the sets $\L$ and $\Pi$ 
satisfy \eqref{localp1:eq}.  
Suppose that $\Om$ is a bounded Lipschitz region. 
Let 
$\a\ell\gtrsim 1$ and $\a \ell T \lesssim 1$. Then 
\begin{align}\label{jump:eq}
\|
\chi_{B(\bz, \ell)}
%\varphi 
\bigl(D_\a(a_{T}, \L; f) 
-  
%\varphi 
D_\a(\chi_{\Om}, \Pi; f)\bigr)\|_1
%\notag\\[0.2cm]
\lesssim  N_n(f)
%\SN^{(d+2)}(\varphi; \ell)
(\a\ell)^{d-1}. 
\end{align}
The constant in \eqref{jump:eq} is independent of 
the sets $\L$ and $\Pi$.
\end{lem}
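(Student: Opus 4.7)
The plan is to derive \eqref{jump:eq} in a single application of the combined localization-and-symbol-change estimate \eqref{9Dp:eq} from Lemma \ref{9Dp:lem}, which handles simultaneously the change of cut-off set (from $\L$ to $\Pi$) and the change of symbol (from $a_T$ to $\chi_\Om$). The two sides of the hypothesis $\a\ell\gtrsim 1\gtrsim \a\ell T$ enter in different ways: the lower bound controls the commutator $[\varphi,\op_\a(a_T)]$ on the natural quasi-classical scale, while the upper bound $\a\ell T\lesssim 1$ is precisely the regime in which the lattice-quasi-norm estimate \eqref{cyl:eq} delivers the sharp $(\a\ell)^{d-1}$ bound on the trace norm of $\chi_{B(\bz,\ell)}\op_\a(a_T-\chi_\Om)$.

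First, fix $\varphi\in\plainC\infty_0(\R^d)$ with $\varphi\equiv 1$ on $B(\bz,\ell)$, $\supp\varphi\subset B(\bz,2\ell)$, and $\ell^m|\nabla^m\varphi|\lesssim 1$ for all $m$; in particular $\SN^{(d+2)}(\varphi;\ell)\lesssim 1$ and $\chi_{B(\bz,\ell)}\varphi=\chi_{B(\bz,\ell)}$. The localization assumption \eqref{localp1:eq} then yields $\supp\varphi\cap\L=\supp\varphi\cap\Pi$, i.e.\ condition \eqref{localp:eq} of Lemma \ref{9Dp:lem} holds. Applying \eqref{9Dp:eq} with $b=a_T$, $\tilde b=\chi_\Om$ reduces the task to controlling the three trace norms
\begin{align*}
\|[\varphi,\op_\a(a_T)]\|_1, \qquad \|\varphi\op_\a(a_T-\chi_\Om)\|_1, \qquad \|\op_\a(a_T-\chi_\Om)\varphi\|_1.
\end{align*}

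The first is $\lesssim(\a\ell)^{d-1}$ directly from \eqref{between2:eq}. For the second, the support condition on $\varphi$ together with $\|\varphi\|_{\plainL\infty}\lesssim 1$ gives $\|\varphi\op_\a(a_T-\chi_\Om)\|_1\lesssim\|\chi_{B(\bz,2\ell)}\op_\a(a_T-\chi_\Om)\|_1$, which is controlled by \eqref{cyl:eq} taken at $q=1$. The third, being the adjoint of an operator of the same form (both $a_T$ and $\chi_\Om$ are real-valued), is estimated identically. Summing the three contributions and multiplying by $N_n(f)$ yields \eqref{jump:eq}, with an implicit constant independent of the sets $\L$ and $\Pi$. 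The only delicate point I expect is ensuring that \eqref{cyl:eq} is applicable at the endpoint $q=1$: this requires $d\b^{-1}<1$, i.e.\ $\b>d$, which is built into Condition \ref{at:cond}, and it relies crucially on the upper hypothesis $\a\ell T\lesssim 1$, which is exactly the operating regime of Corollary \ref{cyl:cor}.
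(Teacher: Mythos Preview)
Your proof is correct and follows essentially the same route as the paper: construct the cut-off $\varphi$, invoke \eqref{9Dp:eq} with $b=a_T$, $\tilde b=\chi_\Om$, then bound the commutator via \eqref{between2:eq} and the difference term via Corollary \ref{cyl:cor}. You make explicit a couple of points the paper leaves implicit (the adjoint argument for the third term and the endpoint $q=1$ in \eqref{cyl:eq}), but the strategy and the lemmas used are identical.
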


\begin{proof} 
We use Lemma \ref{9Dp:lem} with 
$b = a_{T}, \tilde b = \chi_{\Om}$,  with the function 
$\varphi$ defined in the proof of the previous lemma. 
By  \eqref{between2:eq},
\begin{equation*} 
\|[\op_\a(a_{T}), \varphi]\|_1
\lesssim 
%\SN^{(d+2)}(\varphi; \ell)
(\a\ell)^{d-1}.
\end{equation*}
Furthermore, by Corollary \ref{cyl:cor},
\begin{equation*}
\| \varphi\op_\a(a_{T}- \chi_{\Om})\|_1
\lesssim 
%\SN^{(d+2)}(\varphi; \ell)
(\a\ell)^{d-1}.
\end{equation*}
Now \eqref{9Dp:eq} leads to \eqref{jump:eq}.
\end{proof}

\begin{lem} \label{ext:lem}
Suppose that $\Om$ is a bounded Lipschitz region. 
Suppose that $\R^d\setminus\L\subset B(\bold0, R_0)$ with some $R_0>0$, 
and let $\varphi\in\plainC\infty(\R^d)$ be a bounded 
function such that $\varphi(\bx) = 0$ for $\bx\in B(\bold0, R_0)$ 
and $\varphi(\bxi) = 1$ for $|\bx|> 2R_0$. 
Let $\a\gtrsim 1, 0<T\lesssim 1$. Then 
\begin{align}\label{ext:eq}
\|\varphi D_\a(a_{T}, \L; f)\|_1
\lesssim N_n(f) \a^{d-1}. 
\end{align}
The constant in \eqref{ext:eq} 
is independent of $\L$, but may depend on 
$R_0$ and $\varphi$.
\end{lem}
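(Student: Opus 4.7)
The plan is to reduce the estimate to the compactly supported commutator bound \eqref{between2:eq} by writing $\varphi = 1 - \psi$ with $\psi$ supported in $B(\bold0, 2R_0)$, and then to apply the ``away'' bound \eqref{9Dp_away:eq} from Lemma \ref{9Dp:lem}.

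First, I would pick an auxiliary function $\psi\in\plainC\infty_0(\R^d)$ with $\supp\psi\subset B(\bold0, 2R_0)$ and $\psi(\bx)=1$ on $B(\bold0, R_0)$, so that $\psi=1-\varphi$ on $\R^d$. Since $\R^d\setminus\L\subset B(\bold0, R_0)$ and $\varphi$ vanishes on $B(\bold0, R_0)$, we have $\supp\varphi\subset\L$, which means $\varphi$ satisfies the hypothesis \eqref{localp:eq} with the choice $\Pi=\R^d$. Lemma \ref{9Dp:lem} then gives
\begin{equation*}
\|\varphi D_\a(a_T, \L; f)\|_1\lesssim N_n(f)\,\|[\varphi, \op_\a(a_T)]\|_1.
\end{equation*}

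Next, the constant function commutes with $\op_\a(a_T)$, so
\begin{equation*}
[\varphi, \op_\a(a_T)]=-[\psi, \op_\a(a_T)],
\end{equation*}
and $\psi$ is a compactly supported test-function satisfying \eqref{supporth:eq} with $\bz=\bold0$ and $\ell=2R_0$. Applying \eqref{between2:eq} for this choice (and observing that $\a\ell\asymp\a R_0\gtrsim 1$ by the hypotheses $\a\gtrsim 1$ and $R_0$ fixed), we conclude
\begin{equation*}
\|[\psi, \op_\a(a_T)]\|_1\lesssim \SN^{(d+2)}(\psi; 2R_0)(\a R_0)^{d-1}\lesssim \a^{d-1},
\end{equation*}
with implicit constants depending on $R_0$ and on the fixed function $\psi$. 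Combining the two displays yields \eqref{ext:eq}.

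The only subtle point is the verification that \eqref{9Dp_away:eq} applies even though $\varphi$ is not compactly supported: the hypothesis of Lemma \ref{9Dp:lem} only requires $\varphi\in\plainC\infty(\R^d)$ bounded together with \eqref{localp:eq}, both of which are clearly satisfied here. No other obstacle arises, and the argument is short.
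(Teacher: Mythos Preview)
Your proof is correct and follows essentially the same approach as the paper: apply \eqref{9Dp_away:eq} with $\Pi=\R^d$ to bound the left-hand side by $N_n(f)\|[\varphi,\op_\a(a_T)]\|_1$, then replace $\varphi$ by the compactly supported function $1-\varphi$ in the commutator and invoke \eqref{between2:eq}. The only cosmetic difference is that you introduce the auxiliary $\psi=1-\varphi$ explicitly, whereas the paper just observes directly that $1-\varphi$ is compactly supported.
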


\begin{proof} 
We use Lemma \ref{9Dp:lem} with $\Pi = \R^d$, 
so that $\L$ and $\Pi$ satisfy  \eqref{localp:eq}. 
Thus we can use the bound \eqref{9Dp_away:eq} with 
$b = a_{T}$, and hence 
the left-hand side of \eqref{ext:eq} is bounded from above by 
\begin{equation*}
\|[\varphi, \op_\a(a_{T}) ]\|_1
 = \|[I-\varphi, \op_\a(a_{T}) ]\|_1.
\end{equation*}
The function $1-\varphi$ is compactly supported. 
Now using \eqref{between2:eq} we arrive at \eqref{ext:eq}.
\end{proof}

\subsection{Proof of Theorem \ref{main:thm} for $\a T \lesssim 1$} 
We assume that $\L$ and $\Om$ satisfy conditions of Theorem \ref{main:thm}, 
and that $\Om$ is bounded (see Remark \ref{boundom:rem}).
For brevity, in the proof 
we often use the short-hand notation 
$D_\a(f) = D_\a(a_{T}, \L; f)$ and $\GV_1 = \GV_1(1; \p\L, \p\Om)$.   
As in the previous section, we use the notation 
$g_p(t) = t^p$, $p =1, 2, \dots$. 
 
\begin{rem}\label{compact:rem}
It is clear that we can use for the operator 
$D_\a(f)$ all the bounds, established earlier, with arbitrary smooth 
functions $f$ without the assumption that $f$ is compactly supported. 
Indeed, since $\|a_T\|\lesssim 1$ 
we have $f(a_T) = f(a_T) \z(a_T)$ 
and $f(W_\a) = f(W_\a) \z(W_\a)$,\ $W_\a = W_\a(a_T; \L)$,
with some fixed 
$\z\in\plainC\infty_0(\R)$. 
In particular, this observation applies to the 
polynomial functions $g_p$. 
\end{rem}

We precede the proof with some bounds for the integral 
\eqref{U:eq}, see \cite[Lemma 4.6]{Sob_14}:  

\begin{prop}\label{co:prop}
If $f\in \plainW{1}{\infty}(\R)$, then
\begin{equation}\label{diff:eq}
|U(f)|\le 2  \|f'\|_{\plainL\infty}.
\end{equation}
If $f$ satisfies \eqref{fnorm:eq} with $n=1$ and some $0 < R\le 1$, 
then 
\begin{equation}\label{sing:eq}
|U(f)|\lesssim  
R^{\frac{\g}{2}} \1 f\1_1.
\end{equation}
\end{prop}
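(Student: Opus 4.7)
The plan is to derive one integral representation of $U(f)$ that serves both bounds. Starting from the substitution $s=1-t$, one gets
\[
U(f)=\int_0^1 \frac{f(s)-sf(1)-(1-s)f(0)}{s(1-s)}\,ds.
\]
Using the identity
\[
f(s)-sf(1)-(1-s)f(0)=(1-s)\!\int_0^s f'(u)\,du-s\!\int_s^1 f'(u)\,du,
\]
which is immediate from the fundamental theorem of calculus (the $f(0)$ terms cancel), and applying Fubini, I would obtain the key formula
\[
U(f)=\int_0^1 f'(u)\log\frac{1-u}{u}\,du,
\]
valid for any $f\in\plainW{1}{\infty}(\R)$.

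For the first bound \eqref{diff:eq}, the representation gives $|U(f)|\le\|f'\|_{\plainL\infty}\int_0^1 \bigl|\log\frac{1-u}{u}\bigr|\,du$. A short calculation exploiting the symmetry $u\mapsto 1-u$ yields $\int_0^1|\log\frac{1-u}{u}|\,du=2\ln 2<2$, giving \eqref{diff:eq}.

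For the second bound \eqref{sing:eq}, I use $|f'(u)|\le\1 f\1_1|u-t_0|^{\g-1}$ together with the support condition $\supp f\subset[t_0-R,t_0+R]$. Thus
\[
|U(f)|\le\1 f\1_1\!\int_{[t_0-R,t_0+R]\cap(0,1)}\!|u-t_0|^{\g-1}\,\biggl|\log\frac{1-u}{u}\biggr|\,du.
\]
I would split into the case $t_0\in[0,1]$ (where on the support one has $u,1-u\gtrsim R$ away from $t_0$, giving $|\log\frac{1-u}{u}|\lesssim\log(1/R)$) and the case $t_0\notin[0,1]$ (where the support either misses $[0,1]$, so $U(f)=0$, or one has $|u-t_0|\ge u$ or $|u-t_0|\ge 1-u$ on the relevant piece, letting the $|u-t_0|^{\g-1}$ factor absorb the logarithm). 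In each case a direct computation gives
\[
|U(f)|\lesssim\1 f\1_1 R^\g\log(1/R).
\]
Since $R^{\g/2}\log(1/R)$ is bounded on $(0,1]$ by a constant depending only on $\g$ (maximized at $R=e^{-2/\g}$), we conclude $R^\g\log(1/R)\lesssim R^{\g/2}$, which yields \eqref{sing:eq}.

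Neither step has a serious obstacle: the derivation of the formula $U(f)=\int f'\log\frac{1-u}{u}\,du$ is the one nontrivial ingredient, and once it is in place both estimates are routine. The mild care needed in the second bound is the observation that the naive estimate produces $R^\g\log(1/R)$ and that the restriction $R\le 1$ allows one to trade a power of $R$ for the logarithm.
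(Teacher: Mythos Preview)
The paper does not prove this proposition; it simply quotes \cite[Lemma 4.6]{Sob_14}. Your route via the representation
\[
U(f)=\int_0^1 f'(u)\log\frac{1-u}{u}\,du
\]
is clean and self-contained, and the derivation of this formula together with the bound \eqref{diff:eq} is correct as written.

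For \eqref{sing:eq} there is a gap in the case analysis. The assertion that on the support one has $u,1-u\gtrsim R$ when $t_0\in[0,1]$ is false once $t_0$ lies within distance $R$ of $0$ or $1$: for instance if $t_0=0$ the support is $(0,R]$ and $u$ can be arbitrarily small. In that regime the singularity of $|u-t_0|^{\g-1}$ and that of $|\log u|$ coincide, and you cannot simply pull out a factor $\log(1/R)$. The repair is routine but must be done: when $0<t_0<2R$, split at $t_0/2$ and $2t_0$, use $|u-t_0|\asymp t_0$ on the middle piece and $|u-t_0|\gtrsim u$ on the outer pieces, and observe that each contribution is bounded by $t_0^{\g}(1+|\log t_0|)+R^{\g}(1+\log(1/R))\lesssim R^{\g}(1+\log(1/R))$ since $t_0\le 2R$. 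Alternatively, a single application of H\"older's inequality with exponent $q=2/\g$ on the logarithmic factor gives $R^{\g/2}$ directly and bypasses all the casework. Note also that your intermediate claim should read $R^{\g}(1+\log(1/R))$ rather than $R^{\g}\log(1/R)$, since the latter vanishes at $R=1$; this does not affect the final step, as $R^{\g}(1+\log(1/R))\lesssim R^{\g/2}$ for $0<R\le 1$.
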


Now we can proceed with the proof of Theorem \ref{main:thm}. 
It follows the idea of \cite{Sob_14}, and consists of three parts: 
first we consider polynomial functions $f$, then extend it to arbitrary 
$\plainC2$-functions, and finally complete the proof for functions satisfying the 
conditions of the Theorem.

\underline{Step 1. Polynomial $f$.}  
Let $f = g_p$. 
Let $R_0$ be such that either $\L\subset B(\bold0, R_0)$ or 
$\R^d\setminus \L\subset B(\bold0, R_0)$.  
Let $\varphi\in\plainC\infty_0(\R^d)$ 
be a function such that 
$\varphi(\bx) = 1$ for $|\bx|\le R_0$, and $\varphi(\bx) = 0$ 
for $|\bx| > 2R_0$.
Since $\a R_0\gtrsim 1$ and $\a R_0 T\lesssim 1$, from Lemma \ref{jump:lem} 
we obtain that 
\begin{align*}
\|\varphi D_\a(a_{T}, \L; g_p)
- \varphi D_\a(\chi_{\Om}, \L; g_p)\|_1
\lesssim 
%{\color{red} N_n(g_p\z)} 
%\SN^{(d+2)}(\varphi; R_0) 
(\a R_0)^{d-1},
\end{align*}
with an implicit constant depending on $p$. 
In combination with Proposition \ref{IETO:prop} this gives the equality
\begin{equation}\label{varphi_as:eq}
\underset{ \substack{\a\to\infty\\ \a T\lesssim 1 }}\lim \ \frac{1}{\a^{d-1}\log\a}
\tr\bigl(\varphi D_\a(a_T, \L; g_p)\bigr)
= U(g_p) \GV_1.
\end{equation}
If $\L\subset B(\bold0, R_0)$, then $\varphi\chi_\L = \chi_\L$, and hence 
these asymptotics 
coincide with the sought 
formula \eqref{main_le:eq}. 
If $\R^d\setminus \L\subset B(\bold0, R_0)$, then 
we invoke Lemma \ref{ext:lem} which implies that 
\begin{equation*}
\underset{ \substack{\a\to\infty\\ \a T\lesssim 1 }}\lim \ \frac{1}{\a^{d-1}\log\a}
\tr\bigl((I-\varphi) D_\a(a_{T}, \L; g_p)\bigr)
= 0.
\end{equation*} 
Together with \eqref{varphi_as:eq} this gives 
\eqref{main_le:eq} for $f = g_p$ again. 

\underline{Step 2. Arbitrary functions $f\in\plainC2(\R)$.} 
The extension from polynomials to more general 
functions is done in the same way as in 
\cite{Sob_14}, and we remind this argument for the sake of 
completeness.  

Let $\z\in\plainC\infty_0(\R)$ be the function introduced before 
Proposition \ref{co:prop}. 
Let $g$ be a polynomial such that 
\[
\|(f-g)\z\|_{\plainC2} < \d.
\] 
For $g$  we can use the formula \eqref{main_le:eq} established previously:
\begin{equation}\label{fp:eq} 
\underset{ \substack{\a\to\infty\\ \a T\lesssim 1 }}\lim
\frac{1}{\a^{d-1}\log \a}
\tr D_\a(g) = U(g)\GV_1.
\end{equation}
On the other hand,  by virtue of \eqref{smooth2:eq}, we have 
\begin{align*}
\|D_\a(f-g)\|_1 = \|D_\a\big((f-g)\z\big)\|_1
\lesssim \|(f-g)\z\|_{\plainC2}\ \a^{d-1}\log\a\lesssim \d \a^{d-1}\log\a,
\end{align*}
for $\a T\lesssim 1$,  
and also, by \eqref{diff:eq}, 
\[
|U(f) - U(g)|
= 
|U(f - g)| = |U\big((f - g)\z\big)|
\le 2 \| \big((f - g)\z\big)'\|_{\plainL\infty} < 2\d.
\]
Thus, using \eqref{fp:eq} and the additivity 
\[
D_\a(f) = D_\a(g) + D_\a(f - g),\ 
U(f) = U(g) + U(f - g),
\] we get
\begin{equation*}
\underset{ \substack{\a\to\infty\\ \a T\lesssim 1 }}\limsup
\biggl|
\frac{1}{\a^{d-1}\log \a}
\tr D_\a(f)  
-  U(f)\GV_1\biggr|\lesssim \d.
\end{equation*}
Since $\d>0$ is arbitrary, we obtain 
\eqref{main_le:eq} for arbitrary $f\in\plainC2(\R)$.

\underline{Step 3. Completion of the proof.} 
Let $f$ be a function as specified in Theorem \ref{main:thm}. 
Without loss of generality 
suppose that the set $X$ consists 
of one point, and this point is $z = 0$.

Let $\z\in\plainC\infty_0(\R)$ be a real-valued function, such that 
$\z(t) = 1$ for $|t|\le 1/2$. 
Represent $f= f_R^{(1)}+ f_R^{(2)}, 0<R\le 1$, where 
$f_R^{(1)}(t) = f(t) \z\bigl(tR^{-1}\bigr)$,\ 
$f_R^{(2)}(t) = f(t)  - f_R^{(1)}(t)$. 
It is clear that  $f_R^{(2)}\in \plainC2(\R)$, 
so one can use the formula \eqref{main_le:eq} established in Step 2 of the proof:
\begin{equation}\label{g2R_le:eq}
\underset{ \substack{\a\to\infty\\ \a T\lesssim 1 }}\lim\frac{1}{\a^{d-1}\log\a}
D_\a(f_R^{(2)}) =  U(f_R^{(2)}) \GV_1.
\end{equation}
For $f_R^{(1)}$ we use Theorem 
\ref{entropy:thm} taking into account that 
$\1 f_R^{(1)}\1_2\lesssim \1 f\1_2$:
\begin{equation*}
 |\tr D_\a(f_R^{(1)})|
\lesssim 
R^{\g-\s} \1 f\1_2 \a^{d-1}\log\a,  \ \ \a\gtrsim1, \a T\lesssim 1, 
\end{equation*}
for any $\s \in (d\b^{-1}, \g)$, $\s\in (0, 1]$.
Moreover, 
by \eqref{sing:eq}, 
\begin{equation*}
|U(f_R^{(1)})|\lesssim R^{\frac{\g}{2}} \1 f\1_1.
\end{equation*}
Thus, using \eqref{g2R_le:eq} and the additivity 
\[
D_\a(f) = D_\a(f_R^{(2)}) + D_\a(f_R^{(1)}),\  
U(f) = U(f_R^{(2)}) + U(f_R^{(1)}),
\] 
we get the bound
\begin{equation*}
\limsup_{\a\to\infty }\biggl|\frac{1}{\a^{d-1}\log\a}
D_\a(f) - U(f)\GV_1\biggr| 
\lesssim \1 f\1_2 \bigl(R^{\g-\s} + R^{\frac{\g}{2}}\bigr).
\end{equation*}
Since $R$ is arbitrary, by taking $R\to 0$, we obtain 
\eqref{main_le:eq} for the function $f$.
\qed

\section{Proof of Theorem \ref{main:thm} for $\a T\gtrsim 1$}
\label{proof2:sect}

\subsection{Proof of Theorem \ref{main:thm}: basic smooth domains $\L$}
We begin with an asymptotic formula for the trace 
\begin{equation*}
\tr\bigl(
\varphi D_\a(a_{T}, \L; g_p)
\bigr),
\end{equation*}
with a basic $\plainC1$-domain $\L$, a function 
$\varphi\in\plainC\infty_0(\R^d)$, and a polynomial $g_p(t) = t^p$, 
$p = 1, 2, \dots$. 
As before we assume that $\Om$ is bounded. We assume that 
 $f\in\plainC{n}_0(\R)$ with some $n\ge 3$.
Our immediate objective is to prove the following result. 
 
\begin{thm}\label{boundary_contr:thm}
Let $\L$ be a basic $\plainC1$-domain, 
and let $\Om$ be a bounded $\plainC3$-region. 
Suppose that $\a T\gtrsim 1$, and that 
$\varphi\in\plainC\infty_0(\R^d)$. Then 
\begin{align}\label{boundary_contr:eq}
\underset{ \substack{T\to 0\\ \a T\gtrsim 1 }}\lim\ \frac{1}{\a^{d-1}\log\frac{1}{T}}
\tr \bigl(\varphi D_\a( a_{T}, \L;  g_p)\bigr) 
=  U(g_p)\GV_1(\varphi; \p\L, \p \Om).
\end{align} 
\end{thm}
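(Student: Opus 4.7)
The strategy implements the heuristic of the introduction: the dominant contribution to $\tr(\varphi D_\a(a_T, \L; g_p))$ originates in a boundary layer of thickness $\ell := (\a T)^{-1}$ around $\p\L$, where $a_T$ may be replaced by $\chi_\Om$ via Lemma \ref{jump:lem} and the discontinuous-symbol asymptotics of Lemma \ref{onebasic:lem} invoked, while the region away from $\p\L$ contributes only $O(\a^{d-1})$. Cover $\p\L \cap \supp\varphi$ by balls $\{B(\bz_j, \ell)\}_{j=1}^N$ with $\bz_j \in \p\L$ and $N \lesssim \ell^{-(d-1)} = (\a T)^{d-1}$; let $\{\psi_j\}_{j \ge 1}$ be a smooth partition of unity subordinate to $\{B(\bz_j, 2\ell)\}$ satisfying $\SN^{(k)}(\psi_j; \ell) \lesssim 1$ and $\sum_{j\ge 1}\psi_j = 1$ in the $\ell/2$-neighbourhood of $\p\L \cap \supp\varphi$. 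Set $\psi_0 := \varphi(1 - \sum_{j\ge 1}\psi_j)$, supported at distance $\ge \ell/2$ from $\p\L \cap \supp\varphi$.

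The away estimate $\|\psi_0 D_\a(a_T, \L; g_p)\|_1 = O(\a^{d-1})$ follows from Corollary \ref{CD:cor} (applicable since multiplication operators commute, so $[\chi_\L, \psi_0] = 0$), which reduces the task to bounding $\|\psi_0\chi_\L\op_\a(a_T)(I-\chi_\L)\|_1$ and $\|[\op_\a(a_T), \psi_0]\|_1$. For the first, decompose $\supp(\psi_0\chi_\L)$ into dyadic bands at distance $[2^{k-1}\ell, 2^k\ell]$ from $\p\L$, cover band $k$ by balls of radius $\sim 2^{k-1}\ell$, and apply the decoupling estimate \eqref{scales_apart:eq} with separation $\sim 2^{k-1}\ell$ on each ball; the per-band total equals $2^{(k-1)(1-m)}\a^{d-1}$, summing geometrically (for $m \ge 2$) to $O(\a^{d-1})$. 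For the commutator, write $[\op_\a(a_T), \psi_0] = [\op_\a(a_T), \varphi] - \sum_j [\op_\a(a_T), \varphi\psi_j]$: the first term is $O(\a^{d-1})$ by \eqref{between2:eq}, and each of the $N$ summands is $\lesssim (\a\ell)^{d-1} = T^{-(d-1)}$ by \eqref{between2:eq}, summing to $N T^{-(d-1)} \lesssim \a^{d-1}$.

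Next, for each $j \ge 1$, Lemma \ref{jump:lem} applies (since $\a(2\ell)T = 2 \lesssim 1$) and yields $\|\chi_{B(\bz_j, 2\ell)}[D_\a(a_T, \L; g_p) - D_\a(\chi_\Om, \L; g_p)]\|_1 \lesssim T^{-(d-1)}$; summing over $j$ gives a cumulative error of $O(\a^{d-1})$. Apply Lemma \ref{onebasic:lem} to each $\varphi\psi_j$ with $R = 2\ell$ (so $\a R = 2/T \to \infty$ and $\log(\a R) = \log(2/T) \sim \log(1/T)$), obtaining
\begin{equation*}
\tr(\varphi\psi_j D_\a(\chi_\Om, \L; g_p)) = U(g_p)\GV_1(\varphi\psi_j; \p\L, \p\Om)\a^{d-1}\log(1/T) + R^{d-1}\a^{d-1}\log(1/T)\cdot o_j(1),
\end{equation*}
uniformly in $j$. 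Summation over $j$ produces $U(g_p)\GV_1(\varphi \sum_j \psi_j; \p\L, \p\Om)\a^{d-1}\log(1/T) = U(g_p)\GV_1(\varphi; \p\L, \p\Om)\a^{d-1}\log(1/T)$ (using $\sum_j\psi_j \equiv 1$ on $\p\L \cap \supp\varphi$ and the fact that $\GV_1$ is a surface integral on $\p\L$) plus a total error $N R^{d-1}\a^{d-1}\log(1/T)\cdot o(1) = o(\a^{d-1}\log(1/T))$. Combined with the estimates for $\psi_0$ and for the jump replacement, this proves \eqref{boundary_contr:eq}.

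The main obstacle lies in the application of Lemma \ref{onebasic:lem}: for the error terms to be genuinely $o(1)$ uniformly in $j$, one must verify that after rescaling each ball $B(\bz_j, 2\ell)$ to unit size, the local representations of $\p\L$ share a uniform Lipschitz constant $M_\Phi$ and a common modulus of continuity $\varepsilon(r)$ for $\nabla\Phi$, exactly as required by Remark \ref{dom_uni:rem}. This uniformity holds because $\L = \G(\Phi)$ is a single basic $\plainC1$-domain whose geometric data are preserved (and, for the modulus of continuity, only improved) under rescaling with ratio $R = 2\ell \le 1$, so $N$-fold summation of the uniform $o(1)$ errors remains compatible with the prefactor $N R^{d-1} \lesssim 1$.
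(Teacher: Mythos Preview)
Your proof is correct and follows essentially the same architecture as the paper's: decompose $\varphi$ into a boundary-layer part (balls of radius $\ell=(\a T)^{-1}$ covering $\p\L\cap\supp\varphi$) and a remainder $\psi_0$, replace $a_T$ by $\chi_\Om$ on the boundary layer via Lemma~\ref{jump:lem}, invoke Lemma~\ref{onebasic:lem} ball-by-ball, and sum using the uniformity in Remark~\ref{dom_uni:rem}.

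The one place you diverge is the away estimate. You appeal to Corollary~\ref{CD:cor}, which produces two terms, and then spend a dyadic argument on $\|\psi_0\chi_\L\op_\a(a_T)(I-\chi_\L)\|_1$ via \eqref{scales_apart:eq}. The paper avoids this entirely: since $\supp\psi_0\cap\p\L=\varnothing$, the pair $(\L,\Pi)=(\L,\R^d)$ (or $(\L,\varnothing)$ on the exterior piece) satisfies \eqref{localp:eq}, so \eqref{9Dp_away:eq} applies and reduces the bound to the single commutator $\|[\psi_0,\op_\a(a_T)]\|_1$, which you already control exactly as the paper does (splitting as $[\varphi,\cdot]-\sum_j[\varphi\psi_j,\cdot]$ and using \eqref{between2:eq}). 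Your dyadic route is valid---the per-band bookkeeping you sketch is right provided the ball radii in band $k$ are chosen a fixed fraction of $2^k\ell$ so that $s$ in \eqref{scales_apart:eq} stays uniformly above $1$---but it is extra work that \eqref{9Dp_away:eq} makes unnecessary.
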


By rescaling and translating we may assume that 
in Theorem \ref{boundary_contr:thm} 
the support of $\varphi$ is contained in the ball $B(\bold0, 1)$. 
We also assume that 
\begin{equation}\label{varphi1:eq}
\SN^{(d+2)}(\varphi; 1)\le 1.
\end{equation}
If the support of $\varphi$ has an empty intersection with the boundary $\p\L$, 
then by \eqref{9D_away:eq}, 
\begin{equation}\label{bound_away:eq}
|\tr \bigl(\varphi D_\a( a_{T}, \L;  f)\bigr)|
\lesssim N_n(f)\a^{d-1},
\end{equation}
and hence \eqref{boundary_contr:eq} automatically holds. 

It remains to consider the case where 
$\supp\varphi\cap\p\L\not=\varnothing$. 
For this case we construct
a convenient partition of unity. For $\L = \G(\Phi)$  let 
$\bx_{\hat\bn} = \bigl(\ell\hat\bn, \Phi(\ell\hat\bn)\bigr)$,\ 
$\hat\bn\in\mathbb Z^{d-1}$, $\ell = ~(\a T)^{-1}$, 
be the points on the boundary $\p\L$. 
Then the balls 
$B(\bx_{\hat\bn}, r)$, $\hat\bn\in\Z^{d-1}$, with 
$r = \ell\sqrt{(1+M)^2+1}$ form 
a covering of the strip 
\begin{equation*}
\L_\ell = \{\bx\in\R^d: \Phi(\hat\bx)< x_d< \Phi(\hat\bx) + \ell\}
\subset\L.
\end{equation*}
Let $\phi_{\hat\bn}\in\plainC\infty_0(\R^d)$ be a partition of unity 
subordinate to this covering. 
We may assume that 
\begin{equation}\label{hatn:eq}
\SN^{(n)}(\phi_{\hat\bn}; \ell) \lesssim 1,\ n = 0, 1, 2, \dots,
\end{equation}
uniformly in $\hat\bn\in\mathbb Z^{d-1}$. 
Denote 
\begin{equation}\label{w1w2:eq}
w_1(\bx) = \varphi(\bx)\sum_{{\hat\bn}\in\mathbb Z^{d-1}} \phi_{\hat\bn}(\bx),\ 
w_2(\bx) = \varphi - w_1(\bx).
\end{equation}
Note that
\begin{equation}\label{number:eq}
\#\{\hat\bn\in\Z^{d-1}: \bx_{\hat\bn}\in \supp\varphi\}\lesssim \ell^{1-d}.
\end{equation}

\begin{lem}\label{boundary:lem} 
Let $\L$ be a 
basic Lipschitz domain, and  let $\Om$ be a bounded Lipschitz region. 
Let $0<T\lesssim 1$, $\a T\gtrsim 1$. Then 
\begin{equation}\label{boundary1:eq}
\bigl|\tr \bigl[w_1 \bigl(D_\a(a_T, \L; f) 
- D_\a(\chi_{\Om}, \L; f)\bigr)\bigr]\bigr|
\lesssim N_{n}(f) \a^{d-1},
\end{equation}
uniformly in $\L$. 
\end{lem}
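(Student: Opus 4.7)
The plan is to use the partition of unity $\{\phi_{\hat\bn}\}$ already constructed before the lemma statement, which sits at the natural scale $\ell = (\a T)^{-1}$, and to invoke Lemma \ref{jump:lem} cell by cell. The scale $\ell$ is tailored to match the hypotheses of that lemma: since $\a T\gtrsim 1$ and $T\lesssim 1$, one has $\a\ell = T^{-1}\gtrsim 1$ and $\a\ell T = 1$, so both conditions $\a\ell\gtrsim 1$ and $\a\ell T\lesssim 1$ hold with uniform constants.

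More explicitly, using $|\tr Y|\le \|Y\|_1$ and the triangle inequality for the trace norm, I would estimate
\begin{equation*}
|\tr[w_1(D_\a(a_T,\L;f) - D_\a(\chi_\Om,\L;f))]|
\le \sum_{\hat\bn}\|\varphi\phi_{\hat\bn}\bigl(D_\a(a_T,\L;f)-D_\a(\chi_\Om,\L;f)\bigr)\|_1,
\end{equation*}
summing only over those $\hat\bn\in\Z^{d-1}$ for which $\supp(\varphi\phi_{\hat\bn})\ne\varnothing$. Since $\varphi\phi_{\hat\bn}$ is supported in $B(\bx_{\hat\bn},r)$ with $r\asymp\ell$ and is uniformly bounded by \eqref{varphi1:eq} and \eqref{hatn:eq}, I can dominate each summand by $\|\chi_{B(\bx_{\hat\bn},r)}(\,\cdots)\|_1$ and apply Lemma \ref{jump:lem} with $\Pi=\L$, so that condition \eqref{localp1:eq} is trivially satisfied. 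This gives
\begin{equation*}
\|\varphi\phi_{\hat\bn}\bigl(D_\a(a_T,\L;f)-D_\a(\chi_\Om,\L;f)\bigr)\|_1
\lesssim N_n(f)(\a r)^{d-1} \asymp N_n(f)\,T^{-(d-1)},
\end{equation*}
with constants uniform in $\hat\bn$ and in the basic Lipschitz domain $\L$ (in the sense of Subsection \ref{domains:subsect}).

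The proof then concludes by noting that, by \eqref{number:eq}, the number of non-vanishing summands is $\lesssim \ell^{1-d}=(\a T)^{d-1}$, so multiplying gives the claimed total
\begin{equation*}
N_n(f)\,(\a T)^{d-1}\cdot T^{-(d-1)} = N_n(f)\,\a^{d-1}.
\end{equation*}
The crucial observation, rather than a real obstacle, is the exact cancellation between the $(d-1)$-dimensional counting factor $\ell^{1-d}$ reflecting the codimension of $\p\L$ and the per-cell trace-norm estimate $(\a\ell)^{d-1}$ supplied by Lemma \ref{jump:lem}; this is what eliminates the logarithmic factor that would appear had one instead applied the global estimate \eqref{log_bound:eq}. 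The uniformity in $\L$ simply propagates from the uniformity built into Lemma \ref{jump:lem} and the fact that the construction of $\{\phi_{\hat\bn}\}$ and the counting bound \eqref{number:eq} depend on $\L=\G(\Phi)$ only through $M_\Phi$.
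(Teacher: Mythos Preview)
Your proposal is correct and follows essentially the same route as the paper: decompose $w_1$ into the cells $\varphi\phi_{\hat\bn}$, apply Lemma~\ref{jump:lem} on each cell at scale $\ell=(\a T)^{-1}$ (where $\a\ell=T^{-1}\gtrsim 1$ and $\a\ell T=1$), and then sum using the counting bound \eqref{number:eq}. The only cosmetic difference is that the paper invokes Lemma~\ref{jump:lem} in a form that tracks the $\SN^{(d+2)}(\varphi\phi_{\hat\bn};\ell)$ factor directly, whereas you first dominate $\varphi\phi_{\hat\bn}$ by (a constant times) $\chi_{B(\bx_{\hat\bn},r)}$ and then apply the lemma as stated; both are equivalent.
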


\begin{proof} 
The trace on the left-hand side 
of \eqref{boundary1:eq} coincides with 
\begin{equation*}
\sum_{\hat\bn} \tr\bigl[
\varphi\phi_{\hat\bn}
\bigl(D_\a(a_T, \L; f) - D_\a(\chi_{\Om}, \L; f)\bigr)
\bigr].
\end{equation*}
The support of $\phi_{\hat\bn}$ is contained in $B(\bx_{\hat\bn}, r)$, 
$r\asymp\ell$, and also $\a\ell = T^{-1} \gtrsim 1$, $\a\ell T = 1$.
Thus from Lemma \ref{jump:lem} for each summand we obtain 
the bound by 
\begin{equation*}
 N_{n}(f) \ \SN^{(d+2)}(\varphi\phi_{\hat\bn}; \ell)\  (\a \ell)^{d-1}. 
\end{equation*}
In view of \eqref{varphi1:eq}, 
\eqref{hatn:eq} and \eqref{number:eq},  
this leads to \eqref{boundary1:eq}. 
\end{proof}

\begin{lem}\label{away:lem} 
Let $\L$ be a basic Lipschitz domain, 
and let $\Om$ be a bounded Lipschitz region.   
Suppose that $0<T\lesssim 1$, $\a T\gtrsim 1$. Then  
\begin{equation}\label{away:eq}
\bigl|\tr \bigl(w_2 D_\a(a_T, \L; f)\bigr)\bigr|
\lesssim N_{n}(f) \a^{d-1},
\end{equation}
uniformly in $\L$.
\end{lem}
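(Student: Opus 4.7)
The plan is to show that $w_2$, which vanishes in a fixed-multiple-of-$\ell$ neighbourhood of $\p\L$, contributes at most $N_n(f)\a^{d-1}$ to the trace, by isolating the part of its support in $\L$ and exploiting the structure of the partition $\{\phi_{\hat\bn}\}$.

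First I would observe that $w_2\chi_{\R^d\setminus\L}\cdot D_\a(a_T,\L;f)=0$: both summands in \eqref{Dalpha:eq} contain a $\chi_\L$ factor that annihilates any function supported in the complement of $\L$. Hence $\tr(w_2D_\a)=\tr(w_2^{(\mathrm{in})}D_\a)$ where $w_2^{(\mathrm{in})}:=w_2\chi_\L$ is supported in $\L\cap\{\dist(\cdot,\p\L)\geq c\ell\}$. Next I would decompose $w_2^{(\mathrm{in})}$ by means of the partition of unity itself:
\begin{equation*}
w_2^{(\mathrm{in})} = \sum_{\hat\bn}\phi_{\hat\bn}w_2^{(\mathrm{in})} + \Bigl(1-\sum_{\hat\bn}\phi_{\hat\bn}\Bigr)w_2^{(\mathrm{in})}.
\end{equation*}

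For the first (``transition'') sum, each piece $\phi_{\hat\bn}w_2^{(\mathrm{in})}$ is supported in a ball $B(\bx_{\hat\bn},r)\cap\L$ with $r\asymp\ell$; because $w_2^{(\mathrm{in})}$ vanishes in $\L_\ell$, the support actually lies in $\L$ at distance $\geq c\ell$ from $\p\L$. Choosing the cut-off parameter $c$ so that a concentric ball of radius $\ell/2$ sits inside $\L$, I would apply \eqref{9D_away:eq} (with $\Pi=\R^d$) to each piece, getting $\|\phi_{\hat\bn}w_2^{(\mathrm{in})}D_\a\|_1\lesssim N_n(f)(\a\ell)^{d-1}$. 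By \eqref{number:eq} there are at most $O(\ell^{1-d})$ nonzero indices, so the total contribution is $\lesssim N_n(f)\a^{d-1}$.

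For the second (``bulk'') piece, on its support $\sum\phi_{\hat\bn}\equiv0$, so it equals $\varphi\chi_\L$ there. I would introduce an auxiliary smooth cutoff $\eta$ at scale $1$ with $\eta\equiv 1$ on $\{\dist(\cdot,\p\L)>2\}$ and $\eta\equiv0$ on $\{\dist<1\}$, and split this piece as $\varphi\eta + R$, where $R$ is supported in the bounded region $\L\cap\{c\ell<\dist<2\}$. The term $\varphi\eta$ is smooth at scale $1$ with support at distance $\geq 1$ from $\p\L$; covering it by $O(1)$ balls of radius $1/4$ sitting inside $\L$ with the required margin and applying \eqref{9D_away:eq} again gives $\|\varphi\eta D_\a\|_1\lesssim N_n(f)\a^{d-1}$. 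The remainder $R$ equals $\varphi(1-\eta)$ outside the $C\ell$-neighbourhood of $\p\L$ and reverts to the transition structure inside it, so it is absorbed into the transition analysis above, yielding another $O(N_n(f)\a^{d-1})$.

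The main obstacle is the scale mismatch between the transition strip (scale $\ell$, volume $O(\ell)$, $O(\ell^{1-d})$ pieces summing to $\a^{d-1}$) and the bulk (scale $1$, bounded number of pieces each $\a^{d-1}$): a naive Whitney-type decomposition of the intermediate region $\{C\ell<\dist<1\}$ would produce a spurious $\log(1/T)$ factor. The resolution is the careful two-scale split above, which isolates the scale-$\ell$ features to thin strips already handled by the transition argument and reduces the remaining bulk to a fixed number of scale-$1$ pieces, so that \eqref{9D_away:eq} can be used at each scale at its natural value without any dyadic summation.
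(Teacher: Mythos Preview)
Your decomposition goes astray at the ``bulk'' step. The second piece $(1-\sum_{\hat\bn}\phi_{\hat\bn})w_2^{(\mathrm{in})}$ equals $(1-\sum\phi_{\hat\bn})^2\varphi\chi_\L$, and its support is $\{\sum\phi_{\hat\bn}<1\}\cap\L\cap\supp\varphi$, not $\{\sum\phi_{\hat\bn}=0\}$; on the shell where $0<\sum\phi_{\hat\bn}<1$ it does \emph{not} coincide with $\varphi\chi_\L$. Consequently the split ``$\varphi\eta+R$'' does not match the bulk piece, and the remainder $R$ is not what you claim. Even after repairing this identification, the part supported in the strip $\{c\ell<\dist(\cdot,\p\L)<2\}$---which carries scale-$\ell$ derivatives at its inner edge but has width of order $1$---cannot be handled by a bounded number of scale-$1$ balls satisfying $B(\bz,2\ell')\subset\L$, nor by the scale-$\ell$ transition argument alone; any honest covering respecting the condition of \eqref{9D_away:eq} forces $\ell'\lesssim\dist(\bz,\p\L)$, i.e.\ a Whitney decomposition, and the $\log(1/T)$ factor you set out to avoid reappears.

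The paper bypasses all geometric subdivision of $w_2$. Since only $w_2\chi_\L$ matters and it is supported away from $\p\L$, one invokes \eqref{9Dp_away:eq} once for the function $w_2$ itself, reducing everything to the single commutator $\|[w_2,\op_\a(a_T)]\|_1$. This commutator is then split \emph{algebraically} via the defining identity $w_2=\varphi-\sum_{\hat\bn}\varphi\phi_{\hat\bn}$: by \eqref{between2:eq} the scale-$1$ term gives $\|[\varphi,\op_\a(a_T)]\|_1\lesssim\a^{d-1}$, and each of the $O(\ell^{1-d})$ scale-$\ell$ terms gives $\|[\varphi\phi_{\hat\bn},\op_\a(a_T)]\|_1\lesssim(\a\ell)^{d-1}$, so their sum is again $\lesssim\a^{d-1}$. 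The two scales appear side by side but never interact, and no covering argument is needed.
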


\begin{proof}
Since the function $w_2$ satisfies \eqref{localp:eq} 
with $\Pi = \R^d$, the bound \eqref{9Dp_away:eq} implies that 
the left-hand side is bounded (up to the factor $N_{n}(f)$) by 
\begin{equation}\label{prom_comm:eq}
\|[w_2, \op_\a(a_T)]\|_1 
\le \|[\varphi, \op_\a(a_T)]\|_1 
+ \sum_{\hat\bn}\|[\varphi\phi_{\hat\bn}, \op_\a(a_T)]\|_1.
\end{equation}
By \eqref{between2:eq}, 
\begin{align*}
\|[\varphi, \op_\a(a_T)]\|_1
\lesssim &\ \SN^{(d+2)}(\varphi; 1) \a^{d-1},\\[0.2cm]
\|[\varphi\phi_{\hat\bn}, \op_\a(a_T)]\|_1
\lesssim &\ \SN^{(d+2)}(\varphi\phi_{\hat\bn}; \ell) (\a \ell)^{d-1}.
\end{align*}
As in the proof of Lemma \ref{boundary:lem}, the number of summands 
does not exceed $\ell^{1-d}$, so that 
the right-hand side of \eqref{prom_comm:eq} 
is bounded by $N_n(f) \a^{d-1}$, as 
claimed.
\end{proof}
  
\begin{proof}[Proof of Theorem \ref{boundary_contr:thm}]
%Without loss of generality assume that $N_{n}(g_p) = 1$. 
By Lemmas \ref{boundary:lem}, \ref{away:lem} and Remark \ref{compact:rem}, 
\begin{align}\label{varphiw:eq}
\bigl|\tr \bigl(\varphi D_\a( a_{T}, \L;  g_p)\bigr)  
- \tr \bigl(w_1 D_\a( &\ \chi_{\Om}, \L;  g_p)\bigr)\bigr|\notag\\[0.2cm]
= &\ \bigl|\tr \bigl(w_2 D_\a( a_T, \L;  g_p)\bigr)\bigr|  
\lesssim %N_n(g_p)
\a^{d-1},
\end{align}
with an implicit constant 
depending on $p$. 
Thus it suffices to find the asymptotics of the required form 
for the second term on the left-hand side. 
To analyse the asymptotics for each term in the definition 
of $w_1$( see \eqref{w1w2:eq}) we use the formula \eqref{onebasic:eq} 
with the function $\phi_{\hat\bn}\varphi$ and $R = \ell$. 
Since $\ell = (\a T)^{-1}$,\ and $\a\ell = T^{-1}$ 
the formula \eqref{onebasic:eq} rewrites as follows:
\begin{align}\label{critical1:eq}
\underset{\substack{T\to 0,\\ \a T\gtrsim 1}}
\lim \ell^{1-d} 
\biggl[
\frac{1}{\a^{d-1} \log\frac{1}{T}}
\tr \bigl(\phi_{\hat\bn}\varphi &\ 
D_\a( \chi_{\Om}, \L;  g_p)\bigr)\notag\\[0.2cm]
&\ - U(g_p) \GV_1(\phi_{\hat\bn}\varphi; \p\L, \p\Om)
\biggr] = 0,
\end{align} 
uniformly in $\hat\bn\in\mathbb Z^{d-1}$. 
Therefore
\begin{align*}
\limsup  &\ 
\biggl|
\frac{1}{\a^{d-1} \log\frac{1}{T}}
\tr \bigl( w_1D_\a( \chi_{\Om}, \L;  g_p)\bigr)
-  U(g_p) \GV_1(w_1; \p\L, \p\Om)
\biggr|\\[0.2cm]
\le &\ 
\limsup   
\underset{\hat\bn}\sum
\biggl|
\frac{1}{\a^{d-1} \log\frac{1}{T}}
\tr \bigl(\phi_{\hat\bn}\varphi D_\a( \chi_{\Om}, \L;  g_p)\bigr)
- U(g_p) \GV_1(\phi_{\hat\bn}\varphi; \p\L, \p\Om)
\biggr|\\
\lesssim &\ 
\limsup  
\ \ell^{1-d} 
\underset{\hat\bn}\max
\biggl|
\frac{1}{\a^{d-1} \log\frac{1}{T}}
\tr \bigl(\phi_{\hat\bn}\varphi 
D_\a( \chi_{\Om}, \L;  g_p)\bigr)
- U(g_p) \GV_1(\phi_{\hat\bn}\varphi; \p\L, \p\Om)
\biggr|,
\end{align*}
as $T\to 0, \a T\gtrsim 1$. 
Here we have used \eqref{number:eq} again. Given the uniformity 
in $\hat\bn\in\mathbb Z^{d-1}$ and the bound 
\eqref{number:eq}, the formula \eqref{critical1:eq} implies that 
\begin{equation*}
\underset{T\to 0}\lim  
\biggl[
\frac{1}{\a^{d-1} \log\frac{1}{T}}
\tr \bigl( w_1D_\a( \chi_{\Om}, \L;  g_p)\bigr)
- U(g_p) \GV_1(w_1; \p\L, \p\Om)
\biggr] = 0. 
\end{equation*} 
Taking into account \eqref{varphiw:eq} and that 
$w_1(\bx) = \varphi(\bx)$ for $\bx\in\p\L$, this leads to  
\eqref{boundary_contr:eq}.   
\end{proof}

\subsection{Proof of Theorem \ref{main:thm}: basic piece-wise 
smooth domains $\L$}

Here we extend the formula \eqref{boundary_contr:eq} 
to piece-wise smooth domains $\L$. Our argument follows the 
proof of \cite[Theorem 4.1]{Sob2}. 
For simplicity we assume that only $\L$ is piece-wise smooth, whereas 
$\Om$ remains smooth. This simplification preserves the idea 
of \cite{Sob2}, but allows one to avoid some routine technical work 
that would have been just a modified repetition of the proof 
from \cite{Sob2}. 
 
\begin{thm}\label{pw:thm}
 Let $\L$ be a basic piece-wise $\plainC1$-domain, and let $\Om$ 
 be a bounded $\plainC3$-region. Suppose that  
 $\varphi\in\plainC\infty_0(\R^d)$. Then the formula \eqref{boundary_contr:eq} 
 holds. 
 \end{thm}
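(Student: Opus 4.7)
The plan is to reduce Theorem \ref{pw:thm} to the basic smooth case of Theorem \ref{boundary_contr:thm} via a two-scale cut-off that separates $\p\L$ into its regular part and a thin tube around the singular set $(\p\L)_{\rm s}$. The regular part contributes the main term by direct application of Theorem \ref{boundary_contr:thm} after flattening the local piece of $\L$ to a globally $\plainC1$ basic domain, while the singular tube is shown to be asymptotically negligible by the sharp trace-norm bound \eqref{log_bound:eq} and the fact that $(\p\L)_{\rm s}$ is only $(d-2)$-dimensional.

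Fix $\delta>0$ (to be sent to zero at the end) and split $\varphi=\varphi_\delta^{(s)}+\varphi_\delta^{(r)}$, where $\varphi_\delta^{(s)}$ is supported in a $2\delta$-neighbourhood of $(\p\L)_{\rm s}$, $\varphi_\delta^{(r)}$ vanishes on a $\delta$-neighbourhood of $(\p\L)_{\rm s}$, and both satisfy $\SN^{(d+2)}(\cdot\,;\delta)\lesssim 1$. For $\varphi_\delta^{(r)}$, I would cover $\supp\varphi_\delta^{(r)}\cap\p\L$ by finitely many balls $B(\bz_k,r_k)$ on each of which the local defining function for $\p\L$ is $\plainC1$; extending each such function to a globally Lipschitz $\plainC1$-function produces basic $\plainC1$-domains $\L_k$ with $\L\cap B(\bz_k,r_k)=\L_k\cap B(\bz_k,r_k)$. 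With a subordinate partition of unity $\{\chi_k\}$, the trace-norm localization \eqref{9D:eq} gives
\begin{equation*}
\bigl\|\chi_k\varphi_\delta^{(r)}\bigl(D_\a(a_T,\L;g_p)-D_\a(a_T,\L_k;g_p)\bigr)\bigr\|_1\lesssim\a^{d-1},
\end{equation*}
and Theorem \ref{boundary_contr:thm} applied to each $\L_k$ then yields
\begin{equation*}
\lim_{\substack{T\to 0\\ \a T\gtrsim 1}}\frac{1}{\a^{d-1}\log\frac{1}{T}}\,\tr\bigl(\varphi_\delta^{(r)}D_\a(a_T,\L;g_p)\bigr)=U(g_p)\,\GV_1\bigl(\varphi_\delta^{(r)};\p\L,\p\Om\bigr).
\end{equation*}

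For $\varphi_\delta^{(s)}$, I would cover the $(d-2)$-dimensional set $(\p\L)_{\rm s}\cap\supp\varphi$ by $O(\delta^{-(d-2)})$ balls of radius $\delta$, take a partition of unity $\{\phi_j\}$ subordinate to this cover with $\SN^{(d+2)}(\phi_j;\delta)\lesssim 1$, and apply the bound \eqref{log_bound:eq} on each ball (valid once $\a\delta\gtrsim 1$, which holds for $\a$ sufficiently large at fixed $\delta$). This gives a per-ball contribution of order $(\a\delta)^{d-1}\log(1/T+1)$, so summing yields
\begin{equation*}
\bigl|\tr\bigl(\varphi_\delta^{(s)}D_\a(a_T,\L;g_p)\bigr)\bigr|\lesssim\delta^{-(d-2)}(\a\delta)^{d-1}\log\tfrac{1}{T}=\delta\,\a^{d-1}\log\tfrac{1}{T}.
\end{equation*}
Combining the regular and singular contributions and using that $\GV_1(\varphi_\delta^{(s)};\p\L,\p\Om)\to 0$ as $\delta\to 0$ (since $(\p\L)_{\rm s}$ has zero $(d-1)$-dimensional surface measure on $\p\L$), one obtains
\begin{equation*}
\limsup\Bigl|\tfrac{1}{\a^{d-1}\log(1/T)}\tr\bigl(\varphi D_\a(a_T,\L;g_p)\bigr)-U(g_p)\GV_1(\varphi;\p\L,\p\Om)\Bigr|\lesssim\delta,
\end{equation*}
and sending $\delta\to 0$ completes the proof.

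The main obstacle is ensuring that the implicit constants in \eqref{9D:eq} and \eqref{log_bound:eq} are uniform across the basic $\plainC1$-domains $\L_k$ used to straighten the smooth part and across the $O(\delta^{-(d-2)})$ balls covering $(\p\L)_{\rm s}$. This uniformity rests on the convention from Subsection \ref{domains:subsect} that all estimates involving basic Lipschitz domains are uniform in $M_\Phi$, together with the covering argument from \cite{Sob2} which pins down the $\delta^{-(d-2)}$ count; both ingredients are available here with only superficial modifications, so this part of the argument is just sketched rather than repeated in full.
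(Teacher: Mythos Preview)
Your proposal is correct and follows essentially the same approach as the paper's own proof. The paper covers $\supp\varphi$ by a single family of balls of radius $\varepsilon$ and sorts them according to whether they meet $(\p\L)_{\rm s}$, whereas you first split $\varphi$ into singular and regular parts and then cover each separately; but the key mechanisms are identical --- the $(d-2)$-dimensional count $\#\Sigma\lesssim\varepsilon^{2-d}$ together with \eqref{log_bound:eq} for the singular tube, and \eqref{9D:eq} plus Theorem \ref{boundary_contr:thm} for the smooth pieces --- and the final $\limsup\lesssim\varepsilon$ (your $\delta$) step is the same.
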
 

\begin{proof}
Assume as before that $\varphi(\bx) = 0$ for $|\bx| >1$ and 
\eqref{varphi1:eq} holds. 
We follow the idea of the proof of \cite[Theorem 4.1]{Sob2}.
Cover the ball $B(\bold0, 1)$ with open 
balls of radius $\varepsilon>0$, such that the number of intersections 
is bounded from above uniformly in $\varepsilon$. 
Introduce a  subordinate partition of unity $\{\phi_j\}, j = 1, 2, \dots$, 
such that $\SN^{(n)}(\phi_j; \varepsilon)\lesssim 1$ uniformly in $j= 1, 2, \dots$.   
By \eqref{bound_away:eq} contributions 
to \eqref{boundary_contr:eq} from the balls having empty intersection 
with $\p\L$, equal zero. 

Let $\Sigma$ be the set of indices such 
that the ball indexed by $j\in\Sigma$ has a non-empty intersection 
with the set $(\p\L)_{\rm s}$, see Subsect. \ref{domains:subsect} for the definition. 
Since the set $(\p\L)_{\rm s}$ is built out of $(d-2)$-dimensional Lipschitz surfaces, 
we have 
\begin{equation}\label{sigma:eq}
\#\Sigma\lesssim \varepsilon^{2-d}.
\end{equation}
By \eqref{log_bound:eq},  
for each ball we have the bound 
\begin{equation*}
\bigl|\tr\bigl(\varphi\phi_jD_\a(a_{T}, \L; g_p)\bigr)\bigr|
\lesssim %N_n(g_p) 
(\a \varepsilon)^{d-1} \log\frac{1}{T},
\end{equation*}
with an implicit constant depending on $p$,
uniformly in $j = 1, 2, \dots$, if $\a\varepsilon\gtrsim 1$. 
By virtue of \eqref{sigma:eq}, this implies that
\begin{equation*}
\underset{j\in\Sigma}\sum 
\biggl|\tr\bigl(\varphi\phi_jD_\a(a_{T}, \L; g_p)\bigr)\biggr|
\lesssim %N_n(g_p) 
\varepsilon \a^{d-1}\log\frac{1}{T},\ \textup{if}\ \ \a\varepsilon\gtrsim 1.
\end{equation*}
As 
\begin{equation*}
\underset{j\in\Sigma}\sum 
\bigl|
\GV_1(\varphi\phi_j, \p\L; \p\Om)
\bigr|\lesssim \varepsilon
\end{equation*}
(see \eqref{W1:eq} for the definition of $\GV_1$), 
we can rewrite the last two formulas as follows:
\begin{align}\label{vare:eq}
\underset{\substack{T\to 0\\
\a T\gtrsim 1}}\limsup\underset{j\in\Sigma}\sum 
\ \biggl|\frac{1}{\a^{d-1}\log\frac{1}{T}} 
\tr\bigl(\varphi\phi_jD_\a(a_{T}, \L; g_p)\bigr)
 - U(g_p) \GV_1(\varphi\phi_j, \p\L, \p\Om)\biggr|
 \lesssim \varepsilon.
\end{align}
Let us now turn to the balls 
with indices $j\notin\Sigma$. We may assume that they are separated from 
$(\p\L)_{\rm s}$. Thus in each such ball the boundary of $\L$ is $\plainC1$. 
By \eqref{9D:eq}, we may assume that the entire 
$\L$ is $\plainC1$, and hence  
Theorem \ref{boundary_contr:thm} is applicable. Together with \eqref{vare:eq}, 
this gives 
\begin{align*}
 \underset{\substack{T\to 0\\
\a T\gtrsim 1}}\limsup\biggl|\frac{1}{\a^{d-1}\log\frac{1}{T}} 
\tr\bigl(\varphi D_\a(a_{T}, \L; g_p)
 - U(g_p) \GV_1(\varphi, \p\L, \p\Om)\biggr|
 \lesssim \varepsilon.
\end{align*}
Since $\varepsilon>0$ is arbitrary, this proves the Theorem.
\end{proof}

\subsection{Proof of Theorem \ref{main:thm}: completion} 

Theorem \ref{pw:thm} 
extends to arbitrary piece-wise $\plainC1$ region $\L$ by 
using the standard partition of unity 
argument based on Lemma \ref{9D:lem}. 
Also, as mentioned earlier, 
one can extend 
Theorem \ref{pw:thm} to the piece-wise
 $\plainC3$-regions $\Om$.  We omit this argument since it 
 repeats the proofs in \cite{Sob2}.

The extension of Theorem \ref{pw:thm} to arbitrary 
functions $f$ specified in Theorem \ref{main:thm}, 
is done in the same way as in 
the proof of Theorem \ref{main:thm} for $\a T \lesssim 1$, with the help 
of the  
bounds \eqref{entropy:eq} and \eqref{smooth2:eq}. We omit the details. 
\qed

\section{Comparison with known asymptotic formulas} 
\label{compar:sect}

\subsection{Coefficient $\CB_d$}
The asymptotics of $\tr D_\a(a, \L; f)$ 
for a fixed symbol $a$ as $\a\to\infty$ have 
been studied rather extensively in 
the 1980's by H. Widom and R. Roccaforte, e.g. \cite{Widom_85} 
and references therein. It is interesting and 
instructive to compare there findings with 
the asymptotics in Theorem \ref{main:thm}. 
As shown in \cite{Widom_85}, \cite{BuBu}, 
for a fixed smooth symbol $a$, smooth $f$ and 
smooth $\L$ one can write out a complete asymptotics 
expansion in powers of $\a^{-1}$. 
In this section we focus only on the first 
coefficient of this expansion that we denoted by 
$\CB_d(a; \p\L, f)$ in the Introduction, see \eqref{bd:eq}.
 It has a more complicated form than 
the coefficient $\GV_1$ defined in \eqref{W1:eq} and featuring in 
Theorem \ref{main:thm}, and it is described below. 
 
For a function $f: \mathbb C\to \mathbb C$ 
and any $s_1, s_2\in\mathbb C$ define the integral  
\begin{equation}\label{Uf:eq}
U(s_1, s_2; f) 
= \int_0^1 \frac{f\bigl((1-t)s_1 + t s_2\bigr) 
	- [(1-t)f(s_1) + t f(s_2)]}{t(1-t)} dt.
\end{equation}
The integral $U(f)$ defined in \eqref{U:eq} is easily 
expressed as $U(f) = U(1, 0; f)$. 
It is clear that $U(s_1, s_2; 1) = U(s_1, s_2; t) = 0$,\ 
for all $s_1, s_2\in\mathbb C$.
This integral is  
finite for functions $f\in \plainC{0, \varkappa}(\mathbb C)$,  
$\varkappa\in (0, 1]$. 
It is also H\"older-continuous:
for any $\d\in (0, \varkappa)$ we have 
\begin{equation}\label{approx1:eq}
|U(s_1, s_2; f) - U(r_1, r_2; f)|\lesssim  \textup{Lip}_\varkappa
(f)\bigl(|s_1-r_1|^\d + |s_2-r_2|^\d\bigr), 
\end{equation}
where
\begin{equation*}
\textup{Lip}_\varkappa(f) 
= \sup_{z\not = w} \frac{|f(z) - f(w)|}{|z-w|^\varkappa}.
\end{equation*}
Note also that 
\begin{equation}\label{U_sym:eq}
U(s_1, s_1; f) = 0,\ \ \ 
U(s_1, s_2; f) = U(s_2, s_1; f), \forall s_1, s_2\in \mathbb C.
\end{equation}
For a symbol $a = a(\xi),\ \xi\in \R$ define 
\begin{equation}\label{cb:eq}
	\CB(a; f) := \frac{1}{8\pi^2}\lim_{\varepsilon\to 0}
	\underset{|\xi_1-\xi_2|>\varepsilon}\iint  
	\frac{U\bigl(a(\xi_1), a(\xi_2); f\bigr)}{|\xi_1-\xi_2|^2}
	d\xi_1 d\xi_2.
\end{equation} 
If $f$ is smooth, then this definition coincides with the standard 
double integral. In particular, if $f''$ is bounded, then 
\begin{equation*}
|\CB(a; f)|\lesssim \|f''\|_{\plainL\infty} \iint 
\frac{|a(\xi_1) - a(\xi_2)|^2}{|\xi_1 - \xi_2|^2}
d\xi_1 d\xi_2.
\end{equation*}
This estimate was first pointed out in \cite{Widom_82}. 
Note that  $\CB$ 
is invariant under the change $a(\xi) \to a(\tau \xi)$ 
with an arbitrary $\tau >0$. If $f$ is allowed 
to be non-smooth, as in Condition \ref{f:cond}, then 
the finiteness of the limit in \eqref{cb:eq} is not trivial, and 
we comment on this later, in Proposition \ref{scales:prop}. 

As shown in \cite{Widom_82}, see also \cite{LeSpSo_15}, 
in the case $d=1$, for smooth $f$ and $a$ 
we have $\tr D_\a(a, \R_+; f) \to \CB(a; f)$ 
as $\a\to\infty$. 
For the multi-dimensional case the asymptotic 
coefficient $\CB_d(a;\p\L, f)$ is defined as follows. 
For a unit vector $\be\in\R^d, d\ge 2$, introduce the hyperplane 
\begin{equation*}
\Pi_{\be} := \{ \bxi\in\R^d: \be\cdot\bxi = 0 \}.
\end{equation*} 
Introduce the orthogonal coordinates 
$\bxi = (\overc{\bxi}, t)$ such that $\overc{\bxi}\in \Pi_\be$ and $t\in\R$. 
Then we set
\begin{equation}\label{cbd:eq}
	\CB_d(a; \p\L,  f) := \frac{1}{(2\pi)^{d-1}}
	\int_{\p\L}   \CA_d(a, \bn_\bx; f) dS_\bx,\ \ 
	\CA_d(a, \be; f) := 
	\int_{\Pi_{\be}} 
	\CB\bigl(a(\overc{\bxi}, \ \cdot\ ); f\bigr)d\overc{\bxi}.
	\end{equation} 
	For the smooth symbol $a$ and smooth function $f$ 
it was proved by H. Widom 
(see \cite{Widom_80} and \cite{Widom_85}), that 
the trace of $D_\a(a, \L; f)$ satisfies 
\eqref{bd:eq}. 
%
%\begin{equation}\label{Wid:eq}
%\underset{\a\to\infty}\lim \ \a^{1-d}\ \tr D_\a(a, \L; f) = 
%\CB_d(a; \p\L, f).
%\end{equation}
%This formula coincides with \eqref{bd:eq}, and we reproduce 
%it here for the reference convenience. 
Clearly, the formula \eqref{bd:eq} 
describes the asymptotics of $\tr D_\a$ 
for the symbol $a = a_{T}$, 
as $\a\to\infty$ and $T>0$ is fixed. 
On the other hand, Theorem \ref{main:thm}
offers an asymptotic formula in two parameters: $\a\to\infty$ 
and $T\to 0$. Our aim now is to compare the asymptotic coefficient 
defined in \eqref{W1:eq}, with the 
coefficient $\CB_d(a_T, \p\L; f)$ as $T\to 0$. 
The relevant calculations are quite involved, 
and to avoid further complications, 
we assume that $\Om$ is smooth. 
In fact, for these purposes it will be 
sufficient to assume that $\Om$ is $\plainC2$-smooth.

\begin{thm} \label{comparison:thm}
Let $\L$ satisfy Condition \ref{domain:cond}, 
and let $\Om\subset \R^d$ be a bounded $\plainC2$-region. 

Let the function $f$ satisfy Condition \ref{f:cond} 
with $n = 2$ and some $\g >0$. 

Let $a = a_{T}$ be the symbol defined in 
Subsection \ref{at:subsect} with some $\b > \max\{d\g^{-1}, d\}$. 

Then 
\begin{align}\label{comparison:eq}
\underset{T\to 0}
\lim\ \frac{1}{\log\frac{1}{T}} \CB_d(a_{T}; \p\L, f) 
= U(f)\GV_1(1; \p\L, \p\Om).
\end{align}
\end{thm}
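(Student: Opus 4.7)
Since $\CB_d(a_T;\p\L,f)$ is defined in \eqref{cbd:eq} as an iterated integral and $\GV_1(1;\p\L,\p\Om)$ is also iterated with the same outer integral over $\p\L$, the natural route is to work from the inside out and reduce to a one-dimensional asymptotic for the functional $\CB$ of \eqref{cb:eq}. Concretely, I will try to prove the following one-dimensional lemma: if $c_T\in\plainC\infty(\R)$ is a family of symbols approximating $\chi_J$ for a finite union of intervals $J\subset\R$, with transition scale $T$ at each boundary point (inherited from \eqref{ata:eq}--\eqref{nablat:eq}), then
\begin{equation*}
\CB(c_T;f)=\frac{\#\p J}{4\pi^2}\,U(f)\,\log\frac{1}{T}+o\!\left(\log\frac{1}{T}\right),\qquad T\to 0.
\end{equation*}

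To establish this, I would write $c_T=\chi_J+r_T$ with $|r_T|\lesssim\lu\dist(\,\cdot\,,\p J)/T\ru^{-\b}$, insert into \eqref{cb:eq}, and use the H\"older continuity estimate \eqref{approx1:eq} on $U(s_1,s_2;f)$ to replace $(c_T(t_1),c_T(t_2))$ by $(\chi_J(t_1),\chi_J(t_2))$ up to an integrable error. The principal term is then $U(f)\iint_{\chi_J(t_1)\ne\chi_J(t_2)}(t_1-t_2)^{-2}dt_1dt_2$, which diverges logarithmically at each point of $\p J$. The smoothing of $c_T$ cuts the divergence at distance $T$, and the model computation
\begin{equation*}
\int_T^{O(1)}\!\!\int_T^{O(1)}\frac{ds_1\,ds_2}{(s_1+s_2)^2}=\log\frac{1}{T}+O(1)
\end{equation*}
gives $2U(f)\log(1/T)$ per boundary point, hence the factor $\#\p J/(4\pi^2)$ after the normalization $1/(8\pi^2)$ in \eqref{cb:eq} and the symmetry $U(s_1,s_2;f)=U(s_2,s_1;f)$.

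Second, I apply this 1D asymptotic to the cross-section $b_{T,\overc\bxi}(t):=a_T(\overc\bxi,t)$ along $\be=\bn_\bx$. For $\overc\bxi\in\Pi_\be$ such that the line $\overc\bxi+t\be$ meets $\p\Om$ transversally (which holds off a $(d{-}2)$-dimensional tangent set), the symbol $b_{T,\overc\bxi}$ satisfies the hypothesis of the 1D lemma with $J=I(\overc\bxi):=\{t:(\overc\bxi,t)\in\Om\}$, and $\#\p I(\overc\bxi)=N(\overc\bxi)$ is the number of transversal crossings. Integrating over $\Pi_\be$ and using the coarea identity
\begin{equation*}
\int_{\Pi_\be}N(\overc\bxi)\,d\overc\bxi=\int_{\p\Om}|\bn_\bxi\cdot\be|\,dS_\bxi,
\end{equation*}
one obtains, via dominated convergence,
\begin{equation*}
\CA_d(a_T,\be;f)=\frac{U(f)\log(1/T)}{4\pi^2}\int_{\p\Om}|\bn_\bxi\cdot\be|\,dS_\bxi+o\!\left(\log\frac{1}{T}\right).
\end{equation*}
Integrating once more over $\bx\in\p\L$ against $dS_\bx/(2\pi)^{d-1}$ and using $(2\pi)^{d-1}\cdot 4\pi^2=(2\pi)^{d+1}$ produces the limit $U(f)\,\GV_1(1;\p\L,\p\Om)$ prescribed by \eqref{W1:eq}.

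\textbf{Main obstacle.} The delicate step is the passage to the limit under the integral over $\Pi_\be$. Near the tangent set $\{\overc\bxi:\bn_\bxi\cdot\be=0 \text{ at some crossing}\}$, two boundary points of $I(\overc\bxi)$ coalesce (or the line misses $\Om$ altogether), so the effective smoothing scale seen by $b_{T,\overc\bxi}$ is larger than $T$ and the pointwise 1D expansion degenerates. To justify dominated convergence I would need a uniform a priori bound of the form $|\CB(b_{T,\overc\bxi};f)|\lesssim (1+N(\overc\bxi))\log(1/T)$ with an $\overc\bxi$-integrable majorant, which uses only the defining bound \eqref{ata:eq} and geometric integrability of $N$. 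A secondary difficulty is that $f$ is only H\"older continuous at $t_0$, so the error estimate \eqref{approx1:eq} in the 1D lemma must be balanced against the $(t_1-t_2)^{-2}$ singularity in \eqref{cb:eq}; this is what dictates the condition $\b>\max\{d\g^{-1},d\}$ and is already what forces the Schatten exponent restriction $\s\in(d\b^{-1},\min\{\g,1\})$ used elsewhere in the paper.
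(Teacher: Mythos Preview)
Your overall strategy---reduce to the one-dimensional functional $\CB$, slice along $\be$, invoke the coarea identity, and close with dominated convergence---is exactly the route the paper takes. The gap is in the execution of the 1D step.

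You propose to write $c_T=\chi_J+r_T$ and use \eqref{approx1:eq} to bound the error $U(c_T,c_T;f)-U(\chi_J,\chi_J;f)$ pointwise by $|r_T(t_1)|^\d+|r_T(t_2)|^\d$. But this error, divided by $(t_1-t_2)^2$, is \emph{not} integrable over $\R^2$: for each fixed $t_1$ the integral $\int(t_1-t_2)^{-2}\,dt_2$ already diverges. Equivalently, your ``principal term'' $\iint_{\chi_J(t_1)\ne\chi_J(t_2)}(t_1-t_2)^{-2}$ is $+\infty$, so you have written the finite quantity $\CB(c_T;f)$ as a formal difference of two infinities. The sentence ``the smoothing of $c_T$ cuts the divergence at distance $T$'' is correct intuition but not a proof step; inserting lower limits $T$ into your model integral does not correspond to anything in the argument as written.

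The paper's fix is to split at $|t_1-t_2|=\t T$ \emph{before} replacing $a_T$ by $\chi_\Om$: write $\CB=\CB^{(1)}+\CB^{(2)}$ as in \eqref{cb1:eq}--\eqref{cb2:eq}. For $\CB^{(2)}$ the replacement error is now genuinely bounded, since $\int_{|t-s|>\t T}|t-s|^{-2}\,ds\lesssim T^{-1}$ while $\int|a_T-\chi_\Om|^\d\lesssim T$ by \eqref{ld:eq}; this is exactly the content of \eqref{a2_diff:eq}. The close-points piece $\CB^{(1)}(a_T;f;\t T)$ cannot be treated by comparison with $\chi_\Om$ at all---instead one bounds it directly by the multi-scale estimate of Proposition~\ref{sub:prop} (imported from \cite{Sob_15}), yielding the uniform $O(1)$ bound \eqref{ca1_bound:eq}. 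That proposition is the missing ingredient which makes your heuristic cutoff rigorous, and it is not elementary.

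On the dominated-convergence obstacle you correctly flag: the paper sidesteps the non-uniformity of the 1D transition scale by performing the replacement $a_T\to\chi_\Om$ at the level of $\CA_d^{(2)}$, so the error is controlled by the full $d$-dimensional integral in \eqref{vsp:eq} and no pointwise-in-$\overc\bxi$ bound is needed for that step. What remains is $\CA_d^{(2)}(\chi_\Om,\be;f;\t T)$, whose slices depend on $T$ only through the cutoff; the dominating function over $\Pi_\be$ is then $\#\SX_l(\overc\bxi)\log(1/T)$, integrable by the coarea identity \eqref{SX:eq}, and the majorant for the outer integral over $\p\L$ is the uniform bound \eqref{ca_bound:eq}.
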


As pointed out in the Introduction, 
due to this theorem, the formula \eqref{main_gr:eq} 
can be rewritten in the form \eqref{standard:eq}, 
and hence it can be viewed as an extension of \eqref{bd:eq} 
to the asymptotics in two parameters, $\a$ and $T$. 
Such an asymptotic formula 
was obtained in \cite{LeSpSo_15} for 
the one-dimensional case. 
Note that \eqref{main_le:eq} cannot be rewritten in the same way. 

\subsection{Coefficient $\CB_d$ for the symbol $a_T$} 
We begin our analysis of the coefficient $\CB_d(a;\p\L, f)$ with 
studying the multi-scale symbols introduced in 
Subsection \ref{mscale:subsect}. Let the symbol 
$a$ satisfy \eqref{scales:eq} 
with the scale $\tau$ and amplitude $v$ that satisfy \eqref{Lip:eq} 
and \eqref{w:eq} respectively. Assume that \eqref{tauinf:eq} 
holds. As the region $\L$ is always fixed, 
for brevity we omit $\p\L$ from the notation and write simply 
$\CB_d(a; f)$. 

From now on we assume that $f$ satisfies Condition 
\ref{f:cond} with some $\g >0$ and $n=2$. 
We use the notation $\varkappa = \min\{1, \g\}$. 
The next proposition is borrowed from 
\cite[Theorem 6.1]{Sob_15}.

\begin{prop}\label{scales:prop}
Suppose that $f$ satisfies Condition \ref{f:cond} with $n = 2$, 
$\g >0$ and some $R > 0$. 
Let the symbol 
$a\in\plainC\infty(\R)$ be a real-valued symbol described above. 
Then for any $\s\in (0, \varkappa]$ we have 
\begin{equation}\label{coeffscales:eq}
|\CB(a; f)|\lesssim \1 f\1_2 R^{\g-\s} V_{\s, 1}(v, \tau), 
\end{equation}
with a constant independent of $f$, 
uniformly in the functions $\tau, v$, 
and the symbol $a$.
\end{prop}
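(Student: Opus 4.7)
The plan is to bound the singular double integral \eqref{cb:eq} by exploiting the multi-scale structure of $a$ together with the Hölder-type continuity of $U(\,\cdot\,,\,\cdot\,;f)$ from \eqref{approx1:eq}. Since \eqref{U_sym:eq} gives $U(s,s;f)=0$, inequality \eqref{approx1:eq} yields the pointwise bound $|U(a(\xi_1),a(\xi_2);f)| \lesssim \textup{Lip}_\s(f)|a(\xi_1)-a(\xi_2)|^\s$ for any $\s\in(0,\varkappa]$, which is the central tool. I would prove the estimate for the principal-value truncated integral uniformly in the cut-off $\varepsilon>0$ and then pass to the limit (whose existence is addressed separately in \cite{Sob_15}).

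First, I would split the $\xi_1,\xi_2$ integration region into the near-diagonal set $\{|\xi_1-\xi_2|\le \tau(\xi_1)\}$ and its complement. Within the near-diagonal set the Lipschitz property \eqref{Lip:eq} (with $\nu$ taken small) forces $\tau(\xi_2)\asymp\tau(\xi_1)$, the comparability \eqref{w:eq} gives $v(\xi_2)\asymp v(\xi_1)$, and \eqref{scales:eq} combined with the mean value theorem produces
\[
|a(\xi_1)-a(\xi_2)| \lesssim |\xi_1-\xi_2|\,v(\xi_1)/\tau(\xi_1).
\]
Inserting this into the main pointwise bound and performing the one-dimensional $\xi_2$-integration of $|\xi_1-\xi_2|^{\s-2}$ out to radius $\tau(\xi_1)$ yields an integrand of size $\textup{Lip}_\s(f)\,v(\xi_1)^\s/\tau(\xi_1)$, whose remaining $\xi_1$-integral equals $\textup{Lip}_\s(f)\,V_{\s,1}(v,\tau)$.

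Second, on the far region $\{|\xi_1-\xi_2|>\tau(\xi_1)\}$ I would use the cruder estimate $|U|\lesssim \textup{Lip}_\s(f)(v(\xi_1)^\s+v(\xi_2)^\s)$, coming from $|a(\xi_i)|\lesssim v(\xi_i)$. For the $v(\xi_1)^\s$ piece the radial integral $\int_{|\xi_1-\xi_2|>\tau(\xi_1)}|\xi_1-\xi_2|^{-2}d\xi_2\lesssim\tau(\xi_1)^{-1}$ reproduces $V_{\s,1}(v,\tau)$. For the $v(\xi_2)^\s$ piece I would further decompose by whether $|\xi_1-\xi_2|>\tau(\xi_2)$ (in which case swapping variables returns the previous case) or $\tau(\xi_1)<|\xi_1-\xi_2|\le\tau(\xi_2)$; in the latter intermediate regime \eqref{Lip:eq} forces $\tau(\xi_1)\asymp\tau(\xi_2)$ and \eqref{w:eq} propagates $v(\xi_2)\asymp v(\xi_1)$, reducing this piece again to the diagonal one.

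Finally, to produce the prefactor $\1 f\1_2 R^{\g-\s}$ I would express $\textup{Lip}_\s(f)$ in terms of the quantities in Condition \ref{f:cond}. Since $f$ is supported in $[t_0-R,t_0+R]$, the pointwise bound $|f'(t)|\le\1 f\1_2|t-t_0|^{\g-1}$ gives $\textup{Lip}_\varkappa(f)\lesssim\1 f\1_2 R^{\g-\varkappa}$ (with the cases $\g\ge 1$ and $\g<1$ treated separately), and one further interpolation converts the exponent $\varkappa$ to $\s$ at the cost of a factor $R^{\varkappa-\s}$, yielding $\textup{Lip}_\s(f)\lesssim\1 f\1_2 R^{\g-\s}$. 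The main obstacle I expect is the narrow intermediate subregion in the far regime where neither scale dominates $|\xi_1-\xi_2|$: only the smallness of $\nu$ in \eqref{Lip:eq} allows one to couple the two scales there and propagate the amplitude comparability \eqref{w:eq}, and all estimates must be arranged so that this sub-piece contributes no worse than $V_{\s,1}(v,\tau)$.
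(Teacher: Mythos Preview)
The paper does not prove this proposition; it simply quotes it from \cite[Theorem~6.1]{Sob_15}. Your proposal is therefore an attempt at a self-contained argument, and it has a genuine gap in the near-diagonal step.

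You claim that integrating $|\xi_1-\xi_2|^{\s-2}$ over $\{\varepsilon<|\xi_1-\xi_2|\le\tau(\xi_1)\}$ ``yields an integrand of size $\textup{Lip}_\s(f)\,v(\xi_1)^\s/\tau(\xi_1)$''. But $\s\le\varkappa\le 1$, so the one-dimensional integral $\int_\varepsilon^{\tau(\xi_1)} r^{\s-2}\,dr$ diverges as $\varepsilon\to 0$ (like $\varepsilon^{\s-1}$ when $\s<1$, like $\log(1/\varepsilon)$ when $\s=1$). Hence your truncated bound is \emph{not} uniform in the cut-off, and the argument does not close. The H\"older estimate $|U(s_1,s_2;f)|\lesssim\textup{Lip}_\s(f)\,|s_1-s_2|^\s$ with $\s\le 1$ is simply too weak to dominate the kernel $|\xi_1-\xi_2|^{-2}$ near the diagonal.

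What is missing is any use of the hypothesis $n=2$ in Condition~\ref{f:cond}: your argument exploits only the $\varkappa$-H\"older continuity of $f$, which is the $n=1$ information. Controlling the near-diagonal piece requires the second-derivative bound $|f''(t)|\le\1 f\1_2|t-t_0|^{\g-2}$. Roughly, when the segment $[a(\xi_1),a(\xi_2)]$ stays away from $t_0$ one has the quadratic estimate $|U(a(\xi_1),a(\xi_2);f)|\lesssim\1 f\1_2\,|a(\xi_1)-a(\xi_2)|^{2}\,\dist\bigl(t_0,[a(\xi_1),a(\xi_2)]\bigr)^{\g-2}$, which does kill the $|\xi_1-\xi_2|^{-2}$ singularity; a separate and genuinely delicate treatment is then needed for the set where $a(\xi_1)$ and $a(\xi_2)$ straddle $t_0$. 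This two-regime splitting in the \emph{range} of $a$ --- not merely in the $(\xi_1,\xi_2)$-plane --- is what drives the proof in \cite{Sob_15}, and it is absent from your outline. Your far-region analysis and the final estimate $\textup{Lip}_\s(f)\lesssim\1 f\1_2 R^{\g-\s}$ are fine, but the near-diagonal contribution is where the actual work lies.
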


We note another useful result from \cite{Sob_15}. 
It describes the contribution of ``close" points 
$\xi_1$ and $\xi_2$ in the coefficient \eqref{cb:eq}. For $r>0$  define 
\begin{align}
\CB^{(1)}(a; f; r) = &\ \frac{1}{8\pi^2}\lim_{\varepsilon\to 0}
	\underset{\varepsilon<|\xi_1-\xi_2| < r}\iint  
	\frac{U\bigl(a(\xi_1), a(\xi_2); f\bigr)}{|\xi_1-\xi_2|^2}
	d\xi_1 d\xi_2,\label{cb1:eq}\\[0.2cm]
\CB^{(2)}(a; f; r) = & \frac{1}{8\pi^2}
\underset{|\xi_1-\xi_2| \ge r}\iint  
	\frac{U\bigl(a(\xi_1), a(\xi_2); f\bigr)}{|\xi_1-\xi_2|^2}
	d\xi_1 d\xi_2.\label{cb2:eq} 
\end{align}
The integral \eqref{cb1:eq} 
is estimated in the following proposition.

\begin{prop}\label{sub:prop} 
Suppose that $f$ satisfies Condition \ref{f:cond} 
with $n = 2$, $\g>0$ and some $R>0$. 
Let $a\in\plainC\infty(\R)$ be as above. 
Suppose also that $r\le \tau_{\textup{\tiny inf}}/2$.
Then for any $\d\in [0, \varkappa)$,
the following bound holds:
\begin{equation}\label{sub:eq}
|\CB^{(1)}(a; f; r)|\lesssim \1 f\1_2 R^{\g-\varkappa}
r^\d V_{\varkappa, 1+ \d}(v, \tau),
\end{equation}
uniformly in the functions $\tau, v$, 
and the symbol $a$.
 \end{prop}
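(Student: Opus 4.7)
The plan is to refine the argument underlying Proposition \ref{scales:prop} (inequality \eqref{coeffscales:eq}) by exploiting the localization $|\xi_1-\xi_2|<r\le\tau_{\textup{\tiny inf}}/2$, in order to extract the additional factor $r^\d$ and the sharper weight $\tau^{-(1+\d)}$ appearing in $V_{\varkappa,1+\d}$.

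I would begin with a localization step. The hypothesis $r\le\tau_{\textup{\tiny inf}}/2\le\tau(\xi_1)/2$ means that, for $\xi_2$ in the inner integration, both points lie in a ball of radius $\tau(\xi_1)/2$ around $\xi_1$. The Lipschitz bound \eqref{Lip:eq} (with the small constant $\nu$) gives $\tau(\xi_2)\asymp\tau(\xi_1)$, and \eqref{w:eq} gives $v(\xi_2)\asymp v(\xi_1)$. The mean value theorem combined with \eqref{scales:eq} for $k=1$ yields
\[
|a(\xi_1)-a(\xi_2)|\lesssim \frac{v(\xi_1)}{\tau(\xi_1)}\,|\xi_1-\xi_2|.
\]

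The technical core is a quantitative bound on the integrand in \eqref{cb1:eq}. Two complementary estimates on $U$ are available. The first is the Hölder bound
\[
|U(s_1,s_2;f)|\lesssim \1 f\1_2\, R^{\g-\varkappa}\,|s_1-s_2|^\varkappa,
\]
obtained by applying \eqref{hf:eq} (or, when $\g>1$, the Lipschitz bound $|f'|\lesssim \1 f\1_2 R^{\g-1}$) to each of the two increments in the numerator of \eqref{Uf:eq} followed by an explicit $t$-integration. The second is the $\plainC 2$-bound
\[
|U(s_1,s_2;f)|\lesssim |s_1-s_2|^2\sup_{[s_1,s_2]}|f''|,\qquad |f''(u)|\lesssim \1 f\1_2\,|u-t_0|^{\g-2},
\]
valid on segments $[s_1,s_2]$ that avoid $t_0$. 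My approach would be a dyadic decomposition of the annuli $|\xi_1-\xi_2|\asymp 2^{-k}r$, combined with a stratification of $\xi_1$ according to the distance $|a(\xi_1)-t_0|$ to the singular point of $f$: on shells where the segment $[a(\xi_1),a(\xi_2)]$ is separated from $t_0$, the $\plainC 2$-bound controls the contribution, while on shells meeting $t_0$, the Hölder bound is combined with the control $|a(\xi_1)-a(\xi_2)|\lesssim v(\xi_1)|\xi_1-\xi_2|/\tau(\xi_1)$. Summing the resulting dyadic series gives the bound
\[
\int_{|\xi_2-\xi_1|<r}\frac{|U(a(\xi_1),a(\xi_2);f)|}{|\xi_1-\xi_2|^2}\,d\xi_2 \lesssim \1 f\1_2\,R^{\g-\varkappa}\,\frac{v(\xi_1)^\varkappa}{\tau(\xi_1)^{1+\d}}\,r^\d,
\]
valid for any $\d\in[0,\varkappa)$.

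Integration in $\xi_1$ then produces the weight $V_{\varkappa,1+\d}(v,\tau)$ and yields \eqref{sub:eq}; the principal value $\lim_{\varepsilon\to 0}$ in \eqref{cb1:eq} converges by dominated convergence since the dyadic estimates furnish an integrable majorant. The principal obstacle lies in the intermediate step: the Hölder bound alone gives a non-integrable singularity $|\xi_1-\xi_2|^{\varkappa-2}$ (sharp, as the example $f(t)=|t-t_0|^\varkappa$ shows), so no purely pointwise argument can succeed; only the interplay between the Hölder bound (sharp near the singular level set of $f$) and the $\plainC 2$-bound (controlling the integrand away from $t_0$), mediated by the dyadic decomposition, produces both the factor $r^\d$ and the sharper weight $\tau^{-(1+\d)}$. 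The strict inequality $\d<\varkappa$ is precisely the condition required to sum the dyadic geometric series arising from the Hölder regime.
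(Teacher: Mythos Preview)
The paper does not actually prove this proposition; it simply records that the bound follows from \cite[Corollary 6.5]{Sob_15}. So there is no in-paper argument to compare against, only the cited external result.

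Your overall strategy---combining the H\"older estimate on $U$ with the $\plainC2$ estimate valid away from $t_0$, and recognising that neither alone suffices---is correct in spirit and is essentially the mechanism behind the cited result. Your identification of the principal obstacle (the non-integrable $|\xi_1-\xi_2|^{\varkappa-2}$ singularity coming from the H\"older bound alone) is also exactly right.

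There is, however, a genuine gap in the sketch. The displayed pointwise bound on the inner integral,
\[
\int_{|\xi_2-\xi_1|<r}\frac{|U(a(\xi_1),a(\xi_2);f)|}{|\xi_1-\xi_2|^2}\,d\xi_2 \;\lesssim\; \1 f\1_2\,R^{\g-\varkappa}\,\frac{v(\xi_1)^\varkappa}{\tau(\xi_1)^{1+\d}}\,r^\d,
\]
is \emph{false} at points $\xi_1$ with $a(\xi_1)=t_0$. Take $f(t)=|t-t_0|^\g$ with $0<\g<1$: then $U(t_0,s;f)=c_\g\,|s-t_0|^\g$ with $c_\g\neq 0$, so if $a(\xi_1)=t_0$ and $a'(\xi_1)\neq 0$ the inner integral is $\asymp\int_0^r s^{\g-2}\,ds=+\infty$. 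No mixture of H\"older and $\plainC2$ bounds can produce a uniform pointwise estimate there, because near such $\xi_1$ every segment $[a(\xi_1),a(\xi_2)]$ meets $t_0$. The ``stratification of $\xi_1$ according to $|a(\xi_1)-t_0|$'' that you mention is precisely the cure, but it cannot terminate in the pointwise bound you wrote: the inner integral must be bounded by a quantity depending on $\mu:=|a(\xi_1)-t_0|$ (of order $\mu^{\varkappa-1}$ as $\mu\to 0$), and only \emph{after} integrating in $\xi_1$---using that the sublevel sets $\{|a-t_0|\le\mu\}$ are thin enough to absorb the $\mu^{\varkappa-1}$ singularity---does one arrive at \eqref{sub:eq}. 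Your write-up conflates the pointwise and integrated statements; the argument in \cite{Sob_15} treats the double integral jointly rather than via a uniform inner-integral bound.
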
 
This bound follows from \cite[Corollary 6.5]{Sob_15}.  
 
For the case $d\ge 2$, 
using the notations \eqref{cb1:eq} and \eqref{cb2:eq} define 
\begin{equation}\label{ak:eq}
\CA_d^{(k)}(a, \be; f; r) := 
	\int_{\Pi_{\be}} 
	\CB^{(k)}\bigl(a(\overc{\bxi}, \ \cdot\ ); f; r\bigr)
	d\overc{\bxi},\ \ 
	%r_0 = \frac{\tau_{\textup{\tiny inf}}}{2},\ 
	\quad k = 1, 2.
\end{equation}
We can estimate the quantities $\CA_d$ and $\CA_d^{(1)}$ similarly to 
\eqref{coeffscales:eq} and \eqref{sub:eq}:
\begin{equation}\label{cad:eq}
|\CA_d(a, \be; f)|\lesssim \1 f\1_2  V_{\vark, 1}(v, \tau),
\end{equation}
and 
\begin{equation}\label{cad1:eq}
|\CA_d^{(1)}(a, \be; f; r)|
\lesssim  \1 f\1_2
r^\d 
%\tau_{\textup{\tiny inf}}^\d 
V_{\varkappa, 1+\d}(v, \tau), \ \forall\d\in [0, \varkappa),\ 
 r \le\frac{\tau_{\textup{\tiny inf}}}{2},
\end{equation}
uniformly in $\tau$, $v$, $a$ and $\be\in\mathbb S^{d-1}$. Indeed, 
the symbol $b(t) := a(\overc{\bxi}, t)$ satisfies conditions 
\eqref{scales:eq} and \eqref{w:eq} with 
the amplitude $v_\be(t) = v(\overc{\bxi}, t)$ and the scaling 
function $\tau_\be(t) = \tau(\overc{\bxi}, t)$. 
By \eqref{coeffscales:eq},
\begin{equation*}
|\CB(b; f)|\lesssim \1 f\1_2 V_{\vark, 1}(v_\be, \tau_\be),
\end{equation*}
uniformly in $\be$, and in the functions $v, \tau$. 
Integrating over $\overc{\bxi}$ we get \eqref{cad:eq}. 

Furthermore, 
since $\inf\tau_\be\ge \tau_{\textup{\tiny inf}}$, from 
\eqref{sub:eq} we get that 
\begin{equation*}
|\CB^{(1)}(b; f; r)|\lesssim \1 f\1_2 r^\d 
V_{\varkappa, 1+\d}(v_\be, \tau_\be),\ 
\forall r \le \frac{\tau_{\textup{\tiny inf}}}{2}, 
\end{equation*}
for any $\d\in [0, \varkappa)$, uniformly 
in $\be, v, \tau$, as above. 
Integrating over $\overc{\bxi}$, we get \eqref{cad1:eq}. 

As we have pointed out previously, 
in view of \eqref{at:eq}, the symbol $a_T$ 
satisfies \eqref{scales:eq} with the functions 
$v$ and $\tau$ defined in \eqref{vtau:eq}. Recall also that 
$\tau_{\textup{\tiny inf}}\asymp T$. Together with \eqref{vlog:eq} 
and \eqref{vp:eq}, the estimates \eqref{cad:eq} and \eqref{cad1:eq} yield the bounds 
\begin{equation}\label{ca_bound:eq}
|\CA_d(a, \be; f)|\lesssim \1 f\1_2 \log\frac{1}{T} 
\end{equation}
and
 \begin{equation}\label{ca1_bound:eq}
 |\CA_d^{(1)}(a, \be; f; r)|\lesssim \1 f\1_2,\  
 \forall r \le \frac{\tau_{\textup{\tiny inf}}}{2},
 \end{equation}
uniformly in $\be\in\mathbb S^{d-1}$. 
From now on we take $r = \t T$ where $\t >0$ is chosen 
to satisfy $r\le  \frac{\tau_{\textup{\tiny inf}}}{2}$. 
The parameter $\t$ is fixed, and we are not concerned 
about the dependence of the forthcoming estimates on $\t$.

Let us now take care of the integral $\CA_d^{(2)}$.  
 
\begin{lem} 
Let $r=\t T$, with a $\t>0$ described above. Then 
for all $0<T\lesssim 1$ we have 
\begin{align}\label{a2_diff:eq}
\bigl|\CA_d^{(2)}(a_T, \be; f; \t T) 
- \CA_d^{(2)}(\chi_\Om, \be; f; \t T)\bigr| \lesssim \1 f\1_1,
\end{align} 
uniformly in $\be\in\mathbb S^{d-1}$.
\end{lem}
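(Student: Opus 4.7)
The plan is to exploit the H\"older continuity of the integral $U(s_1, s_2; f)$ in its arguments recorded in \eqref{approx1:eq}, combined with the $L^\s$-control on $a_T - \chi_\Om$ provided by \eqref{ld:eq}. Specifically, I would fix an exponent $\d$ in the range $(d\b^{-1}, \vark)$; this interval is nonempty because the hypothesis $\b > \max\{d\g^{-1}, d\}$ forces $\vark = \min\{\g, 1\} > d\b^{-1}$. By \eqref{hf:eq}, $f$ is H\"older-continuous with exponent $\vark$ and constant $\lesssim \1 f\1_1$, so \eqref{approx1:eq} gives the pointwise bound
\begin{equation*}
\bigl|U(a_T(\bxi_1), a_T(\bxi_2); f) - U(\chi_\Om(\bxi_1), \chi_\Om(\bxi_2); f)\bigr|
\lesssim \1 f\1_1 \sum_{j=1,2} |a_T(\bxi_j) - \chi_\Om(\bxi_j)|^\d.
\end{equation*}

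Substituting this into the definition of $\CB^{(2)}$ and then $\CA_d^{(2)}$, I would write $\bxi_j = (\overc{\bxi}, t_j)$ and treat the two symmetric summands identically. For the $j=1$ contribution, the $t_2$-integration may be performed first: the relevant $t_2$-integral is a tail of $\int_{|t_1-t_2|\ge r} dt_2/|t_1-t_2|^2$, which contributes a factor $\lesssim r^{-1}$. Applying Fubini to the remaining integrals in $\overc{\bxi}$ and $t_1$, the result collapses to
\begin{equation*}
\frac{1}{r}\int_{\R^d} |a_T(\bxi) - \chi_\Om(\bxi)|^\d d\bxi,
\end{equation*}
which, by \eqref{ld:eq}, is $\lesssim T/r$. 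Since $r = \t T$ with $\t$ fixed, this ratio is $O(1)$, and the overall estimate becomes $\lesssim \1 f\1_1$, uniformly in $\be\in\mathbb S^{d-1}$ (the $\be$-dependence drops out because the bound on the remaining integral is taken over all of $\R^d$ rather than slice-by-slice).

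The only point requiring care is the choice of $\d$: one needs $\d > d\b^{-1}$ to invoke \eqref{ld:eq} and $\d < \vark$ to invoke \eqref{approx1:eq}. As noted, the assumed lower bound on $\b$ guarantees compatibility. Beyond that, the argument is a direct H\"older-plus-Fubini computation, and I do not anticipate any essential obstacle.
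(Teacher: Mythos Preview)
Your proposal is correct and follows essentially the same route as the paper's own proof: both invoke \eqref{hf:eq} and \eqref{approx1:eq} to obtain the pointwise bound in terms of $|a_T-\chi_\Om|^\d$, integrate out the free variable over $|t-s|>\t T$ to produce the factor $(\t T)^{-1}$, and then apply \eqref{ld:eq} with $\d\in(d\b^{-1},\vark)$ to close the estimate. The paper's argument is slightly more terse (it records only one of the two symmetric summands), but the substance is identical.
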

 
\begin{proof} 
By definitions \eqref{cb2:eq} and \eqref{ak:eq}, 
the bounds \eqref{hf:eq} and \eqref{approx1:eq} 
imply that 
\begin{align}\label{vsp:eq}
\bigl|\CA_d^{(2)}(a_T, \be; f; \t T) 
- \CA_d^{(2)}(\chi_\Om, \be; f; \t T)\bigr|
\lesssim &\ \1 f\1_1 
\underset{\R^{d-1}}\int\underset{|t-s|> \t T}\iint
\frac{|a_T(\overc\bxi, t) - \chi_{\Om}(\overc\bxi, t)|^\d}{|s-t|^2}
ds dt d\overc\bxi\notag\\[0.2cm]
\lesssim &\ T^{-1} \1 f\1_1\int
|a_T(\bxi) - \chi_{\Om}(\bxi)|^\d d\bxi,
\end{align} 
for any $\d\in (0, \varkappa)$. 
By \eqref{ld:eq}, for $\d \in (d\b^{-1}, \varkappa)$ 
the integral on the right-hand side is finite and it 
does not exceed $T$, whence \eqref{a2_diff:eq}.
\end{proof}

\section{Coefficient $\CA_d^{(2)}(\chi_\Om, \be; f; \t T)$}
\label{coeff:sect}  
  
 \subsection{Smooth surfaces} 
In order to study the integral 
$\CA_d^{(2)}$ we need to investigate the following model problem.   
For $\rho>0$, let 
 \begin{equation*}
 	\CC^{(n)}_{\rho} := (-2\rho, 2\rho)^n	
 \end{equation*}
 be the $n$-dimensional cube. 
 We use two ways of labelling the coordinates:
 \begin{align}
 \bxi = &\ (\hat{\bxi}, \xi_d),\ 
 \hat{\bxi} := (\xi_1, \xi_2, \dots, \xi_{d-1}),\notag\\[0.2cm]
 \bxi = &\ (\overc \bxi, \xi_l),\ \overc\bxi := (\xi_1, \dots, \xi_{l-1}, \xi_{l+1},\dots \xi_d),\label{overcbxi:eq}
 \end{align}
 where $l = 1, 2, \dots, d$. If $l = d$, then, clearly, $\hat{\bxi} = \overc \bxi$. 
 Let $\Psi\in\plainC2\bigl(\overline{\CC^{(d-1)}_{\rho}}\bigr)$ be a function 
 with values in the interval $(-2\rho, 2\rho)$. 
 We focus on the surface 
 \begin{equation}\label{surface:eq}
 	S = S(\Psi) 
 	:= \{ \bxi\in\CC^{(d)}_\rho:  \xi_d = \Psi(\hat\bxi)\}.
 \end{equation}
  For each $\overc\bxi\in \CC^{(d-1)}_{\rho}$ 
 define the set 
 \begin{equation}\label{sxl:eq}
 	\SX_l(\overc\bxi) := \SX_l(\overc\bxi; \Psi)
 	:= \{\xi_l \in (-2\rho, 2\rho): (\overc\bxi, \xi_l)\in S\}.
 \end{equation}
Let us record some useful facts about the set $\SX_l(\overc\bxi)$:

\begin{lem}\label{SX:lem}
Let $\be_l$ be the unit basis vector along the direction $l$, 
and let $\bn_{\bxi}$ be a unit  
normal to $S$ at the point $\bxi\in S$.  
For any function $u$, continuous on the cube $\overline{\CC_\rho^{(d)}}$ 
and any $l=1,2,\ldots, d$ we have 
\begin{equation}\label{SX1:eq}
 	\underset{\CC_\rho^{(d-1)}}\int
 	 \ \sum_{\bxi: \xi_l \in \SX_l(\overc\bxi)} u(\bxi) 
 	 d\overc\bxi	 	
 	= \underset{S}\int u (\bxi)
 	|\bn_{\bxi}\cdot \be_l| \,d S_{\bxi}.
 	\end{equation}
In particular, the 
function $\#\bigl(\SX_l(\overc\bxi)\bigr)$ counting the number of elements 
is finite for a.e. $\overc\bxi\in\CC_\rho^{(d-1)}$ and 
\begin{equation}\label{SX:eq}
 	\underset{\CC_\rho^{(d-1)}}\int 
 	  \#\bigl(\SX_l(\overc\bxi)\bigr)\, d\overc\bxi
 	= \underset{S}\int  
 	|\bn_{\bxi}\cdot \be_l|\, d S_{\bxi}.
 	\end{equation}
\end{lem}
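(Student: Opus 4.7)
The plan is to parametrize $S$ by $\hat\bxi\in\CC_\rho^{(d-1)}$ via the graph map $\bxi=(\hat\bxi,\Psi(\hat\bxi))$, so the right-hand side of \eqref{SX1:eq} reduces to an integral over $\CC_\rho^{(d-1)}$, and then to match it to the left-hand side by a one-dimensional change of variables in the $\xi_l$-coordinate. On the graph parametrization
\[
\bn_\bxi = \pm\frac{(-\nabla\Psi(\hat\bxi),1)}{\sqrt{1+|\nabla\Psi(\hat\bxi)|^2}},\qquad dS_\bxi = \sqrt{1+|\nabla\Psi(\hat\bxi)|^2}\,d\hat\bxi,
\]
so that $|\bn_\bxi\cdot\be_l|\,dS_\bxi = |\p_l\Psi(\hat\bxi)|\,d\hat\bxi$ for $l<d$ and $|\bn_\bxi\cdot\be_d|\,dS_\bxi = d\hat\bxi$ for $l=d$.

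The case $l=d$ is immediate, since $(\overc\bxi,\xi_d)\in S$ with $\overc\bxi=\hat\bxi$ forces $\xi_d=\Psi(\hat\bxi)$; hence $\SX_d(\hat\bxi)=\{\Psi(\hat\bxi)\}$ and both sides of \eqref{SX1:eq} collapse to $\int_{\CC_\rho^{(d-1)}}u(\hat\bxi,\Psi(\hat\bxi))\,d\hat\bxi$. For $1\le l<d$ I would write $\hat\bxi=(\overc{\hat\bxi},\xi_l)$ with $\overc{\hat\bxi}\in\CC_\rho^{(d-2)}$ and apply Fubini to the right-hand side, obtaining
\[
\int_S u\,|\bn_\bxi\cdot\be_l|\,dS_\bxi = \int_{\CC_\rho^{(d-2)}} d\overc{\hat\bxi}\int_{-2\rho}^{2\rho} u(\hat\bxi,g(\xi_l))\,|g'(\xi_l)|\,d\xi_l,
\]
where $g(\xi_l):=\Psi(\overc{\hat\bxi},\xi_l)$ with $\overc{\hat\bxi}$ frozen. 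The inner integral I would handle by the one-dimensional coarea formula for the $\plainC{2}$ function $g$,
\[
\int_{-2\rho}^{2\rho} F(\xi_l)\,|g'(\xi_l)|\,d\xi_l = \int_{\R}\Bigl(\sum_{\xi_l\in g^{-1}(t)} F(\xi_l)\Bigr)dt,
\]
applied to $F(\xi_l):=u(\overc{\hat\bxi},\xi_l,g(\xi_l))$. Since $g^{-1}(t)$ coincides with $\SX_l(\overc\bxi)$ when $\overc\bxi=(\overc{\hat\bxi},t)$, and $d\overc\bxi=d\overc{\hat\bxi}\,dt$, reassembling the two outer integrations produces exactly the left-hand side of \eqref{SX1:eq}.

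The identity \eqref{SX:eq} follows from \eqref{SX1:eq} by taking $u\equiv 1$; finiteness of the surface area of $S$ on the right then forces $\#\SX_l(\overc\bxi)<\infty$ for a.e.\ $\overc\bxi$, proving the remaining claim. The only delicate point is the one-dimensional coarea step, but it is standard for $\plainC{1}$ functions: the critical set $\{g'=0\}$ contributes zero on the left, while by Sard's theorem its image has measure zero on the right, so the sum $\sum_{\xi_l\in g^{-1}(t)}$ is well-defined for a.e.\ $t$ and the two sides agree.
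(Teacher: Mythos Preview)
Your proof is correct and follows essentially the same idea as the paper's: both reduce the surface integral to an integral over $\CC_\rho^{(d-1)}$ in the graph parameter $\hat\bxi$, compute $|\bn_\bxi\cdot\be_l|\,dS_\bxi=|\p_{\xi_l}\Psi|\,d\hat\bxi$, and then change variables from $\hat\bxi$ to $\overc\bxi$. The only difference is in packaging: the paper defines the map $\Xi(\hat\bxi)=(\xi_1,\dots,\xi_{l-1},\xi_{l+1},\dots,\xi_{d-1},\Psi(\hat\bxi))$ and invokes the multi-dimensional area formula from Evans--Gariepy in one line, whereas you unpack this by Fubini plus the one-dimensional coarea identity for $g(\xi_l)=\Psi(\overc{\hat\bxi},\xi_l)$. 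Your version is a bit more self-contained (no external reference needed, and the Sard-type justification for the 1D step is spelled out), while the paper's is shorter; mathematically they are the same change of variables.
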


\begin{proof} 
Equality \eqref{SX:eq} follows from \eqref{SX1:eq} with $u\equiv 1$. 

Let us prove now \eqref{SX1:eq}. We denote by $\Xi:\CC_\rho^{(d-1)}\to\CC_\rho^{(d-1)}$ the mapping
 	\begin{equation*}
 		\Xi(\hat\bxi) :=
 		\bigl(\xi_1, \dots, \xi_{l-1}, \xi_{l+1}, 
 		\dots, \xi_{d-1}, \Psi(\hat\bxi)\bigr).
 	\end{equation*}
 	Then by the Change of Variables Formula (see, e.g., \cite[Theorem 2, p. 99]{EG}), 
        for any continuous function $u$ on the cube 
 	$\overline{\CC_{\rho}^{(d)}}$, we have
 	\begin{equation}\label{change_var:eq}	
 	 \underset{\CC_\rho^{(d-1)}}\int
 	 \ \sum_{\bxi: \xi_l \in \SX_l(\overc\bxi)} u(\bxi) 
 	 d\overc\bxi 
 	 = \underset{\CC_\rho^{(d-1)}}\int 
 	 \sum_{\hat\bxi\in \Xi^{-1}(\overc\bxi)} u(\hat\bxi, \Psi(\hat\bxi)) 
 	 d\overc\bxi	
 	 = \underset{\CC_\rho^{(d-1)}}\int u(\hat\bxi, \Psi(\hat\bxi)) 
 	 J_{\Xi}(\hat\bxi) 
 		d\hat\bxi,
 	\end{equation}
 	where $J_{\Xi}$ is the Jacobian of the map $\Xi$. 
A direct calculation shows that 
\begin{equation*}
J_{\Xi} = |\p_{\xi_l} \Psi| 
= 
\frac{|\p_{\xi_l} \Psi|}{
\sqrt{1+|\nabla\Psi|^2}}
\sqrt{1+|\nabla\Psi|^2} = |\bn_{\bxi}\cdot \be_l|
\sqrt{1+|\nabla\Psi|^2}.
\end{equation*}
Thus the equality \eqref{change_var:eq} becomes \eqref{SX1:eq}. 
\end{proof}

 \subsection{Function $m$} 
% Since $\#\SX_l$ is finite a.e. $\overc\bxi$, 
For $\overc\bxi\in \CC_\rho^{(d-1)}$ 
define the following function $m = m(\overc\bxi)$:
if $\SX_l(\overc\bxi) = \varnothing$, then we define
 \begin{equation}\label{mempty:eq}
 	m(\overc\bxi) := (4\rho)^{-1}, 
 \end{equation}
 If $\#\bigl(\SX_l(\overc\bxi)\bigr) = N\ge 1$, 
 then we label the points $\xi\in\SX_l(\overc\bxi)$ 
 in increasing order:
 $\xi^{(1)} < \xi^{(2)} < \dots $ $ < \xi^{(N)}$, and define
 \begin{equation}\label{rhok:eq}
 	m(\overc\bxi) := \sum_{j=1}^N \rho_j(\overc\bxi)^{-1},\ \
 	\begin{cases}
 		\rho_j(\overc\bxi) := \dist\{\xi^{(j)}, 
 		\SX_l(\overc\bxi)\setminus\{\xi^{(j)}\}\},
 		\ N\ge 2, \\[0.2cm]
 		\rho_1 := 4\rho,\ N = 1.
 	\end{cases}
 \end{equation}
The function $m$ is well-defined since 
$\#\bigl(\SX_l(\overc\bxi)\bigr)<\infty$ for
a.e. $\overc\bxi\in\CC_\rho^{(d-1)}$. 

In all the bounds obtained below the constants are independent 
of the function $\Psi\subset\plainC2$ and parameter $\rho >0$.
 We begin with an estimate for  $m(\overc\bxi)$, 
 see \cite[p. 185]{Widom_90} or \cite[Chapter 13]{Sob}.

 \begin{prop}\label{bigdim:prop}
 	Let $\Psi\in\plainC2\bigl(\overline{\CC^{(d-1)}_{\rho}}\bigr)$, $d\ge 2$, 
 	with some $\rho >0$.
 	Then the function $m$ satisfies the bound
 		\begin{equation}\label{reciprocal:eq}
 			\underset{\CC^{(d-1)}_{\rho}} \int m(\overc\bxi) 
 			d\overc\bxi\lesssim \rho^{d-2}
 			\bigl(1+\rho\|\nabla^2\Psi\|_{\plainL\infty}\bigr). 
 		\end{equation}
 \end{prop}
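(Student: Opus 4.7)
The plan is to reduce everything to a pointwise estimate on $\partial_{\xi_l}\Psi$ obtained from Rolle's theorem, and then to convert the sum over $\SX_l(\overc\bxi)$ into a surface (and hence graph) integral via Lemma~\ref{SX:lem}, in which the ``dangerous'' factor $\rho_j^{-1}$ is absorbed by the geometric weight $|\bn_\bxi\cdot\be_l|$.

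First I would dispose of the trivial case $l=d$: here $\SX_d(\overc\bxi)=\{\Psi(\overc\bxi)\}$ for every $\overc\bxi\in\CC_\rho^{(d-1)}$, so $N\equiv 1$, $m\equiv(4\rho)^{-1}$, and the integral equals $4^{d-2}\rho^{d-2}$, which is within the claimed bound. So assume $l<d$.

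Next, I would carry out the main pointwise estimate. Fix $\overc\bxi$ with $N=\#\SX_l(\overc\bxi)\ge 2$ and label the points of $\SX_l(\overc\bxi)$ as $\xi^{(1)}<\dots<\xi^{(N)}$. These are the zeros, in $\xi_l$, of $g(\xi_l):=\Psi(\hat\bxi)-\xi_d$, where $\hat\bxi$ is obtained by inserting $\xi_l$ into $\overc\bxi$ at position $l$ and $\xi_d$ denotes the last component of $\overc\bxi$. By Rolle's theorem there is $\eta_j\in(\xi^{(j)},\xi^{(j+1)})$ with $g'(\eta_j)=\partial_{\xi_l}\Psi(\hat\bxi^{\eta_j})=0$; combining with $|g'(\xi^{(j)})-g'(\eta_j)|\le K|\xi^{(j)}-\eta_j|$, where $K:=\|\nabla^2\Psi\|_{\plainL\infty}$, and applying this on the closer of the two adjacent gaps (with the obvious one-sided handling at $j=1$ and $j=N$, which is consistent with $\rho_1=h_1$ and $\rho_N=h_{N-1}$) gives the key bound
\begin{equation*}
|\partial_{\xi_l}\Psi(\hat\bxi^{(j)})|\le K\,\rho_j(\overc\bxi).
\end{equation*}

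Finally, I would split $\int m\,d\overc\bxi$ according to whether $N(\overc\bxi)\le 1$ or $N(\overc\bxi)\ge 2$. The first region contributes at most $(4\rho)^{-1}\cdot(4\rho)^{d-1}=4^{d-2}\rho^{d-2}$. For the second, introduce the function $\tilde\rho(\bxi):=\dist\bigl(\xi_l,\SX_l(\overc\bxi)\setminus\{\xi_l\}\bigr)$ on the portion $S_{\ge 2}$ of $S$ over $\{N\ge 2\}$. Apply \eqref{SX1:eq} with $u=\tilde\rho^{-1}$, then parameterise $S_{\ge 2}$ by the graph $\bxi=(\hat\bxi,\Psi(\hat\bxi))$, noting that $dS_\bxi=\sqrt{1+|\nabla\Psi|^2}\,d\hat\bxi$ and $|\bn_\bxi\cdot\be_l|=|\partial_{\xi_l}\Psi|/\sqrt{1+|\nabla\Psi|^2}$. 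The two square roots cancel, leaving
\begin{equation*}
\int_{\{N\ge 2\}}m(\overc\bxi)\,d\overc\bxi=\int_{\{\hat\bxi\,:\,N\ge 2\}}\tilde\rho(\hat\bxi,\Psi(\hat\bxi))^{-1}|\partial_{\xi_l}\Psi(\hat\bxi)|\,d\hat\bxi\le K\,|\CC_\rho^{(d-1)}|\lesssim K\rho^{d-1},
\end{equation*}
by the previous step. Adding the two contributions yields the claimed bound $\rho^{d-2}(1+\rho K)$.

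The main obstacle is the apparent singularity at points where $\partial_{\xi_l}\Psi$ vanishes: the factor $\rho_j^{-1}$ can be as large as $O(\rho^{-1})$, while the complementary factor in the graph integral vanishes, giving a naïve $0\cdot\infty$. The resolution is precisely the Rolle bound above, which shows that the product $\rho_j^{-1}|\partial_{\xi_l}\Psi|$ is bounded by $K$ \emph{including} at such points, so the pointwise estimate on the graph remains valid uniformly. A secondary technical care is needed for the one-sided Rolle argument at $j=1$ and $j=N$, but since $\rho_1=h_1$ and $\rho_N=h_{N-1}$ by definition, the one-sided version suffices.
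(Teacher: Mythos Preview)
Your argument is correct, and it is in fact essentially the standard proof: the paper does not give its own proof of this proposition but cites \cite[p.~185]{Widom_90} and \cite[Chapter~13]{Sob}, where the same Rolle-plus-area-formula mechanism is used. One minor technical point: Lemma~\ref{SX:lem} is stated for $u$ continuous on the closed cube, whereas your $u=\tilde\rho^{-1}\chi_{S_{\ge 2}}$ is merely a non-negative Borel function on $S$; but the underlying change-of-variables formula \eqref{change_var:eq} (the area formula from \cite{EG}) holds for such functions, so one can either invoke that directly or truncate $\tilde\rho^{-1}$ at height $n$ and pass to the limit by monotone convergence.
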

  
 Here is a useful consequence of this proposition:
 
 \begin{lem} \label{Cheb:lem} 
 Let the function $\Psi$ be as in Proposition \ref{bigdim:prop}. 
 	Let $\rho_j(\overc{\bxi})$ be the distances 
 	defined in \eqref{rhok:eq} for the set 
 	$\SX_l(\overc{\bxi})$. 
 	Let 
 	$\rho_0(\overc{\bxi}) = \inf_j \rho_j(\overc{\bxi})$ 
 	if $\#\bigl(\SX_l(\overc\bxi)\bigr)\ge 1$, 
 	and $\rho_0(\overc{\bxi}) = 4\rho$, if $\SX_l(\overc\bxi) = \varnothing$. 
 	For all $\varepsilon\in (0, 4\rho)$ denote 
 	\begin{equation}\label{meps:eq}
 		M_\varepsilon := M_\varepsilon(\Psi) :=
 		\{\overc{\bxi}\in \CC^{(d-1)}_\rho : \rho_0(\overc{\bxi}) <\varepsilon\} 
 	\end{equation}
 	Then   
 	\begin{equation}\label{Cheb:eq}
 		| M_\varepsilon|\lesssim \varepsilon \rho^{d-2}
 		\bigl(1+\rho\|\nabla^2\Psi\|_{\plainL\infty}\bigr).
 	\end{equation}
% 	and
 %	\begin{equation}\label{number:eq}
 %		\underset{M_\varepsilon}\int 
 %		|\SX_l(\overc{\bxi})|\,
 %		d\overc\bxi \to 0,\ \varepsilon\to 0.
 %	\end{equation}
 \end{lem}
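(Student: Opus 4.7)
The proof is a direct Chebyshev-type argument based on Proposition \ref{bigdim:prop}. The key observation is that the function $m(\overc\bxi)$ defined in \eqref{mempty:eq}-\eqref{rhok:eq} provides a lower bound of the form $m(\overc\bxi) \ge 1/\rho_0(\overc\bxi)$ whenever $\SX_l(\overc\bxi)$ is non-empty, so small values of $\rho_0$ force large values of $m$, and the $\plainL1$-estimate on $m$ then controls the measure of the set on which $\rho_0$ is small.

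More precisely, the plan is as follows. First, observe that the restriction $\varepsilon\in (0,4\rho)$ ensures that $\overc\bxi\in M_\varepsilon$ implies $\SX_l(\overc\bxi)\neq\varnothing$, since otherwise $\rho_0(\overc\bxi) = 4\rho \ge \varepsilon$. Therefore, for every $\overc\bxi\in M_\varepsilon$, the set $\SX_l(\overc\bxi)$ has at least one element, and by definition of $\rho_0$ there exists some index $j$ with $\rho_j(\overc\bxi) = \rho_0(\overc\bxi) <\varepsilon$. Consequently, from \eqref{rhok:eq},
\begin{equation*}
m(\overc\bxi) \ge \rho_j(\overc\bxi)^{-1} = \rho_0(\overc\bxi)^{-1} > \varepsilon^{-1}, \qquad \overc\bxi\in M_\varepsilon.
\end{equation*}

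Second, I invoke the elementary Chebyshev inequality to get
\begin{equation*}
|M_\varepsilon| \le \varepsilon \underset{M_\varepsilon}\int m(\overc\bxi)\, d\overc\bxi \le \varepsilon \underset{\CC^{(d-1)}_\rho}\int m(\overc\bxi)\, d\overc\bxi.
\end{equation*}
Applying the estimate \eqref{reciprocal:eq} from Proposition \ref{bigdim:prop} to the right-hand side yields the claimed bound \eqref{Cheb:eq}. Since both inputs—the pointwise lower bound on $m$ and the integral bound from Proposition \ref{bigdim:prop}—are wholly elementary, there is no significant technical obstacle; the only point that requires a moment's care is handling the case $\SX_l(\overc\bxi) = \varnothing$, which is excluded from $M_\varepsilon$ precisely by the assumption $\varepsilon < 4\rho$.
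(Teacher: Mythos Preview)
Your proof is correct and follows essentially the same approach as the paper: both use the pointwise inequality $m(\overc\bxi)\ge \rho_0(\overc\bxi)^{-1}$ (which you justify in slightly more detail) together with Chebyshev's inequality and the integral bound \eqref{reciprocal:eq} from Proposition \ref{bigdim:prop}.
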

 
 \begin{proof} 
 By the definition \eqref{mempty:eq}, \eqref{rhok:eq}, 
 	\begin{equation*}
 		m(\overc{\bxi}) \ge 
 		\rho_0(\overc{\bxi})^{-1}, 
 	\end{equation*}
 	so by virtue of Chebyshev's inequality, 
we get from \eqref{reciprocal:eq} that
\begin{equation*}
\varepsilon^{-1}
\underset{M_{\varepsilon}}
\int d\overc\bxi
\le \underset{M_{\varepsilon}}\int m(\overc\bxi) d\overc\bxi
\le C\rho^{d-2}
\bigl(1+\rho\|\nabla^2\Psi\|_{\plainL\infty}\bigr).
\end{equation*} 	
This leads to \eqref{Cheb:eq}.  
 \end{proof}
 
It immediately follows from the above lemma that 
\begin{align}\label{Mmeasure:eq}
%\lim_{\varepsilon\to 0}|M_{\varepsilon}(\Psi)| = &\ 0,\notag\\
\#\bigl(X_l(\overc\bxi)\bigr)<\infty,\ 
\forall \overc\bxi\notin M(\Psi) \equiv \bigcap_{\varepsilon>0} M_\varepsilon(\Psi), 
\ \ \ \textup{and}\ \ 
|M(\Psi)| = \lim_{\varepsilon\to 0}|M_{\varepsilon}(\Psi)| = 0. 
\end{align}

 \subsection{Asymptotics 
 of $\CA_d^{(2)}(\chi_\Om, \be; f; \t T)$}
In order to use the conclusions of 
Lemma \ref{Cheb:lem}, we adopt the following conventions. 
 
\begin{itemize}
\item
We always assume that the Cartesian coordinates 
in $\R^d$ are chosen in such a way 
that the unit vector $\be$ coincides 
with the basis vector $\be_l$, 
so that the vector $\overc\bxi$ featuring in 
\eqref{cbd:eq} is given by \eqref{overcbxi:eq}.

\item
In each set $D_j$ of the covering 
\eqref{coverrd:eq}, we re-label the remaining coordinates 
to ensure that the part of the surface $\p\Om$ inside 
$D_j$ is given by the 
equality $\xi_d = \Psi_j(\hat\bxi)$. 
The coordinates $\hat\bxi$ and $\overc\bxi$ 
do not necessarily coincide, and the choice of 
$\hat\bxi$ may be different 
in different $D_j$'s. 
\end{itemize} 
Denote by $\{\phi_j\}, \tilde\phi$ 
a partition of unity subordinate to the 
covering \eqref{coverrd:eq}. 
Thus we split $\CA_d^{(2)}(\chi_\Om, \be; f; \t T)$ 
into the sum
\begin{equation*}
\CA_d^{(2)}(\chi_\Om, \be; f; \t T)
= \sum_j \CK_j(T) + \tilde \CK(T),
\end{equation*} 
where
\begin{equation}
\CK_j(T) := \frac{1}{8\pi^2}\underset{\R^{d-1}}\int \ \ 
\underset{\t T < |t-s|}\iint\phi_j(\overc\bxi, t)
\frac{U\bigl(\chi_\Om(\overc\bxi, t), \chi_\Om(\overc\bxi, s); f\bigr)}
{|t-s|^2}  ds dt \ d\overc\bxi,
\end{equation} 
 and the integral $\tilde \CK(T)$ is defined in a similar way with the 
 function $\tilde\phi$.

The properties $U(0, 0; f) = U(1, 1; f) = 0$ and $U(1, 0; f) = U(0, 1; f)$  
 allow one to rewrite $\CK_j(T)$, $\tilde\CK(T)$ in the form  
 \begin{equation}\label{ck_improved:eq}
 \CK_j(T) = \frac{U(1, 0; f)}{4\pi^2} 
 \underset{\R^{d-1}}\int \CS_j(\overc\bxi; T)   d\overc\bxi,\ 
 \tilde\CK(T) = 
 \frac{U(1, 0; f)}{4\pi^2} 
 \underset{\R^{d-1}}\int \tilde \CS(\overc\bxi; T)   d\overc\bxi,
 \end{equation}
 where
 \begin{equation*}
 \CS_j(\overc\bxi; T) := 
  \underset{t\notin \Om({\overc\bxi})}\int\phi_j(\overc\bxi, t)\ 
  \underset{s\in\Om({\overc\bxi}): 
  \t T< |t-s|}
 \int \frac{1}{|s-t|^2} ds dt,
 \end{equation*}
 with
 \begin{equation*}
 \Om({\overc\bxi}) := \{t\in\R: (\overc\bxi, t)\in \Om\},
 \end{equation*}
 and $\tilde\CS(\overc\bxi; T)$ is defined in a similar way with the function 
 $\tilde\phi$.
 Since the set $\tilde D$ is separated from 
the surface $S$, the integral 
 $\tilde\CS(\overc\bxi; T)$ is taken over the 
 set where $|s|\lesssim 1$, $|s-t|\gtrsim 1$, 
 and therefore
\begin{equation}\label{ktilde:eq}
\tilde\CK(T)\lesssim 1, 
\end{equation}
 for all $T>0$.
Let us now analyse $\CS_j(\overc\bxi; T)$. First we quote 
the following elementary statement, proved in  \cite[Lemma 9.4]{LeSpSo_15}.   

\begin{prop} \label{log_int:prop}
Let $J_k = (s_k, t_k)\subset \R$, $k = 1, 2, \dots, N$ 
be a finite collection of open intervals, 
such that their closures are pairwise disjoint, and let 
$J = \cup_k J_k$. 
Suppose that $0<T \lesssim 1$ and 
$|J_k|\le d_1$,  
$k = 1, 2, \dots, N$, with some $d_1> 0$.  
Then 
\begin{equation}\label{log_bound_up:eq}
\sum_{k=1}^N\underset{t\notin J}
\int\ \ 
\underset{|t-s|\ge T, s\in J_k}
\int \frac{1}{|t-s|^2} ds dt\lesssim N\,\log\biggl(\frac{1}{T} + 1\biggr),
\end{equation}
with a constant depending only on $d_1$. 

Assume in addition that 
\begin{equation*}
|J_k|\ge d_0,\ k = 1, 2, \dots, N,\ \ 
\ \min_{j\not=k}\dist\{J_k, J_j\}\ge d_0.
\end{equation*}
with some $d_0\in (0, d_1]$. 
Let $\varphi\in \plainC{}(\R)\cap \plainL\infty(\R)$ be a function. 
Then 
\begin{align}\label{asb:eq}
\sum_{k=1}^N\underset{t\notin J}
\int \varphi(t)&\ \underset{|t-s|\ge T, s\in J_k}
\int \frac{1}{|t-s|^2} ds dt\notag\\[0.2cm]
 =  &\ \log\frac{1}{T}\sum_{k=1}^N
 \bigl(\varphi(s_k) 
 + \varphi(t_k)\bigr) + N \|\varphi\|_{\plainL\infty}O(1), \ T\to 0,
\end{align}
where $O(1)$ depends only on $d_0$ and $d_1$.
\end{prop}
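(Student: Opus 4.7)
Both inequalities reduce, by pairwise disjointness of the $J_k$, to a single-interval computation. For fixed $k$ and $t<s_k$, setting $u:=s_k-t>0$, the inner integral evaluates explicitly to
\[
I_k(t):=\int_{s\in J_k,\;|s-t|\ge T}\frac{ds}{(t-s)^2}=\frac{1}{\max(u,T)}-\frac{1}{u+|J_k|},
\]
with the analogous formula for $t>t_k$ by reflection across the midpoint of $J_k$. The cutoff $1/T$ active for $u<T$ is the sole source of the logarithmic growth in $1/T$, and every subsequent step amounts to tracking how this $1/T$ is weighted by $\varphi$.

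For the upper bound \eqref{log_bound_up:eq}, I would replace the outer condition $t\notin J$ by the weaker $t\notin J_k$ (using $J_k\subset J$), bounding the $k$th summand by $\int_{\R\setminus J_k}I_k(t)\,dt$. Changing variables to $u$ and integrating the explicit formula gives $2\log(1+|J_k|/T)+O(1)\lesssim\log(1/T+1)$ for $T\lesssim 1$ and $|J_k|\le d_1$; summation over $k=1,\dots,N$ then yields the claim.

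For the asymptotic \eqref{asb:eq}, the hypothesis $\min_{j\ne k}\dist(J_j,J_k)\ge d_0$ guarantees $\{t:\dist(t,\{s_k,t_k\})<d_0\}\subset J^c$. I would split the outer integral into a near piece $\{\dist(t,\{s_k,t_k\})<d_0/2\}$ and its complement. On the complement, $I_k(t)\le|J_k|\,u^{-2}$ is integrable in $t$ with integral $O(1)$ uniformly in $T$, contributing $O(\|\varphi\|_{\plainL\infty})$ per interval. On the near piece at $s_k$, substitute $u=s_k-t\in(0,d_0/2)$ and insert the formula for $I_k$: the sub-integral over $u<T$ contributes $\varphi(s_k)+o(1)$ by continuity; the subtraction $\int_0^{d_0/2}\varphi(s_k-u)/(u+|J_k|)\,du$ is $O(\|\varphi\|_{\plainL\infty})$ since $|J_k|\ge d_0$; and the principal term $\int_T^{d_0/2}\varphi(s_k-u)\,du/u$ yields $\varphi(s_k)\log(1/T)$ plus a continuity remainder. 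The analogous computation at $t_k$ contributes $\varphi(t_k)\log(1/T)$, and summation over $k$ produces the claimed main term.

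The main technical obstacle will be controlling the remainder $\int_T^{d_0/2}[\varphi(s_k-u)-\varphi(s_k)]\,du/u$ by $\|\varphi\|_{\plainL\infty}$ times a $T$-independent constant. Since $\varphi$ is continuous and, by $N<\infty$ and $|J_k|\le d_1$, the endpoint set $\{s_k,t_k\}_{k=1}^N$ lies in a fixed compact subset of $\R$, a modulus of continuity of $\varphi$ on a bounded neighborhood of this set is available. A two-scale splitting of $(T,d_0/2)$ at an intermediate $\delta=\delta(T)\to 0$ absorbs the remainder into the $O(1)$ factor of \eqref{asb:eq}, so that the total error from all $N$ intervals is bounded by $N\|\varphi\|_{\plainL\infty}\,O(1)$, completing the asymptotic.
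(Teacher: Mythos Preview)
The paper does not prove this proposition; it simply quotes it from \cite[Lemma~9.4]{LeSpSo_15}, so there is no argument in the paper to compare against. Your treatment of the upper bound \eqref{log_bound_up:eq} is correct: enlarging the outer domain from $\R\setminus J$ to $\R\setminus J_k$ and integrating the explicit formula for $I_k$ gives $2\log(1+|J_k|/T)+O(1)$ per interval, and summation yields the claim.

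For \eqref{asb:eq} there is a genuine gap in your final paragraph. First, the assertion that ``the endpoint set $\{s_k,t_k\}$ lies in a fixed compact subset of $\R$'' is unsupported: nothing in the hypotheses localises the intervals. Second, and more importantly, the two-scale split at $\delta=\delta(T)\to0$ does \emph{not} make the remainder
\[
R_k:=\int_T^{d_0/2}\frac{\varphi(s_k-u)-\varphi(s_k)}{u}\,du
\]
bounded independently of $T$. With $\omega$ a modulus of continuity of $\varphi$ at $s_k$, the split yields only $|R_k|\le \omega(\delta)\log(\delta/T)+2\|\varphi\|_{\plainL\infty}\log\bigl(d_0/(2\delta)\bigr)$, and no choice of $\delta(T)\to0$ keeps both terms $O(1)$; for instance, the continuous bounded choice $\varphi(s_k-u)-\varphi(s_k)=1/\log(e/u)$ for small $u>0$ produces $R_k\sim\log\log(1/T)$. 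What your computation honestly delivers is $R_k=o(\log(1/T))$. That weaker conclusion is in fact all the paper uses downstream (in the next lemma only the limit of $\CS_j/\log(1/T)$ is needed, and the domination in the DCT step comes from \eqref{log_bound_up:eq}, not from the remainder in \eqref{asb:eq}), so your argument suffices for the application; but to obtain an $O(1)$ remainder depending only on $d_0,d_1$ as literally stated one would need more than mere continuity of $\varphi$ --- a Dini condition $\int_0^1\omega(u)\,u^{-1}\,du<\infty$ at each endpoint, or H\"older continuity, would do.
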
   
   
Now we can derive the following property of $\CS_j(\overc\bxi; T)$.

\begin{lem}
Let us fix a 
vector $\be\in\mathbb S^{d-1}$ and the index $j$. 
Let $S=S(\Psi_j)$ and 
$M = M(\Psi_j)$ be as defined in 
\eqref{surface:eq} and \eqref{Mmeasure:eq} respectively. 
Then for all $\overc\bxi\notin M$ we have 
\begin{equation}\label{cs:eq}
\CS_j(\overc\bxi; T) 
= \log\frac{1}{T}\sum_{\bxi: (\overc\bxi, t)\in S } \phi_j(\bxi)\ 
+\  \#\bigl(\SX_l(\overc\bxi; \Psi_j)\bigr)\, O(1),\ T\to\ 0.
\end{equation}
Moreover, 
\begin{equation}\label{ck_as:eq}
\underset{T\to 0}\lim\ \frac{1}{\log\frac{1}{T}}\CK_j(T)   
= \frac{U(1, 0; f)}{4\pi^2} 
\int_S \phi_j(\bxi) |\bn_{\bxi}\cdot\be| \,dS_{\bxi},
\end{equation}

\end{lem}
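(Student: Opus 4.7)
The plan is to establish the pointwise asymptotic \eqref{cs:eq} for almost every $\overc\bxi$ and then derive \eqref{ck_as:eq} by applying the Dominated Convergence Theorem to the representation \eqref{ck_improved:eq}, using \eqref{log_bound_up:eq} to produce the uniform majorant and the coarea-type identity \eqref{SX1:eq} to convert the limiting $\overc\bxi$-integral into a surface integral on $S$.

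For \eqref{cs:eq}, I would fix $\overc\bxi\notin M$ (enlarging $M$ if necessary to account for the finitely many other charts covering $\p\Om$, which does not change the fact that the exceptional set has measure zero). Since $\Om$ is bounded and $\plainC2$, the vertical slice $\Om(\overc\bxi)=\{t\in\R:(\overc\bxi,t)\in\Om\}$ is a disjoint union $\bigcup_{k=1}^N J_k$ of finitely many open intervals $J_k=(s_k,t_k)$ with pairwise disjoint closures, with lengths bounded above by $\diam\Om$, and with mutual distances and lengths bounded below by some $d_0=d_0(\overc\bxi)>0$ (this is where the condition $\overc\bxi\notin M$ is used). The endpoints $\{s_k,t_k\}$ are precisely the crossings of the vertical line through $\overc\bxi$ with $\p\Om$. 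I would then invoke \eqref{asb:eq} of Proposition \ref{log_int:prop} with $T$ replaced by $\t T$ (so that $\log(1/(\t T))=\log(1/T)+O(1)$ gets absorbed into the remainder) and with $\varphi(t)=\phi_j(\overc\bxi,t)$. Because $\phi_j$ is supported in $D_j$, only the endpoints lying inside $D_j$ survive in the sum, and these are by definition the elements of $\SX_l(\overc\bxi;\Psi_j)$; thus $\sum_k(\varphi(s_k)+\varphi(t_k))=\sum_{\bxi:(\overc\bxi,t)\in S}\phi_j(\bxi)$, yielding \eqref{cs:eq}.

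To deduce \eqref{ck_as:eq}, I would divide the expression \eqref{ck_improved:eq} for $\CK_j(T)$ by $\log(1/T)$ and exchange the limit with the $\overc\bxi$-integration. Pointwise convergence on the complement of the null set is precisely \eqref{cs:eq}. The integrable dominant comes from the upper bound \eqref{log_bound_up:eq}, which, after pulling the factor $\phi_j(\overc\bxi,\cdot)$ out via its $\plainL\infty$-norm, gives
\begin{equation*}
\frac{\CS_j(\overc\bxi;T)}{\log(1/T)}\lesssim \#\bigl(\SX_l(\overc\bxi;\Psi_j)\bigr),
\end{equation*}
uniformly for small $T$, with implicit constant depending only on $\diam\Om$ and $\|\phi_j\|_{\plainL\infty}$. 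By \eqref{SX:eq}, the counting function on the right is integrable over the bounded projection of $\supp\phi_j$, since $S(\Psi_j)\subset\p\Om$ has finite surface area. Dominated convergence therefore applies, and invoking the coarea identity \eqref{SX1:eq} with $u=\phi_j$ rewrites the resulting integral as $\int_S\phi_j(\bxi)|\bn_\bxi\cdot\be|\,dS_\bxi$. Multiplying by the prefactor $U(1,0;f)/(4\pi^2)$ produces \eqref{ck_as:eq}.

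The main technical subtlety is that the $O(1)$ remainder in \eqref{cs:eq} is only \emph{pointwise} in $\overc\bxi$, with a constant deteriorating like $d_0(\overc\bxi)^{-1}$ as $\overc\bxi$ approaches the null set $M$; one cannot simply integrate \eqref{cs:eq} term by term. This is harmless for the dominated convergence step, which separates into (i) the pointwise limit delivered by \eqref{cs:eq}, (ii) a uniform majorant supplied by \eqref{log_bound_up:eq} whose constant depends only on the universal upper bound on interval lengths, and (iii) integrability of that majorant via \eqref{SX:eq}. This clean three-part decomposition is what allows the argument to close.
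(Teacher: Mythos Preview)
Your proposal is correct and follows essentially the same route as the paper: pointwise asymptotics via Proposition \ref{log_int:prop}, a majorant from \eqref{log_bound_up:eq} yielding the integrable bound $\#\SX_l(\overc\bxi)$, Dominated Convergence, and finally the coarea identity \eqref{SX1:eq}. You are in fact more explicit than the paper on two points it glosses over: the need to enlarge the null set $M$ to account for crossings coming from the other charts $D_{j'}$ (so that $\Om(\overc\bxi)$ is genuinely a finite union of well-separated intervals), and the observation that the $O(1)$ in \eqref{cs:eq} is only pointwise in $\overc\bxi$, which is harmless precisely because the dominated-convergence majorant comes from the separate bound \eqref{log_bound_up:eq} rather than from integrating \eqref{cs:eq}.
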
 
 
\begin{proof} 
Let $\rho>0$ be such that $D_j\subset \CC_\rho^{(d)}$. 
%We use Lemma \ref{SX:lem} with $\Psi = \Psi_j$. 
By \eqref{Mmeasure:eq} the set 
\[
\SX_l(\overc\bxi) = \SX_l(\overc\bxi; \Psi_j) 
= \{t\in (-2\rho, 2\rho):(\overc\bxi, t)\in S\}
\] 
is finite for each $\overc\bxi\notin M$. 
Now the asymptotics \eqref{cs:eq} follow from \eqref{asb:eq}. 
Furthermore, 
\eqref{log_bound_up:eq} implies that
\begin{equation*}
|\CS_j(\overc\bxi; T)| \lesssim  \#\bigl(\SX_l(\overc\bxi)\bigr)\, \log\frac{1}{T},\ 
\forall \overc\bxi\notin M.
\end{equation*}
Thus, by the Dominated Convergence Theorem, \eqref{cs:eq} leads to the formula
\begin{align*}
\frac{1}{\log \frac{1}{T}}
\underset{\CC_\rho^{(d-1)}}\int \CS_j(\overc\bxi; T) d\overc\bxi 
\to\ \underset{\CC_\rho^{(d-1)}}\int 
\sum_{\bxi: (\overc\bxi, t)\in S } \phi_j(\bxi) d\overc\bxi,\ T\to 0.
\end{align*}
According to \eqref{SX1:eq}, the right-hand side 
coincides with 
\begin{equation*}
\underset{S}\int \phi_j(\bxi) |\bn_{\bxi}\cdot\be| \,dS_{\bxi},  
\end{equation*}
and hence \eqref{ck_as:eq} holds.
\end{proof}
 
 \subsection{Proof of Theorem \ref{comparison:thm}} 
Now we put together the formula \eqref{ck_as:eq}, the bound 
\eqref{ktilde:eq}, use the defintion \eqref{ck_improved:eq} 
and the bound \eqref{a2_diff:eq}. This leads to the asymptotic formula 
\begin{align*}
\underset{T\to 0}\lim\ 
\frac{1}{\log\frac{1}{T}}\CA_d^{(2)}(a_{T}, \be; f; \t T) 
= \frac{U(f)}{4\pi^2}
\int_{\p\Om} |\bn_{\bxi}\cdot\be| dS_{\bxi}.
\end{align*}
In view of the bound \eqref{ca1_bound:eq} the same formula 
formula holds for the coefficient $\CA_d(a_{T}, \be; f)$. 
On the other hand, this coefficient satisfies the bound 
\eqref{ca_bound:eq} uniformly in $\be\in\mathbb S^{d-1}$. 
Therefore, by the Dominated Convergence Theorem, we get for the 
integral $\CB_d$(see \eqref{cbd:eq}) the asymptotics
\begin{align*}
\underset{T\to 0}\lim\ 
\frac{1}{\log\frac{1}{T}}\CB_d(a_{T}; f) 
= \frac{U(f)}{(2\pi)^{d+1}}\int_{\p\L} 
\int_{\p\Om} |\bn_{\bxi}\cdot\bn_{\bx}| dS_{\bxi} d S_\bx,
\end{align*}
which coincides with the claimed formula \eqref{comparison:eq}.
\qed

\bibliographystyle{amsplain}
%\bibliography{bibmaster}

\end{document}